\newtheorem{thm}{Theorem}[section]
\newtheorem{lem}[thm]{Lemma}
\newtheorem{cor}[thm]{Corollary}
\newtheorem{prop}[thm]{Proposition}
\newtheorem{conj}[thm]{Conjecture}
\theoremstyle{definition}
\newtheorem{defin}[thm]{Definition}
\newtheorem*{organization}{Organization of article}
\newtheorem*{conventions}{Conventions}
\newtheorem*{notation}{Notation}
\newtheorem*{acknowledgment}{Acknowledgment}
\theoremstyle{remark}
\newtheorem{rem}[thm]{Remark}
\numberwithin{equation}{section}
\newcommand{\bA}{\mathbb{A}}
\newcommand{\bF}{\mathbb{F}}
\newcommand{\bG}{\mathbb{G}}
\newcommand{\bk}{\Bbbk}
\newcommand{\sO}{\mathscr{O}}
\newcommand{\bP}{\mathbb{P}}
\newcommand{\bQ}{\mathbb{Q}}
\newcommand{\bZ}{\mathbb{Z}}
\newcommand{\Amp}{\mathrm{Amp}}
\newcommand{\Ampc}{\mathrm{Amp}^{\mathrm{cyl}}}
\newcommand{\Cl}{\mathrm{Cl}}
\newcommand{\Pic}{\mathrm{Pic}}
\newcommand{\Sing}{\mathrm{Sing}}
\newcommand{\Spec}{\mathrm{Spec}}
\newcommand{\Supp}{\mathrm{Supp}}
\newcommand{\wA}{\widetilde{A}}
\newcommand{\wB}{\widetilde{B}}
\newcommand{\wC}{\widetilde{C}}
\newcommand{\wD}{\widetilde{D}}
\newcommand{\wE}{\widetilde{E}}
\newcommand{\wF}{\widetilde{F}}
\newcommand{\wS}{\widetilde{S}}
\newcommand{\wGamma}{\widetilde{\Gamma}}
\newcommand{\wDelta}{\widetilde{\Delta}}
\newcommand{\hF}{\hat{F}}
\newcommand{\hM}{\hat{M}}
\newcommand{\hGamma}{\hat{\Gamma}}
\newcommand{\sA}{\textsf{A}}
\newcommand{\sD}{\textsf{D}}
\newcommand{\sE}{\textsf{E}}
\begin{document}

\title{Polarized cylinders in Du Val del Pezzo surfaces of degree two}


\author{}
\address{}
\curraddr{}
\email{}
\thanks{}

\author{Masatomo Sawahara}
\address{Faculty of Education, Hirosaki University, Bunkyocho 1, Hirosaki-shi, Aomori 036-8560, JAPAN}
\curraddr{}
\email{sawahara.masatomo@gmail.com}
\thanks{}

\subjclass[2020]{14C20, 14E05, 14J17, 14J26, 14J45, 14R25. }

\keywords{cylinder, rational surface, $\mathbb{P}^1$-biration, Du Val singularity. }

\date{}

\dedicatory{}

\begin{abstract}
Let $S$ be a del Pezzo surface with at worst Du Val singularities of degree $2$ such that $S$ admits an $(-K_S)$-polar cylinder. 
In this article, we construct an $H$-polar cylinder for any ample $\mathbb{Q}$-divisor $H$ on $S$. 
\end{abstract}

\maketitle
\setcounter{tocdepth}{1}

Throughout this article, all considered varieties are assumed to be algebraic and defined over an algebraically closed field $\bk$ of characteristic $0$. 
\section{Introduction}\label{1}
Let $X$ be a normal projective variety. 
An open subset $U$ in $X$ is called a {\it cylinder} if $U$ is isomorphic to $\bA ^1_{\bk} \times Z$ for some variety $Z$. 
Moreover, letting $H$ be an ample $\bQ$-divisor on $X$, we say that a cylinder $U$ in $X$ is an {\it $H$-polar cylinder} if there exists an effective $\bQ$-divisor $D$ on $X$ such that $D \sim _{\bQ} H$ and $U = X \backslash \Supp (D)$. 
The motivation for studying polarized cylinders comes from the study of $\bG _a$-actions on the corresponding affine cones, which are special kinds of affine varieties. 
\begin{thm}[{\cite{KPZ11,KPZ13}}]
Let $X$ be a normal projective variety, and let $H$ be an ample $\bQ$-divisor on $X$. 
Then the following affine variety: 
\begin{align*}
\Spec \left( \bigoplus _{n \ge 0} H^0(X,\sO _X(nH)) \right)
\end{align*}
admits a non-trivial $\bG _a$-action if and only if $X$ contains an $H$-polar cylinder. 
\end{thm}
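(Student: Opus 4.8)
The plan is to translate the existence of a non-trivial $\bG _a$-action into the existence of a nonzero locally nilpotent derivation (LND) on the graded ring $A := \bigoplus _{n \ge 0} H^0(X, \sO _X(nH))$, and then to pass between homogeneous LNDs on $A$ and cylinders on $X$. Since $X$ is normal and $H$ is ample, $A$ is a finitely generated graded normal domain with $\Proj A \cong X$, and the grading corresponds to a $\bG _m$-action on the affine cone $Y := \Spec A$. Under this dictionary a non-trivial $\bG _a$-action on $Y$ is the same datum as a nonzero LND $\partial \colon A \to A$, so the whole theorem reduces to a statement relating nonzero LNDs on $A$ to $H$-polar cylinders on $X$.

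First I would prove the direction ``cylinder $\Rightarrow$ action''. Given an $H$-polar cylinder $U = X \backslash \Supp (D)$ with $D \sim _{\bQ} H$ effective and $U \cong \bA ^1_{\bk} \times Z$, I would realize $U$ as a principal open $X_f$ for a homogeneous $f \in A_m = H^0(X, \sO _X(mH))$ whose zero divisor is $mD$. The cone $Y_f = \Spec A_f$ fibers over $X_f$ with $\bG _m$-fibers, so $Y_f$ is itself a cylinder and carries the $\bG _a$-action given by translation along the $\bA ^1$-factor inherited from $U$. This yields an LND on the localization $A_f$; multiplying by a sufficiently high power of $f$ (which lies in the kernel, hence preserves local nilpotency and clears the poles along $\{ f = 0 \}$) produces a nonzero LND $A \to A$, i.e.\ a non-trivial $\bG _a$-action on $Y$.

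For the converse ``action $\Rightarrow$ cylinder'', I would start from a nonzero LND $\partial$ on $A$ and decompose it into its $\bG _m$-homogeneous components $\partial = \sum _i \partial _i$, where $\partial _i$ shifts degree by $i$, invoking the standard fact that the extreme (top- or bottom-degree) component is again an LND in order to reduce to a nonzero \emph{homogeneous} LND. Choosing a homogeneous local slice, the Slice Theorem gives, after localizing at a suitable homogeneous $f \in \Ker \partial$, an isomorphism $A_f \cong (\Ker \partial)_f[t]$ with $\partial = \partial / \partial t$; hence $Y_f$ is a cylinder. Taking degree-zero parts descends this to a cylinder $X_f = X \backslash \Supp (D_f)$, where $D_f = \tfrac{1}{m}\,\mathrm{div}_0(f) \sim _{\bQ} H$ is effective because $f$ is a section of a multiple of $H$; thus $X_f$ is an $H$-polar cylinder.

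The main obstacle lies in the careful bookkeeping of the $\bG _m$-action in both directions. Extracting an LND from a single homogeneous component of $\partial$ relies on the (non-trivial) preservation of local nilpotency under taking extreme graded parts, and the passage between $Y_f$ and $X_f$ must account for the fact that $A$ need not be generated in degree one, so that $Y_f \to X_f$ is only a $\bG _m$-orbibundle rather than a trivial torsor. Guaranteeing that the slice and the localizing element $f$ can be chosen homogeneous, and that the resulting derivation extends across $Y \backslash Y_f$ after clearing denominators, is where the heart of the argument sits.
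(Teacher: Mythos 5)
The paper itself offers no proof of this statement: it is quoted from Kishimoto--Prokhorov--Zaidenberg \cite{KPZ11,KPZ14}, so your sketch can only be measured against the original arguments there. Your overall plan is indeed the KPZ strategy: the dictionary between non-trivial $\bG _a$-actions and nonzero locally nilpotent derivations, homogenization via the extreme graded component of an LND, the slice theorem plus descent for one direction, and trivialization of the cone over the cylinder plus clearing denominators by a power of $f \in \Ker \partial$ for the other. The forward direction is essentially complete, though the word ``inherited'' hides the one real point: to lift the translation action from $U \simeq \bA ^1_{\bk} \times Z$ along the Seifert $\bG _m$-fibration $Y_f \to U$ you must linearize the divisorial sheaves $\sO _U(nH|_U)$, which works because $\Cl (\bA ^1_{\bk} \times Z) \simeq \Cl (Z)$ (here normality of $U$, hence of $Z$, is used), so that $H|_U$ is pulled back from $Z$ and the translation acts trivially on it; this also shows $Y_f \simeq \bA ^1_{\bk} \times W$ and that $f$, being invertible on $Y_f$ and hence pulled back from $W$, automatically lies in the kernel.

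The genuine gap is in the converse, at the sentence ``taking degree-zero parts descends this to a cylinder $X_f$.'' Write $B := \Ker \partial$ and let $d$ be the degree of the homogeneous LND $\partial$, so the slice $t = a/f$ has degree $-d$. Then the degree-zero part is not visibly a polynomial ring: one computes $(A_f)_0 = \bigoplus _{i \ge 0} (B_f)_{id}\, t^i$, a twisted (Rees-type) algebra over $(B_f)_0$, which is polynomial precisely when $B_f$ contains an invertible homogeneous element $w$ of degree $d$ (then $u := wt$ gives $(A_f)_0 = (B_f)_0[u]$). For $d = 0$ this is automatic, but for $d \neq 0$ the units of $B_f$ a priori only have degrees in $\deg (f) \cdot \bZ$, and your proposal offers no mechanism to produce $w$; you name the orbibundle phenomenon but do not resolve it. The repair, which is where the actual KPZ proof does work, is to localize further at homogeneous elements of $B$ chosen to trivialize the twist --- this is legitimate for the theorem being proved because an $H$-polar cylinder may always be shrunk: for any prime divisor $E$ and small $\delta > 0$, ampleness gives an effective $G \sim _{\bQ} H - \delta E$, so extra components can be absorbed into $D$ while keeping $D \sim _{\bQ} H$. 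Relatedly, your recipe breaks in the corner case where $f = \partial a$ is a nonzero constant (a global slice): then $\mathrm{div}(f) = 0$ and $X_f = X$, which is never an $H$-polar cylinder since $\Supp (D) \neq \emptyset$ for ample $H$; one must instead localize at a nonconstant homogeneous element of $B$, which exists because $B$ is a graded subalgebra of positive dimension with $B_0 = \bk$. With these two repairs your outline matches the proof in the literature; without them, the descent step as stated fails whenever $d \neq 0$.
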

Moreover, the geometry of polarized cylinders in projective varieties can be applied to the study of the flexibility of affine cones and 
the existence of $\bG _a$-actions on the complements of hypersurfaces (see {\cite{Pre13,PW16,CDP18,Par22}}). 
Meanwhile, the connection between anti-canonical polarized cylinders in Fano varieties and their $\alpha$-invariants is also known (see {\cite[Theorem 1.26]{CPPZ21}}). 

In this article, we study polarized cylinders in log del Pezzo surfaces with the aim of making explicit several geometric and group-theoretic applications. 
In this article, we study polarized cylinders in log del Pezzo surfaces in order to make explicit progress. 
Here, a log del Pezzo surface means a del Pezzo surface (see \S \S \ref{2-2}, for the definition) with at worst klt singularities. 
We introduce the following notation: 
\begin{defin}
Let $S$ be a normal projective rational surface. Then we say that the following set is called the {\it cylindrical ample set} of $S$: 
\begin{align*}
\Ampc (S) := \{ H \in \Amp (S) \,|\, \text{there is an $H$-polar cylinder on $S$}\}, 
\end{align*}
where $\Amp (S)$ is the ample cone of $S$. 
\end{defin}
Cheltsov, Park and Won proposed the following conjecture on polarized cylinders in log del Pezzo surfaces: 
\begin{conj}[{\cite{CPW17}}]\label{conj}
Let $S$ be a log del Pezzo surface. 
Then $-K_S \in \Ampc (S)$ if and only if $\Ampc (S) = \Amp (S)$. 
\end{conj}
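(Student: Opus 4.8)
The implication $(\Leftarrow)$ is immediate: since $S$ is a del Pezzo surface, $-K_S$ is ample, so under the hypothesis $\Ampc(S) = \Amp(S)$ we get $-K_S \in \Amp(S) = \Ampc(S)$. The entire content lies in $(\Rightarrow)$: assuming $S$ carries a $(-K_S)$-polar cylinder, one must produce an $H$-polar cylinder for \emph{every} ample $\bQ$-divisor $H$.

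The first step is to reformulate $\Ampc(S)$ intrinsically. For a cylinder $U = S \setminus \Supp(D)$ whose complement has irreducible components $D_1,\dots,D_r$, the cylinder $U$ is $H$-polar precisely when $H$ is $\bQ$-linearly equivalent to some effective combination $\sum a_i D_i$ with $a_i \ge 0$. Hence the classes polarizing a \emph{fixed} $U$ form the cone $\sigma_U := \Amp(S) \cap \langle [D_1],\dots,[D_r]\rangle_{\ge 0}$ in $\Picq$, and
\begin{align*}
\Ampc(S) = \bigcup_{U} \sigma_U,
\end{align*}
the union over all cylinders $U$ in $S$. Thus $\Ampc(S) = \Amp(S)$ says exactly that the cones $\sigma_U$ cover the ample cone. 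The hypothesis $-K_S \in \Ampc(S)$ supplies one cylinder $U_0$ with $-K_S \in \sigma_{U_0}$; the task is to enlarge this single datum into a covering family.

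Next I would pass to the minimal resolution $\pi : \wS \to S$ and work with $\bP^1$-fibrations, since on a rational surface every cylinder $\bA^1 \times Z$ is fibred by affine lines, and these typically extend to a $\bP^1$-fibration $f : \wS \to \bP^1$ whose general fibre, minus a section and the components it meets, recovers the $\bA^1$-cylinder. In this picture the boundary components $D_i$ are pushforwards of fibre components, sections, and exceptional curves of $\pi$. The plan is to exploit the freedom in this construction: by choosing different sections, absorbing different degenerate-fibre components into the boundary, and composing with elementary transformations of the fibration, one generates a family $\{U_\lambda\}$ of cylinders whose cones $\sigma_{U_\lambda}$ sweep out a neighbourhood of $-K_S$ and, more ambitiously, all of $\Amp(S)$. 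Since $\NE(\wS)$ is rational polyhedral, $\Amp(S)$ is spanned by finitely many classes, so it suffices to produce, for each generator—equivalently, for each chamber adjacent to a face of the nef cone—a cylinder whose boundary cone contains it.

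The main obstacle is twofold. First, \emph{uniformity across the classification}: log del Pezzo surfaces comprise many singularity types and Picard ranks, and there is no single fibration recipe valid for all of them; realistically one must stratify by the dual graph of $\pi$ and treat families of configurations, as is already the pattern for the Du Val degree-two case studied here. Second, and more seriously, is \emph{reaching the boundary of the ample cone}: as $H$ approaches a face of $\overline{\Amp(S)}$ the natural $\bP^1$-fibration degenerates and the associated cylinder's boundary cone collapses onto that face, so covering ample classes \emph{near} the boundary forces one to build cylinders with many boundary components, or cylinders adapted to the specific extremal curve defining that face. Controlling this degeneration—showing that the anticanonical cylinder hypothesis is strong enough to guarantee such boundary-adapted cylinders simultaneously for every face—is the crux, and is precisely where a case-free argument is missing; the honest route is therefore to combine a classification of log del Pezzo surfaces admitting anticanonical polar cylinders with an explicit covering construction in each resulting case.
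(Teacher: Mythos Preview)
The statement you are addressing is labelled \emph{Conjecture} in the paper, and the paper does \emph{not} prove it in general: it remains open for arbitrary log del Pezzo surfaces. The paper's contribution (Theorem~\ref{main(1)} and Corollary following it) is to verify the conjecture only for Du Val del Pezzo surfaces of degree $\ge 2$. So there is no ``paper's own proof'' of the full statement to compare against.

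That said, your strategy outline is well-aligned with how the paper attacks the partial case it does settle. The paper indeed passes to the minimal resolution $\wS$, locates specific $\bP^1$-fibrations $g:\wS\to\bP^1_{\bk}$ whose section and fibre components are controlled by the exceptional configuration (the conditions $(\ast)$ and $(\ast\ast)$ in \S\ref{4-1}), and then for each ample $H$ constructs an explicit effective $\bQ$-divisor $D\sim_{\bQ} H$ supported on pushforwards of fibre components and sections so that $S\setminus\Supp(D)$ is a cylinder of the form $\bA^1_{\bk}\times\bA^1_{\ast,\bk}$ or $\bA^1_{\bk}\times(\bA^1_{\bk}\setminus\{\text{points}\})$. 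The argument is organised exactly as you anticipate: a case split by singularity type (Lemmas~\ref{lem(4-1)}--\ref{lem(4-6)}), and within each case a Riemann--Roch computation producing an auxiliary effective divisor whose intersection with $H$ forces certain coefficients to be positive---this is precisely the mechanism you describe for ``reaching the boundary of the ample cone''.

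Your honest concluding assessment---that no case-free argument is available and one must combine classification with explicit construction---is the correct diagnosis of why the conjecture is still open. What you have written is a sound research plan, not a proof; and for the full log del Pezzo conjecture, that is currently the state of the art.
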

Note that Conjecture \ref{conj} holds for smooth del Pezzo surfaces because it follows from the following two theorems: 
\begin{thm}[{\cite{KPZ11,KPZ14,CPW16a}}]
Let $S$ be a smooth del Pezzo surface of degree $d \in \{ 1,\dots ,9\}$ (see \S \S \ref{2-2}, for the definition). 
Then $-K_S \in \Ampc (S)$ if and only if $d \ge 4$. 
\end{thm}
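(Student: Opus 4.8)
The plan is to prove the two implications separately, treating the construction of cylinders when $d \ge 4$ (the ``if'' direction) as the elementary part and the non-existence when $d \le 3$ (the ``only if'' direction) as the genuine difficulty. Throughout I would use the classification of smooth del Pezzo surfaces: $S$ is $\bP^2$ ($d = 9$), $\bP^1 \times \bP^1$ ($d = 8$), or the blow-up of $\bP^2$ at $9 - d$ points in general position ($1 \le d \le 7$).

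For the ``if'' direction I would first dispose of the two highest degrees by hand. If $S = \bP^2$, then $3L \sim -K_S$ for a line $L$ and $\bP^2 \setminus L \cong \bA^2 = \bA^1 \times \bA^1$ is a cylinder; if $S = \bP^1 \times \bP^1$, then $2F_1 + 2F_2 \sim -K_S$ for rulings $F_1, F_2$ and the complement of $F_1 \cup F_2$ is $\bA^1 \times \bA^1$. For $4 \le d \le 7$ I would exploit the conic bundle structure $\pi \colon S \to \bP^1$ carried by every del Pezzo surface of these degrees. Choosing a section $C$ (a $(-1)$-curve with $C \cdot F = 1$ on a fibre $F$) and deleting $C$, finitely many fibres, and the components of the degenerate fibres, the restriction of $\pi$ to the complement becomes an $\bA^1$-bundle over an affine curve, hence a cylinder. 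The content of this step is to arrange the deleted curves into an effective divisor $D$ with $D \sim_\bQ -K_S$: writing $-K_S$ as a positive rational combination of the section, a fibre, and the degenerate-fibre components, one checks that for $d \ge 4$ there is enough room in the anticanonical class to absorb the whole boundary with nonnegative coefficients while still removing entire degenerate fibres, so that the $\bA^1$-fibration survives. This is the numerical heart of the positive direction, and it is exactly here that degree $4$ is the threshold.

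For the ``only if'' direction I would argue by contradiction. Suppose $U = S \setminus \Supp(D)$ is a $(-K_S)$-polar cylinder with $D \sim_\bQ -K_S$ effective. The cylinder carries an $\bA^1$-fibration $\rho \colon U \to B$ onto a smooth affine curve, which extends, after passing to a smooth projective model $\sigma \colon \wS \to S$ resolving its base points, to a $\bP^1$-fibration (conic bundle) $\tilde\rho \colon \wS \to \bP^1$. Writing the boundary $\wS \setminus U$ as the union of the strict transform of $\Supp(D)$ and the $\sigma$-exceptional curves, the $\bA^1$-fibration condition forces a general fibre of $\tilde\rho$ to meet this boundary in exactly one point, so the boundary must contain a section of $\tilde\rho$ together with a prescribed collection of fibral components. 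Pushing this configuration back down to $S$ and using $\sigma^*(-K_S) \sim_\bQ -K_{\wS} + (\text{effective exceptional})$, I would compare the anticanonical degree $d = (-K_S)^2$ with the number of sectional and fibral curves that the fibration requires; when $d$ is small this accounting cannot be balanced, and no effective $D \sim_\bQ -K_S$ with an $\bA^1$-fibred complement exists.

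The main obstacle is precisely this non-existence step for $d \le 3$, and above all the borderline case $d = 3$ of the smooth cubic surface. Here a single numerical inequality does not suffice: the global log canonical threshold of a cubic is only $2/3 < 1$, so singular anticanonical members exist in abundance, yet none of them yields a cylinder. One must therefore carry out a complete classification of the possible $\bA^1$-fibration structures on $U$ --- equivalently, of the possible conic-bundle boundaries on $\wS$ --- and rule each of them out using the finite and rigid configuration of $(-1)$-curves available on a del Pezzo surface of low degree. Making this case analysis exhaustive, and verifying that every candidate boundary either fails to be $\bQ$-linearly equivalent to $-K_S$ or fails to leave an $\bA^1$-fibred complement, is the delicate part; it is the content of \cite{CPW16a} for $d = 3$, with the degrees $d = 1, 2$ treated by an analogous but less involved analysis.
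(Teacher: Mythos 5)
First, a point of calibration: the paper does not prove this theorem at all --- it is quoted with citations to \cite{KPZ11,KPZ14,CPW16a} --- so your proposal can only be measured against the strategy of those references, and against its own internal completeness. Your ``if'' direction is the standard construction and is acceptable as an outline: cylinders from a line or the rulings in degrees $9$ and $8$, and conic-bundle complements for the middle degrees, with the numerical check that the deleted curves support an effective $D \sim_{\bQ} -K_S$. Two small slips there: your classification omits $\bF _1$ (the blow-up of $\bP ^2_{\bk}$ at one point), which is also a smooth del Pezzo surface of degree $8$, so the blow-up case runs over $1 \le d \le 8$; and ``deleting the components of the degenerate fibres'' must mean deleting, in each degenerate fibre $G_1+G_2$, precisely the component met by the chosen section --- otherwise the fibre of the restricted fibration is $\bA ^1_{\ast ,\bk}$ rather than $\bA ^1_{\bk}$ and the complement is not a cylinder.

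The genuine gap is in the ``only if'' direction, and it is not merely unexecuted bookkeeping. Your mechanism --- extend the $\bA ^1$-fibration on $U$ to a $\bP ^1$-fibration on a model $\wS$, note the boundary contains one section plus vertical curves, and then compare $d=(-K_S)^2$ with ``the number of sectional and fibral curves'' --- can be made to work only when the pencil defining the fibration has no base point on $S$ (there one uses $(-K_S \cdot F)=2$, adjunction on the section, and the $8-d$ degenerate fibres). In the base-point case, which is the hard and unavoidable case for $d \le 3$, no such accounting of boundary components produces a contradiction: the proofs in the literature instead show that every effective $D \sim_{\bQ} -K_S$ whose complement is a cylinder fails to be log canonical at the base point (all members of the pencil pass through it, forcing large multiplicity of $D$ there), and then exclude such non-lc anticanonical pairs via multiplicity and Corti-type inequalities together with the configuration of lines --- this is the content of \cite{CPW16a} for $d=3$, with $d=1,2$ treated in \cite{KPZ14}. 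Your own text concedes that ``a single numerical inequality does not suffice'' and defers the exhaustive analysis to \cite{CPW16a}; as a blind proof, that concession is exactly the gap: the decisive step for $d \le 3$ is asserted rather than proved, and the substitute you propose (counting sections and fibral components against $d$) would fail precisely where the difficulty lies.
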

\begin{thm}[{\cite{CPW17}}]
Let $S$ be a smooth del Pezzo surface of degree $\ge 4$. 
Then $\Ampc (S) = \Amp (S)$. 
\end{thm}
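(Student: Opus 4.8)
The plan is to translate the existence of an $H$-polar cylinder into a purely combinatorial covering problem on the Néron--Severi space and then to solve it degree by degree. Since $S$ is a smooth rational surface, numerical and $\bQ$-linear equivalence coincide on $\Pic (S) \otimes _{\bZ} \bQ$; hence a cylinder $U = S \backslash \Supp (D)$ with reduced boundary components $D_1, \dots , D_k$ is $H$-polar precisely when the class of $H$ lies in the rational cone $\mathrm{Cone}([D_1], \dots , [D_k])$ spanned by the boundary classes. Consequently $\Ampc (S) \supseteq \Amp (S) \cap \bigcup _U \mathrm{Cone}([D_1], \dots , [D_k])$, where $U$ ranges over a chosen finite family of cylinders, and it suffices to produce a family whose boundary cones cover $\Amp (S)$.

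The basic source of cylinders will be the conic bundle structures $\pi \colon S \to \bP ^1$ (morphisms with $\pi _* \sO _S = \sO _{\bP ^1}$ and general fibre $\bP ^1$), of which $S$ carries only finitely many, indexed by the classes $F$ with $F^2 = 0$ and $-K_S \cdot F = 2$. Fixing such a $\pi$, a section $E$ which is a $(-1)$-curve (so $E \cdot F = 1$), a general fibre $F_\infty$, and, in each of the $8 - K_S^2$ reducible fibres, the component meeting $E$, I would take the boundary to be the union of these curves. Removing them leaves an $\bA ^1$-fibration over $\bA ^1$ all of whose fibres are reduced and isomorphic to $\bA ^1$ (the component meeting $E$ is the one that must be extracted, so that the surviving component of each reducible fibre becomes $\bP ^1$ minus one point); by the structure theory of $\bA ^1$-fibrations over an affine line this total space is isomorphic to $\bA ^1 \times \bA ^1$, so $U$ is a cylinder. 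Its boundary cone is spanned by $[E]$, $[F]$ and the classes of the extracted reducible-fibre components.

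It then remains to check that, as $\pi$, $E$ and the extracted components vary, the resulting boundary cones cover the whole ample cone. Here I would exploit the action of the Weyl group $W(S) \subset \Aut (\Pic (S))$ preserving $K_S$: it permutes the $(-1)$-curves and the conic bundle structures, hence the cylinders above, while preserving $\Amp (S)$, so it is enough to cover a fundamental chamber and translate by $W(S)$. The verification is carried out case by case for $4 \le \deg S \le 9$; the cases $\deg S = 9$ (where $S = \bP ^2$ and $\bP ^2 \backslash (\text{line}) \cong \bA ^2$ already covers the single ray of $\Amp (S)$) and $\deg S = 8$ (where $\bP ^1 \times \bP ^1$ minus a fibre of each ruling, and $\bF _1$ analogously, already cover $\Amp (S)$) are immediate.

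The main obstacle is the combinatorial covering step for the low-degree surfaces $\deg S = 4, 5$, where $\Pic (S)$ has rank $6$ and $5$ and the ample cone has many facets. One must show that every ample class is a non-negative combination of the boundary classes of at least one cylinder; the delicate regime is near the walls of $\Amp (S)$, where a polarization $H$ degenerates towards a $(-1)$-curve class and the associated conic bundles become unbalanced, so that one must verify that a suitable cylinder still has $H$ in its boundary cone. I expect this to reduce, after using the Weyl symmetry to cut down to finitely many representatives, to an explicit polyhedral check on the list of conic bundle structures and their reducible fibres.
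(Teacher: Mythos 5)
Before the comparison itself, note that the paper does not prove this statement: it is quoted from {\cite{CPW17}} as an ingredient, so the only internal point of reference is the fibration machinery of Section \ref{4}, which your approach genuinely resembles. Your cylinder construction is sound: for a conic bundle $\pi \colon S \to \bP ^1_{\bk}$ with a $(-1)$-curve section $E$, the section meets exactly one component of each reducible fibre (away from the node, since $E \cdot F = 1$), and the complement of $E$, a smooth fibre and the components meeting $E$ is an $\bA ^1$-bundle over $\bA ^1$, hence $\bA ^1_{\bk} \times \bA ^1_{\bk}$ -- this is the smooth analogue of Lemma \ref{lem(2-1-1)}. The Weyl-group reduction is also legitimate at the level of classes (no automorphisms of $S$ are needed), because $W(S)$ preserves the $(-1)$-classes, the conic-bundle classes and their incidence relations, hence permutes the boundary cones of your intrinsically defined family. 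One small correction: $U$ is $H$-polar if and only if $H$ is a \emph{strictly} positive combination of \emph{all} boundary components, i.e.\ $H$ lies in the relative interior of the cone, not merely in the closed cone; since you must cover the open cone $\Amp (S)$, this costs nothing, but ``precisely when'' as stated is false on the boundary.

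The genuine gap is that the covering step -- the actual content of the theorem -- is deferred to an ``explicit polyhedral check'' that you never carry out for degrees $4$, $5$, $6$, and this check is not routine. Concretely, on the quintic del Pezzo surface take $H = \alpha L - \sum _{i=1}^4 \beta _i E_i$ with $\alpha = 5/2$ and $\beta _i = 1$: this is ample ($\beta _i > 0$ and $\alpha - \beta _i - \beta _j = 1/2 > 0$), yet for the cylinder attached to $F = L - E_1$ with section $E_1$, whose boundary cone is spanned by $E_1$, $L - E_1$ and the three classes $L - E_1 - E_j$, solving for the coefficients forces the coefficient of $L - E_1$ to be $\alpha - \beta _2 - \beta _3 - \beta _4 = -1/2 < 0$; the other sections $L - E_j - E_k$ of this conic bundle give the coefficient $\beta _1 - \beta _l = 0$, which is not strictly positive either. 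This $H$ is only reached through the fifth conic bundle $2L - E_1 - E_2 - E_3 - E_4$, so the union over \emph{all} conic bundles and sections is truly needed, and whether that union exhausts $\Amp (S)$ -- especially in degree $4$, where $\rho (S) = 6$ and there are ten conic-bundle classes -- is exactly what must be proven. Without executing this verification (or replacing it by an a priori effective decomposition of an arbitrary ample class, which is how {\cite{CPW17}} proceeds), what you have is a correct reduction and a plausible plan, not a proof.
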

\begin{rem}
{\cite{CPW17,KP21,KW25}} further study the structure of cylindrical ample sets of smooth del Pezzo surfaces of degree $\le 3$. 
See also {\cite{MW18,Che21}} for the structure of cylindrical ample sets of smooth rational surfaces. 
\end{rem}

From now on, we consider polarized cylinders in del Pezzo surfaces with at worst Du Val singularities; i.e.,  Du Val del Pezzo surfaces. 
Notice that every Du Val del Pezzo surface is a log del Pezzo surface because all Du Val singularities are klt singularities. 
The condition for a Du Val del Pezzo surface to contain anti-canonical polar cylinders is completely determined. 
More precisely, the following theorem holds: 
\begin{thm}[{\cite{CPW16b}}]\label{CPW}
Let $S$ be a Du Val del Pezzo surface of degree $d \in \{ 1,\dots ,9\}$. Then: 
\begin{enumerate}
\item If $d \ge 4$, then $-K_S \in \Ampc (S)$. 
\item If $d=3$, then $-K_S \in \Ampc (S)$ if and only if $S$ has a singular point. 
\item If $d=2$, then $-K_S \in \Ampc (S)$ if and only if $S$ has a singular point, which is not of type $\sA _1$. 
\item If $d=1$, then $-K_S \in \Ampc (S)$ if and only if $S$ has a singular point, which is not of type $\sA _1$, $\sA _2$, $\sA _3$ nor $\sD _4$. 
\end{enumerate}
\end{thm}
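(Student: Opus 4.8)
The plan is to pass to the minimal resolution $\pi \colon \wS \to S$, a smooth weak del Pezzo surface for which crepancy of Du Val singularities gives $\pi^*(-K_S) = -K_{\wS}$. Since any cylinder $U \cong \bA^1 \times Z$ is smooth, all singular points of $S$ must lie in the boundary $S \setminus U = \Supp(D)$; hence $\pi$ is an isomorphism over $U$, and the total transform $\wD := \pi^*D$ is an effective $\bQ$-divisor with $\wD \sim_{\bQ} -K_{\wS}$ whose support contains the entire exceptional locus $\mathrm{Exc}(\pi)$ (by negative-definiteness of the exceptional intersection matrix, every $(-2)$-curve acquires a positive coefficient once the strict transform of $D$ meets it). Thus the problem becomes equivalent to the following: \emph{decide for which singularity configurations the weak del Pezzo surface $\wS$ carries a $(-K_{\wS})$-polar cylinder whose boundary support contains all of $\mathrm{Exc}(\pi)$.} This places everything on a smooth surface, where the intersection theory of the exceptional $(-2)$-curves is fully available.

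The second step is to convert the cylinder into an affine ruling. The projection $\wS \setminus \Supp(\wD) \cong \bA^1 \times Z \to Z$ is an $\bA^1$-fibration, so by the structure theory of $\bA^1$-fibrations on smooth rational surfaces it is induced, after finitely many blow-ups, by a $\bP^1$-fibration $p \colon \wS' \to \bP^1$ admitting a section, with $\Supp(\wD)$ supported on one degenerate fiber together with a section (a chain, or fence, of rational curves). The task is then combinatorial: for a prescribed exceptional configuration, decide whether $\wS$ admits a $\bP^1$-fibration whose forced boundary fence, obliged to absorb every $(-2)$-curve of $\mathrm{Exc}(\pi)$, completes to an effective divisor $\bQ$-linearly equivalent to $-K_{\wS}$.

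For the existence direction (the \emph{if} parts) I would exhibit such a fibration explicitly. When $d \ge 4$ there are enough $(-1)$-curves that a suitable ruling always exists, by adapting the constructions available for smooth del Pezzo surfaces of degree $\ge 4$. In degree $3$ a single singular point already suffices, whereas in degrees $2$ and $1$ I would route the $\bP^1$-fibration through a singular point whose exceptional chain (or $\sD$-type fork) is long enough to serve as boundary: the $(-2)$-curves, together with one fiber and one section, assemble into an effective $\wD \sim_{\bQ} -K_{\wS}$ with cylinder complement. Checking the $\bQ$-linear equivalence is then a finite linear-algebra computation in $\Pic(\wS)$ using the negative-definite intersection matrix of the exceptional lattice.

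The hard part is the non-existence direction (the \emph{only if} parts), and specifically locating the exact threshold on singularity types. Here I would enumerate all $\bP^1$-fibration structures---equivalently all affine rulings---on $\wS$ and, for each, compute the class of the fence that is forced to contain $\mathrm{Exc}(\pi)$; the assertion to be proved is that the short chains of an $\sA_1$ point in degree $2$, and of $\sA_1, \sA_2, \sA_3, \sD_4$ points in degree $1$, never supply enough boundary to reach $-K_{\wS}$, yielding a numerical contradiction, while every larger singularity does. The link with the $\alpha$-invariant (cf. {\cite[Theorem 1.26]{CPPZ21}}) furnishes a complementary obstruction, since an anticanonical cylinder forces a failure of stability. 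The genuine difficulty is that the cut-off is governed by the fine combinatorics of the exceptional lattice relative to $-K_{\wS}$, not merely by the severity of the singularities, so a complete and careful case analysis of affine rulings on $\wS$ is unavoidable; this is the technical heart of the argument.
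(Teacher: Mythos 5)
Note first that the paper you were asked to match contains no proof of this statement: Theorem \ref{CPW} is imported verbatim from \cite{CPW16b}, so the comparison is with the proof in that reference. Your first reduction is sound and matches the standard framework: a cylinder is smooth, so $\Sing(S)\subset\Supp(D)$, the minimal resolution is crepant, and negative definiteness forces every exceptional $(-2)$-curve over a point of $\Supp(D)$ into $\Supp(\pi^{*}D)$; the resulting equivalence on $\wS$ is exactly the setting in which \cite{CPW16b} works, and the existence (``if'') direction there is indeed done by explicit constructions of the kind you describe. However, already your description of the boundary is inaccurate: for $U\simeq \bA^1_{\bk}\times Z$ with $Z$ an $r$-punctured affine line, the boundary consists of a section, \emph{several} full fibers, and partial degenerate fibers --- not ``one degenerate fiber together with a section'' (compare Lemma \ref{lem(2-1-1)} (1) of the present paper). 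This is fixable, but it feeds into the genuine gap below.

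The serious gap is in the ``only if'' direction. The $\bA^1$-fibration on $U$ is induced by a pencil on $\wS$ that in general has base points; it becomes a $\bP^1$-fibration only on a blow-up $\wS'\to\wS$, and the degree of the pencil members is a priori unbounded (already on $\bP^2_{\bk}$ cylinders arise from pencils of lines, conics, cuspidal cubics, and so on). Consequently ``enumerate all $\bP^1$-fibration structures --- equivalently all affine rulings --- on $\wS$'' is not a finite linear-algebra problem: the finitely many classes $\wF$ with $(\wF)^2=0$, $(-K_{\wS}\cdot\wF)=2$ on $\wS$ do not capture the rulings living on the blow-ups $\wS'$, and you provide no mechanism bounding the pencil degree, so the proposed case analysis never terminates. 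This is precisely why \cite{CPW16b} argues differently: assuming a $(-K_S)$-polar cylinder exists, they show the closures of the $\bA^1$-fibers either give a base-point-free pencil (a structured, analyzable case) or all pass through a single point $P$ at which the pair $(S,D)$ fails to be log canonical; Corti-type multiplicity inequalities and local log canonical threshold computations at Du Val points then yield the contradiction exactly for the types $\sA_1$ in degree $2$ and $\sA_1,\sA_2,\sA_3,\sD_4$ in degree $1$. Finally, your appeal to the $\alpha$-invariant criterion of {\cite[Theorem 1.26]{CPPZ21}} is circular here: that statement is a repackaging of the results of \cite{CPW16a,CPW16b,CPW17}, i.e.\ of the very theorem you are trying to prove, so it cannot serve as an independent obstruction.
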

\begin{rem}
For a Du Val del Pezzo surface $S$, Belousov shows that $\Ampc (S) \not= \emptyset$ if and only if ${\rm Dyn}(S) \not= 4\sA _2$, $2\sA _3 + 2\sA _1$ or $2\sD _4$ (see {\cite{Bel17,Bel23}}). 
\end{rem}
\begin{rem}
There are infinitely many log del Pezzo surfaces without anti-canonical polar cylinders (see {\cite{KKW24,CD,KKW}}). 
However, the condition for whether a (general) log del Pezzo surface contains anti-canonical polar cylinders is still unknown. 
\end{rem}
In {\cite{Saw25}}, the author studies polarized cylinders in Du Val del Pezzo surfaces of degree $\ge 3$. 
In this article, we construct numerous polarized cylinders on singular rational surfaces that were not covered in the previous work. 
As a result, we completely determine the structure of cylindrical ample sets of Du Val del Pezzo surfaces of degree $2$ with anti-canonical polar cylinders. 
Our main result is summarized as follows: 
\begin{thm}\label{main(1)}
Let $S$ be a Du Val del Pezzo surface of degree $\ge 2$. 
Then $-K_S \in \Ampc (S)$ if and only if $\Ampc (S) = \Amp (S)$. 
\end{thm}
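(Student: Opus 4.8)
The backward implication is immediate: since $S$ is del Pezzo, $-K_S$ is ample, so $\Ampc (S) = \Amp (S)$ forces $-K_S \in \Ampc (S)$. The entire content lies in the forward implication, and the case of degree $\ge 3$ is already settled in \cite{Saw1}; thus the plan is to treat degree exactly $2$. So I assume $-K_S \in \Ampc (S)$, which by Theorem \ref{CPW}(3) means that $S$ carries a singular point that is not of type $\sA _1$. Fixing an arbitrary ample $\bQ$-divisor $H$, I must exhibit an effective $\bQ$-divisor $D \sim _{\bQ} H$ whose complement $S \backslash \Supp (D)$ is a cylinder.

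The main mechanism is the following elementary principle, which I would isolate as a lemma. If $U = S \backslash \Supp (\Delta )$ is a cylinder with reduced boundary $\Delta = \Delta _1 + \dots + \Delta _k$, then $U$ is an $H$-polar cylinder as soon as the class of $H$ lies in the open cone $\sum _i \bQ _{>0}[\Delta _i] \subset \Pic (S) \otimes _{\bZ} \bQ$: indeed, on a rational surface numerical and $\bQ$-linear equivalence coincide, so one simply reads off the coefficients of $D$ from an expression of $H$ as a strictly positive combination of the $[\Delta _i]$. Hence it suffices to produce a finite family of cylinders $U_1, \dots , U_m$ on $S$ whose boundary cones $\sigma (U_j) := \sum _i \bQ _{>0}[\Delta _i^{(j)}]$ together cover $\Amp (S)$.

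To build these cylinders I would pass to the minimal resolution $\wS \to S$, a weak del Pezzo surface of degree $2$, and exploit its conic bundle structures. Each $\bP ^1$-fibration $\pi \colon \wS \to \bP ^1$, together with a choice of section and of the reducible fibres to be deleted, produces an $\bA ^1$-bundle over an affine curve, hence a cylinder on $\wS$ whose boundary meets the exceptional locus only in $(-2)$-curves and which therefore descends to a cylinder on $S$; its boundary consists of the chosen section together with fibre components, many of them the $(-2)$-curves contracted by $\wS \to S$. Varying the fibration among the finitely many rulings of $\wS$, the section within each ruling, and the deleted fibres yields a rich supply of boundary cones. The non-$\sA _1$ singular point is precisely what guarantees a ruling with a suitably degenerate fibre; this is the source of the anti-canonical cylinder of \cite{CPW16b}, and I would promote that single cylinder to a family covering a full neighbourhood of the ray $\bQ _{>0}(-K_S)$ and, by tilting the section and fibre choices, the surrounding chambers as well.

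The decisive and most laborious step is verifying that these boundary cones exhaust $\Amp (S)$. Since $\Amp (S)$ is a rational polyhedral cone dual to $\NE (\wS )$, whose extremal rays are represented by the $(-1)$- and $(-2)$-curves, the covering reduces to a finite combinatorial check that across every wall of $\Amp (S)$ some cylinder's boundary cone reaches over. I expect this to require a case-by-case analysis over the finitely many admissible Dynkin types of degree-$2$ Du Val del Pezzo surfaces carrying a non-$\sA _1$ point, the hardest instances being the sparse configurations---a single non-$\sA _1$ point, possibly accompanied by a few $\sA _1$ points---where few rulings are available and the boundary cones must be assembled with care so as to leave no gap along the boundary of the ample cone.
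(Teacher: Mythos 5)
Your reduction (the trivial backward direction, degree $\ge 3$ via \cite{Saw1}, Theorem \ref{CPW} (3) to obtain a non-$\sA _1$ singular point) and your governing principle---that $U = S \backslash \Supp (\Delta)$ is $H$-polar precisely when $[H] \in \sum _i \bQ _{>0}[\Delta _i]$, so that it suffices to cover $\Amp (S)$ by the boundary cones of finitely many fibration-induced cylinders---are exactly the architecture of the paper's proof. But the two steps you defer are the entire mathematical content, and your sketches of them would not go through as stated. First, the existence of a ruling $g:\wS \to \bP ^1_{\bk}$ in which every component of $\wD$ is a fiber component except for one or two sections (the paper's conditions $(\ast )$ and $(\ast \ast )$) is not an automatic consequence of having a non-$\sA _1$ point: for a singularity of type $\sA _n$ with $n \ge 3$ it requires producing $(-1)$-curves meeting the exceptional chain in a precisely prescribed pattern. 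This is Proposition \ref{prop(3-0)}, whose proof (all of Section \ref{3}) is a genuine argument via Riemann--Roch applied to $\wDelta = -K_{\wS} - \wD _1 - 2(\wD _2 + \dots + \wD _{n-1}) - \wD _n$ together with negative-definiteness estimates, and whose trichotomy (A)/(B)/(C) is what forces the finer case split into types such as $(\sA _5)'$ versus $(\sA _5)''$; your proposal contains no substitute for it.

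Second, your plan for verifying the covering---a finite combinatorial wall-check over the chambers of $\Amp (S)$, using many rulings and varying sections---is not what actually works and is where the real difficulty hides. (Note also that $\Amp (S)$ is dual to $\NE (S)$, not $\NE (\wS )$, and the sections must be taken among the $(-2)$-curves so that all of $\wD$ sits in the boundary.) The paper fixes a \emph{single} fibration per surface, with fixed $(-2)$-curve section(s), and varies only which chain-ends $\wE _i$ versus $\wE _i'$ enter the boundary; since the boundary components outnumber $\rho (S)$, their classes are $\bQ$-linearly dependent and one cannot simply ``read off'' coefficients---they must be chosen, with $\varepsilon$-perturbations governed by the ordering of the ratios $a_i/\alpha _i$. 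Crucially, the positivity of the resulting coefficients is not a formal cone statement: it is proved by exhibiting, again via Riemann--Roch, auxiliary effective divisors $\wDelta$ on $\wS$ satisfying $(\wDelta \cdot \wD _0) = (\wDelta \cdot \wD _{\infty}) = 0$ and pairing $H$ with $f_{\ast}(\wDelta )$ to extract inequalities such as $d_{r,s'} > 0$ (Lemmas \ref{lem(3-1-2)}, \ref{lem(3-2-1)}, \ref{lem(3-4-1)}, \ref{lem(3-3-1)}, \ref{lem(3-6-1)}, \ref{lem(3-6-3)} and \ref{lem(3-5-1)}). Without these ampleness-exploiting inequalities your covering claim remains unverified, so the proposal is a correct strategic outline of the paper's route with its decisive steps missing rather than a proof.
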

Theorem \ref{main(1)} provides a partially positive answer to Conjecture \ref{conj} as follows: 
\begin{cor}
Conjecture \ref{conj} is true for Du Val del Pezzo surfaces with degree $\ge 2$.
\end{cor}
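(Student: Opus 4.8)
The plan is to obtain this corollary as an immediate consequence of Theorem \ref{main(1)}. The only point requiring attention is that the corollary concerns Du Val del Pezzo surfaces, whereas Conjecture \ref{conj} is phrased for arbitrary log del Pezzo surfaces; I must therefore confirm that the former fall within the scope of the latter.

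First I would recall, as noted in the introduction, that every Du Val singularity is klt, so any Du Val del Pezzo surface $S$ is in particular a log del Pezzo surface. Hence a Du Val del Pezzo surface $S$ of degree $\ge 2$ is one of the surfaces to which Conjecture \ref{conj} applies, and the assertion of the conjecture for this specific $S$ reads: $-K_S \in \Ampc(S)$ if and only if $\Ampc(S) = \Amp(S)$. Next I would simply observe that this is verbatim the conclusion of Theorem \ref{main(1)} for surfaces of degree $\ge 2$. Therefore the equivalence predicted by the conjecture holds for every Du Val del Pezzo surface of degree $\ge 2$, which is precisely the statement of the corollary.

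As for the real difficulty, there is essentially none at the level of the corollary, since it is a formal restatement of Theorem \ref{main(1)} under the observation that Du Val del Pezzo surfaces are log del Pezzo. All of the substance is concentrated in Theorem \ref{main(1)} itself, and within that, in the degree $2$ case, the degree $\ge 3$ case having already been settled in \cite{Saw1}. The hard part is thus the explicit construction, for a degree $2$ Du Val del Pezzo surface $S$ admitting an $(-K_S)$-polar cylinder, of an $H$-polar cylinder for every ample $\bQ$-divisor $H$ on $S$; but that work lies upstream of this corollary rather than in it.
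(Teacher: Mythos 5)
Your proposal is correct and matches the paper's reasoning exactly: the corollary is an immediate consequence of Theorem \ref{main(1)}, combined with the observation (already made in the introduction) that Du Val singularities are klt, so every Du Val del Pezzo surface is a log del Pezzo surface to which Conjecture \ref{conj} applies. You also correctly locate all the substantive work upstream in Theorem \ref{main(1)}, with the degree $\ge 3$ case settled in \cite{Saw1} and the degree $2$ case being the content of this paper.
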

\begin{organization}
In Section \ref{2}, we prepare results on cylinders in Hirzebruch surfaces and singular fibers of $\bP ^1$-fibrations. 
Moreover, we review the relationship between Du Val del Pezzo surfaces and weak del Pezzo surfaces, and then discuss the singularity types of Du Val del Pezzo surfaces. 

In Section \ref{3}, we find a special kind of $(-1)$-curves on Du Val del Pezzo surfaces of degree $2$ with a singular point of type $\sA _n$ $(n \ge 3)$ referring to {\cite[\S 4]{Saw24}}. 
Using this result, we can obtain $\bP ^1$-fibrations from these surfaces, which can construct polarized cylinders. 

In Section \ref{4}, we present a method to construct $H$-polar cylinders for every ample $\bQ$-divisor $H$ on normal rational surfaces with specific $\bP ^1$-fibration structures. 
Our argument is similar to {\cite[\S 3]{Saw25}} but requires a more intricate discussion. 

In the final Section \ref{5}, we prove Theorem \ref{main(1)}, which is our main result. 
In more detail, using results in Section \ref{3}, we will show that every Du Val del Pezzo surface of degree $2$ with anti-canonical polar cylinders has a specific $\bP ^1$-fibration in the above sense.
Namely, Theorem \ref{main(1)} is shown by this argument combined with the result of Section \ref{4}. 
\end{organization}
\begin{conventions}
For an integer $m$, we say that an $m$-curve on a smooth projective surface is called a smooth projective rational curve with self-intersection number $m$. 

For any weighted dual graph, a vertex $\circ$ with the number $m$ corresponds to an $m$-curve. 
Exceptionally, we omit this weight (resp. we omit this weight and use the vertex $\bullet$ instead of $\circ$) if $m=-2$ (resp. $m=-1$).
\end{conventions}
\begin{notation}
We employ the following notations: 
\begin{itemize}
\item $\bA ^d_{\bk}$: the affine space of dimension $d$. 
\item $\bP ^d_{\bk}$: the projective space of dimension $d$. 
\item $\bF _n$: the Hirzebruch surface of degree $n$; i.e., $\bF _n := \bP (\sO _{\bP ^1_{\bk}} \oplus \sO _{\bP ^1_{\bk}}(n))$. 
\item $\bA ^1_{\ast ,\bk}$: the affine line with one closed point removed, i.e., $\bA ^1_{\ast ,\bk} = \Spec (\bk [t^{\pm 1}])$. 
\item $\Cl (X)$: the divisor class group of $X$. 
\item $\Cl (X)_{\bQ} := \Cl (X) \otimes _{\bZ} \bQ$. 
\item $\rho (X)$: the Picard number of $X$. 
\item $K_X$: the canonical divisor on $X$. 
\item $\Sing (X)$: the set of singular points on $X$. 
\item $D_1\sim D_2$: $D_1$ and $D_2$ are linearly equivalent. 
\item $D_1\sim _{\bQ} D_2$: $D_1$ and $D_2$ are $\bQ$-linearly equivalent. 
\item $(D_1 \cdot D_2)$: the intersection number of $D_1$ and $D_2$. 
\item $(D)^2$: the self-intersection number of $D$. 
\item $\varphi ^{\ast}(D)$: the total transform of $D$ by a morphism $\varphi$. 
\item $\psi _{\ast}(D)$: the direct image of $D$ by a morphism $\psi$. 
\item $\Supp (D)$: the support of $D$. 
\item $|D|$: the complete linear system of $D$. 
\item $\sharp D$: the number of all irreducible components in $\Supp (D)$. 
\item $\delta _{i,j}$: The Kronecker delta. 
\end{itemize}
\end{notation}
\begin{acknowledgment}
The author would like to thank Doctor Jaehyun Kim and Doctor Dae-Won Lee for their valuable discussions and comments at Ewha Womans University. 
Moreover, he is also grateful to Professor Joonyeong Won for providing an excellent discussion place with them. 
Moreover, he would be to express the referees for careful reading this article and suggesting many useful comments. 
The author is supported by JSPS KAKENHI Grant Number JP24K22823 and JP25K17222. 
\end{acknowledgment}
\section{Preliminaries}\label{2}
\subsection{Basic results}
In this subsection, we summarize the following results, which are basic but important. 
\begin{lem}\label{lem(2-1-1)}
Let $\hM$ and $\hF$ be the minimal section and a general fiber on the Hirzebruch surface $\bF _n$ of degree $n$, respectively. 
Then the following assertions hold: 
\begin{enumerate}
\item Let $\hF_1,\dots ,\hF _r$ be distinct fibers on $\bF _n$ other than $\hF$. 
Then $\bF _n \backslash (\hM \cup \hF \cup \hF _1 \cup \dots \cup \hF _r) \simeq \bA ^1_{\bk} \times (\bA ^1_{\bk} \backslash \{ r\text{ points}\})$. 
\item Let $\hGamma$ be a smooth rational curve with $\hGamma \sim \hM + n\hF$. 
Then $\bF _n \backslash (\hGamma \cup \hM \cup \hF ) \simeq \bA ^1_{\bk} \times \bA ^1_{\ast ,\bk}$. 
\item Let $\hGamma$ be a smooth rational curve with $\hGamma \sim \hM + (n+1)\hF$. 
Then $\bF _n \backslash (\hGamma \cup \hM \cup \hF _0 ) \simeq \bA ^1_{\bk} \times \bA ^1_{\ast ,\bk}$, where $\hF _0$ is the fiber satisfying $\hM \cap \hGamma \cap \hF _0 \not= \emptyset$. 
\end{enumerate}
\end{lem}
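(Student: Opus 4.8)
The plan is to treat the three assertions as variations on a single theme: on $\bF _n$, the complement of a chain of ``horizontal'' and ``vertical'' curves that trivializes the ruling is an $\bA ^1_{\bk}$-bundle, and one identifies the base by inspecting which fibers are deleted. Throughout I would use the standard intersection theory on $\bF _n$, namely $\hM ^2=-n$, $\hF ^2=0$, $(\hM \cdot \hF)=1$, together with the fact that the projection $\pi \colon \bF _n \to \bP ^1_{\bk}$ restricts to a trivial $\bA ^1_{\bk}$-bundle once a single section has been removed.

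For assertion (1), I would start by noting that deleting the minimal section $\hM$ from $\bF _n$ yields the total space of the line bundle $\sO _{\bP ^1_{\bk}}(n)$, which is a Zariski-locally trivial (hence, over the affine base, globally trivial) $\bA ^1_{\bk}$-bundle over $\bP ^1_{\bk}$. Removing in addition the fiber $\hF$ restricts the base to $\bP ^1_{\bk} \backslash \{ \text{pt} \} \simeq \bA ^1_{\bk}$, over which the bundle is trivial, so $\bF _n \backslash (\hM \cup \hF) \simeq \bA ^1_{\bk} \times \bA ^1_{\bk}$ with the second factor being the base. Deleting the further fibers $\hF _1,\dots ,\hF _r$ simply removes $r$ points from that base, giving $\bA ^1_{\bk} \times (\bA ^1_{\bk} \backslash \{ r \text{ points}\})$ as claimed; here the key point is that each $\hF _i$ maps isomorphically to a point of the base under $\pi$ after $\hM$ is removed.

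For assertions (2) and (3) the curve $\hGamma$ is now horizontal rather than vertical, so the strategy is instead to change the ruling. In (2), since $\hGamma \sim \hM + n\hF$ is a section disjoint from $\hM$ (one checks $(\hGamma \cdot \hM) = \hM ^2 + n(\hF \cdot \hM) = -n+n = 0$), I would contract or rather re-rule: removing the two disjoint sections $\hGamma$ and $\hM$ leaves an $\bA ^1_{\ast ,\bk}$-bundle over $\bP ^1_{\bk}$ (the fibers become $\bP ^1_{\bk}$ minus two points), and deleting one more fiber $\hF$ trivializes the base to $\bA ^1_{\bk}$, yielding $\bA ^1_{\bk} \times \bA ^1_{\ast ,\bk}$. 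For (3) the relevant curve $\hGamma \sim \hM + (n+1)\hF$ meets $\hM$ in one point since $(\hGamma \cdot \hM) = -n + (n+1) = 1$; the trick is to pass to $\bF _{n+1}$ (or to a new ruling) via an elementary transformation centered at the point $\hM \cap \hGamma \cap \hF _0$, after which $\hGamma$ and $\hM$ become disjoint sections and $\hF _0$ becomes a fiber, reducing the computation to the situation of (2).

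The main obstacle I anticipate is the bookkeeping in assertion (3): one must verify that the point $\hM \cap \hGamma \cap \hF _0$ is genuinely a common intersection point (so that an elementary transformation there separates $\hGamma$ from $\hM$ while turning $\hF _0$ into a $(-1)$-curve that can be handled), and that the resulting surface and the images of $\hGamma$, $\hM$, $\hF _0$ match the hypotheses of the $\bA ^1_{\ast ,\bk}$-bundle picture used in (2). Carefully tracking self-intersection numbers and incidences through the elementary transformation — rather than the $\bA ^1$-bundle triviality, which is routine — is where the real care is needed.
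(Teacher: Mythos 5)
Your proposal is correct, and there is little to compare it against: the paper does not prove Lemma \ref{lem(2-1-1)} at all, but simply cites {\cite[Lemma 2.3]{Saw1}} (see also {\cite{Koj02}}), so your argument serves as a self-contained replacement along the standard lines one would expect in that reference. Your three steps are all sound: for (1), $\bF _n \backslash \hM$ is the total space of a line bundle over $\bP ^1_{\bk}$, trivial over the affine base obtained by deleting the point under $\hF$, and further fibers just puncture the base; for (2), $(\hGamma \cdot \hM) = 0$ makes $\hGamma$ and $\hM$ disjoint sections, and for (3), the elementary transformation at $p = \hM \cap \hGamma$ (blow up $p$, contract the strict transform of $\hF _0$, which becomes a $(-1)$-curve disjoint from the strict transforms of $\hM$ and $\hGamma$ since the three curves have pairwise distinct tangents at $p$) lands you in $\bF _{n+1}$ with $\hM$, $\hGamma$ becoming disjoint sections of classes $\hM$ and $\hM + (n+1)\hF$, reducing to (2). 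Two small points deserve a sentence each to be airtight. First, in (2) the claim that deleting $\hF$ ``trivializes'' needs the observation that the two removed sections are globally distinguished curves, so the transition functions of the resulting $\bA ^1_{\ast ,\bk}$-fibration cannot swap the two punctures; hence the complement is $L^{\times}$ for a line bundle $L$ on $\bP ^1_{\bk}$ (indeed $\hM$ and $\hGamma$ split the rank-two bundle), and $L$ restricted to $\bA ^1_{\bk}$ is trivial. Second, in (3) the complement comparison should be phrased via total transforms: $\bF _n \backslash (\hGamma \cup \hM \cup \hF _0)$ is the complement in $\Bl _p\bF _n$ of $\hGamma ' \cup \hM ' \cup \hF _0' \cup E$, and after contracting $\hF _0'$ the image point lies on the image $\bar E$ of $E$, which is exactly the fiber removed downstairs — and you should note that your proof of (2) works for an arbitrary fiber, not only a general one, since $\bar E$ is the specific fiber through the image of $E$.
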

\begin{proof}
See {\cite[Lemma 2.3]{Saw25}} (see also {\cite{Koj02}}). 
\end{proof}
\begin{lem}\label{lem(2-1-2)}
Let $\wS$ be a smooth projective surface with a $\bP ^1$-fibration $g:\wS \to \bP ^1_{\bk}$. 
Assume that $g$ admits a section $\wD _0$ and a singular fiber $\wF$, which consists only of $(-1)$-curves and $(-2)$-curves. 
Then the weighted dual graph of $\wF _i+ \wD _0$ is one of the following: 
\begin{align}
\label{I-1} &\xygraph{\circ ([]!{+(0,.25)} {^{\wD_0}}) -[rr] \bullet -[r] \circ -[r] \cdots ([]!{+(0,-.35)} {\underbrace{\ \quad \qquad \qquad \qquad}_{\ge 0}}) -[r] \circ -[r] \bullet} \tag{I-1}\\ 
\label{I-2} &\xygraph{\circ ([]!{+(0,.25)} {^{\wD_0}}) -[rr] \circ (- []!{+(1,0.5)} \circ -[r] \cdots ([]!{+(0,-.35)} {\underbrace{\ \quad \qquad \qquad \qquad}_{\ge 0}}) -[r] \circ -[r] \bullet, - []!{+(1,-.5)} \circ -[r] \cdots ([]!{+(0,-.35)} {\underbrace{\ \quad \qquad \qquad \qquad}_{\ge 0}}) -[r] \circ -[r] \bullet)} \tag{I-2}\\ 
\label{II} &
\xygraph{\bullet -[l] \circ -[l] \cdots ([]!{+(0,-.4)} {\underbrace{\ \quad \qquad \qquad \qquad}_{\ge 0}}) -[l] \circ (-[]!{+(-1,-0.5)} \circ , -[]!{+(-1,0.5)} \circ -[ll] \circ ([]!{+(0,.25)} {^{\wD_0}}))} \tag{II}
\end{align}
Here, each vertex with label $\wD _0$ in the above graphs corresponds to the curve $\wD _0$. 
\end{lem}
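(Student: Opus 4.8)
The plan is to reconstruct $\wF$ from a smooth fiber by blow-ups and read off the admissible configurations one step at a time. I would start from the classical structure theory of $\bP ^1$-fibrations on smooth surfaces: passing to a relatively minimal model (a Hirzebruch surface, since the base is $\bP ^1_{\bk}$), the fiber $\wF$ is the total transform of a smooth fiber, i.e.\ of a $0$-curve, under a finite sequence of blow-ups at points lying over the corresponding point of $\bP ^1_{\bk}$; in particular $\wF$ is a tree of smooth rational curves meeting transversally at nodes. Two elementary facts are recorded first. Since $\wD _0$ is a section, $\wD _0 \cdot \wF = 1$, so $\wD _0$ meets exactly one component $C_0$ of $\wF$, transversally at one point, and $C_0$ has multiplicity $1$ in $\wF$; this tells us where the section may be attached. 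I would also use that every component $C$ of a reducible fiber satisfies $(C)^2 < 0$, which combined with the hypothesis will pin each component to be a $(-1)$-curve or a $(-2)$-curve at every reducible stage of the construction.

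The key observation is a monotonicity principle: the self-intersection number of a curve never increases under a blow-up. Hence the self-intersection of each component at every intermediate stage is at least its final value, which is $\ge -2$ by hypothesis; together with $(C)^2 < 0$ for reducible fibers, every reducible fiber appearing in the construction again consists only of $(-1)$-curves and $(-2)$-curves. This severely limits the admissible blow-ups: a blow-up at a smooth point of a $(-2)$-curve, or at a node incident to a $(-2)$-curve, would create a component of self-intersection $\le -3$, which can never be repaired. Therefore each blow-up is of exactly one of two types: (a) at a smooth point of a $(-1)$-curve, which turns it into a $(-2)$-curve and attaches a new $(-1)$-curve as a leaf; or (b) at the node of two $(-1)$-curves, which turns both into $(-2)$-curves and inserts a new $(-1)$-curve between them.

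Next I would track multiplicities. The unique two-component fiber is $\bullet - \bullet$, two $(-1)$-curves of multiplicity $1$. A type-(a) move on a $(-1)$-curve of multiplicity $m$ attaches a new $(-1)$-curve of multiplicity $m$, whereas a type-(b) move on $(-1)$-curves of multiplicities $m_1,m_2$ inserts a $(-1)$-curve of multiplicity $m_1+m_2 \ge 2$; thus non-reduced fibers arise precisely when a type-(b) move is used. But a type-(b) move requires two \emph{adjacent} $(-1)$-curves, and I would show this happens only at the stage $\bullet - \bullet$: after the first type-(b) move, producing the chain $\circ - \bullet - \circ$ of multiplicities $1,2,1$, there is always exactly one $(-1)$-curve in the fiber, so no further type-(b) move is possible; likewise an extended reduced chain has its two $(-1)$-curves at opposite ends, hence never adjacent. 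Consequently either no type-(b) move is used, in which case $\wF$ is a chain with a $(-1)$-curve at each end and $(-2)$-curves in between, all of multiplicity $1$; or exactly one type-(b) move is used, after which only type-(a) moves on the unique $(-1)$-curve remain, producing the branched configuration with a central multiplicity-$2$ $(-2)$-curve carrying two multiplicity-$1$ $(-2)$-leaves and one chain of multiplicity-$2$ $(-2)$-curves terminating in the unique $(-1)$-curve.

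Finally I would attach the section. Because $\wD _0$ meets a multiplicity-$1$ component, in the chain case it may sit at a $(-1)$-end, giving graph (I-1), or at a $(-2)$-curve, which then displays the two sub-chains and gives graph (I-2); in the branched case the only multiplicity-$1$ components are the two $(-2)$-leaves, so $\wD _0$ is attached to one of them and, by the symmetry between the leaves, yields graph (II). The main obstacle is the bookkeeping in the third step: proving rigorously that a type-(b) move can be used at most once, equivalently that a non-reduced fiber in this class has a unique $(-1)$-curve and no second branch point, so as to exclude more complicated branched or higher-multiplicity configurations. Establishing this, together with the routine verification that the multiplicities forced by $\wF \cdot C = 0$ along each component are exactly those asserted above, completes the classification.
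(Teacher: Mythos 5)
Your proposal is correct, but note that it differs from the paper in a basic respect: the paper does not prove this lemma at all --- the stated proof is just a citation to {\cite[Lemma 1.5]{Zha88}} --- so your blind attempt supplies an actual argument where the source defers to the literature. Your route (pass to a relatively minimal model, which is a Hirzebruch surface since the base is $\bP ^1_{\bk}$; realize $\wF$ as the total transform of a $0$-fiber under blow-ups over one point; use that strict transforms never gain self-intersection, so every intermediate reducible fiber again has only $(-1)$- and $(-2)$-components, which restricts each center to a smooth point of a $(-1)$-curve or the node of two $(-1)$-curves) is the standard mechanism behind Zhang's lemma, and your adjacency and multiplicity bookkeeping is sound: a type-(b) move needs two adjacent $(-1)$-curves, which happens only at the two-component stage, since reduced chains keep their $(-1)$-curves at opposite ends and a non-reduced fiber retains exactly one $(-1)$-curve thereafter. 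Combined with $(\wD _0 \cdot \wF )=1$, which forces the section to meet a single multiplicity-one component transversally, this yields exactly the chains of \eqref{I-1} and \eqref{I-2} and the branched fibers of \eqref{II}. What your argument buys is self-containedness, plus the explicit multiplicity profiles of the fibers, which is precisely the information the paper uses later in \S \ref{4}.

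One boundary case should be made explicit rather than glossed. If the single type-(b) move is followed by \emph{no} type-(a) move, the fiber is the three-component chain with multiplicities $(1,2,1)$: a $(-2)$-curve, the $(-1)$-curve, a $(-2)$-curve. There is then no ``central multiplicity-$2$ $(-2)$-curve'' as in your description of the branched configuration; the section attaches to one of the two multiplicity-one $(-2)$-curves, and the resulting dual graph is the instance of \eqref{II} in which the displayed chain of $(-2)$-curves is empty, so that the $(-1)$-curve itself carries the two $(-2)$-leaves. This is consistent with the convention --- already needed in \eqref{I-1} to allow the fiber consisting of two $(-1)$-curves --- that the braced chains in the graphs may have length zero. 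Your case division (``only type-(a) moves on the unique $(-1)$-curve remain,'' possibly zero of them) does produce this fiber, so nothing in your argument fails; but as written the final sentence of your third step asserts a branch vertex of type $(-2)$ that does not exist in this degenerate instance, and a referee would ask you to record it separately. With that emendation the classification is complete.
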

\begin{proof}
See {\cite[Lemma 1.5]{Zha88}}. 
\end{proof}
\subsection{Du Val del Pezzo surfaces and Weak del Pezzo surfaces}\label{2-2}
A {\it del Pezzo surface} is a normal projective surface whose anti-canonical divisor is ample. 
Moreover, we say that a {\it Du Val del Pezzo surface} is a del Pezzo surface with at worst Du Val singularities. 
See, e.g., {\cite{Dur79}}, for details on Du Val singularities. 
For a Du Val del Pezzo surface $S$, ${\rm Dyn} (S)$ denotes the Dynkin type of its singularities; e.g., ${\rm Dyn}(S)  = \sA _2+2\sA _1$ implies that $\Sing (S)$ consists of a Du Val singular point of type $\sA _2$ and two Du Val singular points of type $\sA _1$. 
On the other hand, a {\it weak del Pezzo surface} is a smooth projective surface, whose anti-canonical divisor is nef and big. 
Notice that the minimal resolution of any Du Val del Pezzo surface is a weak del Pezzo surface.

We summarize some properties of weak del Pezzo surfaces. 
Let $\wS$ be a weak del Pezzo surface. 
We note $(-K_{\wS})^2 > 0$ and $(-K_{\wS} \cdot \wC) \ge 0$ for every curve $\wC$ on $\wS$ because $-K_{\wS}$ is nef and big. 
\begin{lem}\label{lem(2-2-1)}
$\wS \simeq \bP ^1_{\bk} \times \bP ^1_{\bk}$, $\wS \simeq \bF _2$, or there exists a birational morphism $h:\wS \to \bP ^2_{\bk}$. 
In particular, $\wS$ is rational. 
\end{lem}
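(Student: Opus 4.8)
The plan is to run the minimal model program on $\wS$ and to track how the nef-and-big condition on $-K_{\wS}$ constrains the possible relatively minimal models, the upshot being that the only minimal models that are not $\bP ^2_{\bk}$ are $\bF _0$ and $\bF _2$.

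First I would record that $\wS$ is rational. Since $-K_{\wS}$ is nef and big, Kawamata--Viehweg vanishing gives $h^1(\sO _{\wS}) = h^1(\wS, K_{\wS} + (-K_{\wS})) = 0$, so the irregularity vanishes; moreover any effective multiple $mK_{\wS}$ with $m > 0$ would satisfy $(mK_{\wS} \cdot (-K_{\wS})) = -m(-K_{\wS})^2 < 0$, contradicting the nefness of $-K_{\wS}$, so $h^0(\wS, mK_{\wS}) = 0$ for all $m > 0$. Castelnuovo's rationality criterion then shows $\wS$ is rational, which also settles the final assertion once the trichotomy is in place. Next I would contract $(-1)$-curves, noting that a single contraction $\pi \colon \wS \to \wS'$ preserves the weak del Pezzo property: writing $K_{\wS} = \pi ^{\ast}K_{\wS'} + E$ one checks $(-K_{\wS'} \cdot C') = (-K_{\wS} \cdot \wC') + (E \cdot \wC') \ge 0$ for the strict transform $\wC'$ of any curve $C'$, while $(-K_{\wS'})^2 = (-K_{\wS})^2 + 1 > 0$. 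Iterating, a relatively minimal model of $\wS$ is again a weak del Pezzo surface, and since $\wS$ is rational this model is $\bP ^2_{\bk}$ or some $\bF _n$. Testing nefness of $-K_{\bF _n} \sim 2\hM + (n+2)\hF$ against the minimal section $\hM$ forces $2 - n \ge 0$, i.e. $n \le 2$; as $\bF _1$ is not relatively minimal, the minimal model is $\bP ^2_{\bk}$, $\bF _0$, or $\bF _2$.

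If the minimal model is $\bP ^2_{\bk}$ the composite birational morphism is the desired $h$, and if $\wS$ is itself $\bF _0 \simeq \bP ^1_{\bk} \times \bP ^1_{\bk}$ or $\bF _2$ we are in the first two cases. The remaining case, where $\wS \to \bF _n$ with $n \in \{ 0,2\}$ is a \emph{nontrivial} birational morphism, is where I expect the main obstacle to lie, and it is resolved by an elementary transformation. I would factor the morphism as $\wS \to \Bl _p \bF _n \to \bF _n$ through the blow-up of a point $p \in \bF _n$ lying under the exceptional locus. Since $\Bl _p \bF _n$ arises from $\wS$ by contracting $(-1)$-curves it is again weak del Pezzo; for $n = 2$ this is precisely the key point, because a direct computation gives $(-K_{\Bl _p \bF _2} \cdot \wM) = -1 < 0$ when $p$ lies on the negative section $M$, so the weak del Pezzo condition \emph{forces} $p \notin M$. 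With $p$ off the negative section (automatic for $\bF _0$), the strict transform of the fibre through $p$ is a $(-1)$-curve, and contracting it realizes the elementary transformation carrying both $\bF _0$ and $\bF _2$ to $\bF _1$. Thus $\Bl _p \bF _n$ carries a birational morphism to $\bF _1 = \Bl _{pt} \bP ^2_{\bk}$, and composing the contraction $\bF _1 \to \bP ^2_{\bk}$ with $\wS \to \Bl _p \bF _n \to \bF _1$ yields the required $h \colon \wS \to \bP ^2_{\bk}$. The crux of the argument is exactly this: the nef-and-big hypothesis obstructs blowing up the negative section of $\bF _2$, which is what keeps the elementary transformation decreasing the degree of the Hirzebruch surface and steers $\wS$ toward $\bP ^2_{\bk}$.
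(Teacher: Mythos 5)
Your proof is correct, but note that the paper does not actually prove this lemma: its ``proof'' is a citation to {\cite[Theorem 8.1.15]{Dol12}}, so what you have done is reconstruct, in full, the standard argument behind that reference rather than follow anything in the paper itself. Each step checks out: Kawamata--Viehweg vanishing applied to $K_{\wS}+(-K_{\wS})$ gives $q(\wS)=0$, the intersection computation $(mK_{\wS}\cdot -K_{\wS})=-m(-K_{\wS})^2<0$ kills all plurigenera, and Castelnuovo yields rationality; the projection-formula argument $(-K_{\wS'}\cdot C')=(-K_{\wS}\cdot \wC')+(E\cdot \wC')\ge 0$ together with $(-K_{\wS'})^2=(-K_{\wS})^2+1$ correctly shows that contracting a $(-1)$-curve preserves the weak del Pezzo property, so the relatively minimal model is weak del Pezzo and the test $(-K_{\bF_n}\cdot \hM)=2-n\ge 0$ cuts the Hirzebruch cases down to $n\in\{0,2\}$; and you correctly identified the one genuinely delicate point, namely that when $\wS\to\bF_2$ is nontrivial the intermediate surface $\Bl_p\bF_2$ inherits the weak del Pezzo condition, so the computation $(-K_{\Bl_p\bF_2}\cdot\wM)=0-1=-1$ forces the blown-up point off the negative section, which is exactly what makes the elementary transformation land in $\bF_1\to\bP^2_{\bk}$ rather than climb to $\bF_3$. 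The trade-off between the two routes is the usual one: the paper's citation is economical and appropriate for a preliminary lemma, while your self-contained MMP argument makes visible \emph{why} $\bF_0$ and $\bF_2$ are the only exceptional cases (they are the only relatively minimal surfaces on which $-K$ is nef but whose blow-ups can fail to dominate $\bP^2_{\bk}$ only through the negative section, which nefness forbids) --- a point the citation hides.
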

\begin{proof}
See {\cite[Theorem 8.1.15]{Dol12}}. 
\end{proof}
By Lemma \ref{lem(2-2-1)} combined with $(-K_{\wS})^2 > 0$, we know that $(-K_{\wS})^2$ is an integer between $1$ and $9$. 
The number $(-K_{\wS})^2$ is called the {\it degree} of $\wS$. 
Moreover, we say that the {\it degree} of a Du Val del Pezzo surface is the degree of its minimal resolution. 

An irreducible curve $\wC$ on $\wS$ is a {\it negative curve} if $(\wC ) ^2 < 0$. 
Then the following lemma on negative curves holds: 
\begin{lem}\label{lem(2-2-2)}
The following assertions hold: 
\begin{enumerate}
\item Every negative curve of $\wS$ is either a $(-1)$-curve or a $(-2)$-curve. 
\item The number of all $(-2)$-curves on $\wS$ is at most $9-(-K_{\wS})^2$. 
\item For any irreducible curve $\wC$ on $\wS$, then $\wC$ is a $(-2)$-curve if and only if $(\wC \cdot -K_{\wS}) = 0$. 
\end{enumerate}
\end{lem}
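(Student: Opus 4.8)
The plan is to obtain all three statements from the adjunction formula together with the two standing facts that $-K_{\wS}$ is nef and that $(-K_{\wS})^2>0$, the latter letting us invoke the Hodge index theorem with $-K_{\wS}$ as the class of positive self-intersection.

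For (1), I would start from the adjunction formula $2p_a(\wC)-2=(\wC)^2+(K_{\wS}\cdot \wC)$ for a negative irreducible curve $\wC$. Nefness of $-K_{\wS}$ gives $(K_{\wS}\cdot \wC)\le 0$, and $(\wC)^2\le -1$; hence the left-hand side is $\le -1$, which forces $p_a(\wC)=0$, so $\wC$ is a smooth rational curve. Rearranging, $(\wC)^2=-2-(K_{\wS}\cdot \wC)\ge -2$, and combined with $(\wC)^2\le -1$ this pins down $(\wC)^2\in\{-1,-2\}$, i.e. $\wC$ is a $(-1)$-curve or a $(-2)$-curve. Statement (3) is then close at hand: a $(-2)$-curve satisfies $(-K_{\wS}\cdot \wC)=0$ directly by adjunction, while conversely, if $(-K_{\wS}\cdot \wC)=0$ then the Hodge index theorem (using $(-K_{\wS})^2>0$) gives $(\wC)^2\le 0$ with equality only for a numerically trivial class, so the irreducible curve $\wC$ has $(\wC)^2<0$; by (1) it is a $(-1)$- or $(-2)$-curve, and since a $(-1)$-curve has $(-K_{\wS}\cdot \wC)=1$, it must be a $(-2)$-curve.

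For (2), I would first record that $\rho(\wS)=10-(-K_{\wS})^2$, which follows from Noether's formula on the rational surface $\wS$ (alternatively, by counting the blow-ups in the birational morphism to $\bP^2_{\bk}$ supplied by Lemma \ref{lem(2-2-1)} and treating the two exceptional cases $\bP^1_{\bk}\times\bP^1_{\bk}$ and $\bF_2$ directly). By (3) every $(-2)$-curve lies in the orthogonal complement $(K_{\wS})^{\perp}\subset \Cl(\wS)_{\bQ}$, which has dimension $\rho(\wS)-1=9-(-K_{\wS})^2$ and carries a negative definite intersection form by the Hodge index theorem. It therefore suffices to prove that the $(-2)$-curves are linearly independent. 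The key step, and the only one needing care, runs as follows: given a linear relation among them, I would collect the positive- and negative-coefficient terms into two effective $\bQ$-divisors $A$ and $B$ with disjoint supports and $A=B$ in $\Cl(\wS)_{\bQ}$; since distinct irreducible curves meet nonnegatively, $(A)^2=(A\cdot B)\ge 0$, whereas $A\in (K_{\wS})^{\perp}$ forces $(A)^2\le 0$, so $A=0$ by negative definiteness, and pairing against an ample class then shows every coefficient vanishes. Linear independence yields at most $9-(-K_{\wS})^2$ such curves.

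The computations in (1) and (3) are routine once adjunction and the Hodge index theorem are available; I expect the only genuine obstacle to be the linear independence argument in (2), where one must combine the negative definiteness coming from Hodge index with the nonnegativity of pairwise intersections of distinct irreducible curves and rule out a nonzero effective class of self-intersection zero by pairing against an ample divisor.
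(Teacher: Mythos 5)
Your proposal is correct, and for part (3) it is essentially identical to the paper's argument: adjunction for the forward direction, and for the converse the Hodge index theorem (using $(-K_{\wS})^2>0$) to force $(\wC)^2<0$, then (1) plus $(\wC\cdot -K_{\wS})=0$ to exclude the $(-1)$-curve case. Where you genuinely diverge is in (1) and (2): the paper does not prove these at all but cites \cite[Lemma 8.1.13]{Dol12} and \cite[Proposition 8.2.25]{Dol12}, whereas you supply self-contained proofs. Your argument for (1) is the standard adjunction computation and is complete. Your argument for (2) --- $\rho(\wS)=10-(-K_{\wS})^2$ via Noether's formula on the rational surface $\wS$, negative definiteness of $(K_{\wS})^{\perp}$ by Hodge index, and linear independence of the $(-2)$-curves via the disjoint-support decomposition $A=B$ with $(A)^2=(A\cdot B)\ge 0$ --- is the standard route and is sound; it replaces Dolgachev's root-system bookkeeping with a direct Zariski-type argument, at the cost of a page of elementary lattice theory. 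One phrasing nit in (2): negative definiteness of $(K_{\wS})^{\perp}$ yields only that the \emph{class} of $A$ in $\Cl(\wS)_{\bQ}$ vanishes, not the effective divisor $A$ itself; it is precisely the subsequent pairing against an ample class, which you do carry out, that forces each coefficient to vanish, so the argument as written closes correctly once that distinction is kept in mind.
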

\begin{proof}
In (1) and (2), see {\cite[Lemma 8.1.13]{Dol12}} and {\cite[Proposition 8.2.25]{Dol12}}, respectively. 

In (3), let $\wC$ be an irreducible curve on $\wS$. 
If $\wC$ is a $(-2)$-curve, then $(\wC \cdot -K_{\wS}) = 0$ by the adjunction formula. 
Conversely, assume that $(\wC \cdot -K_{\wS}) = 0$. 
Since $(-K_{\wS})^2 > 0$, we know that $\wC$ is a negative curve by the Hodge index theorem. 
Hence, $\wC$ is a $(-2)$-curve by virtue of (1) combined with $(\wC \cdot -K_{\wS}) = 0$. 
\end{proof}
Now, $d$ denotes the degree of $\wS$. 
By Lemma \ref{lem(2-2-2)} (2), there are at most finite $(-2)$-curves on $\wS$. 
Let $\wD$ be the reduced divisor consisting of all $(-2)$-curves on $\wS$. 
It is known that the dual graph of $\wD$ corresponds to a subsystem of the root systems of types $\sE _8$, $\sE _7$, $\sE _6$, $\sD _5$, $\sA _4$, $\sA _2 + \sA _1$ and $\sA _1$ for $d=1,\dots ,7$, respectively, with exceptions: $8\sA _1$, $7\sA _1$ and $\sD _4+4\sA _1$ for $d=1$ and $7\sA _1$ for $d=2$ (see {\cite{CPW16b,CT88,BW79,Ura81}}). 
In particular, the intersection matrix of $\wD$ is negative definite. 
Hence, we obtain the contraction $f:\wS \to S$ of $\wD$. 
Then we can easily see that $-K_S$ is ample. 
Namely, $S$ is a Du Val del Pezzo surface. 
Therefore, Du Val del Pezzo surfaces are in one-to-one correspondence with weak del Pezzo surfaces via minimal resolutions. 
\subsection{Singularity types of Du Val del Pezzo surfaces}\label{2-3}
Note that Du Val del Pezzo surfaces are classified (see, e.g., {\cite{CT88,BW79,Ura81}}). 
In this subsection, we recall a classification of Du Val del Pezzo surfaces. 

To consider types of Du Val del Pezzo surfaces, we prepare the following definition: 
\begin{defin}[{\cite[Definition 2.8]{CPW16b}}]
Let $S_1$ and $S_2$ be two Du Val del Pezzo surfaces, and let $f_1:\wS _1 \to S_1$ and $f_2:\wS _2 \to S_2$ be the minimal resolutions. 
Then we say that $S_1$ and $S_2$ have the {\it same singularity type} if there exists an isomorphism of $\Pic (\wS _1) \simeq \Pic (\wS _2)$ preserving the intersection form that gives a bijection between their sets of classes of negative curves. 
\end{defin}
Note that any negative curve on every weak del Pezzo surface is a $(-1)$-curve or a $(-2)$-curve by Lemma \ref{lem(2-2-2)} (1). 
Hence, for two Du Val del Pezzo surfaces $S_1$ and $S_2$ with the same type, we know $(-K_{S_1})^2 = (-K_{S_2})^2$ and ${\rm Dyn}(S_1) = {\rm Dyn}(S_2)$. 
However, the converse does not hold. 
In other words, there exist two Du Val del Pezzo surfaces $S_1$ and $S_2$ such that $(-K_{S_1})^2 = (-K_{S_2})^2$ and ${\rm Dyn}(S_1) = {\rm Dyn}(S_2)$ but they do not have the same type. 
More precisely, the pair of the degree and the Dynkin type of a Du Val del Pezzo surface is one of the following if and only if there exist two Du Val del Pezzo surfaces, whose pair of the degree and the Dynkin type is the same, not have the same singularity type: 
\begin{align*}
&(6,\sA _1),\\ 
&(4,\sA _3),\ (4,2\sA _1),\\ 
&(2,\sA _5+\sA _1),\ (2,\sA _5),\ (2,\sA _3+2\sA _1),\ (2,\sA _3+\sA _1),\ (2,4\sA _1),\ (2,3\sA _1),\\
&(1,\sA _7),\ (1,\sA _5+\sA _1),\ (1,2\sA _3),\ (1,\sA _3+2\sA _1),\ (1,4\sA _1). 
\end{align*}

Let $S$ be a Du Val del Pezzo surface of degree $2$ with ${\rm Dyn}(S) = \sA _5+\sA _1$, $\sA _5$, $\sA _3+2\sA _1$ or $\sA _3+\sA _1$, and let $f:\wS \to S$ be the minimal resolution. 
We shall introduce a notation to distinguish the singularity types of $S$. 

Assume that ${\rm Dyn}(S) = \sA _5+\sA _1$ or $\sA _5$. 
Let $\wD$ be the reduced divisor on $\wS$, which can be contracted to a singular point of type $\sA _5$, and let $\wD _0$ be the central component of $\wD$; i.e., $\wD _0$ is the irreducible component of $\wD$ such that $\wD - \wD _0$ can be contracted to two singular points of type $\sA _2$. 
Then $S$ is of type $(\sA _5+\sA _1)'$ (resp. $(\sA _5)'$) if ${\rm Dyn}(S) = \sA _5+\sA _1$ (resp. $\sA _5$) and there exists a $(-1)$-curve $\wE$ on $\wS$ such that $(\wD_0 \cdot \wE) = 1$. 
On the other hand, we say that $S$ is of type $(\sA _5+\sA _1)''$ (resp. $(\sA _5)''$) if ${\rm Dyn}(S) = \sA _5+\sA _1$ (resp. $\sA _5$) and there is no such $(-1)$-curve on $\wS$. 

In what follows, assume that ${\rm Dyn}(S) = \sA _3+2\sA _1$ or $\sA _3+\sA _1$. 
Let $\wD$ be the reduced divisor on $\wS$, which can be contracted to a singular point of type $\sA _3$, and let $\wD _0$ be the central component of $\wD$; i.e., $\wD _0$ is the irreducible component of $\wD$ such that $\wD - \wD _0$ can be contracted to two singular points of type $\sA _1$. 
Then $S$ is of type $(\sA _3+2\sA _1)'$ (resp. $(\sA _3+\sA _1)'$) if ${\rm Dyn}(S) = \sA _3+2\sA _1$ (resp. $\sA _3+\sA _1$) and there exists a $(-1)$-curve $\wE$ on $\wS$ such that $(\wD_0 \cdot \wE) = (\wE \cdot \wD_0') = 1$ for some a $(-2)$-curve $\wD_0'$ not contained in $\Supp (\wD)$. 
On the other hand, we say that $S$ is of type $(\sA _3+2\sA _1)''$ (resp. $(\sA _3+\sA _1)''$) if ${\rm Dyn}(S) = \sA _3+2\sA _1$ (resp. $\sA _3+\sA _1$) and there is no such $(-1)$-curve on $\wS$. 
\begin{rem}
The number of primes in the above notation reflects the number of special $(-1)$-curves on $\wS$. 
Indeed, if $\wS$ is of type $(\sA _5+\sA _1)'$ $(\sA _5)'$, $(\sA _3+2\sA _1)'$ or $(\sA _3+\sA _1)'$, then there exists a $(-1)$-curve on $\wS$ meeting the central component of $\wD$. 
On the other hand, if $\wS$ is of type $(\sA _5+\sA _1)''$ or $(\sA _5)''$, then there exist two $(-1)$-curves on $\wS$ such that they meet distinct two $(-2)$-curves on $\wD$, which meet the central component of $\wD$, respectively. 
Moreover, if $\wS$ is of type $(\sA _3+2\sA _1)''$ or $(\sA _3+\sA _1)''$, then there exist two $(-1)$-curves on $\wS$ meeting the central component of $\wD$. 
These results will be proved in the next section \S \ref{3}. 
\end{rem}
\section{$(-1)$-curves on Du Val del Pezzo surfaces of degree two}\label{3}

Let $S$ be a Du Val del Pezzo surface of degree $2$. 
Assume that $S$ has a singular point $P$ of type $\sA _n$ with $n \ge 3$. 
Let $f:\wS \to S$ be the minimal resolution, let $\wD$ be the reduced exceptional divisor of $f$, let $\wD = \sum _{i=1}^m \wD ^{(i)}$ be the decomposition of $\wD$ into connected components, and let $\wD ^{(i)} = \sum _{j=1}^{n(i)}\wD _j^{(i)}$ be the decomposition of $\wD ^{(i)}$ into irreducible components for $i=1,\dots ,m$. 
Without loss of generality, we can assume $f(\Supp (\wD ^{(1)})) = \{ P\}$. Namely, $n(1) = n$. 
For simplicity, we put $\wD _j := \wD _j^{(1)}$ for $j=1,\dots ,n$. 
\begin{prop}\label{prop(3-0)}
With the same notations as above, one of the following assertions holds: 
\begin{enumerate}
\item[(A)] There exist two $(-1)$-curves $\wE _1$ and $\wE _2$ on $\wS$ such that the weighted dual graph of $\wE _1 + \wE _2 + \wD ^{(1)}$ looks like that in Figure \ref{fig(3-1)} (A). 
\item[(B)] $n=5$ and there exists a $(-1)$-curve $\wE$ on $\wS$ such that the weighted dual graph of $\wE + \wD ^{(1)}$ looks like that in Figure \ref{fig(3-1)} (B). 
\item[(C)] $n=3$ and there exist an irreducible component $\wD _4$ of $\wD - \wD ^{(1)}$ and a $(-1)$-curve $\wE$ on $\wS$ such that $(\wD _4 \cdot \wD - \wD _4) = 0$ and the weighted dual graph of $\wE + \wD ^{(1)} + \wD _4$ looks like that in Figure \ref{fig(3-1)} (C). 
\end{enumerate}
\end{prop}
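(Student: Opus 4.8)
The plan is to study the anticanonical morphism of the weak del Pezzo surface $\wS$ and to use the $\sA_n$-chain $\wD^{(1)} = \wD_1 + \dots + \wD_n$ together with the degree-$2$ structure to produce the required $(-1)$-curves. Since $(-K_{\wS})^2 = 2$, the linear system $|-K_{\wS}|$ defines a degree-$2$ morphism $\pi : \wS \to \bP^2_{\bk}$, and each $(-2)$-curve $\wD_j$ is contracted by $\pi$ (by Lemma \ref{lem(2-2-2)} (3), since $(\wD_j \cdot -K_{\wS}) = 0$). First I would normalize the chain: label $\wD_1, \dots, \wD_n$ so that $(\wD_j \cdot \wD_{j+1}) = 1$ and $\wD_1, \wD_n$ are the two end components. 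The central component $\wD_0$ of the remark corresponds to $\wD_{(n+1)/2}$ when $n$ is odd; the distinguished role of the center is exactly what governs which of cases (A), (B), (C) occurs.

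The main tool will be intersection-number bookkeeping on $\wS$ combined with the classification of which effective classes can be $(-1)$-curves. I would argue as follows. A $(-1)$-curve $\wE$ satisfies $(\wE \cdot -K_{\wS}) = 1$, so each such curve meets the anticanonical-degree-$1$ locus minimally. For a fixed chain component $\wD_j$, the quantity $(\wE \cdot \wD_j) \in \{0,1,\dots\}$ is constrained by $(\wE + \wD_j)^2$ and the adjunction formula; in particular a $(-1)$-curve meeting the chain does so along a very restricted pattern. The strategy is to enumerate, via the well-known explicit description of $(-1)$-curves on degree-$2$ weak del Pezzo surfaces (equivalently, by pulling back lines, conics and the ramification data of $\pi$), exactly how $(-1)$-curves attach to the $\sA_n$-chain, and to show the attachment configuration is forced into one of the three listed dual graphs. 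This is where I would lean on \cite[\S 4]{Saw24}, cited in the introduction: the existence of these special $(-1)$-curves is essentially the content of that reference, and the present proposition repackages it into the trichotomy (A)/(B)/(C) adapted to the degree-$2$, $\sA_n$ ($n \ge 3$) setting.

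The key steps, in order, would be: (i) fix notation for the chain and record $(\wD_i \cdot \wD_j) = -2\delta_{i,j} + \delta_{|i-j|,1}$; (ii) produce at least one $(-1)$-curve $\wE_1$ meeting an end of the chain, using that the restriction of $\pi$ (or the existence of $(-1)$-curves guaranteed on any weak del Pezzo surface of degree $\le 8$) forces a curve of anticanonical degree $1$ adjacent to $\wD_1$ or $\wD_n$; (iii) analyze whether a second such curve $\wE_2$ exists at the opposite end and whether a $(-1)$-curve attaches to the central component, splitting into the cases by the value of $n$ and by the presence of an extra $(-2)$-curve $\wD_4$ outside $\wD^{(1)}$ in the $n=3$ situation; (iv) verify in each surviving configuration that the resulting weighted dual graph matches Figure \ref{fig(3-1)} (A), (B), or (C), checking self-intersections and adjacencies. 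The small cases $n=3$ and $n=5$ deserve separate treatment because the short chain allows the extra-component phenomenon (C) and the single-curve phenomenon (B) that cannot occur for longer chains.

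The hard part will be step (iii): showing the trichotomy is exhaustive, i.e.\ that no other attachment pattern of $(-1)$-curves to the $\sA_n$-chain can arise. This requires ruling out, for each $n \ge 3$, configurations where a $(-1)$-curve meets an interior non-central component, or where the number of $(-1)$-curves meeting the chain differs from what (A)/(B)/(C) predicts. I expect this to follow from a finite but delicate case check driven by the constraint $(\wE \cdot -K_{\wS}) = 1$ and the negative-definiteness of the $(-2)$-configuration (Lemma \ref{lem(2-2-2)}), possibly combined with the bound in Lemma \ref{lem(2-2-2)} (2) on the total number of $(-2)$-curves, which pins down how much room there is for additional components such as $\wD_4$. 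The cleanest route is likely to invoke the explicit classification from \cite{Saw24} directly and then verify that each listed type of surface lands in exactly one of the three cases, rather than re-deriving the $(-1)$-curve structure from scratch.
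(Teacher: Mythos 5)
There is a genuine gap: your outline never supplies an actual existence mechanism for the special $(-1)$-curves, and where it gestures at one it aims at the wrong configuration. The paper's proof does not enumerate $(-1)$-curves via the double cover $\pi:\wS \to \bP^2_{\bk}$ or quote a classification; it \emph{constructs} the curves from the effectivity of the single adjoint-type class $\wDelta := -K_{\wS} - \wD_1 - 2(\wD_2+\dots+\wD_{n-1}) - \wD_n$, whose coefficients are engineered so that Riemann--Roch gives $|\wDelta| \neq \emptyset$ (Lemma \ref{lem(3-3)}). Splitting off the $(-2)$-part $\wDelta_0$, the negative-definiteness estimate $(\wA^{(1)})^2 \le -4$ (Lemma \ref{lem(3-1)}) yields $(\wDelta_+)^2 \le -2$, and since $(\wDelta_+ \cdot -K_{\wS}) = 2$ this forces $\wDelta_+ = \wE_1 + \wE_2$ with both summands $(-1)$-curves. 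The intersection bookkeeping in Proposition \ref{prop(3-1)} then gives $(\wE_1+\wE_2 \cdot \wD_j) = \delta_{2,j}+\delta_{n-1,j}$: in case (A) the curves attach at $\wD_2$ and $\wD_{n-1}$, and in (B), (C) at the central component. In \emph{no} case does a produced $(-1)$-curve meet an end component, so your step (ii) --- producing a curve ``adjacent to $\wD_1$ or $\wD_n$'' --- targets a configuration that does not occur in the trichotomy and would not feed into step (iv).

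The deeper problem is step (iii), which you correctly identify as the hard part but then outsource either to an unexecuted ``finite but delicate case check'' or to citing {\cite[\S 4]{Saw24}} directly. Neither works as stated: the paper says explicitly that it could not find a proof of this proposition in the literature, and {\cite{Saw24}} concerns canonical del Pezzo \emph{fibrations} --- it supplies only the model lemmas, which the present paper restates and reproves in the degree-$2$ surface setting. The paper's actual replacement for your step (iii) is the case $\wE_1 = \wE_2 =: \wE$, where $2\wE \sim \wDelta - \wDelta_0$, the estimate $(\wE \cdot \wD^{(1)}) = 1$ follows from the $\wB_\ell$ computations of Lemma \ref{lem(3-2)} and Table \ref{A-list} (Lemma \ref{lem(3-5)}), and the identity $\sum_i (\wB^{(i)})^2 = -\frac{3}{2}$ combined with the requirement that $\wDelta - \wDelta_0$ be a $\bZ$-divisor eliminates $n=7$ and the unbalanced subcases, leaving exactly $n=5$ with central attachment or $n=3$ with a disjoint extra component $\wD_4$ (Proposition \ref{prop(3-2)}). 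Your proposal contains no substitute for this half-integrality argument; moreover, a double-cover enumeration would have to distinguish surfaces with equal Dynkin type but different singularity types (e.g. $(\sA_5)'$ versus $(\sA_5)''$), which is precisely the distinction the trichotomy encodes, so the exhaustiveness cannot be waved through by ``each listed type lands in one of the three cases'' without doing that work.
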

\begin{figure}
\begin{center}\scalebox{0.8}{\begin{tikzpicture}
\node at (0,2) {\large (A)};
\node at (3,1) {\xygraph{\circ -[r] \circ (-[d] \bullet, -[r] \cdots -[r] \circ (-[d] \bullet ,-[r] \circ )) }};

\node at (7,2) {\large (B)};
\node at (10,1) {\xygraph{\circ -[r] \circ -[r] \circ (-[d] \bullet ([]!{+(.3,0)} {\wE}) ,-[r] \circ -[r] \circ ) }};

\node at (14,2) {\large (C)};
\node at (15.75,0.4) {\xygraph{ \circ -[r] \circ (-[d] \bullet ([]!{+(.3,0)} {\wE}) -[d] \circ ([]!{+(.3,0)} {\wD _4}) ,-[r] \circ ) }};
\end{tikzpicture}}\end{center}

\caption{The weighted dual graphs in Proposition \ref{prop(3-0)}}\label{fig(3-1)}
\end{figure}
It seems that the result of Proposition \ref{prop(3-0)} is well-known to study Du Val del Pezzo surfaces of low degree but the author could not find any reference. 
Hence, the purpose of this section is to prove this proposition for the readers' convenience. 
Here, our argument is based on {\cite[\S 4]{Saw24}}. 

Note that every singular point on $S$ is cyclic by the classification of Du Val del Pezzo surfaces of degree $2$ (see, e.g., {\cite{Ura81} or {\cite[\S 8.7]{Dol12}}) because $S$ has the singular point of type $\sA _n$ with $n \ge 3$. 
Hence, for every $i=1,\dots ,m$ we may assume that the dual graph of $\wD ^{(i)}$ is the following: 
\begin{align*}
\xygraph{\circ ([]!{+(0,.25)} {^{\wD_1^{(i)}}}) -[r] \circ ([]!{+(0,.25)} {^{\wD_2^{(i)}}}) -[r] \cdots -[r] \circ ([]!{+(0,.25)} {^{\wD_n^{(i)}}}) }.
\end{align*}
The following two lemmas can be shown by the elementary argument. Hence, we omit the proofs of these lemmas. 
\begin{lem}[{\cite[Lemma 4.5]{Saw24}}]\label{lem(3-1)}
Let $a_1,\dots ,a_n$ be positive integers satisfying $a_j \ge 2$ for every $j=2,\dots ,n-1$, and set the effective $\bZ$-divisor $\wA := \sum _{j=1}^n a_j \wD _j$ on $\wS$. 
Then $(\wA ) ^2 \le -4$. 
Moreover, $(\wA ) ^2 = -4$ if and only if $a_1=a_n=1$ and $a_2 = \dots = a_{n-1} = 2$. 
\end{lem}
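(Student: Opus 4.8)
The plan is to reduce the whole statement to an elementary inequality about sums of squares, by first rewriting the self-intersection $(\wA)^2$ as a manifestly nonpositive quantity. I begin by recording the intersection data of the chain: since $P$ is of type $\sA _n$ with $n\ge 3$, the curves $\wD _1,\dots ,\wD _n$ are $(-2)$-curves forming a linear chain, so $(\wD _j)^2=-2$ for all $j$, $(\wD _i\cdot \wD _j)=1$ when $|i-j|=1$, and $(\wD _i\cdot \wD _j)=0$ otherwise. Expanding $(\wA)^2=\sum _{i,j}a_ia_j(\wD _i\cdot \wD _j)$ then gives
\begin{align*}
(\wA)^2=-2\sum _{j=1}^n a_j^2+2\sum _{j=1}^{n-1}a_ja_{j+1}.
\end{align*}

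The key step is to recognize this as a discrete Laplacian quadratic form. Using the telescoping identity $\sum _{j=1}^{n-1}(a_{j+1}-a_j)^2=2\sum _{j=1}^n a_j^2-a_1^2-a_n^2-2\sum _{j=1}^{n-1}a_ja_{j+1}$, I would rewrite the above as
\begin{align*}
(\wA)^2=-a_1^2-a_n^2-\sum _{j=1}^{n-1}(a_{j+1}-a_j)^2.
\end{align*}
This presents $(\wA)^2$ as minus a sum of squares, after which both the bound and the equality case become transparent; establishing this identity is really the crux of the argument.

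It then suffices to show $a_1^2+a_n^2+\sum _{j=1}^{n-1}(a_{j+1}-a_j)^2\ge 4$, which I would do by a short case split. If $a_1\ge 2$ (or symmetrically $a_n\ge 2$), then $a_1^2\ge 4$ together with $a_n^2\ge 1$ already forces the left-hand side to exceed $4$. Otherwise $a_1=a_n=1$, and here I invoke the hypothesis $a_2,a_{n-1}\ge 2$ together with $n\ge 3$ (so that $j=1$ and $j=n-1$ are genuinely distinct indices) to get $(a_2-a_1)^2\ge 1$ and $(a_n-a_{n-1})^2\ge 1$, whence the sum of difference-squares is at least $2$ and the left-hand side is at least $1+1+2=4$. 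Finally, reading off equality forces $a_1=a_n=1$, the two boundary differences to have absolute value $1$ (so $a_2=a_{n-1}=2$), and all interior differences to vanish (so $a_2=\dots =a_{n-1}$), yielding exactly $a_1=a_n=1$ and $a_2=\dots =a_{n-1}=2$.

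The argument is entirely elementary once the identity of the second paragraph is in place; there is no deep obstacle. The only points demanding care are keeping the endpoint constraints $a_1,a_n\ge 1$ separate from the interior constraints $a_j\ge 2$, so that the equality case is pinned down precisely, and noting that $n\ge 3$ is exactly what makes the two boundary difference-squares independent contributions (the bound genuinely fails for $n=2$, where $a_1=a_2=1$ gives $(\wA)^2=-2$).
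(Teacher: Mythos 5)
Your proof is correct and takes essentially the same route as the paper: the paper's entire argument consists of the identity $(\wA)^2 = -(a_1^2+a_n^2)-\sum_{j=1}^{n-1}(a_j-a_{j+1})^2$ followed by the remark that the conclusion is then easy, and your case split simply supplies the elementary details (endpoint versus interior constraints, the equality analysis, and the role of $n\ge 3$) that the paper leaves implicit.
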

\begin{lem}[{\cite[Lemma 4.1]{Saw24}}]\label{lem(3-2)}
Fix an integer $i$ with $1 \le i \le m$ and an integer $\ell$ with $1 \le \ell \le n(i)$. 
If an effective $\bQ$-divisor $\wB _{\ell}^{(i)} := \sum _{j=1}^{n(i)}b_{i,j}\wD _j^{(i)}$ on $\wS$ satisfies $(-\wB _{\ell}^{(i)} \cdot \wD _j^{(i)}) = \delta _{j,\ell}$ for every $j=1,\dots ,n$, then we have:   
\begin{align*}
\wB _{\ell}^{(i)} = \frac{n-\ell +1}{n+1} \sum _{j=1}^{\ell}j\wD _j^{(i)} + \frac{\ell}{n+1} \sum _{j=1}^{n-\ell}j\wD _{n-j+1}^{(i)}
\end{align*} 
and: 
\begin{align*}
(\wB _{\ell}^{(i)})^2 = -\frac{(n-\ell +1)\ell}{n+1}. 
\end{align*}
\end{lem}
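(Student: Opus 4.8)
The plan is to read the defining conditions $(-\wB_\ell^{(i)} \cdot \wD_j^{(i)}) = \delta_{j,\ell}$ as a linear system for the unknown coefficients and solve it explicitly. Throughout I suppress the superscript $(i)$, write $n := n(i)$ and $b_j := b_{i,j}$, and use that $\wD^{(i)}$ is an $\sA_n$-chain, so that $(\wD_j)^2 = -2$, $(\wD_j \cdot \wD_{j+1}) = 1$, and $(\wD_j \cdot \wD_k) = 0$ whenever $|j-k| \ge 2$. Expanding $(\wB_\ell^{(i)} \cdot \wD_j) = -\delta_{j,\ell}$ turns the conditions into the second-difference recursion
\begin{align*}
b_{j-1} - 2b_j + b_{j+1} = -\delta_{j,\ell} \qquad (1 \le j \le n),
\end{align*}
under the boundary convention $b_0 = b_{n+1} = 0$; the cases $j=1$ and $j=n$ then reproduce the two genuine boundary equations $-2b_1 + b_2 = -\delta_{1,\ell}$ and $b_{n-1} - 2b_n = -\delta_{n,\ell}$. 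Since the intersection matrix of the chain is negative definite (it is minus the $\sA_n$ Cartan matrix), this system has a unique solution, so it suffices to exhibit one.

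Next I would verify that the coefficients displayed in the statement solve the recursion. Writing them uniformly as $b_j = \frac{1}{n+1}\min(j,\ell)\,\bigl(n+1-\max(j,\ell)\bigr)$, one checks that $b_j$ is affine in $j$ on each of the ranges $0 \le j \le \ell$ and $\ell \le j \le n+1$, vanishing at the endpoints $j=0$ and $j=n+1$. Hence its second difference is zero for every $j \ne \ell$ by linearity, while at $j=\ell$ the slope drops from $\frac{n+1-\ell}{n+1}$ to $-\frac{\ell}{n+1}$, so the second difference there equals the slope change $-1$, as required. Reindexing the range $j > \ell$ by $k = n-j+1$ recovers exactly the two-block expression in the statement. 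The only point needing care is that the two boundary equations are genuinely absorbed by the convention $b_0 = b_{n+1} = 0$; this is the main, though entirely elementary, bookkeeping step, and is really just the explicit inversion of the $\sA_n$ Cartan matrix.

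Finally, for the self-intersection I would avoid re-summing the coefficients and instead use the defining relations directly:
\begin{align*}
(\wB_\ell^{(i)})^2 = \sum_{j=1}^n b_j\,(\wB_\ell^{(i)} \cdot \wD_j) = \sum_{j=1}^n b_j\,(-\delta_{j,\ell}) = -b_\ell .
\end{align*}
Substituting $b_\ell = \frac{\ell(n+1-\ell)}{n+1}$ yields $(\wB_\ell^{(i)})^2 = -\frac{(n-\ell+1)\ell}{n+1}$, completing the proof. I expect no real obstacle: once the coefficients are pinned down by the piecewise-linear ansatz, the self-intersection falls out for free from the defining intersection numbers.
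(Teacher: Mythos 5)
Your proposal is correct and is essentially the argument the paper has in mind: the paper's proof (following {\cite[Lemma 4.1]{Saw24}}) simply says the lemma follows by computing the relevant intersection numbers directly, which is exactly what your second-difference verification of the piecewise-linear coefficients $b_j = \frac{1}{n+1}\min(j,\ell)\bigl(n+1-\max(j,\ell)\bigr)$, together with uniqueness from negative definiteness of the chain's intersection matrix, carries out. Your observation that $(\wB_\ell^{(i)})^2 = -b_\ell$ falls out of the defining relations is a clean way to organize that computation.
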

In Lemma \ref{lem(3-2)}, if $-(\wB _{\ell}^{(i)}\cdot \wD _j^{(i)}) = \delta _{j,\ell}$, then the value of $(\wB _{\ell}^{(i)})^2$ is explicitly summarized in Table \ref{A-list} depending on the values of $n(i)$ and $\ell$: 
\begin{table}
\begin{center}
\begin{tabular}{|c||c|c|c|c|c|c|c|} \hline

$n(i) \backslash \ell$ & $1$ & $2$ & $3$ & $4$ & $5$ & $6$ & $7$ \\ \hline \hline

$1$ & $-\frac{1}{2}$ & & & & & & \\ \hline
$2$ & $-\frac{2}{3}$ & $-\frac{2}{3}$ & & & & & \\ \hline
$3$ & $-\frac{3}{4}$ & $-1$ & $-\frac{3}{4}$ & & & & \\ \hline
$4$ & $-\frac{4}{5}$ & $-\frac{6}{5}$ & $-\frac{6}{5}$ & $-\frac{4}{5}$ & & & \\ \hline
$5$ & $-\frac{5}{6}$ & $-\frac{4}{3}$ & $-\frac{3}{2}$ & $-\frac{4}{3}$ & $-\frac{5}{6}$ & & \\ \hline
$6$ & $-\frac{6}{7}$ & $-\frac{10}{7}$ & $-\frac{12}{7}$ & $-\frac{12}{7}$ & $-\frac{10}{7}$ & $-\frac{6}{7}$ & \\ \hline
$7$ & $-\frac{7}{8}$ & $-\frac{3}{2}$ & $-\frac{15}{8}$ & $-2$ & $-\frac{15}{8}$ & $-\frac{3}{2}$ & $-\frac{7}{8}$ \\ \hline
\end{tabular}
\end{center}
\caption{The value of $(\wB _{\ell}^{(i)})^2$ in Lemma \ref{lem(3-2)}}\label{A-list} 
\end{table}

From now on, we set the divisor $\wDelta := -K_{\wS} - \wD _1 -2(\wD _2 + \dots + \wD _{n-1}) - \wD _n$ on $\wS$. 
Note that $\wS$ is rational by Lemma \ref{lem(2-2-1)}. 
By the Riemann-Roch theorem, we thus obtain the following lemma: 
\begin{lem}[{\cite[Lemma 4.6]{Saw24}}]\label{lem(3-3)}
With the same notations as above, we have $|\wDelta | \not= \emptyset$. 
\end{lem}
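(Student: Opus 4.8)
The plan is to prove $|\wDelta| \not= \emptyset$ by computing the Euler characteristic $\chi(\sO_{\wS}(\wDelta))$ via Riemann--Roch and showing it is positive, after checking that the $h^2$ term vanishes. I would begin by abbreviating $\wA := \wD _1 + 2(\wD _2 + \dots + \wD _{n-1}) + \wD _n$, so that $\wDelta = -K_{\wS} - \wA$, and collecting the two intersection facts about $\wA$ that come for free from the preceding lemmas. First, Lemma \ref{lem(3-1)} applied with $a_1 = a_n = 1$ and $a_2 = \dots = a_{n-1} = 2$ gives $(\wA)^2 = -4$. Second, each $\wD _j$ is a $(-2)$-curve, so Lemma \ref{lem(2-2-2)} (3) yields $(-K_{\wS} \cdot \wD _j) = 0$ for all $j$, hence $(-K_{\wS} \cdot \wA) = 0$. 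Using $(-K_{\wS})^2 = 2$ (the degree of $\wS$ is $2$), these give
\begin{align*}
(\wDelta )^2 = (-K_{\wS})^2 - 2(-K_{\wS} \cdot \wA) + (\wA )^2 = 2 - 0 - 4 = -2, \qquad (\wDelta \cdot -K_{\wS}) = 2.
\end{align*}

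Next I would apply Riemann--Roch on the rational surface $\wS$. Since $\wS$ is rational, $\chi(\sO_{\wS}) = 1$, and therefore
\begin{align*}
\chi(\sO_{\wS}(\wDelta )) = \chi(\sO_{\wS}) + \frac{1}{2}\bigl( (\wDelta )^2 - (\wDelta \cdot K_{\wS}) \bigr) = 1 + \frac{1}{2}(-2 + 2) = 1,
\end{align*}
where I used $(\wDelta \cdot K_{\wS}) = -(\wDelta \cdot -K_{\wS}) = -2$. It then remains to control $h^2(\wS ,\sO_{\wS}(\wDelta ))$. By Serre duality this equals $h^0(\wS ,\sO_{\wS}(K_{\wS} - \wDelta )) = h^0(\wS ,\sO_{\wS}(2K_{\wS} + \wA ))$. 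Intersecting $2K_{\wS} + \wA$ with the nef divisor $-K_{\wS}$ gives $2(K_{\wS} \cdot -K_{\wS}) + (\wA \cdot -K_{\wS}) = -4 < 0$; since any effective divisor meets a nef divisor nonnegatively, $2K_{\wS} + \wA$ cannot be effective, so $h^2(\wS ,\sO_{\wS}(\wDelta )) = 0$. Combining the two computations, $h^0(\wS ,\sO_{\wS}(\wDelta )) \ge \chi(\sO_{\wS}(\wDelta )) = 1 > 0$, which is exactly the assertion $|\wDelta | \not= \emptyset$.

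Once the two preceding lemmas are in hand, the argument is essentially bookkeeping, and the only genuinely non-formal step is the vanishing of $h^2$, which I expect to be the crux. It rests entirely on the nefness of $-K_{\wS}$ (valid because $\wS$ is a weak del Pezzo surface) together with the sign computation $(2K_{\wS} + \wA \cdot -K_{\wS}) = -4 < 0$; it is precisely the fact that the degree equals $2$ that makes $\chi$ land at the favorable value $1$ rather than something possibly nonpositive. The one point I would double-check is the coefficient convention in $\wDelta$ against Lemma \ref{lem(3-1)}, to be certain that $(\wA )^2 = -4$ exactly and not merely $(\wA )^2 \le -4$, since the whole computation of $(\wDelta )^2 = -2$ hinges on this equality.
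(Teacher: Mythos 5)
Your proof is correct and follows essentially the same route as the paper: Riemann--Roch on the rational surface $\wS$ applied to $\wDelta = -K_{\wS} - \wA$, using $(\wA)^2 = -4$ from Lemma \ref{lem(3-1)} and $(-K_{\wS} \cdot \wA) = 0$ to get $\chi(\sO_{\wS}(\wDelta)) = 1$. The only difference is that you make explicit the vanishing $h^2(\wS, \sO_{\wS}(\wDelta)) = h^0(\wS, \sO_{\wS}(2K_{\wS}+\wA)) = 0$ via nefness of $-K_{\wS}$, a step the paper leaves implicit in its appeal to ``the Riemann--Roch theorem and rationality''; this is a welcome piece of added care, not a different argument.
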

By Lemma \ref{lem(3-3)}, there exist two effective $\bZ$-divisors $\wDelta _+$ and $\wDelta _0$ on $\wS$ such that $\wDelta \sim \wDelta _+ + \wDelta _0$ and every irreducible component $\wC_+$ of $\wDelta _+$ (resp. every irreducible component $\wC_0$ of $\wDelta _0$) satisfies $(\wC _+ \cdot -K_{\wS})>0$ (resp. $(\wC _0 \cdot -K_{\wS})=0$). 
Note that all irreducible components of $\wDelta _0$ are $(-2)$-curves on $\wS$ by Lemma \ref{lem(2-2-2)} (3). 
Hence, we can write: 
\begin{align*}
\wDelta - \wDelta _0 = -K_{\wS} - \sum _{i=1}^m \wA ^{(i)}, 
\end{align*}
where $\wA ^{(i)}$ is an effective $\bZ$-divisor, which is a $\bZ$-linear combination of $\wD _1^{(i)},\dots ,\wD _{n(i)}^{(i)}$, on $\wS$. 
Note that: 
\begin{align}\label{(3-1)}
\wA ^{(1)} = (c_1+1)\wD _1 + (c_2+2)\wD _2 + \dots + (c_{n-1}+2)\wD _{n-1} + (c_n+1)\wD _n
\end{align}
for some non-negative integers $c_1,\dots ,c_n$. 
\begin{lem}[{\cite[Lemma 4.7]{Saw24}}]\label{lem(3-4)}
With the same notations as above, we have $(\wDelta _+)^2 \le -2$. 
\end{lem}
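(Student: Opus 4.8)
The plan is to bound $(\wDelta _+)^2$ from above by analyzing the decomposition $\wDelta \sim \wDelta _+ + \wDelta _0$ together with the defining equation $\wDelta - \wDelta _0 = -K_{\wS} - \sum _{i=1}^m \wA ^{(i)}$. First I would compute $(\wDelta _+)^2$ by relating $\wDelta _+$ to $-K_{\wS} - \sum _i \wA ^{(i)}$: since $\wDelta _0$ is supported on $(-2)$-curves and $\wDelta _+$ has every component meeting $-K_{\wS}$ positively, the two pieces are ``orthogonal'' with respect to $-K_{\wS}$, and one expects $(\wDelta _+ \cdot -K_{\wS}) = (\wDelta \cdot -K_{\wS})$. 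I would first evaluate $(\wDelta \cdot -K_{\wS})$. Using $\wDelta = -K_{\wS} - \wD _1 - 2(\wD _2 + \dots + \wD _{n-1}) - \wD _n$ and Lemma \ref{lem(2-2-2)}(3), each $(\wD _j \cdot -K_{\wS}) = 0$, so $(\wDelta \cdot -K_{\wS}) = (-K_{\wS})^2 = 2$ because $S$ has degree $2$.

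Next I would use the Hodge index theorem in the form that governs such arguments: for the effective divisor $\wDelta _+$ whose class is $-K_{\wS}$ minus an effective combination $\sum _i \wA ^{(i)}$ of $(-2)$-curves (after absorbing $\wDelta _0$), the self-intersection satisfies $(\wDelta _+)^2 = (-K_{\wS} - \sum _i \wA ^{(i)})^2 = (-K_{\wS})^2 - 2\sum _i (\wA ^{(i)} \cdot -K_{\wS}) + (\sum _i \wA ^{(i)})^2$. Since each $\wA ^{(i)}$ is supported on $(-2)$-curves, the cross term with $-K_{\wS}$ vanishes by Lemma \ref{lem(2-2-2)}(3), giving $(\wDelta _+)^2 = 2 + (\sum _i \wA ^{(i)})^2$. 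The divisors $\wA ^{(i)}$ for distinct $i$ are supported on disjoint connected components, so $(\sum _i \wA ^{(i)})^2 = \sum _i (\wA ^{(i)})^2$. The whole problem therefore reduces to showing $\sum _i (\wA ^{(i)})^2 \le -4$, and in fact it suffices to bound the single term $(\wA ^{(1)})^2 \le -4$ since the other $(\wA ^{(i)})^2$ are nonpositive (each being the self-intersection of an effective divisor on a negative-definite configuration).

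The estimate $(\wA ^{(1)})^2 \le -4$ is exactly where Lemma \ref{lem(3-1)} enters. By the explicit form (\ref{(3-1)}), the coefficients of $\wA ^{(1)}$ are $a_1 = c_1 + 1$, $a_j = c_j + 2$ for $2 \le j \le n-1$, and $a_n = c_n + 1$, all positive integers with $a_j \ge 2$ for the interior indices $2, \dots, n-1$. Lemma \ref{lem(3-1)} then immediately yields $(\wA ^{(1)})^2 \le -4$. For the components $i \ge 2$, I would argue that $\wA ^{(i)}$ is an effective $\bZ$-divisor supported on the negative-definite chain $\wD ^{(i)}$, so $(\wA ^{(i)})^2 \le 0$, and if $\wA ^{(i)} \ne 0$ then in fact $(\wA ^{(i)})^2 < 0$. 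Combining these gives $\sum _i (\wA ^{(i)})^2 \le -4$, hence $(\wDelta _+)^2 \le 2 - 4 = -2$, as claimed.

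The main obstacle I anticipate is the bookkeeping in the first step: carefully justifying that one may replace $\wDelta _+$ by the class $-K_{\wS} - \sum _i \wA ^{(i)}$ for the purpose of computing the self-intersection, i.e. verifying that absorbing $\wDelta _0$ does not corrupt the computation and that the cross terms genuinely vanish. Concretely, one must check that $\wDelta _+ \sim -K_{\wS} - \sum _i \wA ^{(i)}$ as divisor classes (this is precisely the displayed equation $\wDelta - \wDelta _0 = -K_{\wS} - \sum _i \wA ^{(i)}$ combined with $\wDelta \sim \wDelta _+ + \wDelta _0$), so that the self-intersection is computed on the class level and the positivity hypotheses on $\wDelta _+$ and the $(-2)$-curve support of the $\wA ^{(i)}$ make all the $-K_{\wS}$-cross terms drop out cleanly. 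Once this reduction is made rigorous, the remaining inequalities are routine applications of Lemma \ref{lem(3-1)} and the negative-definiteness of the $\sA _n$-configurations.
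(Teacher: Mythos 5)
Your proof is correct and takes essentially the same route as the paper's own argument: both expand $(\wDelta _+)^2 = \bigl( -K_{\wS} - \sum _{i=1}^m \wA ^{(i)} \bigr)^2$, observe that the cross terms vanish since each $\wA ^{(i)}$ is supported on $(-2)$-curves (Lemma \ref{lem(2-2-2)} (3)), bound $(\wA ^{(1)})^2 \le -4$ via (\ref{(3-1)}) and Lemma \ref{lem(3-1)}, and dispose of the remaining terms by negative definiteness of the intersection matrix of $\wD - \wD ^{(1)}$. The only difference is expository: you spell out the bookkeeping (the class-level identity $\wDelta _+ \sim -K_{\wS} - \sum _i \wA ^{(i)}$ and the splitting of $(\sum _i \wA ^{(i)})^2$ over disjoint components) that the paper leaves implicit.
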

\begin{proof}
We can write: 
\begin{align*}
\wDelta _+ \sim \wDelta - \wDelta _0 = -K_{\wS} - \sum _{i=1}^m \wA ^{(i)}. 
\end{align*}
By (\ref{(3-1)}) and Lemma \ref{lem(3-1)}, we have $(\wA ^{(1)})^2 \le -4$. 
Hence, we have $(\wDelta _+ )^2 \le 2 - 4 + 0 = -2$ because the intersection matrix of $\wD - \wD ^{(1)}$ is negative definite. 
\end{proof}
Note that any irreducible component of $\wDelta _+$ has self-intersection number $\ge -1$ because it is not a $(-2)$-curve on $\wS$. 
Hence, $\wDelta _+$ is reducible by Lemma \ref{lem(3-4)}. 
Moreover, since $(\wDelta _+ \cdot -K_{\wS}) = (\wDelta \cdot -K_{\wS}) = 2$, there exist two irreducible curves $\wE _1$ and $\wE _2$ on $\wS$ such that $\wDelta _+ = \wE _1+\wE _2$. 
By Lemmas \ref{lem(2-2-2)} (1) and (3) and \ref{lem(3-4)}, $\wE _1$ and $\wE _2$ are $(-1)$-curves on $\wS$. 

Now, we shall deal with the result based on {\cite[Proposition 4.9]{Saw24}}. 
This result consists of Propositions \ref{prop(3-1)} and \ref{prop(3-2)}. 
\begin{prop}\label{prop(3-1)}
With the same notations as above, assume further that $\wE _1 \not= \wE _2$. 
Then we have the following assertions: 
\begin{enumerate}
\item $(\wE _1 \cdot \wE _2) = 0$. 
\item $\wDelta _0 = 0$; in other words, $\wDelta \sim \wE _1 + \wE _2$. 
\item $(\wE _1 + \wE _2 \cdot \wD _j) = \delta _{2,j} + \delta _{n-1,j}$ for $j = 1,\dots ,n$. 
\item $(\wE _1 \cdot \wD ^{(1)}) = (\wE _2 \cdot \wD ^{(1)}) = 1$. 
\end{enumerate}
\end{prop}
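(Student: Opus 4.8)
The plan is to prove the four assertions in the order stated, each reducing to an intersection-number computation that exploits the relation $\wDelta_+ = \wE_1 + \wE_2$ together with the rigidity of $\wA^{(1)}$ forced by the self-intersection bounds already in hand. For (1), I would simply note that since $\wE_1$ and $\wE_2$ are distinct $(-1)$-curves,
\[
(\wDelta_+)^2 = (\wE_1 + \wE_2)^2 = -2 + 2(\wE_1 \cdot \wE_2) \ge -2,
\]
while Lemma \ref{lem(3-4)} gives $(\wDelta_+)^2 \le -2$. Hence both are equalities, so $(\wE_1 \cdot \wE_2) = 0$, and — what is decisive for the sequel — $(\wDelta_+)^2 = -2$ exactly.

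For (2), I would re-expand $(\wDelta_+)^2$ through $\wDelta_+ \sim -K_{\wS} - \sum_i \wA^{(i)}$. Each $\wA^{(i)}$ is supported on the $(-2)$-curves of a single connected component $\wD^{(i)}$, so it is orthogonal to $-K_{\wS}$ by Lemma \ref{lem(2-2-2)}(3) and to $\wA^{(i')}$ for $i' \neq i$ since distinct components are disjoint; thus $(\wDelta_+)^2 = (-K_{\wS})^2 + \sum_i (\wA^{(i)})^2 = 2 + \sum_i (\wA^{(i)})^2$. Comparing with $(\wDelta_+)^2 = -2$ gives $\sum_i (\wA^{(i)})^2 = -4$. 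Now $(\wA^{(1)})^2 \le -4$ by Lemma \ref{lem(3-1)}, and each $(\wA^{(i)})^2 \le 0$ by negative definiteness with equality only for $\wA^{(i)} = 0$; so the equality $\sum_i (\wA^{(i)})^2 = -4$ forces $(\wA^{(1)})^2 = -4$ and $\wA^{(i)} = 0$ for all $i \ge 2$. The equality clause of Lemma \ref{lem(3-1)} then fixes $\wA^{(1)} = \wD_1 + 2\wD_2 + \dots + 2\wD_{n-1} + \wD_n$, i.e. all $c_j = 0$ in (\ref{(3-1)}). Reading these vanishings back through the construction of the $\wA^{(i)}$ shows $\wDelta_0$ carries no component on any $\wD^{(i)}$, and since every component of $\wDelta_0$ is a $(-2)$-curve (hence lies in $\wD$), I conclude $\wDelta_0 = 0$, whence $\wDelta \sim \wE_1 + \wE_2$; moreover $\wDelta = -K_{\wS} - \wA^{(1)}$ now holds as a class.

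Assertion (3) is then immediate: intersecting $\wDelta \sim \wE_1 + \wE_2$ with $\wD_j$ and using $(-K_{\wS} \cdot \wD_j) = 0$ gives $(\wE_1 + \wE_2 \cdot \wD_j) = -(\wA^{(1)} \cdot \wD_j)$, and the $\sA_n$ chain structure makes $(\wA^{(1)} \cdot \wD_j)$ vanish except at $j = 2$ and $j = n-1$, where it equals $-1$ (for $n = 3$ these coincide, giving $-2$), which is precisely $-(\delta_{2,j} + \delta_{n-1,j})$. For (4), I would compute $(\wE_i \cdot \wDelta) = (\wE_i \cdot \wE_1 + \wE_2) = -1$ from (1), and also $(\wE_i \cdot \wDelta) = (\wE_i \cdot -K_{\wS}) - (\wE_i \cdot \wA^{(1)}) = 1 - (\wE_i \cdot \wA^{(1)})$, so $(\wE_i \cdot \wA^{(1)}) = 2$. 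By (3), $\wE_i$ meets $\wD^{(1)}$ only along $\wD_2$ and $\wD_{n-1}$, exactly where $\wA^{(1)}$ has coefficient $2$; hence $(\wE_i \cdot \wA^{(1)}) = 2\,(\wE_i \cdot \wD^{(1)})$ and therefore $(\wE_i \cdot \wD^{(1)}) = 1$.

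I expect the real content to sit in step (2): the delicate part is converting the single numerical equality $\sum_i (\wA^{(i)})^2 = -4$ into the full rigidity — vanishing of $\wDelta_0$ and the precise coefficient vector of $\wA^{(1)}$ — which then makes everything downstream forced. Step (4) is the only other subtle point, in that it genuinely requires combining the weighted count $(\wE_i \cdot \wA^{(1)}) = 2$ with the support restriction from (3); neither fact alone suffices to split the total intersection $2$ evenly between $\wE_1$ and $\wE_2$.
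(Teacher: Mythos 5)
Your proposal is correct and takes essentially the same route as the paper's proof: (1) via the equality case of Lemma \ref{lem(3-4)} applied to $(\wE_1+\wE_2)^2$, (2) by turning $(\wDelta_+)^2=-2$ into $\sum_i(\wA^{(i)})^2=-4$ and then combining the equality clause of Lemma \ref{lem(3-1)} with negative definiteness to force $\wA^{(1)}=\wD_1+2\wD_2+\dots+2\wD_{n-1}+\wD_n$ and $\wA^{(i)}=0$ for $i\ge 2$, and (3), (4) by the same intersection computations with $\wDelta\sim\wE_1+\wE_2$. The only difference is a cosmetic rearrangement in (4), where you derive $(\wE_i\cdot\wA^{(1)})=2$ and split it using the support restriction from (3), while the paper expands $(\wE_1\cdot\wDelta-\wE_2)$ directly and uses $(\wE_1\cdot\wD_1+\wD_n)=0$ --- numerically the identical argument.
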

\begin{proof}
In (1), this assertion follows from Lemma \ref{lem(3-4)}. 

In (2), we can write: 
\begin{align*}
-K_{\wS} -\wE _1 - \wE _2 \sim \sum _{i=1}^m \wA ^{(i)}. 
\end{align*}
Note that $(-K_{\wS} -\wE _1 - \wE _2)^2 = -4$ by virtue of (1). 
Since the intersection matrix of $\wD - \wD ^{(1)}$ is negative definite, we have $(\wA ^{(1)})^2 \ge -4$. 
Hence, we know: 
\begin{align*}
\wA ^{(1)} = \wD _1 + 2\wD _2 + \dots + 2\wD _{n-1} + \wD _n
\end{align*}
by (\ref{(3-1)}) and Lemma \ref{lem(3-1)}; moreover, $\wA ^{(i)} = 0$ for every $i=2,\dots ,m$. Namely, $\wDelta _0 = 0$. 

In (3), we know $(\wE _1 + \wE _2 \cdot \wD _j) = (\wDelta \cdot \wD _j) = \delta _{2,j} + \delta _{n-1,j}$ for $j = 1,\dots ,n$ by virtue of (2). 

In (4), by using (1) and (2), we have: 
\begin{align*}
-1 = (\wE _1)^2 = (\wE _1 \cdot \wDelta - \wE _2) = (\wE _1 \cdot -K_{\wS}) - 2(\wE _1 \cdot \wD ^{(1)}) + (\wE _1 \cdot \wD _1 + \wD _n). 
\end{align*}
Hence, we obtain $(\wE _1 \cdot \wD ^{(1)}) = 1$ by virtue of (3). 
Similarly, we also obtain $(\wE _2 \cdot \wD ^{(1)}) = 1$. 
\end{proof}
By Proposition \ref{prop(3-1)} (1), (3) and (4), if $\wE _1 \not= \wE _2$, then we know that the weighted dual graph of $\wE _1 + \wE _2 + \wD ^{(1)}$ looks like that in Figure \ref{fig(3-1)} (A). 

From now on, we thus assume that $\wE _1 = \wE _2$. 
For simplicity, we put $\wE := \wE _1$. 
Since $2\wE \sim \wDelta - \wDelta _0$, we can write: 
\begin{align*}
\wE \sim _{\bQ} -\frac{1}{2}K_{\wS} - \sum _{i=1}^m \frac{1}{2}\wA ^{(i)}. 
\end{align*}
\begin{prop}\label{prop(3-2)}
With the same notations and assumptions as above, we have $n=5$ or $n=3$. 
Furthermore, the following assertions hold: 
\begin{itemize}
\item Assume that $n=5$. Then $(\wE \cdot \wD _j) = \delta _{3,j}$ for $j=1,\dots ,5$. 
Moreover, $2\wE \sim -K_{\wS} -\wD _1-2\wD_2 - 3\wD _3 - 2\wD _4 -\wD _5$. 
\item Assume that $n=3$. Then $(\wE \cdot \wD _j) = \delta _{2,j}$ for $j=1,2,3$. 
Moreover, there exists an irreducible component $\wD _4$ of $\wD$ such that $(\wD _4 \cdot \wD - \wD _4) = 0$ and $2\wE \sim -K_{\wS} -\wD _1-2\wD_2 - \wD _3 - \wD _4$. 
\end{itemize}
\end{prop}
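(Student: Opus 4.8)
The plan is to extract everything from the $\bQ$-linear equivalence $\wE \sim _{\bQ} -\tfrac{1}{2}K_{\wS} - \sum _{i=1}^m \tfrac{1}{2}\wA ^{(i)}$ set up just before the statement, reading off numerical constraints from self-intersection numbers. By Lemma \ref{lem(3-5)} we have $(\wE \cdot \wD ^{(1)}) = 1$, so there is a unique $\ell _0 \in \{ 1,\dots ,n\}$ with $(\wE \cdot \wD _j) = \delta _{j,\ell _0}$, and the identity $\tfrac{1}{2}\wA ^{(1)} = \sum _{j=1}^n (\wE \cdot \wD _j)\wB _j = \wB _{\ell _0}$ from the proof of Lemma \ref{lem(3-5)}. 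The first step is to combine this with the shape (\ref{(3-1)}) of $\wA ^{(1)}$: the coefficients of the two end components $\wD _1$ and $\wD _n$ in $\wA ^{(1)} = 2\wB _{\ell _0}$ are positive integers, and by the explicit formula of Lemma \ref{lem(3-2)} they equal $\tfrac{2(n-\ell _0+1)}{n+1}$ and $\tfrac{2\ell _0}{n+1}$. Requiring both to be at least $1$ forces $\ell _0 = \tfrac{n+1}{2}$; in particular $n$ is odd and $\ell _0$ is the central component, so that $(\wE \cdot \wD _j) = \delta _{j,(n+1)/2}$.

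Next I would run the two self-intersection computations that cut down $n$. Using Lemma \ref{lem(3-2)} at $\ell _0 = \tfrac{n+1}{2}$ gives $(\wA ^{(1)})^2 = 4(\wB _{\ell _0})^2 = -(n+1)$. For the global constraint I would square $2\wE \sim -K_{\wS} - \sum _{i=1}^m \wA ^{(i)}$: since $(-K_{\wS})^2 = 2$, each $\wA ^{(i)}$ is supported on $(-2)$-curves so $(-K_{\wS} \cdot \wA ^{(i)}) = 0$, and distinct $\wA ^{(i)}$ have disjoint supports, whence $-4 = (2\wE)^2 = 2 + \sum _{i=1}^m (\wA ^{(i)})^2$, i.e. $\sum _{i=1}^m (\wA ^{(i)})^2 = -6$. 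Subtracting the value of $(\wA ^{(1)})^2$ leaves $\sum _{i=2}^m (\wA ^{(i)})^2 = n-5$. Each $\wA ^{(i)}$ with $i \ge 2$ is an effective combination of $(-2)$-curves, so $(\wA ^{(i)})^2 \le 0$; hence $n - 5 \le 0$, and together with $n$ odd, $n \ge 3$ this forces $n \in \{ 3,5\}$, which in turn pins down $\ell _0 \in \{ 2,3\}$.

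It then remains to identify the divisor classes, and the case $n = 3$ is the main obstacle. When $n = 5$ the relation $\sum _{i=2}^m (\wA ^{(i)})^2 = 0$ forces every $\wA ^{(i)}$ with $i \ge 2$ to vanish, and substituting $\wA ^{(1)} = 2\wB _3 = \wD _1 + 2\wD _2 + 3\wD _3 + 2\wD _4 + \wD _5$ into $2\wE \sim -K_{\wS} - \wA ^{(1)}$ gives the stated equivalence. When $n = 3$ we have $\sum _{i=2}^m (\wA ^{(i)})^2 = -2$, and I must produce a single isolated $(-2)$-curve. Here I would use two facts about each $\wA ^{(i)}$, $i \ge 2$: first, the arithmetic genus formula together with $(\wA ^{(i)} \cdot K_{\wS}) = 0$ shows $(\wA ^{(i)})^2$ is even, so exactly one of them, say $\wA ^{(2)}$, is nonzero, with $(\wA ^{(2)})^2 = -2$; second, the same $\bQ$-linear equivalence gives $(\wA ^{(i)} \cdot \wD _j^{(i)}) = -2(\wE \cdot \wD _j^{(i)}) \in 2\bZ _{\le 0}$ for every $j$. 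Writing $\wA ^{(2)} = \sum _j a_j \wD _j^{(2)}$, the inequalities $(\wA ^{(2)} \cdot \wD _j^{(2)}) \le 0$ on the connected chain $\wD ^{(2)}$ force all $a_j > 0$ (a vanishing coefficient adjacent to a positive one would make the corresponding intersection positive). Feeding this into the identity $(\wA ^{(2)})^2 = -\big( a_1^2 + a_{n(2)}^2 \big) - \sum _j (a_j - a_{j+1})^2$ from the proof of Lemma \ref{lem(3-1)}, the required value $-2$ leaves only $n(2) = 1$ with $\wA ^{(2)} = \wD _4$: the sole alternative, a chain of length $\ge 2$ with all $a_j = 1$, would give $(\wA ^{(2)} \cdot \wD _1^{(2)}) = -1 \notin 2\bZ$. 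Thus $\wD _4$ is an isolated $(-2)$-curve with $(\wD _4 \cdot \wD - \wD _4) = 0$, and substituting $\wA ^{(1)} = 2\wB _2 = \wD _1 + 2\wD _2 + \wD _3$ together with $\wA ^{(2)} = \wD _4$ into $2\wE \sim -K_{\wS} - \wA ^{(1)} - \wA ^{(2)}$ yields the remaining equivalence.
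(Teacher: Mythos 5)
Your proof is correct, and although it runs on the same engine as the paper's argument (the relation $2\wE \sim -K_{\wS} - \sum_{i=1}^m \wA^{(i)}$, the explicit formula for $\wB_\ell$ in Lemma~\ref{lem(3-2)}, and $(\wE\cdot\wD^{(1)})=1$ from Lemma~\ref{lem(3-5)}), the way you eliminate cases is genuinely different and, in places, cleaner. The paper enumerates the possible values of $(\wB^{(i)})^2=\tfrac14(\wA^{(i)})^2$ from Table~\ref{A-list} subject to $\sum_i(\wB^{(i)})^2=-\tfrac32$, which produces spurious candidates ($n=7$ in its Case 1, and the pairs $(-\tfrac56,-\tfrac23)$, $(-\tfrac34,-\tfrac34)$ in its Case 2) that must each be killed by the non-integrality of $\wDelta-\wDelta_0$. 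You instead pin down the marked component first: from $\tfrac12\wA^{(1)}=\wB_{\ell_0}$ together with (\ref{(3-1)}), which makes the end coefficients $c_1+1$ and $c_n+1$ of $\wA^{(1)}$ positive integers, the inequalities $2(n-\ell_0+1)\ge n+1$ and $2\ell_0\ge n+1$ force $\ell_0=\tfrac{n+1}{2}$, so $n$ is odd and $\wE$ sits at the central component; this disposes of $n=7$ and all asymmetric positions at once, with no table lookup. Your global count $\sum_i(\wA^{(i)})^2=-6$ (equivalent to the paper's $\sum_i(\wB^{(i)})^2=-\tfrac32$) then gives $n\in\{3,5\}$, and in the $n=3$ case your two parity observations --- $(\wA^{(i)})^2$ is even for any $\bZ$-divisor supported on $(-2)$-curves, and $(\wA^{(i)}\cdot\wD_j^{(i)})=-2(\wE\cdot\wD_j^{(i)})\in 2\bZ_{\le 0}$ --- replace the paper's table-driven identification of the residual component; your chain argument (all coefficients of $\wA^{(2)}$ positive by connectedness, then $(\wA^{(2)})^2=-2$ via the formula from Lemma~\ref{lem(3-1)} forcing either $n(2)=1$ with coefficient $1$, or a length-$\ge 2$ chain of $1$'s whose end intersection $-1$ violates evenness) is sound, and your appeal to $\wD^{(2)}$ being a chain is covered by the paper's standing observation that all singular points here are cyclic. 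What your route buys is a shorter, self-contained case analysis in which the excluded configurations never even appear; what the paper's buys is reuse of Table~\ref{A-list} (already needed for Lemma~\ref{lem(3-5)}) and a single integrality principle ($\wDelta-\wDelta_0$ is a $\bZ$-divisor) applied uniformly to all bad cases.
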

\begin{proof}
For simplicity, we put $\wB ^{(i)} := \frac{1}{2}\wA ^{(i)}$ for $i=1,\dots ,m$. 
Since: 
\begin{align*}
\sum _{i=1}^m \wB ^{(i)} \sim _{\bQ} -\frac{1}{2}K_{\wS} - \wE, 
\end{align*}
we have: 
\begin{align*}
\sum _{i=1}^m (\wB ^{(i)})^2 = -\frac{3}{2}. 
\end{align*}
By Lemma \ref{lem(3-2)}, we know $(\wB ^{(i)})^2 \le -\frac{1}{2}$ for every $i = 1,\dots ,m$ (see also Table \ref{A-list}). 
Moreover, we notice $(\wB ^{(1)})^2 < -\frac{1}{2}$ because $n(1) = n \ge 3$. 
Hence, we may assume: 
\begin{align*}
\wE \sim _{\bQ} -\frac{1}{2}K_{\wS} - \wB ^{(1)} - \wB ^{(2)}. 
\end{align*}
We consider the following two cases separately. 
\smallskip

\noindent
{\bf Case 1:} ($\wB ^{(2)} = 0$). 
In this case, we have $(\wB ^{(1)})^2 = -\frac{3}{2}$. 
Hence, we know $n=5$ or $n=7$ by looking at Table \ref{A-list}. 
We consider the following two subcases separately. 
\smallskip

\noindent
{\bf Subcase 1-1:} ($n=5$). 
In this subcase, $(\wB ^{(1)} \cdot \wD _j) = \delta _{3,j}$ for $j=1,\dots ,5$ by looking at Table \ref{A-list}. 
Hence: 
\begin{align*}
\wB ^{(1)} \sim _{\bQ} \frac{1}{2}\wD _1 + \wD _2 + \frac{3}{2}\wD _3 + \wD _4 + \frac{1}{2}\wD _5
\end{align*}
by Lemma \ref{lem(3-2)}. 
In particular, $2\wE _1 \sim -K_{\wS} - \wD _1 -2 \wD _2 -3 \wD _3 -2 \wD _4 - \wD _5$. 
\smallskip

\noindent
{\bf Subcase 1-2:} ($n=7$). 
In this subcase, $(\wB ^{(1)} \cdot \wD _j) = \delta _{2,j}$ or $\delta _{5,j}$ for $j=1,\dots ,7$ by looking at Table \ref{A-list}. 
By the symmetry, we may assume $(\wB ^{(1)} \cdot \wD _j) = \delta _{2,j}$ for $j=1,\dots ,7$. 
Hence: 
\begin{align*}
\wB ^{(1)} \sim _{\bQ} \frac{3}{4}\wD _1 + \frac{3}{2} \wD _2 + \frac{5}{4}\wD _3 + \wD _4 + \frac{3}{4}\wD _5 + \frac{1}{2} \wD _6 + \frac{1}{4} \wD _7
\end{align*}
by Lemma \ref{lem(3-2)}. 
In particular, $2\wE _1 \sim \wDelta - \wDelta _0 = -K_{\wS} - \frac{3}{2} \wD _1 -3 \wD _2 -\frac{5}{2} \wD _3 -2 \wD _4 -\frac{3}{2} \wD _5 - \wD _6 -\frac{1}{2} \wD _7$. 
However, this is a contradiction that $\wDelta - \wDelta _0$ is a $\bZ$-divisor. 
Thus, this subcase does not take place. 
\smallskip

\noindent
{\bf Case 2:} ($\wB ^{(2)} \not= 0$). 
In this case, we have $(\wB ^{(1)})^2 + (\wB ^{(2)})^2 = -\frac{3}{2}$. 
Hence, the pair $((\wB ^{(1)})^2, (\wB ^{(2)})^2)$ equals one of $(-1,-\frac{1}{2})$, $(-\frac{5}{6},-\frac{2}{3})$ and $(-\frac{3}{4},-\frac{3}{4})$ by looking at Table \ref{A-list}, where note $n \ge 3$. 
We consider the following two subcases separately. 
\smallskip

\noindent
{\bf Subcase 2-1:} ($(\wB ^{(1)})^2 = -1$). 
In this subcase, $n=3$, $n(2) = 1$, $(\wB ^{(1)} \cdot \wD _j) = \delta _{2,j}$ for $j=1,2,3$, and $(\wB ^{(2)} \cdot \wD _1^{(2)}) = 1$ by looking at Table \ref{A-list}. 
We put $\wD _4 := \wD _1^{(2)}$. 
Then $(\wD _4 \cdot \wD - \wD _4) = 0$ because $n(2)=1$. 
Moreover: 
\begin{align*}
\wB ^{(1)} \sim _{\bQ} \frac{1}{2}\wD _1 + \wD _2 + \frac{1}{2}\wD _3, \quad \wB ^{(2)} = \frac{1}{2} \wD _4. 
\end{align*}
by Lemma \ref{lem(3-2)}. 
In particular, $2\wE _1 \sim -K_{\wS} - \wD _1 -2 \wD _2 - \wD _3 - \wD _4$. 
\smallskip

\noindent
{\bf Subcase 2-2:} ($(\wB ^{(1)})^2 \not= -1$). 
In this subcase, by the same argument as above, we see that $\wDelta - \wDelta _0$ is not a $\bZ$-divisor. 
However, this is a contradiction. 
Thus, this subcase does not take place. 
\smallskip

The proof of Proposition \ref{prop(3-2)} is thus completed. 
\end{proof}
By Proposition \ref{prop(3-2)}, if $\wE _1 = \wE _2$, then $n=5$ or $n=3$. 
Furthermore, if $n=5$ (resp. $n=3$), then we obtain the weighted dual graph of $\wE + \wD ^{(1)}$ (resp. $\wE + \wD ^{(1)} + \wD _4$) looks like that in Figure \ref{fig(3-1)} (B) (resp. (C)), where $\wE := \wE _1$. 

Therefore, the proof of Proposition \ref{prop(3-0)} is completed. 
\section{Construction of cylinders in rational surfaces}\label{4}
\subsection{Notation on some curves}\label{4-1}
Let $S$ be a normal projective rational surface such that $\Sing (S) \not= \emptyset$, let $f:\wS \to S$ be the minimal resolution, and let $\wD$ be the reduced exceptional divisor of $f$. 
Assume that there exists a $\bP ^1$-fibration $g:\wS \to \bP ^1_{\bk}$. 

In this section, we will construct polarized cylinders in $S$ under several conditions about the $\bP ^1$-fibration $g$ (see \S\S \ref{4-2}--\ref{4-7}). 
Hence, we shall prepare some notations in this subsection. 
\begin{defin}
Let the notation be the same as above. Then: 
\begin{enumerate}
\item We say that $g$ satisfies the condition $(\ast)$ if the following two conditions hold: 
\begin{itemize}
\item There exists exactly one irreducible component $\wD _0$, which is a section of $g$, of $\wD$. 
Moreover, $(-K_{\wS})^2 = 4-m_0$ holds, where $m_0 := -(\wD _0) \ge 2$. 
\item Any irreducible component of $\wD - \wD _0$ is contained in singular fibers of $g$ and is either a $(-1)$-curves or a $(-2)$-curves. 
\end{itemize}
\item We say that $g$ satisfies the condition $(\ast \ast)$ if the following two conditions hold: 
\begin{itemize}
\item There exist exactly two irreducible components $\wD _0$ and $\wD _{\infty}$, which are sections of $g$, of $\wD$. 
Moreover, $(-K_{\wS})^2 = 6-m_0-m_{\infty}$ holds, where $m_0 := -(\wD _0) \ge 2$ and $m_{\infty} := -(\wD _{\infty}) \ge 2$. 
\item Any irreducible component of $\wD - (\wD _0+\wD _{\infty})$ is contained in singular fibers of $g$ and is either a $(-1)$-curves or a $(-2)$-curves. 
\end{itemize}
\end{enumerate}
\end{defin}
In \S \ref{5}, we will show that almost Du Val del Pezzo surface of degree 2 admits a $\bP ^1$-fibration satisfying either condition $(\ast )$ or $(\ast \ast )$ (see Lemmas \ref{lem(4-1)}, \ref{lem(4-2)}, \ref{lem(4-3)}, \ref{lem(4-4)}, \ref{lem(4-5)} and \ref{lem(4-6)}). 

In what follows, assume that there exists a $\bP ^1$-fibration $g:\wS \to \bP ^1_{\bk}$ satisfying either condition $(\ast )$ or $(\ast \ast )$. 
Namely, $g$ admits a section $\wD _0$, which is an irreducible component of $\wD$, and $(\wD _0)^2 = -m_0$ holds. 
From now on, we prepare some notation to be used in \S\S \ref{4-2}--\ref{4-7}. 

First, $\wF$ denotes a general fiber of $g$. 
In what follows, we consider the notation for singular fibers of $g$. 
By Lemma \ref{lem(2-1-2)}, there are three kinds of singular fibers of $g$. 
Let $r$, $s$ and $t$ be the numbers of singular fibers of $g$ corresponding to (\ref{I-1}), (\ref{I-2}) and (\ref{II}), respectively, and let $\wF _1,\dots ,\wF _{r+s+t}$ be all singular fibers of $g$. 
Here, if $r>0$ (resp. $s>0$, $t>0$), singular fibers $\{ \wF _i\} _{1 \le i \le r}$ (resp. $\{ \wF _{r+i} \} _{1 \le i \le s}$, $\{ \wF _{r+s+i} \} _{1 \le i \le t}$) correspond to (\ref{I-1}) (resp. (\ref{I-2}), (\ref{II})). 

Suppose $r>0$. 
Then, for $i=1,\dots ,r$ let $\{ \wD _{i,\lambda}\} _{0 \le \lambda \le \alpha _i}$ be all irreducible components of $\wF _i$, where we assume that the weighted dual graph of $\wD _0 + \wF _i$ is as follows: 
\begin{align*}
\xygraph{\circ ([]!{+(0,-.3)} {^{-m_0}}) ([]!{+(0,.25)} {^{\wD_0}}) -[rr] \bullet ([]!{+(0,.25)} {^{\wD_{i,0}}}) - []!{+(1.5,0)} \circ ([]!{+(0,.25)} {^{\wD_{i,1}}}) - []!{+(1.5,0)} \cdots - []!{+(1.5,0)} \circ ([]!{+(0,.25)} {^{\wD_{i,\alpha _i-1}}}) - []!{+(1.5,0)} \bullet([]!{+(0,.25)} {^{\wD_{i,\alpha _i}}})}
\end{align*}
Hence, notice that $\sharp \wF _i = \alpha _i + 1$. Furthermore, we put $\wE _i' := \wD _{i,0}$ and $\wE _i := \wD _{i,\alpha _i}$. 

Suppose $s>0$. 
Then, for $i=1,\dots ,s$ let $\{ \wD _{r+i,\lambda}\} _{0 \le \lambda \le \beta _i + \beta _i'}$ be all irreducible components of $\wF _{r+i}$, where we assume that the weighted dual graph of $\wD _0 + \wF _{r+i}$ is as follows: 
\begin{align*}
\xygraph{\circ ([]!{+(0,-.3)} {^{-m_0}}) ([]!{+(0,.25)} {^{\wD_0}}) -[rr] \circ ([]!{+(0,.25)} {^{\wD_{r+i,0}}}) (- []!{+(1.5,0.5)} \circ ([]!{+(0,.25)} {^{\wD_{r+i,1}}}) - []!{+(1.5,0)} \cdots - []!{+(1.5,0)} \circ ([]!{+(0,.25)} {^{\wD_{r+i,\beta _i-1}}}) - []!{+(1.5,0)} \bullet ([]!{+(0,.25)} {^{\wD_{r+i,\beta _i}}}), - []!{+(1.5,-.5)} \circ ([]!{+(0,.25)} {^{\wD_{r+i,\beta _i+1}}}) - []!{+(1.5,0)} \cdots - []!{+(1.5,0)} \circ ([]!{+(0,.25)} {^{\wD_{r+i,\beta _i + \beta _i' -1}}}) - []!{+(1.5,0)} \bullet ([]!{+(0,.25)} {^{\wD_{r+i,\beta _i + \beta _i'}}}))}
\end{align*}
and assume further that $\beta _i \ge \beta _i'$. 
Hence, notice that $\sharp \wF _{r+i} = \beta _i + \beta _i' + 1$. Furthermore, we put $\wE _{r+i} := \wD _{r+i,\beta _i}$ and $\wE _{r+i}' := \wD _{r+i,\beta _i + \beta _i'}$. 

Suppose $t>0$. 
Then, for $i=1,\dots ,t$ let $\{ \wD _{r+s+i,\lambda}\} _{0 \le \lambda \le \gamma _i}$ be all irreducible components of $\wF _{r+s+i}$, where we assume that the weighted dual graph of $\wD _0 + \wF _{r+s+i}$ is as follows: 
\begin{align*}
\xygraph{\bullet ([]!{+(0,.25)} {^{\wD_{r+s+i,\gamma _i}}}) - []!{+(-1.5,0)} \circ ([]!{+(0,.25)} {^{\wD_{r+s+i,\gamma _i-1}}}) - []!{+(-1.5,0)} \cdots - []!{+(-1.5,0)} \circ ([]!{+(0,.25)} {^{\wD_{r+s+i,2}}}) (- []!{+(-1.5,-0.5)} \circ ([]!{+(0,.25)} {^{\wD_{r+s+i,1}}}),- []!{+(-1.5,0.5)} \circ ([]!{+(0,.25)} {^{\wD_{r+s+i,0}}}) -[ll] \circ ([]!{+(0,-.3)} {^{-m_0}}) ([]!{+(0,.25)} {^{\wD_0}}))}
\end{align*}
Hence, notice that $\sharp \wF _{r+s+i} = \gamma _i +1$. Furthermore, we put $\wE _{r+s+i} := \wD _{r+s+i,\gamma _i}$. 

Now, set $\alpha := \sum _{i=1}^r\alpha _i$, $\beta := \sum _{i=1}^s\beta _i$, $\beta ':= \sum _{i=1}^s\beta _i'$ and $\gamma := \sum _{i=1}^t\gamma _i$. 
Then we notice $(-K_{\wS})^2 = 8- (\alpha + \beta + \beta ' + \gamma ) = 6-m_0-m_{\infty}$, where $m_{\infty} := 2$ if $g$ satisfies $(\ast)$. 

Put $F := f_{\ast}(\wF)$, $E_i := f_{\ast}(\wE _i)$ for $i=1,\dots ,r+s+t$ and $E_j' := f_{\ast}(\wE _j')$ for $j=1,\dots ,r+s$ (if $r+s>0$). 
Then we know $E _i' \sim _{\bQ} F - E_i$ for $i=1,\dots ,r+s$ (if $r+s>0$) and $2E_{r+s+j} \sim _{\bQ} F$ for $j=1,\dots ,t$ (if $t>0$). 
\subsection{Type $\sD _n$ $(n \not = 5)$ or $\sE _n$ case}\label{4-2}
In this subsection, we keep the notation from \S \S \ref{4-1} and assume further that $g$ satisfies $(\ast)$ and one of the following conditions holds: 
\begin{enumerate}
\item $\gamma \ge 5$; 
\item $\gamma > 0$ and $\beta ' \ge 2$; 
\item $\gamma =0$ and $\beta ' \ge 3$, 
\end{enumerate}
Then we have the following result, which is the main result of this subsection: 
\begin{prop}\label{prop(4-1)}
With the same notations and assumptions as above, we have $\Ampc (S) = \Amp (S)$. 
\end{prop}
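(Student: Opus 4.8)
The plan is to produce, for an arbitrary ample $\bQ$-divisor $H$ on $S$, an effective $\bQ$-divisor $D \sim _{\bQ} H$ with $S \backslash \Supp (D)$ a cylinder, carrying out the construction on the resolution. It suffices to build an effective $\bQ$-divisor $\Theta$ on $\wS$ with $\Theta \sim _{\bQ} f^{\ast}H$, $\Supp (\Theta) \supseteq \wD$, and $\wS \backslash \Supp (\Theta)$ a cylinder: since $\Supp (\Theta)$ then contains the whole exceptional locus of $f$, the morphism $f$ restricts to an isomorphism $\wS \backslash \Supp (\Theta) \xrightarrow{\sim} S \backslash \Supp (f_{\ast}\Theta)$, so that $D := f_{\ast}\Theta \sim _{\bQ} H$ is as desired.

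For the cylinder, recall that by $(\ast)$ the curve $\wD _0$ is a section of $g$ and every other component of $\wD$ lies in a singular fiber. I would take $\Theta$ supported on $\wD _0$, on one general fiber $\wF$, and on all the singular fibers $\wF _1,\dots ,\wF _{r+s+t}$, with each of these curves appearing with positive coefficient. Removing the section $\wD _0$ makes every smooth fiber an $\bA ^1_{\bk}$, so $\wS \backslash \Supp (\Theta)$ is an $\bA ^1_{\bk}$-bundle over the open subset of $\bP ^1_{\bk}$ obtained by deleting the points under the removed fibers; that base is an affine rational curve with trivial Picard group, hence the bundle is trivial and the complement is a cylinder. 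By $(\ast)$ we automatically get $\Supp (\Theta) \supseteq \wD$. In the delicate cases below I would instead leave certain terminal $(-1)$-curves $\wE$ out of $\Supp (\Theta)$, verifying via Lemma \ref{lem(2-1-1)} that the complement remains a cylinder; this is the point that makes the present argument more intricate than \cite{Saw1}.

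Writing $\Theta = a_0 \wD _0 + b\wF + \sum _{i,\lambda}c_{i,\lambda}\wD _{i,\lambda}$ and using that numerical and $\bQ$-linear equivalence coincide on the rational surface $\wS$, the relation $\Theta \sim _{\bQ} f^{\ast}H$ becomes the linear system $(\Theta \cdot \wC) = (f^{\ast}H \cdot \wC)$ over a basis of curve classes. Here $(f^{\ast}H \cdot \wC) = 0$ for every component $\wC$ of $\wD$, while $(f^{\ast}H \cdot \wF) = (H \cdot F) > 0$ and $(f^{\ast}H \cdot \wE) = (H \cdot f_{\ast}\wE) > 0$ for each terminal $(-1)$-curve $\wE \in \{ \wE _i,\wE _i',\wE _{r+s+j}\}$. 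Intersecting with $\wF$ forces $a_0 = (H \cdot F) > 0$; the vanishing conditions on the interior $(-2)$-curves of each singular fiber determine the $c_{i,\lambda}$ along that fiber from $b$ and the prescribed positive values on its terminal $(-1)$-curve(s); and intersecting with $\wD _0$ gives $b = a_0 m_0 - \sum _i c_{i,0}$, where $c_{i,0}$ is the coefficient of the component of $\wF _i$ meeting the section. Since $\wD _0$, $\wF$ and the fiber components span $\Pic (\wS)_{\bQ}$, the system is solvable, and the remaining freedom is the $(r+s+t)$-parameter family of substitutions $c_{i,\lambda} \mapsto c_{i,\lambda} + \varepsilon _i m_{i,\lambda}$, $b \mapsto b - \varepsilon _i$ coming from the relations $\wF _i \sim \wF$, where $m_{i,\lambda}$ is the multiplicity of $\wD _{i,\lambda}$ in $\wF _i$.

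The entire content is then to use this freedom to achieve $b \ge 0$ and all $c_{i,\lambda} > 0$ simultaneously, for \emph{every} ample $H$ and not merely for $-K_S$. Solving the chain of vanishing conditions expresses each interior coefficient as a combination of its two boundary data, weighted by position along the chain (the computations being of the type in Lemmas \ref{lem(3-1)} and \ref{lem(3-2)}), so a short fiber leaves no room and positivity can genuinely fail; the slack that absorbs an arbitrary ample class is supplied by the branching type-(\ref{II}) fibers and by the short branches of the type-(\ref{I-2}) fibers, which is exactly what the hypotheses quantify. Accordingly I would split into the three cases $\gamma \ge 5$; $\gamma > 0$ with $\beta ' \ge 2$; and $\gamma = 0$ with $\beta ' \ge 3$, and in each case exhibit explicit values of the free parameters $\varepsilon _i$ — together with the choice of which terminal $(-1)$-curves to drop from $\Supp (\Theta)$ — rendering all coefficients positive while keeping $b \ge 0$. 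This simultaneous-positivity bookkeeping, case by case, is the main obstacle.
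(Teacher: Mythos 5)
Your reduction coincides with the paper's---the cylinder is the complement of $\wD _0 + \wF + \wF _1 + \dots + \wF _{r+s+t}$, cylindricity comes from Lemma \ref{lem(2-1-1)} (1), and the task becomes solving for positive coefficients on this configuration within the class of $H$---but the decisive step is missing, and you flag it yourself (``this simultaneous-positivity bookkeeping \dots is the main obstacle''). Writing $H \sim _{\bQ} aF + \sum _i b_iE_i + \sum _j c_jE_{r+j}$, one has $b_i = -\alpha _i(H \cdot E_i) < 0$ for every $i$, and after using the relations $E_i' \sim _{\bQ} F - E_i$ and $2E_{r+s+k} \sim _{\bQ} F$ (your $\varepsilon _i$-substitutions, pushed to $S$), all coefficients can be made positive precisely when $d_{r,s'} := a + \sum _i b_i + \sum _{c_j<0}c_j > 0$. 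This inequality is \emph{not} a formal consequence of the ampleness inequalities against the classes of the configuration: it asserts that $\Amp (S)$ lies in the cone spanned by $F$, $E_i$, $E_i'$, $E_{r+j}$, $E_{r+j}'$, $E_{r+s+k}$, and dually one must exhibit an effective class whose pairing with $H$ equals (a multiple of) $d_{r,s'}$. That is exactly what the paper's Lemmas \ref{lem(3-1-1)} and \ref{lem(3-1-2)} supply: an explicit divisor $\wDelta$ in the class $2\wD _0 + 2m_0\wF$ minus weighted fiber components, with $\dim |\wDelta| \ge 3m_0+2-3\alpha -3\beta -\gamma = 3\beta '+2\gamma -10$ by Riemann--Roch together with $(-K_{\wS})^2 = 8-(\alpha +\beta +\beta '+\gamma ) = 4-m_0$; hypotheses (1) and (2) make this $\ge 0$ (using that $\gamma \ge 2$ whenever $\gamma >0$), while under (3) one passes to the $\bZ$-divisor $\frac{1}{2}\wDelta$ (possible since $t=0$) with $\dim |\frac{1}{2}\wDelta| \ge \beta '-3 \ge 0$. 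A general member pushes forward to a nonzero effective divisor $f_{\ast}(\wC )$ with $(H \cdot f_{\ast}(\wC )) = 2d_{r,s'}$ (resp.\ $d_{r,s'}$), and ampleness then yields the positivity. Your proposal contains no mechanism by which the hypotheses $\gamma \ge 5$, $\beta ' \ge 2$, $\beta ' \ge 3$ actually enter: linear manipulation of the $\varepsilon _i$ alone only moves representatives within a fixed class and cannot produce $d_{r,s'}>0$, which can fail for points of the cone cut out by the configuration inequalities alone. So the heart of the proof---the Riemann--Roch construction of this auxiliary effective class---is absent.

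A secondary problem: your fallback of leaving certain terminal $(-1)$-curves out of $\Supp (\Theta )$ is both unnecessary and unjustified here. Once $d_{r,s'}>0$, the paper's divisor $D$ in Lemma \ref{lem(3-1-3)} carries \emph{every} component of the configuration with positive coefficient---the negative $c_j$ are rerouted to the opposite branch tip via $E_{r+j}' \sim _{\bQ} F - E_{r+j}$, just as the $b_i<0$ go to $E_i'$---so Lemma \ref{lem(2-1-1)} (1) applies verbatim. A complement retaining a terminal $(-1)$-curve of an otherwise removed fiber is not covered by Lemma \ref{lem(2-1-1)} (1) (the induced $\bA ^1$-fibration degenerates over that point), and parts (2) and (3) of that lemma require an additional horizontal curve ($\wGamma$ or a second section $\wD _{\infty}$), which condition $(\ast )$ does not provide; those devices belong to \S\S \ref{4-3}--\ref{4-7}, not to the situation of this proposition.
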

Proposition \ref{prop(4-1)} can be shown by a similar argument to {\cite[\S 4.1]{Saw25}}. 
For the reader's convenience, we present the proof of this proposition. 

Let $H \in \Amp (S)$. 
Since $\Amp (S)$ is contained in $\Cl (S)_{\bQ} = \bQ [F] \oplus \left( \bigoplus _{i=1}^r\bQ [E_i] \right) \oplus \left( \bigoplus _{j=1}^s\bQ [E_{r+j}] \right)$, we can write:
\begin{align*}
H \sim _{\bQ} aF + \sum _{i=1}^rb_iE_i + \sum _{j=1}^sc_jE_{r+j}
\end{align*}
for some rational numbers $a,b_1,\dots ,b_r,c_1,\dots ,c_s$. 
Then we note $b_i = -\alpha _i(H \cdot E_i)<0$ for $i=1,\dots ,r$ if $r>0$. 
Put $s' := \sharp \{ j \in \{1,\dots ,s\} \,|\, c_j <0 \}$, where $s' := 0$ if $s=0$. 
In what follows, we assume $c_j<0$ for $j=1,\dots ,s'$ when $s'>0$. 
\begin{lem}\label{lem(3-1-1)}
We set the divisor: 
\begin{align*}
\wDelta &:= 2\wD _0 + 2m_0\wF - \sum _{i=1}^r \sum _{\lambda =1}^{\alpha _i}2\lambda \wD _{i,\lambda} \\
&\qquad - \sum _{j=1}^{s'} \sum _{\mu =1}^{\beta _j}2\mu \wD _{r+j,\mu} 
- \sum _{j'=s'+1}^s \sum _{\mu' =1}^{\beta _{j'}'}2\mu' \wD _{r+j',\beta_{j'}+\mu'} 
- \sum _{k=1}^t\sum _{\nu =1}^{\gamma _k}\nu \wD _{r+s+k,\nu}
\end{align*}
on $\wS$. 
Then the following assertions hold: 
\begin{enumerate}
\item $\dim |\wDelta| \ge 3m_0+2 -3\alpha -3\beta -\gamma$. 
\item If $t=0$, then $\frac{1}{2}\wDelta$ is a $\bZ$-divisor and further $\dim |\frac{1}{2}\wDelta| \ge m_0+1-\alpha -\beta$. 
\end{enumerate}
\end{lem}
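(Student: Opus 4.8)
The plan is to obtain both dimension bounds from the Riemann--Roch inequality on the rational surface $\wS$. Since $\wS$ is rational, $\chi(\sO_{\wS}) = 1$, so for any divisor $\wDelta$ one has $h^0(\wDelta) \ge \chi(\sO_{\wS}(\wDelta)) = 1 + \frac{1}{2}(\wDelta \cdot \wDelta - K_{\wS})$ provided $h^2(\wDelta) = h^0(K_{\wS} - \wDelta) = 0$. To verify this vanishing I would use that $\wF$ is nef (being a fiber of $g$) together with $\wDelta \cdot \wF = 2(\wD _0 \cdot \wF) = 2 > 0$; then $(K_{\wS} - \wDelta)\cdot \wF = -2 - 2 < 0$, so $K_{\wS} - \wDelta$ cannot be effective. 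Hence $\dim|\wDelta| \ge \frac{1}{2}(\wDelta \cdot \wDelta - K_{\wS})$, and it remains to compute $\wDelta \cdot K_{\wS}$ and $(\wDelta)^2$. Notice that the hypotheses (1)--(3) of this subsection play no role in the dimension count; they will be used later in the construction of the cylinder.

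For $\wDelta \cdot K_{\wS}$, I would use adjunction together with Lemma~\ref{lem(2-2-2)}~(3): each $(-2)$-curve is orthogonal to $K_{\wS}$, each $(-1)$-curve has intersection $-1$ with $K_{\wS}$, while $\wD _0 \cdot K_{\wS} = m_0 - 2$ and $\wF \cdot K_{\wS} = -2$. The crucial observation is that in each correction chain only the terminal $(-1)$-curve (namely $\wE _i$, $\wE _{r+j}$, $\wE _{r+j'}'$, or $\wE _{r+s+k}$) contributes, carrying coefficient $2\alpha _i$, $2\beta _j$, $2\beta _{j'}'$, or $\gamma _k$ respectively. Writing $B_1 := \sum _{j=1}^{s'}\beta _j$ and $B_2 := \sum _{j'=s'+1}^{s}\beta _{j'}'$, this yields $\wDelta \cdot K_{\wS} = -2m_0 - 4 + 2\alpha + 2B_1 + 2B_2 + \gamma$.

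For $(\wDelta)^2$ I would split $\wDelta = (2\wD _0 + 2m_0\wF) - R$, where $R$ collects all the fibral correction terms. One checks $(2\wD _0 + 2m_0\wF)^2 = 4m_0$ directly, and that the cross term vanishes: $\wF \cdot R = 0$ because $R$ is fibral, while $\wD _0 \cdot R = 0$ because $\wD _0$ meets each singular fiber in the single component $\wD _{i,0}$, $\wD _{r+j,0}$, or $\wD _{r+s+k,0}$, none of which appears in $R$. Thus $(\wDelta)^2 = 4m_0 + R^2$, and since the summands of $R$ are supported on disjoint fibers (and, within a type~(\ref{I-2}) fiber, on a single arm because $i \le s'$ or $i > s'$) the evaluation of $R^2$ reduces to one chain computation per fiber. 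Each such chain is a string of $(-2)$-curves terminating in a $(-1)$-curve, with consecutive coefficients $2, 4, \dots, 2\ell$ (resp.\ $1, 2, \dots, \ell$ in the type~(\ref{II}) case); a telescoping computation gives self-intersection $-4\ell$ (resp.\ $-\ell$). Summing, $R^2 = -4\alpha - 4B_1 - 4B_2 - \gamma$, so $(\wDelta)^2 = 4m_0 - 4\alpha - 4B_1 - 4B_2 - \gamma$.

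Combining, $\dim|\wDelta| \ge \frac{1}{2}((\wDelta)^2 - \wDelta\cdot K_{\wS}) = 3m_0 + 2 - 3\alpha - 3B_1 - 3B_2 - \gamma$. Since $\beta _j' \le \beta _j$ for every $j$, one has $B_1 + B_2 \le \beta$, which upgrades this to the claimed bound $\dim|\wDelta| \ge 3m_0 + 2 - 3\alpha - 3\beta - \gamma$, proving (1). For (2), when $t = 0$ every coefficient of $\wDelta$ is even, so $\frac{1}{2}\wDelta$ is a $\bZ$-divisor; applying the same Riemann--Roch bound to $\frac{1}{2}\wDelta$ (whose intersection numbers are those above divided by $2$ and $4$, with $\gamma = 0$) and again using $B_1 + B_2 \le \beta$ gives $\dim|\frac{1}{2}\wDelta| \ge m_0 + 1 - \alpha - \beta$. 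I expect the main obstacle to be purely organizational: keeping track of which fibral components lie in $R$ and correctly reading off each chain's $(-1)$/$(-2)$ pattern, so that the cross-term vanishing and the per-fiber telescoping identities come out exactly as stated.
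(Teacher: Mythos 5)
Your proposal is correct and takes essentially the same route as the paper: the paper likewise computes $(\wDelta)^2 = 4m_0 - \sum 4\alpha_i - \sum_{j\le s'}4\beta_j - \sum_{j'>s'}4\beta_{j'}' - \sum\gamma_k$ and $(\wDelta\cdot -K_{\wS}) = 2(m_0+2) - \sum 2\alpha_i - \sum_{j \le s'}2\beta_j - \sum_{j'>s'}2\beta_{j'}' - \sum\gamma_k$, applies Riemann--Roch together with rationality of $\wS$, and uses $\beta_{j'} \ge \beta_{j'}'$ to pass to the stated bound, with part (2) handled exactly as you do. The only difference is that you spell out details the paper leaves implicit (the vanishing of $h^0(K_{\wS}-\wDelta)$ via nefness of $\wF$, and the per-chain telescoping), and these check out.
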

\begin{proof}
Note that: 
\begin{align*}
(\wDelta)^2 &= 4m_0 - \sum _{i=1}^r 4\alpha _i - \sum _{j=1}^{s'} 4\beta _j - \sum _{j'=s'+1}^s 4\beta _{j'}' - \sum _{k=1}^t \gamma _k \ge 4m_0 -4(\alpha + \beta) - \gamma ,\\
(\wDelta \cdot -K_{\wS}) &= 2(m_0+2) - \sum _{i=1}^r 2\alpha _i - \sum _{j=1}^{s'} 2\beta _j - \sum _{j'=s'+1}^s 2\beta _{j'}' - \sum _{k=1}^t \gamma _k \ge 2(m_0+2) -2(\alpha + \beta) - \gamma
\end{align*}
because $\beta _{j'} \ge \beta _{j'}'$ for every $j' = s'+1,\dots ,s$ (if $s' \not= s$). 
By the Riemann-Roch theorem and rationality of $\wS$, we have: 
\begin{align*}
\dim |\wDelta| \ge \frac{1}{2}(\wDelta \cdot \wDelta - K_{\wS}) \ge 3m_0+2 -3\alpha -3\beta -\gamma. 
\end{align*}
Hence, we obtain the assertion (1). 
In order to show the assertion (2), we assume $t=0$. 
Then $\frac{1}{2}\wDelta$ is obviously a $\bZ$-divisor on $\wS$ by the definition of $\wDelta$. 
The remaining assertion follows from the same argument as the above argument. 
\end{proof}
\begin{rem}
For the reader's convenience, we present figures of general members in linear systems related to Lemma \ref{lem(3-1-1)}. 
Let $\wDelta$ be the same as in Lemma \ref{lem(3-1-1)}. 
Then: 
\begin{itemize}
\item The configuration of a general member $\wC \in |\wDelta|$ is given as Figure \ref{C2} (a). 
\item Assuming $t=0$, the configuration of a general member $\wC \in |\frac{1}{2}\wDelta|$ is given as Figure \ref{C2} (b). 
\end{itemize}
\end{rem}
\begin{figure}
\begin{minipage}[c]{1\hsize}\begin{center}\scalebox{0.7}{\begin{tikzpicture}
\draw [thick] (-1,0)--(19,0);
\node at (-1.4,0) {$\wD _0$};
\node at (-2,6.3) {\Large (a)};
\node at (1.5,3.4) {\large $\cdots \cdots$};
\node at (5,3) {\Large $\cdots$};
\node at (10,2.7) {\Large $\cdots$};
\node at (15,3) {\large $\cdots \cdots$};
\draw [very thick] (-0.25,5.25)--(19,5.25);
\draw [very thick] (-0.25,5.75)--(13,5.75);
\draw [very thick] (-0.25,5.25) .. controls (-0.5,5.25) and (-0.5,5.75) .. (-0.25,5.75);
\node at (-0.8,5.5) {$\wC$};
\node at (0,-1.5) {};

\draw [dashed] (0,-0.25)--(0.5,1);
\draw (0.5,0.75)--(0,2);
\draw (0,1.75)--(0.5,3);
\draw (0.5,3.75)--(0,5);
\draw [dashed] (0,4.75)--(0.5,6);
\node at (0.5,3.45) {$\vdots$};
\node at (0,-0.6) {\footnotesize $\wE _1'$};
\node at (0.5,6.3) {\footnotesize $\wE _1$};

\draw [dashed] (2,-0.25)--(2.5,1);
\draw (2.5,0.75)--(2,2);
\draw (2,1.75)--(2.5,3);
\draw (2.5,3.75)--(2,5);
\draw [dashed] (2,4.75)--(2.5,6);
\node at (2.5,3.5) {$\vdots$};
\node at (2,-0.6) {\footnotesize $\wE _r'$};
\node at (2.5,6.3) {\footnotesize $\wE _r$};

\draw (3.5,-0.25)--(3.5,1);
\draw (3.5,1) .. controls (3.5,1.25) .. (3.75,1.25);
\draw (3.75,1.25)--(4.5,1.25);
\draw (3.75,1)--(4,1.75);
\draw (4.25,1)--(4.5,1.75);
\draw (3.75,2.25)--(4,1.5);
\draw (4.25,2.25)--(4.5,1.5);
\node at (3.75,2.75) {$\vdots$};
\node at (3.75,3.25) {$\vdots$};
\node at (4.25,2.75) {$\vdots$};
\node at (4.25,3.25) {$\vdots$};
\draw (3.75,3.5)--(4,4.25);
\draw (4.25,3.5)--(4.5,4.25);
\draw (3.75,4.75)--(4,4);
\draw [dashed] (4.25,4.75)--(4.5,4);
\draw [dashed] (3.75,4.25)--(4.25,6);
\node at (3.5,-0.6) {\footnotesize $\wD _{r+1,0}$};
\node at (4.9,4.5) {\footnotesize $\wE_{r+1}'$};
\node at (4.5,6.3) {\footnotesize $\wE _{r+1}$};

\draw (5.5,-0.25)--(5.5,1);
\draw (5.5,1) .. controls (5.5,1.25) .. (5.75,1.25);
\draw (5.75,1.25)--(6.5,1.25);
\draw (5.75,1)--(6,1.75);
\draw (6.25,1)--(6.5,1.75);
\draw (5.75,2.25)--(6,1.5);
\draw (6.25,2.25)--(6.5,1.5);
\node at (5.75,2.75) {$\vdots$};
\node at (5.75,3.25) {$\vdots$};
\node at (6.25,2.75) {$\vdots$};
\node at (6.25,3.25) {$\vdots$};
\draw (5.75,3.5)--(6,4.25);
\draw (6.25,3.5)--(6.5,4.25);
\draw (5.75,4.75)--(6,4);
\draw [dashed] (6.25,4.75)--(6.5,4);
\draw [dashed] (5.75,4.25)--(6.25,6);
\node at (5.5,-0.6) {\footnotesize $\wD _{r+s',0}$};
\node at (6.9,4.5) {\footnotesize $\wE_{r+s'}'$};
\node at (6.5,6.3) {\footnotesize $\wE _{r+s'}$};

\draw (8.25,-0.25)--(8.25,1);
\draw (8.25,1) .. controls (8.25,1.25) .. (8.5,1.25);
\draw (8.5,1.25)--(9.25,1.25);
\draw (8.5,1)--(8.75,1.75);
\draw (9,1)--(9.25,1.75);
\draw (8.5,2.25)--(8.75,1.5);
\draw (9,2.25)--(9.25,1.5);
\node at (8.5,2.75) {$\vdots$};
\node at (9,2.75) {$\vdots$};
\draw (8.5,3)--(8.75,3.75);
\draw (9,3)--(9.5,4.75);
\draw (8.5,4.25)--(8.75,3.5);
\draw [dashed] (8.75,4.75)--(8.5,4);
\draw [dashed] (9,6)--(9.5,4.25);
\node at (8.25,-0.6) {\footnotesize $\wD _{r+s'+1,0}$};
\node at (9.2,6.3) {\footnotesize $\wE_{r+s'+1}'$};
\node at (8,4.75) {\footnotesize $\wE _{r+s'+1}$};

\draw (10.75,-0.25)--(10.75,1);
\draw (10.75,1) .. controls (10.75,1.25) .. (11,1.25);
\draw (11,1.25)--(11.75,1.25);
\draw (11,1)--(11.25,1.75);
\draw (11.5,1)--(11.75,1.75);
\draw (11,2.25)--(11.25,1.5);
\draw (11.5,2.25)--(11.75,1.5);
\node at (11,2.75) {$\vdots$};
\node at (11.5,2.75) {$\vdots$};
\draw (11,3)--(11.25,3.75);
\draw (11.5,3)--(12,4.75);
\draw (11,4.25)--(11.25,3.5);
\draw [dashed] (11.25,4.75)--(11,4);
\draw [dashed] (11.5,6)--(12,4.25);
\node at (10.75,-0.6) {\footnotesize $\wD _{r+s,0}$};
\node at (11.5,6.3) {\footnotesize $\wE_{r+s}'$};
\node at (10.7,4.75) {\footnotesize $\wE _{r+s}$};

\draw (13,-0.25)--(13,1.5);
\draw (12.75,1.25)--(14,1.25);
\draw (13.75,1.5)--(13.75,0.25);
\draw (13.25,1)--(13.5,1.75);
\draw (13.25,2.25)--(13.5,1.5);
\node at (13.25,2.75) {$\vdots$};
\node at (13.25,3.25) {$\vdots$};
\draw (13.25,3.5)--(13.5,4.25);
\draw (13.25,4.75)--(13.5,4);
\draw [dashed] (13.25,4.25)--(13.75,6);
\node at (13,-0.6) {\footnotesize $\wD _{r+s+1,0}$};
\node at (14.7,1.25) {\footnotesize $\wD _{r+s+1,2}$};
\node at (14.6,0.5) {\footnotesize $\wD _{r+s+1,1}$};
\node at (13.75,6.3) {\footnotesize $\wE _{r+s+1}$};

\draw (16,-0.25)--(16,1.5);
\draw (15.75,1.25)--(17,1.25);
\draw (16.75,1.5)--(16.75,0.25);
\draw (16.25,1)--(16.5,1.75);
\draw (16.25,2.25)--(16.5,1.5);
\node at (16.25,2.75) {$\vdots$};
\node at (16.25,3.25) {$\vdots$};
\draw (16.25,3.5)--(16.5,4.25);
\draw (16.25,4.75)--(16.5,4);
\draw [dashed] (16.25,4.25)--(16.75,6);
\node at (16,-0.6) {\footnotesize $\wD _{r+s+t,0}$};
\node at (17.7,1.25) {\footnotesize $\wD _{r+s+t,2}$};
\node at (17.6,0.5) {\footnotesize $\wD _{r+s+t,1}$};
\node at (16.75,6.3) {\footnotesize $\wE _{r+s+t}$};
\end{tikzpicture}}\end{center}\end{minipage}

\begin{minipage}[c]{1\hsize}\begin{center}\scalebox{0.7}{\begin{tikzpicture}
\draw [thick] (-1,0)--(13,0);
\node at (-1.4,0) {$\wD _0$};
\node at (-2,6.3) {\Large (b)};
\node at (1.5,3.4) {\large $\cdots \cdots$};
\node at (5,3) {\Large $\cdots$};
\node at (10,2.7) {\Large $\cdots$};
\draw [very thick] (-0.25,5.55)--(13,5.5);
\node at (-0.6,5.5) {$\wC$};

\draw [dashed] (0,-0.25)--(0.5,1);
\draw (0.5,0.75)--(0,2);
\draw (0,1.75)--(0.5,3);
\draw (0.5,3.75)--(0,5);
\draw [dashed] (0,4.75)--(0.5,6);
\node at (0.5,3.45) {$\vdots$};
\node at (0,-0.6) {\footnotesize $\wE _1'$};
\node at (0.5,6.3) {\footnotesize $\wE _1$};

\draw [dashed] (2,-0.25)--(2.5,1);
\draw (2.5,0.75)--(2,2);
\draw (2,1.75)--(2.5,3);
\draw (2.5,3.75)--(2,5);
\draw [dashed] (2,4.75)--(2.5,6);
\node at (2.5,3.5) {$\vdots$};
\node at (2,-0.6) {\footnotesize $\wE _r'$};
\node at (2.5,6.3) {\footnotesize $\wE _r$};

\draw (3.5,-0.25)--(3.5,1);
\draw (3.5,1) .. controls (3.5,1.25) .. (3.75,1.25);
\draw (3.75,1.25)--(4.5,1.25);
\draw (3.75,1)--(4,1.75);
\draw (4.25,1)--(4.5,1.75);
\draw (3.75,2.25)--(4,1.5);
\draw (4.25,2.25)--(4.5,1.5);
\node at (3.75,2.75) {$\vdots$};
\node at (3.75,3.25) {$\vdots$};
\node at (4.25,2.75) {$\vdots$};
\node at (4.25,3.25) {$\vdots$};
\draw (3.75,3.5)--(4,4.25);
\draw (4.25,3.5)--(4.5,4.25);
\draw (3.75,4.75)--(4,4);
\draw [dashed] (4.25,4.75)--(4.5,4);
\draw [dashed] (3.75,4.25)--(4.25,6);
\node at (3.5,-0.6) {\footnotesize $\wD _{r+1,0}$};
\node at (4.9,4.5) {\footnotesize $\wE_{r+1}'$};
\node at (4.5,6.3) {\footnotesize $\wE _{r+1}$};

\draw (5.5,-0.25)--(5.5,1);
\draw (5.5,1) .. controls (5.5,1.25) .. (5.75,1.25);
\draw (5.75,1.25)--(6.5,1.25);
\draw (5.75,1)--(6,1.75);
\draw (6.25,1)--(6.5,1.75);
\draw (5.75,2.25)--(6,1.5);
\draw (6.25,2.25)--(6.5,1.5);
\node at (5.75,2.75) {$\vdots$};
\node at (5.75,3.25) {$\vdots$};
\node at (6.25,2.75) {$\vdots$};
\node at (6.25,3.25) {$\vdots$};
\draw (5.75,3.5)--(6,4.25);
\draw (6.25,3.5)--(6.5,4.25);
\draw (5.75,4.75)--(6,4);
\draw [dashed] (6.25,4.75)--(6.5,4);
\draw [dashed] (5.75,4.25)--(6.25,6);
\node at (5.5,-0.6) {\footnotesize $\wD _{r+s',0}$};
\node at (6.9,4.5) {\footnotesize $\wE_{r+s'}'$};
\node at (6.5,6.3) {\footnotesize $\wE _{r+s'}$};

\draw (8.25,-0.25)--(8.25,1);
\draw (8.25,1) .. controls (8.25,1.25) .. (8.5,1.25);
\draw (8.5,1.25)--(9.25,1.25);
\draw (8.5,1)--(8.75,1.75);
\draw (9,1)--(9.25,1.75);
\draw (8.5,2.25)--(8.75,1.5);
\draw (9,2.25)--(9.25,1.5);
\node at (8.5,2.75) {$\vdots$};
\node at (9,2.75) {$\vdots$};
\draw (8.5,3)--(8.75,3.75);
\draw (9,3)--(9.5,4.75);
\draw (8.5,4.25)--(8.75,3.5);
\draw [dashed] (8.75,4.75)--(8.5,4);
\draw [dashed] (9,6)--(9.5,4.25);
\node at (8.25,-0.6) {\footnotesize $\wD _{r+s'+1,0}$};
\node at (9.2,6.3) {\footnotesize $\wE_{r+s'+1}'$};
\node at (8,4.75) {\footnotesize $\wE _{r+s'+1}$};

\draw (10.75,-0.25)--(10.75,1);
\draw (10.75,1) .. controls (10.75,1.25) .. (11,1.25);
\draw (11,1.25)--(11.75,1.25);
\draw (11,1)--(11.25,1.75);
\draw (11.5,1)--(11.75,1.75);
\draw (11,2.25)--(11.25,1.5);
\draw (11.5,2.25)--(11.75,1.5);
\node at (11,2.75) {$\vdots$};
\node at (11.5,2.75) {$\vdots$};
\draw (11,3)--(11.25,3.75);
\draw (11.5,3)--(12,4.75);
\draw (11,4.25)--(11.25,3.5);
\draw [dashed] (11.25,4.75)--(11,4);
\draw [dashed] (11.5,6)--(12,4.25);
\node at (10.75,-0.6) {\footnotesize $\wD _{r+s,0}$};
\node at (11.5,6.3) {\footnotesize $\wE_{r+s}'$};
\node at (10.7,4.75) {\footnotesize $\wE _{r+s}$};
\end{tikzpicture}}
\caption{The configuration of $\wC$ in Lemma \ref{lem(3-1-1)}}\label{C2}
\end{center}\end{minipage}
\end{figure}
For simplicity, we set $d_{r,s'} := a + \sum _{i=1}^rb_i + \sum _{j=1}^{s'}c_j $, where we consider $\sum _{j=1}^{s'}c_j = 0$ if $s'=0$. 
\begin{lem}\label{lem(3-1-2)}
If one of the following conditions holds: 
\begin{enumerate}
\item $\gamma \ge 5$; 
\item $\gamma > 0$ and $\beta ' \ge 2$; 
\item $\gamma =0$ and $\beta ' \ge 3$, 
\end{enumerate}
then $d_{r,s'} >0$. 
\end{lem}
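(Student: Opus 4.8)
The plan is to identify $d_{r,s'}$, up to a positive constant, with the intersection number of $H$ and the direct image of the divisor $\wDelta$ appearing in Lemma~\ref{lem(3-1-1)}, and then to force positivity from the ampleness of $H$ once $\wDelta$ is shown to be effective.

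The first and central step is the identity $(H \cdot f_{\ast}\wDelta) = 2 d_{r,s'}$. I would begin by checking that $(\wDelta \cdot \wD_l) = 0$ for every irreducible component $\wD_l$ of $\wD$. This is a direct computation on each singular fiber using the explicit coefficients of $\wDelta$: on the chains the weights $-2\lambda$ and $-\nu$ are precisely the harmonic ones, and one verifies the vanishing separately at the hub $\wD_0$, at the fork and the leaf of each type~(\ref{II}) fiber, and at the two ends. It follows that $\wDelta = f^{\ast}f_{\ast}\wDelta$, whence $(\wDelta \cdot \wC) = (f_{\ast}\wDelta \cdot f_{\ast}\wC)$ for every curve $\wC$ not contracted by $f$. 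Evaluating at the $(-1)$-curves bounding the fibers then gives $(f_{\ast}\wDelta \cdot F) = 2$, $(f_{\ast}\wDelta \cdot E_i) = 2$ for $i \le r$, and $(f_{\ast}\wDelta \cdot E_{r+j}) = 2$ or $0$ according as $j \le s'$ or $j > s'$. Expanding $H \sim_{\bQ} aF + \sum_{i=1}^r b_i E_i + \sum_{j=1}^s c_j E_{r+j}$ and pairing with $f_{\ast}\wDelta$ produces exactly $2(a + \sum_{i=1}^r b_i + \sum_{j=1}^{s'} c_j) = 2 d_{r,s'}$.

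With the identity in hand the inequality becomes an effectivity statement. Since $H$ is ample, $(H \cdot D) > 0$ for every nonzero effective divisor $D$ on $S$, and $(f_{\ast}\wDelta \cdot F) = 2 \neq 0$ shows that the class $f_{\ast}\wDelta$ is nonzero. Thus it suffices to find a nonzero effective representative: if $|\wDelta| \neq \emptyset$, any $N \in |\wDelta|$ has $f_{\ast}N$ effective, linearly equivalent to $f_{\ast}\wDelta$, and nonzero (it meets $F$ in $2$), so $2 d_{r,s'} = (H \cdot f_{\ast}N) > 0$. To settle $|\wDelta| \neq \emptyset$ I would rewrite the estimate of Lemma~\ref{lem(3-1-1)}(1) by substituting $m_0 = \alpha + \beta + \beta' + \gamma - 4$ (coming from $(-K_{\wS})^2 = 4 - m_0$), obtaining $\dim |\wDelta| \ge 3\beta' + 2\gamma - 10$. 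Because every type~(\ref{II}) fiber satisfies $\gamma_k \ge 3$, one has $\gamma \ge 3$ whenever $\gamma > 0$; hence case~(1) gives $2\gamma \ge 10$ and case~(2) gives $3\beta' + 2\gamma \ge 12$, and in both $|\wDelta| \neq \emptyset$. In case~(3) one has $\gamma = 0$, so $t = 0$ and $\tfrac12\wDelta$ is a $\bZ$-divisor; running the same argument with $\tfrac12\wDelta$ (for which $(f_{\ast}\tfrac12\wDelta \cdot F) = 1$) and rewriting Lemma~\ref{lem(3-1-1)}(2) as $\dim |\tfrac12\wDelta| \ge \beta' - 3 \ge 0$ gives $|\tfrac12\wDelta| \neq \emptyset$ and hence $d_{r,s'} > 0$.

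The main obstacle is the intersection identity of the second paragraph: the vanishing $(\wDelta \cdot \wD_l) = 0$ must be confirmed uniformly over all three fiber types, the type~(\ref{II}) fork being the least routine. The whole argument also hinges on the numerical observation that a type~(\ref{II}) fiber forces $\gamma \ge 3$, which is exactly what rescues the otherwise borderline case~(2) from the Riemann--Roch bound.
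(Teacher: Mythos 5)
Your argument is, in substance, the paper's own: the same auxiliary divisor $\wDelta$ from Lemma~\ref{lem(3-1-1)}, the same rewriting of the Riemann--Roch bounds as $3\beta'+2\gamma-10$ (for cases (1) and (2)) and $\beta'-3$ (for case (3)) via $m_0=\alpha+\beta+\beta'+\gamma-4$, and the same conclusion obtained by pushing a general member of the linear system forward to $S$ and pairing with the ample $H$. Your intersection identity $(H\cdot f_{\ast}\wDelta)=2d_{r,s'}$ is correct, and your route to it through the orthogonality $(\wDelta\cdot\wD_l)=0$ for every irreducible component $\wD_l$ of $\wD$ (hence $\wDelta=f^{\ast}f_{\ast}\wDelta$ and the projection formula) does check out at every vertex, including the type~(\ref{II}) fork; this makes explicit a computation the paper only asserts in the line $0<\tfrac{1}{2}(H\cdot f_{\ast}(\wC))=d_{r,s'}$. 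Your treatment of nonvanishing of $f_{\ast}N$ via $(f_{\ast}N\cdot F)\neq 0$ and the separate handling of case (3) with $\tfrac{1}{2}\wDelta$ likewise match the paper.

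One numerical claim is false, though harmlessly so. A type~(\ref{II}) fiber need not have $\gamma_k\ge 3$: the minimal one is $\wD_{r+s+k,0}+\wD_{r+s+k,1}+2\wE_{r+s+k}$, with the $(-1)$-curve sitting at the fork, i.e.\ $\gamma_k=2$ (one checks $(\wD_{r+s+k,0}+\wD_{r+s+k,1}+2\wE_{r+s+k})^2=0$, and the paper itself uses exactly such a fiber, with $\gamma=2$, in \S\ref{4-5}). Accordingly, the paper's proof invokes only the bound $\gamma\ge 2$ whenever $\gamma>0$. Your case~(2) estimate $3\beta'+2\gamma\ge 12$ is therefore not justified as stated; with the correct bound one gets $3\beta'+2\gamma\ge 6+4=10$, i.e.\ $\dim|\wDelta|\ge 3\beta'+2\gamma-10\ge 0$, now possibly with equality --- which is all the argument needs, so the lemma still follows. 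Your closing diagnosis that the fiber-type bound is what rescues the borderline case (2) is exactly right; just replace $\gamma_k\ge 3$ by $\gamma_k\ge 2$.
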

\begin{proof}
Let $\wDelta$ be the same as in Lemma \ref{lem(3-1-1)}. 
In this proof, we will use the formula $(-K_{\wS})^2 = 8 - (\alpha + \beta + \beta ' + \gamma ) = 4-m_0$. 

In (1) and (2), we then obtain $\dim |\wDelta| \ge 3\beta ' + 2\gamma -10 \ge 0$ by Lemma \ref{lem(3-1-1)} (1) combined with $-(\alpha + \beta) = -m_0 -4 + \beta ' + \gamma$. 
Here, we note that $\gamma \ge 2$ provided $\gamma > 0$. 
Hence, $|\wDelta| \not= \emptyset$, so that we can take a general member $\wC$ of $|\wDelta|$. 
Notice $f_{\ast}(\wC ) \sim _{\bQ} 2m_0F - \sum _{i=1}^r 2\alpha _i E_i - \sum _{j=1}^{s'} 2\beta _j E_{r+j} - \sum _{j'=s'+1}^s 2\beta _{j'}' E_{r+j'}' - \sum _{k=1}^t \gamma _k E_{r+s+k} \not\sim _{\bQ} 0$. 
Thus, we obtain $0 < \frac{1}{2}(H \cdot f_{\ast}(\wC )) = d_{r,s'}$ (see also Figure \ref{C2} (a)). 

In (3), we then obtain $\dim | \frac{1}{2}\wDelta| \ge -3+\beta ' \ge 0$ by Lemma \ref{lem(3-1-1)} (2) combined with $-(\alpha + \beta) = -m_0 -4 + \beta '$. 
Hence, $|\frac{1}{2}\wDelta| \not= \emptyset$, so that we can take a general member $\wC$ of $|\frac{1}{2}\wDelta|$. 
Notice $f_{\ast}(\wC ) \sim _{\bQ} m_0F - \sum _{i=1}^r \alpha _i E_i - \sum _{j=1}^{s'} \beta _j E_{r+j} - \sum _{j'=s'+1}^s \beta _{j'}' E_{r+j'}' \not\sim _{\bQ} 0$. 
Thus, we obtain $0 < (H \cdot f_{\ast}(\wC )) = d_{r,s'}$ (see also Figure \ref{C2} (b)). 
\end{proof}
Now, we shall consider $U := f(\wS \backslash \Supp (\wD _0 + \wF + \wF _1 +\dots + \wF _{r+s+t}))$. 
Notice that $U$ is a cylinder in $S$. 
Indeed: 
\begin{align*}
U \simeq \wS \backslash \Supp \left( \wD _0 + \wF + \wF _1 + \dots + \wF _{r+s+t} \right) \simeq \bA ^1_{\bk} \times (\bA ^1_{\bk} \backslash \{ (r+s+t)\text{ points}\})
\end{align*}
by Lemma \ref{lem(2-1-1)} (1). 
Then the following result holds: 
\begin{lem}\label{lem(3-1-3)}
If $d_{r,s'} >0$, then $U$ is an $H$-polar cylinder. 
\end{lem}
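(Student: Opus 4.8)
The plan is to produce an effective $\bQ$-divisor $D$ on $S$ with $D \sim _{\bQ} H$ whose support is exactly the boundary $S \backslash U$, namely the union of $F$ together with all the curves $E_i, E_i'$ $(1 \le i \le r)$, $E_{r+j}, E_{r+j}'$ $(1 \le j \le s)$ and $E_{r+s+k}$ $(1 \le k \le t)$. Note that the section $\wD _0$, and indeed every component of $\wD$, is $f$-exceptional, so it contributes only (singular) points of $S$ rather than divisorial components of $S \backslash U$; this is why no term ``$D_0$'' occurs.

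First I would rewrite the class of $H$ so that all coefficients become non-negative. Starting from $H \sim _{\bQ} aF + \sum _{i=1}^r b_i E_i + \sum _{j=1}^s c_j E_{r+j}$, and recalling that $b_i < 0$ for all $i$ while $c_j < 0$ exactly for $j \le s'$, I substitute $E_i \sim _{\bQ} F - E_i'$ and $E_{r+j} \sim _{\bQ} F - E_{r+j}'$ into the terms carrying negative coefficients. Each such term becomes a positive multiple of $E_i'$ (resp. $E_{r+j}'$) plus a negative multiple of $F$; collecting the $F$-terms yields
\[ H \sim _{\bQ} d_{r,s'} F + \sum _{i=1}^r |b_i| E_i' + \sum _{j=1}^{s'} |c_j| E_{r+j}' + \sum _{j=s'+1}^s c_j E_{r+j}, \]
an expression in which every coefficient is $\ge 0$ and the coefficient of $F$ is precisely $d_{r,s'}$.

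The hypothesis $d_{r,s'} > 0$ now supplies a positive ``budget'' on $F$ that I would spend in order to force every boundary curve into the support. Using the relations $F \sim _{\bQ} E_i + E_i'$, $F \sim _{\bQ} E_{r+j} + E_{r+j}'$ and $2 E_{r+s+k} \sim _{\bQ} F$, I replace, for each boundary curve still absent from the above expression, a sufficiently small positive multiple of $F$ by the corresponding effective combination, while retaining a strictly positive residual coefficient on $F$ itself. Since $d_{r,s'}>0$ these finitely many small subtractions can be arranged to keep the $F$-coefficient positive, and the result is an effective $\bQ$-divisor $D \sim _{\bQ} H$ in which $F$ and all of $E_i, E_i', E_{r+j}, E_{r+j}', E_{r+s+k}$ occur with strictly positive coefficient; in particular $\Supp (D)$ is contained in, and equal to, the whole boundary.

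Finally I would verify $U = S \backslash \Supp (D)$ as sets. By construction $\Supp (D)$ is the union of the boundary curves; the only points of $S \backslash U$ not visibly lying on these curves are the singular points arising from contracting $\wD$, but the image of the section lies on $F$ (as $(\wD _0 \cdot \wF) = 1$) and the images of the $(-2)$-curves inside the singular fibers lie on the relevant $E$-curves, so $S \backslash U = \Supp (D)$. The main obstacle is exactly this step: producing \emph{some} effective representative of $H$ supported in the boundary is automatic, whereas obtaining support equal to the \emph{entire} boundary — no component missing and nothing extra — is precisely what the positivity $d_{r,s'}>0$ buys, and one must check that spending the $F$-budget never drives a coefficient negative while still accounting, via the contracted curves, for all singular points of $S$ lying off $U$.
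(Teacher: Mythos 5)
Your proposal is correct and is essentially the paper's own proof: the paper takes exactly your divisor, substituting $E_k'\sim_{\bQ}F-E_k$ into the negative-coefficient terms and then spending the resulting $F$-budget $d_{r,s'}$ uniformly, namely $D:=\sum_{i=1}^r(-b_i)E_i'+\sum_{j=1}^{s'}(-c_j)E_{r+j}'+\sum_{j'=s'+1}^{s}c_{j'}E_{r+j'}+\frac{d_{r,s'}}{r+s+t+1}\bigl\{F+\sum_{k=1}^{r+s}(E_k+E_k')+\sum_{\ell=1}^{t}2E_{r+s+\ell}\bigr\}$, so that every boundary component carries a strictly positive coefficient at once. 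Your extra verification that the singular points of $S$ (images of $\wD_0$ and of the contracted fiber chains) lie on $F$ and the $E$-curves is a detail the paper leaves implicit, but it matches the intended argument.
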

\begin{proof}
We take the effective $\bQ$-divisor:
\begin{align*}
D := &\sum _{i=1}^r (-b_i) E_i' + \sum _{j=1}^{s'} (-c_j) E_{r+j}' + \sum _{j'=s'+1}^s c_{j'} E_{r+j'} \\
&\quad +\frac{d_{r,s'}}{r+s+t+1} \left \{F + \sum _{k=1}^{r+s}(E_k+E_k') + \sum _{\ell =1}^t2E_{r+s+\ell} \right\}
\end{align*}
on $S$. 
Then we know $D \sim _{\bQ} H$ and $S \backslash \Supp (D) = U$. 
Thus, $U$ is an $H$-polar cylinder. 
\end{proof}
Proposition \ref{prop(4-1)} follows from Lemmas \ref{lem(3-1-2)} and \ref{lem(3-1-3)}. 
\subsection{Type $\sD _5$ case}\label{4-3}
In this subsection, we keep the notation from \S \S \ref{4-1} and assume further that $g$ satisfies $(\ast)$, $(s,t) = (0,1)$ and $\gamma = 4$. 
Namely, $m_0 = \alpha$. 
By a similar argument to {\cite[Lemma 3.4]{Saw25}}, there exists the $(-1)$-curve $\wGamma$ on $\wS$ such that $\wGamma \sim \wD _0 + m_0\wF -\sum _{i=1}^r \sum _{\lambda = 1}^{\alpha _i} \lambda \wD _{i,\lambda} -\sum _{\mu = 1}^4 \wD _{r+1,\mu}$.  
Note that the configuration of the $\bP ^1$-fibration $g:\wS \to \bP ^1_{\bk}$ is given as in Figure \ref{Case2}. 
\begin{figure}\begin{center}\scalebox{0.7}
{\begin{tikzpicture}
\draw [very thick] (-1,0)--(7,0);
\node at (-1.4,0) {$\wD _0$};
\node at (1.75,3) {\Large $\cdots \cdots$};
\draw [thick][dashed] (-1,5.5)--(7,5.5);
\node at (-1.4,5.5) {$\wGamma$};
\node at (7.6,0) {$-m_0$};

\draw [dashed] (0.5,-0.25)--(0,1);
\draw (0,0.75)--(0.5,2);
\draw (0.5,1.75)--(0,3);
\draw (0,3.75)--(0.5,5);
\draw [dashed] (0.5,4.75)--(0,6);
\node at (0,3.5) {$\vdots$};
\node at (0.5,-0.6) {\footnotesize $\wE _1'$};
\node at (0.7,1.25) {\footnotesize $\wD _{1,1}$};
\node at (-0.1,2.25) {\footnotesize $\wD _{1,2}$};
\node at (-0.5,4.3) {\footnotesize $\wD _{1,\alpha _1-1}$};
\node at (0,6.3) {\footnotesize $\wE _1$};

\draw [dashed] (4,-0.25)--(3.5,1);
\draw (3.5,0.75)--(4,2);
\draw (4,1.75)--(3.5,3);
\draw (3.5,3.75)--(4,5);
\draw [dashed] (4,4.75)--(3.5,6);
\node at (3.5,3.5) {$\vdots$};
\node at (4,-0.6) {\footnotesize $\wE _r'$};
\node at (4.2,1.25) {\footnotesize $\wD _{r,1}$};
\node at (3.4,2.25) {\footnotesize $\wD _{r,2}$};
\node at (3,4.3) {\footnotesize $\wD _{r,\alpha _r-1}$};
\node at (3.5,6.3) {\footnotesize $\wE _r$};

\draw (5.5,-0.25)--(6.5,2);
\draw (6.3,0.9)--(6.3,4.85);
\draw (6.5,3.75)--(5.5,6);
\node at (5.5,-0.6) {\footnotesize $\wD _{r+1,0}$};
\node at (5.5,6.3) {\footnotesize $\wD _{r+1,1}$};
\node at (6.9,2.5) {\footnotesize $\wD _{r+1,2}$};
\draw (5,3)--(7,3);
\draw [dashed] (5.5,1.5)--(5.5,4.5);
\node at (4.7,3.3) {\footnotesize $\wD _{r+1,3}$};
\node at (5,4.2) {\footnotesize $\wE _{r+1}$};
\end{tikzpicture}}
\caption{The configuration of $g : \wS \to \bP ^1_{\bk}$ in Subsection \ref{4-3}.}\label{Case2}
\end{center}\end{figure}
Then we have the following result, which is the main result of this subsection: 
\begin{prop}\label{prop(4-2)}
With the same notations and assumptions as above, we have $\Ampc (S) = \Amp (S)$. 
\end{prop}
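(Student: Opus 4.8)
The plan is to follow the template established in §§4.2 for the $\sD_n$ $(n\neq 5)$ or $\sE_n$ case, but to account for the special geometry of the type $\sD_5$ situation, where $(s,t)=(0,1)$, $\gamma=4$, and the extra $(-1)$-curve $\wGamma$ with $\wGamma \sim \wD_0 + m_0\wF - \sum_{i=1}^r\sum_{\lambda=1}^{\alpha_i}\lambda\wD_{i,\lambda} - \sum_{\mu=1}^4\wD_{r+1,\mu}$ is available. As before, I take an arbitrary $H \in \Amp(S)$ and write $H \sim_{\bQ} aF + \sum_{i=1}^r b_i E_i$ (there are no $E_{r+j}$ terms now, since $s=0$), with $b_i = -\alpha_i(H\cdot E_i) < 0$. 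The goal is to produce an explicit effective $\bQ$-divisor $D \sim_{\bQ} H$ whose complement is a cylinder. Since $\wGamma$ is now present, the natural cylinder to use is obtained by \emph{including $\wGamma$ in the boundary} and applying Lemma \ref{lem(2-1-1)}.

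\textbf{Key steps.} First I would establish a positivity inequality analogous to Lemma \ref{lem(3-1-2)}: using the divisor class of $\wGamma$ and the Riemann--Roch computation from Lemma \ref{lem(3-1-1)}, show that the relevant coefficient (playing the role of $d_{r,s'}$) is positive. Concretely, I expect to consider the auxiliary divisor
\begin{align*}
\wDelta := 2\wD_0 + 2m_0\wF - \sum_{i=1}^r\sum_{\lambda=1}^{\alpha_i}2\lambda\wD_{i,\lambda} - \sum_{\nu=1}^{4}\nu\,\wD_{r+1,\nu},
\end{align*}
and compute $\dim|\wDelta|$ (or $\dim|\tfrac12\wDelta|$) by Riemann--Roch using $(-K_{\wS})^2 = 4-m_0 = 4-\alpha$. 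Pushing a general member $\wC$ of this system down by $f$ and pairing with $H$ should yield the inequality $a + \sum_i b_i > 0$. The presence of $\wGamma$ (which pushes to a curve $\Gamma := f_{\ast}(\wGamma)$ with $\Gamma \sim_{\bQ} m_0 F - \sum_i\alpha_i E_i$) gives an additional linear relation in $\Cl(S)_{\bQ}$ that I can exploit to rewrite $H$.

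\textbf{Constructing the cylinder.} Next I would identify the open set. By Lemma \ref{lem(2-1-1)}(2) or (3), removing $\wGamma$ together with $\wD_0$ and an appropriate fiber from a suitable Hirzebruch-surface model (after contracting the chains in the singular fibers) produces a cylinder isomorphic to $\bA^1_{\bk}\times\bA^1_{\ast,\bk}$, or after also deleting the remaining singular fibers $\wF_1,\dots,\wF_r$, a cylinder $\bA^1_{\bk}\times(\bA^1_{\bk}\backslash\{\text{points}\})$. The image $U := f(\wS\backslash\Supp(\wGamma+\wD_0+\wF+\wF_1+\dots+\wF_r+\wF_{r+1}))$ is then a cylinder in $S$. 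I would then write down an explicit effective $D\sim_{\bQ}H$ supported on $\Gamma$, $D_0$, $F$, and the $E_i,E_i'$, mimicking Lemma \ref{lem(3-1-3)}: the negative coefficients $-b_i>0$ supply the $E_i'$ terms, and the positive quantity from the inequality is distributed with equal weights across the fiber components $F$, $E_k+E_k'$, and the components $2E_{r+1+\ell}$ coming from the type-(\ref{II}) fiber, via a denominator like $r+2$.

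\textbf{Main obstacle.} The hard part will be the bookkeeping in the $\sD_5$ fiber of type (\ref{II}): the $(-1)$-curve $\wGamma$ meets $\wD_{r+1,1}$ (the terminal $(-2)$-curve in the fork) rather than $\wE_{r+1}=\wD_{r+1,\gamma}$, so the symmetry used in §4.2 breaks, and one must verify carefully that $\wGamma$, $\wD_0$, and a general fiber $\wF$ can be simultaneously removed to leave a cylinder---this requires identifying the correct Hirzebruch model via Lemma \ref{lem(2-1-1)}(2) (with $\wGamma$ playing the role of the curve $\hGamma \sim \hM + n\hF$) and checking the intersection pattern $\wGamma\cdot\wD_0$, $\wGamma\cdot\wF$. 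Once the cylinder is correctly identified and the coefficient positivity is confirmed, writing the final effective $D\sim_{\bQ}H$ and verifying $S\backslash\Supp(D)=U$ is routine.
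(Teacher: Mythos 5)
Your outline correctly senses that the $(-1)$-curve $\wGamma$ must enter the boundary, but the central positivity step is a genuine gap, not bookkeeping. Your auxiliary divisor $\wDelta := 2\wD_0 + 2m_0\wF - \sum_{i}\sum_{\lambda}2\lambda\wD_{i,\lambda} - \sum_{\nu=1}^{4}\nu\,\wD_{r+1,\nu}$ yields nothing from Riemann--Roch: since $m_0=\alpha$ and $\gamma=4$, one computes $(\wDelta)^2 = 4(m_0-\alpha)-4 = -4$ and $(\wDelta \cdot -K_{\wS}) = 2(m_0-\alpha)+4-4 = 0$, so the bound is $\dim|\wDelta| \ge -2$; and $\frac12\wDelta$ is not a $\bZ$-divisor (the type-(\ref{II}) coefficients $\nu=1,3$ are odd), so the variant of Lemma \ref{lem(3-1-1)}(2) is unavailable. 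The paper repairs this by inserting the correction term $+\wE_r$ (with indices ordered so that $\frac{b_r}{\alpha_r}$ is largest), which gives $(\wDelta)^2=-1$, $(\wDelta\cdot -K_{\wS})=1$, hence $\dim|\wDelta|\ge 0$, and pairing $f_{\ast}(\wDelta)$ with $H$ then produces only the \emph{weaker} inequality $2a+\sum_i 2b_i - \frac{b_r}{\alpha_r} > 0$ (Lemma \ref{lem(3-2-1)}). This weakening is unavoidable: your target inequality $a+\sum_i b_i>0$ is false for some ample $H$. For instance, for ${\rm Dyn}(S)=\sD_5+\sA_1$ one has $r=1$, $\alpha_1=2$, $m_0=2$, with $(F)^2=1$, $(F\cdot E_1)=0$, $(E_1)^2=-\frac12$, and $f_{\ast}(\wDelta) = 2F-3E_1$ is an effective class of negative self-intersection; the class $H = 3F-\frac{39}{10}E_1$ is positive against $E_1$, $E_1'$, $E_{r+1}$, $\Gamma$ and $2F-3E_1$, hence ample, yet $a+b_1 = -\frac{9}{10}<0$. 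So no Riemann--Roch computation can deliver the inequality your construction needs.

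The cylinder itself is also wrong as stated. Your $U = f\bigl(\wS\backslash\Supp(\wGamma+\wD_0+\wF+\wF_1+\dots+\wF_{r+1})\bigr)$ removes \emph{two} disjoint sections together with a general fiber \emph{and} all singular fibers; after deleting both sections each remaining fiber is $\bA^1_{\ast,\bk}$, so this open set is isomorphic to $(\bA^1_{\bk}\backslash\{r+1\text{ points}\})\times\bA^1_{\ast,\bk}$, which is not of the form $\bA^1_{\bk}\times Z$ --- Lemma \ref{lem(2-1-1)}(2) tolerates exactly one full fiber alongside $\hGamma$ and $\hM$. The paper instead removes $\wGamma+\wD_0+\wF_{r+1}+\sum_i(\wF_i-\wE_i')$ (or, for the small-slope indices, $\wF_i-\wE_i$), keeping one end-component of every chain fiber and no general fiber; the realizing divisor $D$ is supported on $\Gamma$, $E_{r+1}$ and the $E_i$ or $E_i'$ only (note also $f_{\ast}(\wD_0)=0$, and a general $F$ never appears in $\Supp(D)$), with a case division according to whether $\frac{b_1}{\alpha_1}=\frac{b_r}{\alpha_r}$ and a small parameter $\varepsilon$. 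The coefficient $-\frac{b_r}{\alpha_r}+\varepsilon>0$ of $\Gamma$ is precisely what absorbs the failure of $a+\sum_i b_i>0$, via the relation $\Gamma \sim_{\bQ} -\sum_i\alpha_iE_i+(2\alpha-1)E_{r+1}$. Without the corrected inequality of Lemma \ref{lem(3-2-1)} and this reshaped boundary, the argument does not close.
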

In what follows, we shall prove the above result. 
Let $H \in \Amp (S)$. 
Since $\Amp (S)$ is contained in $\Cl (S)_{\bQ} = \bQ [F] \oplus \left( \bigoplus _{i=1}^r \bQ [E_i] \right)$, we can write: 
\begin{align*}
H \sim _{\bQ} aF + \sum _{i=1}^rb_iE_i
\end{align*}
for some rational numbers $a,b_1,\dots ,b_r$. 
Without loss of generality, we may assume $\frac{b_1}{\alpha _1} \le \frac{b_2}{\alpha _2} \le \dots \le \frac{b_r}{\alpha _r}$. 
Then the following lemma holds: 
\begin{lem}\label{lem(3-2-1)}
$2a + \sum _{i=1}^r2b_i - \frac{b_r}{\alpha_r} > 0$. 
\end{lem}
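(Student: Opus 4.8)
The plan is to exhibit the number $2a+\sum_{i=1}^r2b_i-\frac{b_r}{\alpha_r}$ as the intersection of the ample divisor $H$ with a single effective class on $S$, and then quote ampleness. To set this up I would first record the intersection form on $S$ in the basis $F,E_1,\dots ,E_r$ of $\Cl(S)_{\bQ}$. Computing the pull-backs $f^{\ast}E_i=\wE_i+\sum_{\lambda=1}^{\alpha_i-1}\frac{\lambda}{\alpha_i}\wD_{i,\lambda}$ and (via $2E_{r+1}\sim_{\bQ}F$ and the shape of the type (\ref{II}) fibre) $f^{\ast}F=\wF+\frac{1}{m_0-1}(\cdots)$, the projection formula gives
\[
F^2=\tfrac{1}{m_0-1},\qquad (F\cdot E_i)=0,\qquad (E_i\cdot E_j)=-\tfrac{\delta_{ij}}{\alpha_i};
\]
in particular $(H\cdot E_i)=-\frac{b_i}{\alpha_i}>0$, which re-proves $b_i<0$ (and with $m_0=\alpha$ one has $F^2=1$ in the case $m_0=2$ of interest).

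Next I would set
\[
M:=2(m_0-1)F-2\sum_{k=1}^r\alpha_kE_k+E_r\qquad\big(\sim_{\bQ}2f_{\ast}\wGamma-E_r'\big),
\]
and compute, using the table above, that $(H\cdot M)=2a+\sum_{i=1}^r2b_i-\frac{b_r}{\alpha_r}$; the factor $2(m_0-1)$ in front of $F$ is chosen precisely so that $2(m_0-1)a\,F^2=2a$. Thus the lemma reduces to the statement that $M$ is a non-zero effective $\bQ$-divisor class, since then ampleness of $H$ forces $(H\cdot M)>0$; non-vanishing is immediate from the $F$-coefficient $2(m_0-1)\neq0$.

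The effectivity of $M$ is the crux and the step I expect to be the main obstacle. I would establish it on $\wS$ by Riemann--Roch applied to the integral divisor
\[
\widetilde{M}:=f^{\ast}(M-E_r)+\wE_r=2f^{\ast}\!\Big((m_0-1)F-\sum_{k}\alpha_kE_k\Big)+\wE_r,
\]
which is integral and satisfies $f_{\ast}\widetilde{M}=M$. (Note $M$ itself fails to be Cartier at the singular point $f(\wE_r)$, because of the lone summand $+E_r$, so $f^{\ast}M$ is not integral and one is forced to work with $\widetilde{M}$.) The key computation is that $(\widetilde{M})^2=-1$: writing $\widetilde{M}=f^{\ast}(M-E_r)+\wE_r$ and using $(M-E_r)^2=-4$ --- this is exactly where $(-K_{\wS})^2=4-m_0$ and $\sum_k\alpha_k=\alpha=m_0$ enter --- together with $(M-E_r)\cdot E_r=2$ and $(\wE_r)^2=-1$, the projection formula gives $(\widetilde{M})^2=-4+2\cdot2-1=-1$; similarly $(\widetilde{M}\cdot -K_{\wS})=1$ (using $(F\cdot -K_S)=2$ and $(E_k\cdot -K_S)=1$). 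Then, exactly as in Lemma \ref{lem(3-3)},
\[
\dim|\widetilde{M}|\ \ge\ \tfrac12\,\widetilde{M}\cdot(\widetilde{M}-K_{\wS})=\tfrac12(-1+1)=0,
\]
so $|\widetilde{M}|\neq\emptyset$ and $M=f_{\ast}\widetilde{M}$ is effective.

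Finally, choosing an effective $\widetilde{M}'\in|\widetilde{M}|$, the push-forward $f_{\ast}\widetilde{M}'$ is a non-zero effective divisor in the class $M$, whence $2a+\sum_{i=1}^r2b_i-\frac{b_r}{\alpha_r}=(H\cdot M)=(H\cdot f_{\ast}\widetilde{M}')>0$, as desired. The genuinely delicate point is the numerical coincidence $(\widetilde{M})^2=-1$: it is what lets Riemann--Roch produce a section, and it relies on the exact self-intersections recorded above, so those (in particular $F^2=\frac{1}{m_0-1}$ and the resulting $(M-E_r)^2=-4$) are what I would verify most carefully.
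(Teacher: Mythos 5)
Your proposal is correct and is essentially the paper's own proof: your $\widetilde{M}$ is linearly equivalent to the divisor $\wDelta = 2\wD _0 + 2m_0\wF - \sum _{i=1}^r\sum _{\lambda =1}^{\alpha _i}2\lambda \wD _{i,\lambda} + \wE _r - \sum _{\mu =1}^4\mu \wD _{r+1,\mu}$ that the paper feeds to Riemann--Roch (the difference is exactly twice the fiber relation $\wF \sim \wD _{r+1,0}+\wD _{r+1,1}+2(\wD _{r+1,2}+\wD _{r+1,3}+\wE _{r+1})$), the paper computes the same invariants $(\wDelta )^2=-1$ and $(\wDelta \cdot -K_{\wS})=1$, and it concludes by pairing $H$ with $f_{\ast}(\wDelta )\sim _{\bQ}M$. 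The only slip is your parenthetical $(F\cdot -K_S)=2$, which is valid just for $m_0=2$ (for general $m_0\ge 2$ one has $(F\cdot -K_S)=\frac{m_0}{m_0-1}$, since $f$ is crepant only over Du Val points, and the lemma is stated for arbitrary $m_0=\alpha$); computing $(\widetilde{M}\cdot -K_{\wS})$ directly on $\wS$, where the $\wD _0$-coefficient $2$ contributes $2(2-m_0)$, still yields $1$ for every $m_0$, so your argument is unaffected.
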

\begin{proof}
Consider the following divisor on $\wS$: 
\begin{align*}
\wDelta := 2\wD _0 + 2m_0\wF - \sum _{i=1}^r \sum _{\lambda = 1}^{\alpha _i} 2\lambda \wD _{i,\lambda} + \wE _r - \sum _{\mu = 1}^4\mu \wD _{r+1,\mu}
\end{align*}
Since $(\wDelta \cdot K_{\wS} - \wF) = -2(m_0-\alpha)-3 = -3$, $(\wDelta )^2 = 4(m_0-\alpha)-1 = -1$ and $(\wDelta \cdot -K_{\wS}) = 2(m_0-\alpha)+1 = 1$, we obtain $\dim |\wDelta| \ge 0$ by the Riemann-Roch theorem and rationality of $\wS$. 
Hence, by virtue of $(H \cdot E_i) = -\frac{b_i}{\alpha _i}$ for $i=1,\dots ,r$ we have: 
\begin{align*}
0 < (H \cdot f_{\ast}(\wDelta)) = a(F \cdot f_{\ast}(\wDelta)) + \sum _{i=1}^rb_i(E_i \cdot f_{\ast}(\wDelta)) = 2a + \sum _{i=1}^r2b_i - \frac{b_r}{\alpha_r}. 
\end{align*}
\end{proof}
We note $2E_{r+1} \sim _{\bQ} F$ because $\wD _{r+1,0} \sim \wF - \wD _{r+1,1} - 2 (\wD _{r+1,2} + \wD _{r+1,3} + \wE _{r+1})$. 
Put $\Gamma := f_{\ast}(\wGamma ) \sim _{\bQ} -\sum _{i=1}^r\alpha _iE_i + (2\alpha -1)E_{r+1}$. 
Note $-\frac{b_r}{\alpha _r} = (H \cdot E_r) > 0$. 

Assume $\frac{b_1}{\alpha _1} = \frac{b_r}{\alpha _r}$. 
For simplicity, we set $d_0 := 2a + \frac{2\alpha b_r}{\alpha_r} - \frac{b_r}{\alpha _r}$. 
Notice that: 
\begin{align*}
d_0 = 2a + \sum _{i=1}^r \frac{2\alpha _i b_r}{\alpha_r} - \frac{b_r}{\alpha _r} = 2a + \sum _{i=1}^r 2b_i - \frac{b_r}{\alpha _r} > 0
\end{align*}
by Lemma \ref{lem(3-2-1)}. 
Letting $\varepsilon$ be a positive rational number satisfying $\varepsilon < \frac{d_0}{2m_0-1}$, we take the effective $\bQ$-divisor: 
\begin{align*}
D := \left( -\frac{b_r}{\alpha _r} + \varepsilon \right) \Gamma + \sum _{i=1}^r\alpha _i  \varepsilon E_i + \left\{ d_0-(2m_0-1)\varepsilon \right\} E_{r+1}
\end{align*}
on $S$. 
Then we know $H \sim _{\bQ} D$ and: 
\begin{align*}
S \backslash \Supp (D) \simeq \wS \backslash \Supp \left( \wGamma + \wD _0 + \sum _{i=1}^r (\wF _i - \wE _i') + \wF _{r+1} \right) \simeq \bA ^1_{\bk} \times \bA ^1_{\ast , \bk}
\end{align*}
by Lemma \ref{lem(2-1-1)} (2). 
Thus, $H \in \Ampc (S)$. 

In what follows, we can assume $\frac{b_1}{\alpha _1} < \frac{b_r}{\alpha _r}$. 
Put $r' := \max \{ i \in \{ 1,\dots ,r-1\} \,|\, \frac{b_i}{\alpha _i} < \frac{b_r}{\alpha _r} \}$. 
For simplicity, we put $\alpha ' := \sum _{i=1}^{r'}\alpha _i$. 
Then $2m_0-1 - 2\alpha ' > 0$ by virtue of $m_0 = \alpha > \alpha '$. 
Moreover, we set $d_{r'} := 2a + \sum _{i=1}^{r'}2b_i + \frac{2(\alpha - \alpha ')b_r}{\alpha_r} - \frac{b_r}{\alpha _r}$. 
Notice that: 
\begin{align*}
d_{r'} = 2a + \sum _{i=1}^{r'}2b_i + \sum _{j=r'+1}^r \frac{2\alpha _j b_r}{\alpha_r} - \frac{b_r}{\alpha _r} = 2a + \sum _{i=1}^r 2b_i - \frac{b_r}{\alpha _r} > 0
\end{align*}
by Lemma \ref{lem(3-2-1)}. 
Letting $\varepsilon$ be a positive rational number satisfying $\varepsilon < \frac{d_{r'}}{2m_0-1-2\alpha '}$, we take the effective $\bQ$-divisor: 
\begin{align*}
D := \left( -\frac{b_r}{\alpha _r} + \varepsilon \right) \Gamma + \sum _{i=1}^{r'} \alpha _i \left( \frac{b_r}{\alpha _r} -\frac{b_i}{\alpha _i} - \varepsilon \right) E_i' + 
\sum _{j=r'+1}^r\alpha _j \varepsilon E_j  + \left\{ d_{r'}-(2m_0-1 - 2\alpha ') \varepsilon \right\} E_{r+1}
\end{align*}
on $S$. 
Then $H \sim _{\bQ} D$. 
Moreover, we know: 
\begin{align*}
S \backslash \Supp (D) \simeq \wS \backslash \Supp \left( \wGamma + \wD _0 + \sum _{i=1}^{r'} (\wF _i - \wE _i) + \sum _{j=r'+1}^r (\wF _j - \wE _j') + \wF _{r+1} \right) \simeq \bA ^1_{\bk} \times \bA ^1_{\ast , \bk}
\end{align*}
by Lemma \ref{lem(2-1-1)} (2). 
Thus, $H \in \Ampc (S)$. 

The proof of Proposition \ref{prop(4-2)} is thus completed. 
\subsection{Type $(\sA _5)'$ case}\label{4-4}
In this subsection, we keep the notation from \S \S \ref{4-1} and assume further that $g$ satisfies $(\ast \ast)$, $(s,t) = (0,1)$, $\gamma = 3$ and $(\wD_0 \cdot \wD_{\infty}) = 0$. 
Namely, $-m_{\infty} = m_0 - \alpha - 1$, so that $(\wD_{\infty} \cdot \wD_{r+1,1}) = 1$. 
Note that the configuration of the $\bP ^1$-fibration $g:\wS \to \bP ^1_{\bk}$ is given as in Figure \ref{Case4}. 
\begin{figure}\begin{center}\scalebox{0.7}
{\begin{tikzpicture}
\draw [very thick] (-1,0)--(7,0);
\node at (-1.4,0) {$\wD _0$};
\node at (1.75,3) {\Large $\cdots \cdots$};
\draw [very thick] (-1,5.5)--(7,5.5);
\node at (-1.4,5.5) {$\wD _{\infty}$};
\node at (7.6,0) {$-m_0$};
\node at (7.6,5.5) {$-m_{\infty}$};

\draw [dashed] (0.5,-0.25)--(0,1);
\draw (0,0.75)--(0.5,2);
\draw (0.5,1.75)--(0,3);
\draw (0,3.75)--(0.5,5);
\draw [dashed] (0.5,4.75)--(0,6);
\node at (0,3.5) {$\vdots$};
\node at (0.5,-0.6) {\footnotesize $\wE _1'$};
\node at (0.7,1.25) {\footnotesize $\wD _{1,1}$};
\node at (-0.1,2.25) {\footnotesize $\wD _{1,2}$};
\node at (-0.5,4.3) {\footnotesize $\wD _{1,\alpha _1-1}$};
\node at (0,6.3) {\footnotesize $\wE _1$};

\draw [dashed] (4,-0.25)--(3.5,1);
\draw (3.5,0.75)--(4,2);
\draw (4,1.75)--(3.5,3);
\draw (3.5,3.75)--(4,5);
\draw [dashed] (4,4.75)--(3.5,6);
\node at (3.5,3.5) {$\vdots$};
\node at (4,-0.6) {\footnotesize $\wE _r'$};
\node at (4.2,1.25) {\footnotesize $\wD _{r,1}$};
\node at (3.4,2.25) {\footnotesize $\wD _{r,2}$};
\node at (3,4.3) {\footnotesize $\wD _{r,\alpha _r-1}$};
\node at (3.5,6.3) {\footnotesize $\wE _r$};

\draw (5.5,-0.25)--(6.5,2);
\draw (6.3,0.9)--(6.3,4.85);
\draw (6.5,3.75)--(5.5,6);
\node at (5.5,-0.6) {\footnotesize $\wD _{r+1,0}$};
\node at (5.5,6.3) {\footnotesize $\wD _{r+1,1}$};
\node at (5.7,2.3) {\footnotesize $\wD _{r+1,2}$};
\draw [dashed] (5,3)--(7,3);
\node at (5,3.3) {\footnotesize $\wE _{r+1}$};
\end{tikzpicture}}
\caption{The configuration of $g : \wS \to \bP ^1_{\bk}$ in Subsection \ref{4-4}.}\label{Case4}
\end{center}\end{figure}
Then we have the following result, which is the main result of this subsection: 
\begin{prop}\label{prop(3-3)}
With the same notations and assumptions as above, we have $\Ampc (S) = \Amp (S)$. 
\end{prop}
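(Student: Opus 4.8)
The plan is to follow exactly the same strategy used in the preceding subsections \ref{4-3} and \ref{4-4}: reduce an arbitrary ample class to a convenient effective $\bQ$-divisor whose complement is a cylinder of product form, using Lemma \ref{lem(2-1-1)} (2) as the final geometric input. The configuration in Figure \ref{Case3} features two sections $\wD_0$ and $\wD_\infty$, the vertical singular fibers $\wF_1,\dots,\wF_r$ of type (\ref{I-1}), and one fiber $\wF_{r+1}$ of type (\ref{II}) with $\gamma=2$. The first step is to write $H\sim_{\bQ}\sum_{i=1}^r a_iE_i$ in the basis of $\Cl(S)_{\bQ}$ (note that here $\wD_\infty$, being the $(-m_\infty)$-section tangent to the $\gamma=2$ fiber, imposes the linear relation that eliminates $F$ from the basis, just as in \S\S\ref{4-4}), reorder so that $\frac{a_1}{\alpha_1}\le\cdots\le\frac{a_r}{\alpha_r}$, and locate the pivot index $r'$ determined by the condition $\alpha_1+\cdots+\alpha_{r'}\ge m_0$, which exists because $m_\infty\ge2$ forces $\alpha>m_0-1$ hence $\sum_i\alpha_i>m_0-1$.

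Next I would establish the key positivity inequality, the analogue of Lemma \ref{lem(3-4-1)}. The proof is to exhibit an auxiliary divisor $\wDelta$ on $\wS$, built from $2\wD_0+2m_0\wF$ corrected along the fibers (with the $\wF_{r+1}$ correction now using only $\wD_{r+1,1}$ and $\wE_{r+1}$ since $\gamma=2$), arrange that $(\wDelta)^2$ and $(\wDelta\cdot K_{\wS}-\wF)$ make the Riemann--Roch estimate $\dim|\wDelta|\ge\frac12(\wDelta\cdot\wDelta-K_{\wS})$ nonnegative by rationality of $\wS$ (Lemma \ref{lem(2-2-1)}), and then compute $0<(H\cdot f_{\ast}(\wDelta))$. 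Reading off the intersection numbers $(E_i\cdot f_{\ast}(\wDelta))$ (which are $2$ for $i<r'$, a positive fraction at $i=r'$, and $0$ for $i>r'$) yields the weighted sum inequality, from which $\frac{a_{r'}}{\alpha_{r'}}>0$ and the intermediate inequalities follow by the monotonicity of the $\frac{a_i}{\alpha_i}$. I would also record the two linear relations peculiar to this case, namely $2E_{r+1}\sim_{\bQ}F$ (from $\wD_{r+1,0}\sim\wF-\wD_{r+1,1}-2\wE_{r+1}$ with $\gamma=2$) and the relation expressing $\wD_\infty$, which together let me convert freely between the $E_i$, $E_i'$, $E_{r+1}$ and $F$.

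Then I would split into the two cases $\frac{a_1}{\alpha_1}=\frac{a_{r'}}{\alpha_{r'}}$ and $\frac{a_1}{\alpha_1}<\frac{a_{r'}}{\alpha_{r'}}$, exactly paralleling \S\S\ref{4-4}. In the first case, choosing a small $\varepsilon>0$, I set $D:=\sum_{i=1}^r\alpha_i\bigl(\frac{a_i}{\alpha_i}-\frac{a_{r'}}{\alpha_{r'}}+\varepsilon\bigr)E_i+\kappa\,E_{r+1}$ with the coefficient $\kappa$ forced by $D\sim_{\bQ}H$; each coefficient is nonnegative for suitable $\varepsilon$, and the complement $S\backslash\Supp(D)$ pulls back to $\wS\backslash\Supp(\wD_0+\wD_\infty+\sum_i(\wF_i-\wE_i')+\wF_{r+1})$, which is $\bA^1_{\bk}\times\bA^1_{\ast,\bk}$ by Lemma \ref{lem(2-1-1)} (2). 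In the second case I introduce $r'':=\max\{i<r'\mid\frac{a_i}{\alpha_i}<\frac{a_{r'}}{\alpha_{r'}}\}$, use $E_i'$ in place of $E_i$ for $i\le r''$, and bound $\varepsilon$ by the appropriate minimum to keep all coefficients nonnegative; again the complement becomes $\bA^1_{\bk}\times\bA^1_{\ast,\bk}$ via Lemma \ref{lem(2-1-1)} (2).

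The main obstacle, as in the earlier subsections, is bookkeeping rather than conceptual: I must verify that the auxiliary $\wDelta$ can be chosen so that $(\wDelta)^2\ge0$ while $(\wDelta\cdot-K_{\wS})$ stays positive given the relation $-m_\infty=m_0-\alpha$ specific to $\gamma=2$, and then check that every coefficient in $D$ is genuinely nonnegative for small $\varepsilon$. The $\gamma=2$ geometry shifts the arithmetic slightly from the $\gamma=3$ case of \S\S\ref{4-4} (the $\wE_{r+1}$-correction is shorter and $2E_{r+1}\sim_{\bQ}F$ with a different relation for $\wD_\infty$), so the delicate point is getting the exact coefficient of $E_{r+1}$ in $D$ and the exact upper bound on $\varepsilon$ right; once those are pinned down the cylinder identification is immediate from Lemma \ref{lem(2-1-1)} (2).
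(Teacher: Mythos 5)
Your proposal follows the paper's proof of Proposition \ref{prop(3-3)} step for step: the same reduction $H\sim_{\bQ}\sum_{i=1}^r a_iE_i$, the same pivot index $r'$ determined by $\alpha_1+\dots+\alpha_{r'}\ge m_0$, a Riemann--Roch positivity lemma (the paper's Lemma \ref{lem(3-3-1)}), the relations $2E_{r+1}\sim_{\bQ}F$ and $m_0F\sim_{\bQ}\sum_{i=1}^r\alpha_iE_i$, and the identical two-case construction of $D$ (your coefficient $\kappa$ forced by $D\sim_{\bQ}H$ is exactly the paper's $2m_0\bigl(\frac{a_{r'}}{\alpha_{r'}}-\varepsilon\bigr)$), ending with Lemma \ref{lem(2-1-1)} (2). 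The one divergence is the auxiliary divisor: the paper uses the \emph{undoubled} $\wDelta=\wD_0+m_0\wF-\sum_{i=1}^{r'}\sum_{\lambda=1}^{\alpha_i}\lambda\wD_{i,\lambda}+\sum_{\mu=m_0-\alpha'}^{\alpha_{r'}}(\mu-m_0+\alpha')\wD_{r',\mu}-(\wD_{r+1,1}+\wE_{r+1})$, with $(\wDelta)^2=-1$ and $(\wDelta\cdot -K_{\wS})=1$; doubling is unnecessary here precisely because $\gamma=2$ makes the relation $m_0F\sim_{\bQ}\sum\alpha_iE_i$ integral, so no half-integer coefficients arise (in contrast to \S\S\ref{4-4}, where $\frac{2m_0-1}{2}F\sim_{\bQ}\sum\alpha_iE_i$ forces the factor $2$), and the inequalities of Lemma \ref{lem(3-3-1)} come without the factor $2$ or the $-1$ shift. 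If you do start from $2\wD_0+2m_0\wF$, beware of one pitfall: the exact double of the natural divisor has $\frac{1}{2}\bigl((\wDelta)^2-(\wDelta\cdot K_{\wS})\bigr)=\frac{1}{2}(-4+2)=-1$, so Riemann--Roch alone does not yield effectiveness; you must keep the $+1$-shifted coefficients $(2\mu-2m_0+2\alpha'+1)$ on $\wD_{r',\mu}$ exactly as in Lemma \ref{lem(3-4-1)} (one then checks $(\wDelta)^2=1$ and $(\wDelta\cdot -K_{\wS})=3$, so $\dim|\wDelta|\ge 2$), and the resulting slightly weaker inequality $2a''+\frac{(2(m_0-\alpha'')-1)a_{r'}}{\alpha_{r'}}>0$ still suffices for positivity of the $E_{r+1}$-coefficient in your second case because $\frac{a_{r'}}{\alpha_{r'}}>0$. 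One minor inaccuracy: $\wD_{\infty}$ is not tangent to the fiber $\wF_{r+1}$; it meets the multiplicity-one tip $\wD_{r+1,0}$ transversally (as does $\wD_0$), and the relation eliminating $F$ from the basis is $\wD_{\infty}\sim\wD_0+m_0\wF-\sum_{i=1}^r\sum_{\lambda=1}^{\alpha_i}\lambda\wD_{i,\lambda}$, as you correctly use.
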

In what follows, we shall prove the above result. 
Let $H \in \Amp (S)$. 
Since $\Amp (S)$ is contained in $\Cl (S)_{\bQ} = \bigoplus _{i=1}^r \bQ [E_i]$, we can write: 
\begin{align*}
H \sim _{\bQ} \sum _{i=1}^r a_iE_i
\end{align*}
for some rational numbers $a_1,\dots ,a_r$. 
Without loss of generality, we may assume $\frac{a_1}{\alpha _1} \le \frac{a_2}{\alpha _2} \le \dots \le \frac{a_r}{\alpha _r}$. 
Since $\alpha + 1 = m_0+m_{\infty}$ and $m_{\infty} \ge 2$, we have $\alpha _1 + \dots + \alpha _r > m_0$. 
We set $r' := \min \{ i \in \{ 1,\dots ,r\} \,|\, \alpha _1 + \dots + \alpha _i \ge m_0\}$. 
For simplicity, we put $\alpha ' := \sum _{i=1}^{r'-1}\alpha _i$, where $\alpha ' = 0$ provided $r'=1$. 
Then the following lemma holds: 
\begin{lem}\label{lem(3-4-1)}
$2(a_1+\dots +a_i) + \frac{\left(2(m_0-\alpha _1 - \dots - \alpha _i)-1\right)a_{r'}}{\alpha _{r'}} > 0$ for every $i=1,\dots ,r'-1$. 
Moreover, $\frac{a_{r'}}{\alpha _{r'}} > 0$. 
\end{lem}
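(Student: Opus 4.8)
The plan is to follow the template already used in Subsections~\ref{4-2} and~\ref{4-3} (and in Lemma~\ref{lem(3-2-1)}): each inequality will be deduced by pairing the ample class $H$ with a suitable \emph{effective} $\bQ$-divisor on $S$, realized as the push-forward of a general member of a complete linear system on $\wS$ whose non-emptiness is certified by Riemann-Roch. Before constructing anything I would record the intersection numbers on $S$ that convert the geometry of Figure~\ref{Case4} into linear inequalities in $a_1,\dots,a_r$. Since $f$ is the minimal resolution, $(\xi\cdot\eta)_S=(f^{\ast}\xi\cdot f^{\ast}\eta)_{\wS}$ and $f^{\ast}E_i$, $f^{\ast}F$ are the unique lifts orthogonal to every component of $\wD$; solving for these lifts (the chains $\wD_{i,1}+\dots+\wD_{i,\alpha_i-1}$ contribute the $a_i/\alpha_i$ through Lemma~\ref{lem(3-2)}, while the common coupling through the two sections $\wD_0,\wD_{\infty}$ contributes a term in $\sum_i a_i$) expresses $(H\cdot E_i)$, $(H\cdot E_i')$ and $(H\cdot F)$ explicitly. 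The one clean positivity I would extract first is this: because $2E_{r+1}\sim_{\bQ}F$, ampleness gives $0<(H\cdot E_{r+1})=\tfrac12(H\cdot F)$, and since $(F\cdot E_i)$ is a common positive constant this reads $a_1+\dots+a_r>0$.

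For the second assertion I would build an effective $\bZ$-divisor $\wDelta$ on $\wS$ of the shape $c\,\wD_0+c\,m_0\wF$, corrected by the fiber chains, arranged so that—modulo $E_{r'}'\sim_{\bQ}F-E_{r'}$ and $2E_{r+1}\sim_{\bQ}F$—its push-forward $f_{\ast}\wDelta$ is an effective class pairing with $H$ to a positive multiple of $a_{r'}/\alpha_{r'}$. The defining property of the threshold, $\alpha_1+\dots+\alpha_{r'-1}<m_0\le\alpha_1+\dots+\alpha_{r'}$, is exactly what allows the multiplicities carried by $\wD_0+m_0\wF$ to be spread over the fibers $1,\dots,r'$ without forcing a negative coefficient, so that Riemann-Roch on the rational surface $\wS$ (Lemma~\ref{lem(2-2-1)}) gives $\dim|\wDelta|\ge\tfrac12(\wDelta\cdot\wDelta-K_{\wS})\ge0$. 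Pairing $H$ with $f_{\ast}\wC$ for a general $\wC\in|\wDelta|$ then yields $a_{r'}/\alpha_{r'}>0$.

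For the first assertion I would carry out the same construction with a divisor $\wDelta=\wDelta^{(i)}$ depending on $i$: assign the coefficient $2\lambda$ to $\wD_{j,\lambda}$ for the fibers $j\le i$, which after push-forward and intersection with $H$ manufactures the term $2(a_1+\dots+a_i)$ exactly as in Lemma~\ref{lem(3-1-1)}, and load the leftover fiber multiplicity $2(m_0-\alpha_1-\dots-\alpha_i)-1$ onto the $r'$-th chain (feeding the type~(II) fiber through $2E_{r+1}\sim_{\bQ}F$), which contributes the term $(2(m_0-\alpha_1-\dots-\alpha_i)-1)\,a_{r'}/\alpha_{r'}$. Here $\alpha_1+\dots+\alpha_i<m_0$ guarantees $2(m_0-\alpha_1-\dots-\alpha_i)-1>0$ and keeps the multiplicities non-negative; Riemann-Roch again gives $|\wDelta^{(i)}|\neq\emptyset$, and ampleness of $H$ gives the inequality.

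The main obstacle is the simultaneous bookkeeping inside $\wDelta^{(i)}$: the multiplicities must be integral and non-negative (so $\wDelta^{(i)}$ is effective and the Riemann-Roch estimate is $\ge0$), while the push-forward must reproduce \emph{precisely} the functional $2(a_1+\dots+a_i)+(2(m_0-\alpha_1-\dots-\alpha_i)-1)\,a_{r'}/\alpha_{r'}$. The difficulty is that the two sections $\wD_0,\wD_{\infty}$ couple every $E_i$ to the common $\sum_i a_i$ term, so the construction must be balanced to make all of these unwanted $\sum_i a_i$ contributions cancel; verifying this cancellation, controlling the parity of the coefficient $2(m_0-\alpha_1-\dots-\alpha_i)-1$, and handling the threshold index $r'$ (including the sub-cases $\alpha_{r'}=1$ versus $\alpha_{r'}\ge2$, which govern whether the tail multiplicity fits on a single chain) is where the delicate part lies. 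The remaining intersection-number and Riemann-Roch computations are routine.
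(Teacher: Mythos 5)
Your overall template (effective divisor certified by Riemann--Roch, paired against the ample class $H$) is indeed the paper's strategy, but there is a genuine gap in how you distribute it over the assertions: you try to realize \emph{each} inequality --- one divisor $\wDelta^{(i)}$ per index $i$, plus a further divisor giving a pure positive multiple of $a_{r'}/\alpha_{r'}$ --- whereas the paper constructs exactly \emph{one} divisor, namely
$\wDelta = 2\wD_0 + 2m_0\wF - \sum_{i=1}^{r'}\sum_{\lambda=1}^{\alpha_i}2\lambda\wD_{i,\lambda} + \sum_{\mu=m_0-\alpha'}^{\alpha_{r'}}(2\mu-2m_0+2\alpha'+1)\wD_{r',\mu} - (\wD_{r+1,1}+2\wD_{r+1,2}+3\wE_{r+1})$,
which yields only the instance $i=r'-1$, i.e.\ $0 < 2(a_1+\dots+a_{r'-1}) + (2(m_0-\alpha')-1)a_{r'}/\alpha_{r'}$. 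Your per-$i$ construction fails as described: the ``cap'' position for the leftover multiplicity on the $r'$-th chain is $m_0-\alpha_1-\dots-\alpha_i$, and the definition of $r'$ guarantees $m_0-\alpha' \le \alpha_{r'}$ \emph{only} for $i=r'-1$. For smaller $i$ the leftover need not fit: e.g.\ with $m_0=6$, $m_{\infty}=2$, $(\alpha_1,\alpha_2,\alpha_3)=(1,4,2)$ (so $\alpha=7=m_0+m_{\infty}-1$, $r'=3$) and $i=1$, the cap position is $5>\alpha_{r'}=2$. You also leave unexplained how the intermediate chains $i+1,\dots,r'-1$ can absorb the fiber multiplicity $2m_0$ while contributing \emph{zero} to the functional --- the capped-chain mechanics do not provide this. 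Likewise, for the second assertion, spreading multiplicities over the fibers $1,\dots,r'$ (as you propose) inevitably reproduces the \emph{mixed} functional above, not a positive multiple of $a_{r'}/\alpha_{r'}$; you exhibit no effective class whose pairing with $H$ isolates $a_{r'}$, and none is apparent.

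The missing ingredient is the step your proposal never uses at all: the normalization $\frac{a_1}{\alpha_1}\le\dots\le\frac{a_r}{\alpha_r}$. The paper deduces the general case $i<r'-1$, and the second assertion, purely algebraically from the single inequality at $i=r'-1$, by replacing each $2a_j$ with the larger quantity $2\alpha_j a_{r'}/\alpha_{r'}$ for $j=i+1,\dots,r'-1$ (respectively for all $j\le r'-1$, giving $0<(2m_0-1)a_{r'}/\alpha_{r'}$). This monotonicity argument is not cosmetic: it is what makes the remaining inequalities provable without further divisor constructions, and it is exactly the device that repairs the bookkeeping you flag as ``the delicate part'' without resolving. (Your preliminary observation that $(F\cdot E_i)$ is a common positive constant, whence $a_1+\dots+a_r>0$, is correct but is used nowhere in the argument.) As written, your proof establishes at most the case $i=r'-1$ and leaves the other assertions unproven.
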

\begin{proof}
Consider the following divisor on $\wS$: 
\begin{align*}
\wDelta &:= 2\wD _0 + 2m_0\wF \\
&\qquad- \sum _{i=1}^{r'} \sum _{\lambda = 1}^{\alpha _i} 2\lambda \wD _{i,\lambda} + \sum _{\mu = m_0-\alpha '}^{\alpha _{r'}}(2\mu - 2m_0 + 2\alpha ' + 1)\wD _{r',\mu} - (\wD _{r+1,1}+2\wD _{r+1,2}+3\wE _{r+1}). 
\end{align*}
Since $(\wDelta \cdot K_{\wS}-\wF) = -4 <0$, $(\wDelta)^2 = 0$ and $(\wDelta \cdot -K_{\wS}) = 2$, we obtain $\dim |\wDelta| \ge 1$ by the Riemann-Roch theorem and rationality of $\wS$. 

At first, we shall show the first assertion with $i=r'-1$. 
Since $(\wDelta \cdot \wD _0) = (\wDelta \cdot \wD _{\infty}) = (\wDelta \cdot \wD _{r+1,0}) = 0$, we have: 
\begin{align*}
(E _i \cdot f_{\ast}(\wDelta)) = \left\{ \begin{array}{cc}
2 & \text{if}\ i <r' \\ \frac{2(m_0-\alpha ')-1}{\alpha _{r'}} & \text{if}\ i =r' \\ 0 & \text{if}\ i >r'
\end{array} \right. 
\end{align*}
for $i=1,\dots ,r$. Hence: 
\begin{align*}
0 < (H \cdot f_{\ast}(\wDelta)) = \sum _{i=1}^ra_i(E_i \cdot f_{\ast}(\wDelta)) = 2(a_1 + \dots + a_{r'-1}) + \frac{\left(2(m_0-\alpha ')-1\right)a_{r'}}{\alpha _{r'}} . 
\end{align*}

In what follows, we shall show the first assertion for the general case. 
By using the above result combined with $\frac{a_{i+1}}{\alpha _{i+1}} \le \dots \le \frac{a_{r'-1}}{\alpha _{r'-1}} \le \frac{a_{r'}}{\alpha _{r'}}$, we have: 
\begin{align*}
0 &< 2(a_1 + \dots + a_{r'-1}) + \frac{\left(2(m_0-\alpha ')-1\right)a_{r'}}{\alpha _{r'}} \\
&\le 2(a_1 + \dots +a_i) + \frac{\left( 2(\alpha _{i+1} + \dots + \alpha _{r'-1} + m_0 - \alpha ')-1\right)a_{r'}}{\alpha _{r'}} \\
&= 2(a_1 + \dots +a_i) + \frac{\left( 2(m_0 - \alpha _1 -\dots - \alpha _i)-1\right)a_{r'}}{\alpha _{r'}}. 
\end{align*}

We shall prove the remaining assertion. 
Similarly, we have: 
\begin{align*}
0 < 2(a_1 + \dots + a_{r'-1}) + \frac{\left(2(m_0-\alpha ')-1\right)a_{r'}}{\alpha _{r'}} \le \frac{(2m_0-1)a_{r'}}{\alpha _{r'}}. 
\end{align*}
By virtue of $2m_0-1>0$, we thus obtain $\frac{a_{r'}}{\alpha _{r'}} > 0$. 
\end{proof}
Note $2E_{r+1} \sim _{\bQ} F$ and $\frac{2m_0-1}{2}F \sim _{\bQ} \sum _{i=1}^r\alpha _iE_i$ because $\wD _{r+1,0} \sim \wF -\wD _{r+1,1} - 2(\wE _{r+1} + \wD _{r+1,2})$ and $\wD _{\infty} \sim \wD _0 + m_0\wF - \sum _{i=1}^r \sum _{\lambda = 1}^{\alpha _i} \lambda \wD _{i,\lambda} -(\wD _{r+1,1}+\wD_{r+1,2}+\wE_{r+1})$. 

Assume $\frac{a_1}{\alpha _1} = \frac{a_{r'}}{\alpha _{r'}}$. 
Note $\frac{a_{r'}}{\alpha _{r'}} > 0$ by Lemma \ref{lem(3-4-1)}. 
Letting $\varepsilon$ be a positive rational number satisfying $\varepsilon < \frac{a_{r'}}{\alpha _{r'}}$, we take the effective $\bQ$-divisor: 
\begin{align*}
D := \sum _{i=1}^r\alpha _i \left( \frac{a_i}{\alpha _i} - \frac{a_{r'}}{\alpha _{r'}} + \varepsilon \right)E_i + (2m_0-1) \left( \frac{a_{r'}}{\alpha _{r'}} - \varepsilon \right)E_{r+1}
\end{align*}
on $S$. 
Then we know $H \sim _{\bQ} D$ and: 
\begin{align*}
S \backslash \Supp (D) \simeq \wS \backslash \Supp \left( \wD _0 + \wD _{\infty} + \sum _{i=1}^r (\wF _i - \wE _i') + \wF _{r+1} \right) \simeq \bA ^1_{\bk} \times \bA ^1_{\ast , \bk}
\end{align*}
by Lemma \ref{lem(2-1-1)} (2). 
Thus, $H \in \Ampc (S)$. 

In what follows, we can assume $\frac{a_1}{\alpha _1} < \frac{a_{r'}}{\alpha _{r'}}$. 
Put $r'' := \max \{ i \in \{ 1,\dots ,r'-1\} \,|\, \frac{a_i}{\alpha _i} < \frac{a_{r'}}{\alpha _{r'}} \}$. 
For simplicity, we put $\alpha '' := \sum _{i=1}^{r''}\alpha _i$ and $a'' := \sum _{i=1}^{r''}a_i$. 
Then we note $m_0 - \alpha '' > 0$. 
Moreover, by using Lemma \ref{lem(3-4-1)} we have: 
\begin{align*}
\frac{1}{2m_0-1-2\alpha''}\left( a'' + \frac{(2m_0-1-2\alpha'')a_{r'}}{\alpha _{r'}} \right) > 0. 
\end{align*}
Letting $\varepsilon$ be a positive rational number satisfying: 
\begin{align*}
\varepsilon < \min \left\{ \frac{a_{r'}}{\alpha _{r'}} - \frac{a_1}{\alpha _1},\ \frac{1}{2m_0-1-2\alpha''}\left( a'' + \frac{(2m_0-1-2\alpha'')a_{r'}}{\alpha _{r'}} \right) \right\} , 
\end{align*}
we take the effective $\bQ$-divisor: 
\begin{align*}
D &:= \sum _{i=1}^{r''} \alpha _i \left( \frac{a_{r'}}{\alpha _{r'}} -\frac{a_i}{\alpha _i} - \varepsilon \right)E_i' + 
\sum _{j=r''+1}^r\alpha _j \left( \frac{a_j}{\alpha _j} - \frac{a_{r'}}{\alpha _{r'}} + \varepsilon \right)E_j \\
&\qquad + \left\{ 2a'' + (2m_0-1-2\alpha'')\left( \frac{a_{r'}}{\alpha _{r'}}-\varepsilon \right) \right\} E_{r+1}
\end{align*}
on $S$. 
Then $H \sim _{\bQ} D$. 
Moreover, we know: 
\begin{align*}
S \backslash \Supp (D) \simeq \wS \backslash \Supp \left( \wD _0 + \wD _{\infty} + \sum _{i=1}^{r''} (\wF _i - \wE _i) + \sum _{j=r''+1}^r (\wF _j - \wE _j') + \wF _{r+1} \right) \simeq \bA ^1_{\bk} \times \bA ^1_{\ast , \bk}
\end{align*}
by Lemma \ref{lem(2-1-1)} (2). 
Thus, $H \in \Ampc (S)$. 

The proof of Proposition \ref{prop(3-3)} is thus completed. 
\subsection{Type $(\sA _3+\sA _1)'$ case}\label{4-5}
In this subsection, we keep the notation from \S \S \ref{4-1} and assume further that $g$ satisfies $(\ast \ast)$, $(s,t) = (0,1)$, $\gamma = 2$, $(\wD_0 \cdot \wD_{\infty}) = 0$ and $(\wD_{\infty} \cdot \wD_{r+1,0}) = 1$. 
Namely, $-m_{\infty} = m_0 - \alpha$. 
Note that the configuration of the $\bP ^1$-fibration $g:\wS \to \bP ^1_{\bk}$ is given as in Figure \ref{Case3}. 
\begin{figure}\begin{center}\scalebox{0.7}
{\begin{tikzpicture}
\draw [very thick] (-1,0)--(7,0);
\node at (-1.4,0) {$\wD _0$};
\node at (1.75,3) {\Large $\cdots \cdots$};
\draw [very thick] (-1,5.5)--(4.5,5.5);
\draw [very thick] (5,0.5)--(7,0.5);
\draw [very thick] (4.5,5.5) .. controls (4.75,5.5) and (4.75,0.5) .. (5,0.5);
\node at (-1.4,5.5) {$\wD _{\infty}$};
\node at (0,-1.5) {};
\node at (7.6,0) {$-m_0$};
\node at (7.6,0.5) {$-m_{\infty}$};

\draw [dashed] (0.5,-0.25)--(0,1);
\draw (0,0.75)--(0.5,2);
\draw (0.5,1.75)--(0,3);
\draw (0,3.75)--(0.5,5);
\draw [dashed] (0.5,4.75)--(0,6);
\node at (0,3.5) {$\vdots$};
\node at (0.5,-0.6) {\footnotesize $\wE _1'$};
\node at (0.7,1.25) {\footnotesize $\wD _{1,1}$};
\node at (-0.1,2.25) {\footnotesize $\wD _{1,2}$};
\node at (-0.5,4.3) {\footnotesize $\wD _{1,\alpha _1-1}$};
\node at (0,6.3) {\footnotesize $\wE _1$};

\draw [dashed] (4,-0.25)--(3.5,1);
\draw (3.5,0.75)--(4,2);
\draw (4,1.75)--(3.5,3);
\draw (3.5,3.75)--(4,5);
\draw [dashed] (4,4.75)--(3.5,6);
\node at (3.5,3.5) {$\vdots$};
\node at (4,-0.6) {\footnotesize $\wE _r'$};
\node at (4.2,1.25) {\footnotesize $\wD _{r,1}$};
\node at (3.4,2.25) {\footnotesize $\wD _{r,2}$};
\node at (3,4.3) {\footnotesize $\wD _{r,\alpha _r-1}$};
\node at (3.5,6.3) {\footnotesize $\wE _r$};

\draw (5.5,-0.25)--(6.5,2);
\draw [dashed] (6.3,0.9)--(6.3,4.85);
\draw (6.5,3.75)--(5.5,6);
\node at (5.5,-0.6) {\footnotesize $\wD _{r+1,0}$};
\node at (5.5,6.3) {\footnotesize $\wD _{r+1,1}$};
\node at (5.8,2.5) {\footnotesize $\wE _{r+1}$};
\end{tikzpicture}}
\caption{The configuration of $g : \wS \to \bP ^1_{\bk}$ in Subsection \ref{4-5}.}\label{Case3}
\end{center}\end{figure}
Then we have the following result, which is the main result of this subsection: 
\begin{prop}\label{prop(3-4)}
With the same notations and assumptions as above, we have $\Ampc (S) = \Amp (S)$. 
\end{prop}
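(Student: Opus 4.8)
The plan is to follow the proof of Proposition \ref{prop(4-2)} closely, adapting it to the two-section setting of condition $(\ast\ast)$. First I would record the shape of the class group: since $s=0$, $t=1$ and $\gamma =3$, one has $2E_{r+1}\sim _{\bQ}F$ and $E_i'\sim _{\bQ}F-E_i$, and (because both $\wD _0$ and $\wD _{\infty}$ are contracted by $f$) $\Cl (S)_{\bQ}$ is spanned by $[F],[E_1],\dots ,[E_r]$, now subject to one relation carried by $\wD _{\infty}$. Writing $H\sim _{\bQ}aF+\sum _{i=1}^rb_iE_i$ with $(H\cdot E_i)=-b_i/\alpha _i>0$, I would reorder so that $b_1/\alpha _1\le \dots \le b_r/\alpha _r$, exactly as in \S \S \ref{4-3}.

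Next I would establish the positivity statement corresponding to Lemma \ref{lem(3-2-1)}. I would exhibit an effective auxiliary divisor $\wDelta$ on $\wS$ built from $\wD _0$, a general fiber $\wF$, the $(-2)$-curves of the type (I-1) chains, and the type (II) fiber, with coefficients increasing linearly along each chain (governed by Lemma \ref{lem(3-2)}) and now also involving $\wD _{\infty}$, arranged so that $(\wDelta )^2$ and $(\wDelta \cdot -K_{\wS})$ are as small as possible. Riemann-Roch together with the rationality of $\wS$ then gives $|\wDelta |\neq \emptyset$, and intersecting a general member with $H$ (after pushing forward by $f$) should produce an inequality of the form $2a+\sum _{i=1}^r2b_i-b_r/\alpha _r>0$.

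Finally I would split into the cases $b_1/\alpha _1=b_r/\alpha _r$ and $b_1/\alpha _1<b_r/\alpha _r$ and, for a small rational $\varepsilon >0$, write down an explicit effective $\bQ$-divisor $D\sim _{\bQ}H$ supported on the fiber $(-1)$-curves $E_i$, $E_i'$ (selected by the sign of $b_i/\alpha _i-b_r/\alpha _r$) and $E_{r+1}$. I would then identify $S\backslash \Supp (D)$ with the image of $\wS \backslash \Supp (\wD _0+\wD _{\infty}+\sum _i(\wF _i-\wC _i)+\wF _{r+1})$, where $\wC _i$ is the one retained endpoint of the $i$-th chain; contracting the retained $(-1)$-curves turns $\wD _0,\wD _{\infty}$ into two disjoint sections $\hM ,\hGamma$ of a Hirzebruch surface and a fiber into $\hF$, so that Lemma \ref{lem(2-1-1)}(2) identifies the complement with $\bA ^1_{\bk}\times \bA ^1_{\ast ,\bk}$, a cylinder.

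The hard part will be that, in contrast with the $\sD _5$ case, here the second section $\wD _{\infty}$ is contracted by $f$ and therefore cannot appear in $D$: there is no clean ``$(-b_r/\alpha _r+\varepsilon )\Gamma$'' term, and the whole positivity must be carried by the fiber components $E_i,E_i',E_{r+1}$ alone. Coverage of $\Sing (S)$ is not the issue — the two sections both contract to the single $\sA _5$ point, which (like every singular point of $S$) already lies on the curves $E_i$, since $\wE _i$ meets both $\wD _{\infty}$ and the top of the $i$-th chain. Rather, the delicate points are keeping $D$ effective for every ample $H$ by tuning $\varepsilon$ against the key inequality, and checking that the two sections, linked through the type (II) fiber into the $\sA _5$ configuration, genuinely descend to disjoint sections of $\bF _n$ so that Lemma \ref{lem(2-1-1)}(2) is applicable.
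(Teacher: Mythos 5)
Your overall template---an auxiliary effective divisor produced by Riemann--Roch to extract positivity, then an explicit effective $D \sim _{\bQ} H$ and Lemma \ref{lem(2-1-1)} (2) to identify the complement with $\bA ^1_{\bk} \times \bA ^1_{\ast ,\bk}$---is indeed the paper's strategy, and your description of the cylinder identification is fine. But your coordinate setup breaks down, and with it your key inequality. In this case $\Cl (S)_{\bQ} = \bigoplus _{i=1}^r \bQ [E_i]$ is the honest basis (the paper writes $H \sim _{\bQ} \sum _{i=1}^r a_iE_i$); your representation $H \sim _{\bQ} aF + \sum _{i=1}^r b_iE_i$ is non-unique because of the relation $\frac{2m_0-1}{2}F \sim _{\bQ} \sum _{i=1}^r \alpha _iE_i$ carried by $\wD _{\infty}$, and your target inequality $2a + \sum _{i=1}^r 2b_i - b_r/\alpha _r > 0$ is not invariant under the reparametrization $(a,b_i) \mapsto (a+\frac{2m_0-1}{2}t,\, b_i - \alpha _i t)$: it changes by $-2(m_{\infty}-1)t \neq 0$, so it is not a well-defined condition on $H$ at all. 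Nor can you pin the $b_i$ down via $(H \cdot E_i) = -b_i/\alpha _i$ as in \S \S \ref{4-3}: that formula rested on $(F \cdot E_i) = 0$ and $(E_i \cdot E_j) = -\delta _{i,j}/\alpha _i$, which fail here because $\wE _i$ meets the contracted section $\wD _{\infty}$, so $f^{\ast}F$ (which acquires correction terms along the contracted $\sA _5$-configuration containing both $\wD _0$ and $\wD _{\infty}$) intersects $\wE _i$ nontrivially.

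More substantively, a single inequality pivoted at the maximal-slope index $r$ cannot make your $D$ effective. In \S \S \ref{4-3} the effectivity of the $E_{r+1}$-coefficient was rescued by the extra $(-1)$-curve $\Gamma \sim _{\bQ} -\sum _{i=1}^r \alpha _iE_i + (2\alpha -1)E_{r+1}$, which absorbs the negative contributions; you correctly observe that $\Gamma$ is unavailable here, but you supply no replacement mechanism. The paper's replacement is the threshold index $r' = \min \{ i \,|\, \alpha _1+\dots +\alpha _i \ge m_0\}$ (which exists since $\alpha +1 = m_0+m_{\infty}$ and $m_{\infty} \ge 2$) together with the whole \emph{family} of inequalities of Lemma \ref{lem(3-4-1)}: $2(a_1+\dots +a_i) + \left(2(m_0-\alpha _1-\dots -\alpha _i)-1\right)a_{r'}/\alpha _{r'} > 0$ for every $i \le r'-1$, plus $a_{r'}/\alpha _{r'} > 0$, proved with an auxiliary divisor whose coefficients switch sign pattern along the $r'$-th chain (the terms $(2\mu -2m_0+2\alpha '+1)\wD _{r',\mu}$). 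The case split is then $a_1/\alpha _1 = a_{r'}/\alpha _{r'}$ versus $a_1/\alpha _1 < a_{r'}/\alpha _{r'}$ with $r'' \le r'-1$, and it is precisely the inequality at $i=r''$ that makes the $E_{r+1}$-coefficient $2a'' + (2m_0-1-2\alpha '')(a_{r'}/\alpha _{r'}-\varepsilon )$ of $D$ positive. Your plan, which orders by slope and compares $b_1/\alpha _1$ with $b_r/\alpha _r$ while tuning $\varepsilon$ against one inequality, has no mechanism to control that coefficient, so the construction would fail for ample classes whose small slopes are concentrated on the first chains; the missing idea is the $r'$-pivot and the $i$-indexed family of positivity estimates.
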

In what follows, we shall prove the above result. 
Let $H \in \Amp (S)$. 
Since $\Amp (S)$ is contained in $\Cl (S)_{\bQ} = \bigoplus _{i=1}^r \bQ [E_i]$, we can write: 
\begin{align*}
H \sim _{\bQ} \sum _{i=1}^r a_iE_i
\end{align*}
for some rational numbers $a_1,\dots ,a_r$. 
Without loss of generality, we may assume $\frac{a_1}{\alpha _1} \le \frac{a_2}{\alpha _2} \le \dots \le \frac{a_r}{\alpha _r}$. 
Since $\alpha = m_0+m_{\infty}$ and $m_{\infty} >0$, we have $\alpha _1 + \dots + \alpha _r \ge m_0$. 
We set $r' := \min \{ i \in \{ 1,\dots ,r\} \,|\, \alpha _1 + \dots + \alpha _i \ge m_0\}$. 
For simplicity, we put $\alpha ' := \sum _{i=1}^{r'-1}\alpha _i$, where $\alpha ' = 0$ provided $r'=1$. 
Then the following lemma holds: 
\begin{lem}\label{lem(3-3-1)}
$a_1+\dots +a_i + \frac{(m_0-\alpha _1 - \dots - \alpha _i)a_{r'}}{\alpha _{r'}} > 0$ for every $i=1,\dots ,r'-1$. 
Moreover, $\frac{a_{r'}}{\alpha _{r'}} > 0$. 
\end{lem}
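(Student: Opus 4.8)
The plan is to copy, \emph{mutatis mutandis}, the argument of the preceding Lemma \ref{lem(3-4-1)}: I will exhibit one auxiliary divisor $\wDelta$ on $\wS$ with $|\wDelta | \neq \emptyset$ for which $(H \cdot f_{\ast}(\wDelta))$ equals precisely the $i = r'-1$ instance of the claimed inequality, and then propagate to all $i \le r'-1$ and to the ``moreover'' clause by the ordering hypothesis alone. Concretely I take
\begin{align*}
\wDelta := \wD _0 + m_0\wF - \sum _{i=1}^{r'} \sum _{\lambda =1}^{\alpha _i}\lambda \wD _{i,\lambda} + \sum _{\mu =m_0-\alpha '}^{\alpha _{r'}}\bigl( \mu -(m_0-\alpha ')\bigr) \wD _{r',\mu} - \wE _{r+1},
\end{align*}
i.e.\ the $\gamma = 2$ analogue of the divisor in Lemma \ref{lem(3-4-1)} with the vertical coefficients no longer doubled and with the correction on the $r'$-th fibre truncating the ramp $-\mu$ at the constant value $-(m_0-\alpha ')$ from $\mu = m_0-\alpha '$ onward (note $1 \le m_0-\alpha ' \le \alpha _{r'}$ by the very definition of $r'$). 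A direct check gives the three normalisations $(\wDelta \cdot \wD _0) = (\wDelta \cdot \wD _{\infty}) = (\wDelta \cdot \wD _{r+1,0}) = 0$, together with $(\wDelta)^2 = -1$ and $(\wDelta \cdot -K_{\wS}) = 1$; hence by the Riemann--Roch theorem and rationality of $\wS$ one gets $\dim |\wDelta | \ge \frac{1}{2}\bigl( \wDelta \cdot (\wDelta - K_{\wS})\bigr) = 0$, so $|\wDelta | \neq \emptyset$.

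I then compute the intersection numbers through the projection formula $(E_i \cdot f_{\ast}(\wDelta)) = (f^{\ast}E_i \cdot \wDelta)$, writing $f^{\ast}E_i = \wE _i + \Xi _i$ with $\Xi _i$ supported on the exceptional locus. For $i < r'$ (resp.\ $i > r'$) every component of $\Xi _i$ lies on a curve on which $\wDelta$ has been normalised to meet trivially, so $(E_i \cdot f_{\ast}(\wDelta)) = (\wE _i \cdot \wDelta)$ equals $1$ (resp.\ $0$). For $i = r'$ the term $(\wE _{r'}\cdot \wDelta )$ vanishes and the only surviving contribution to $(\Xi _{r'}\cdot \wDelta )$ comes from the curve $\wD _{r',m_0-\alpha '}$, where the ramp of $\wDelta$ has its kink: there $(\wDelta \cdot \wD _{r',m_0-\alpha '}) = 1$, while the coefficient of $\wD _{r',m_0-\alpha '}$ in $\Xi _{r'}$ is $\frac{m_0-\alpha '}{\alpha _{r'}}$ (the standard pull-back coefficient along the $\sA _{\alpha _{r'}-1}$-chain for a curve meeting its end), producing exactly $(E_{r'} \cdot f_{\ast}(\wDelta)) = \frac{m_0-\alpha '}{\alpha _{r'}}$. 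Since $f_{\ast}(\wDelta) \neq 0$ and $H$ is ample, a general effective member of $|\wDelta |$ has nonzero effective image, so $0 < (H \cdot f_{\ast}(\wDelta)) = (a_1+\dots +a_{r'-1}) + \frac{(m_0-\alpha ')a_{r'}}{\alpha _{r'}}$, which is the asserted inequality with $i = r'-1$.

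The two remaining points are formal. For a general $i \le r'-1$ the difference between the $i$-expression and the $i = r'-1$ expression equals $\sum _{j=i+1}^{r'-1}\alpha _j\bigl( \frac{a_{r'}}{\alpha _{r'}} - \frac{a_j}{\alpha _j}\bigr) \ge 0$ by the ordering $\frac{a_j}{\alpha _j} \le \frac{a_{r'}}{\alpha _{r'}}$, so the inequality is inherited. For the ``moreover'' clause, bounding $a_1+\dots +a_{r'-1} \le \alpha '\frac{a_{r'}}{\alpha _{r'}}$ collapses the $i = r'-1$ inequality to $0 < \frac{m_0a_{r'}}{\alpha _{r'}}$, and $m_0 \ge 2 > 0$ forces $\frac{a_{r'}}{\alpha _{r'}} > 0$.

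The one genuinely delicate point — and what distinguishes $\gamma = 2$ from the $\gamma = 3$ case of Lemma \ref{lem(3-4-1)} — is that I do \emph{not}, and integrally \emph{cannot}, impose $(\wDelta \cdot \wD _{r+1,1}) = 0$. In the present configuration $\wD _{\infty}$ meets $\wD _{r+1,0}$ rather than $\wD _{r+1,1}$, so that $\wD _{r+1,1}$ is an isolated $(-2)$-curve (the $\sA _1$ point) touched by none of $\wE _1,\dots ,\wE _r$; consequently the nonzero value of $(\wDelta \cdot \wD _{r+1,1})$ never enters any $\Xi _i$, and the intersection computation above goes through cleanly. Verifying this connectivity, and confirming that the kink intersection and the $\sA _{\alpha _{r'}-1}$ pull-back coefficient combine to give exactly $\frac{m_0-\alpha '}{\alpha _{r'}}$ rather than an off-by-one variant, is the heart of the matter; the relation $\alpha = m_0 + m_{\infty}$ peculiar to the $(\sA _3+\sA _1)'$ case is precisely what makes the normalisation $(\wDelta \cdot \wD _{\infty}) = 0$ hold.
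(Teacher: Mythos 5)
Your proof is correct and is essentially the paper's own argument: the paper's auxiliary divisor differs from yours only by the extra term $-\wD_{r+1,1}$, and since that curve is $f$-exceptional and disjoint from every $\wE_i$ (exactly the point you isolate), both choices have the same pushforward $f_{\ast}(\wDelta) = m_0F - \sum_{i=1}^{r'-1}\alpha_iE_i - (m_0-\alpha')E_{r'} - E_{r+1}$ and yield the identical identity $(H\cdot f_{\ast}(\wDelta)) = a_1+\dots+a_{r'-1}+\frac{(m_0-\alpha')a_{r'}}{\alpha_{r'}}$, after which your propagation to general $i$ and the bound $0 < \frac{m_0a_{r'}}{\alpha_{r'}}$ coincide with the paper's. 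The one step you should make explicit is the vanishing $h^0(\wS,\sO_{\wS}(K_{\wS}-\wDelta)) = 0$, needed before Riemann--Roch gives $\dim|\wDelta|\ge 0$; it follows, exactly as in the paper's check ``$(\wDelta \cdot K_{\wS}-\wF) = -3 <0$'', from $\bigl((K_{\wS}-\wDelta)\cdot\wF\bigr) = -2-1 = -3 < 0$ with $\wF$ nef.
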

\begin{proof}
Consider the following divisor on $\wS$: 
\begin{align*}
\wDelta := \wD _0 + m_0\wF - \sum _{i=1}^{r'} \sum _{\lambda = 1}^{\alpha _i} \lambda \wD _{i,\lambda} + \sum _{\mu = m_0-\alpha '}^{\alpha _{r'}}(\mu - m_0 + \alpha ')\wD _{r',\mu} - (\wD _{r+1,1}+\wE _{r+1}). 
\end{align*}
Since $(\wDelta \cdot K_{\wS}-\wF) = -2 <0$, $(\wDelta)^2 = -1$ and $(\wDelta \cdot -K_{\wS}) = 1$, we obtain $\dim |\wDelta| \ge 0$ by the Riemann-Roch theorem and rationality of $\wS$. 

At first, we shall show the first assertion with $i=r'-1$. 
Since $(\wDelta \cdot \wD _0) = (\wDelta \cdot \wD _{\infty}) = (\wDelta \cdot \wD _{r+1,0}) = 0$, we have: 
\begin{align*}
(E _i \cdot f_{\ast}(\wDelta)) = \left\{ \begin{array}{cc}
1 & \text{if}\ i <r' \\ \frac{m_0-\alpha '}{\alpha _{r'}} & \text{if}\ i =r' \\ 0 & \text{if}\ i >r'
\end{array} \right. 
\end{align*}
for $i=1,\dots ,r$. Hence: 
\begin{align*}
0 < (H \cdot f_{\ast}(\wDelta)) = \sum _{i=1}^ra_i(E_i \cdot f_{\ast}(\wDelta)) = a_1 + \dots + a_{r'-1} + \frac{(m_0-\alpha ')a_{r'}}{\alpha _{r'}}. 
\end{align*}

In what follows, we shall show the first assertion for the general case. 
By using the above result combined with $\frac{a_{i+1}}{\alpha _{i+1}} \le \dots \le \frac{a_{r'-1}}{\alpha _{r'-1}} \le \frac{a_{r'}}{\alpha _{r'}}$, we have: 
\begin{align*}
0 &< a_1+\dots +a_{r'-1} + \frac{(m_0-\alpha')a_{r'}}{\alpha _{r'}} \\
&\le a_1 + \dots +a_i + \frac{(\alpha _{i+1} + \dots + \alpha _{r'-1} + m_0 - \alpha ')a_{r'}}{\alpha _{r'}} \\
&= a_1 + \dots +a_i + \frac{(m_0- \alpha _1 - \dots - \alpha _i)a_{r'}}{\alpha _{r'}}. 
\end{align*}

We shall prove the remaining assertion.
Similarly, we have: 
\begin{align*}
0 < a_1+\dots +a_{r'-1} + \frac{(m_0-\alpha')a_{r'}}{\alpha _{r'}} \le \frac{m_0a_{r'}}{\alpha _{r'}}. 
\end{align*}
By virtue of $m_0 \ge 2$, we thus obtain $\frac{a_{r'}}{\alpha _{r'}} > 0$. 
\end{proof}
Note $2E_{r+1} \sim _{\bQ} F$ and $m_0F \sim _{\bQ} \sum _{i=1}^r\alpha _iE_i$ because $\wD _{r+1,0} \sim \wF -\wD _{r+1,1} - 2\wE _{r+1}$ and $\wD _{\infty} \sim \wD _0 + m_0\wF - \sum _{i=1}^r \sum _{\lambda = 1}^{\alpha _i} \lambda \wD _{i,\lambda}$. 

Assume $\frac{a_1}{\alpha _1} = \frac{a_{r'}}{\alpha _{r'}}$. 
Then we note $\frac{a_{r'}}{\alpha _{r'}} > 0$ by Lemma \ref{lem(3-3-1)}. 
Letting $\varepsilon$ be a positive rational number satisfying $\varepsilon < \frac{a_{r'}}{\alpha _{r'}}$, we take the effective $\bQ$-divisor: 
\begin{align*}
D := \sum _{i=1}^r\alpha _i \left( \frac{a_i}{\alpha _i} - \frac{a_{r'}}{\alpha _{r'}} + \varepsilon \right)E_i + 2m_0 \left( \frac{a_{r'}}{\alpha _{r'}} - \varepsilon \right)E_{r+1}
\end{align*}
on $S$. 
Then we know $H \sim _{\bQ} D$ and: 
\begin{align*}
S \backslash \Supp (D) \simeq \wS \backslash \Supp \left( \wD _0 + \wD _{\infty} + \sum _{i=1}^r (\wF _i - \wE _i') + \wF _{r+1} \right) \simeq \bA ^1_{\bk} \times \bA ^1_{\ast , \bk}
\end{align*}
by Lemma \ref{lem(2-1-1)} (2). 
Thus, $H \in \Ampc (S)$. 

In what follows, we can assume $\frac{a_1}{\alpha _1} < \frac{a_{r'}}{\alpha _{r'}}$. 
Put $r'' := \max \{ i \in \{ 1,\dots ,r'-1\} \,|\, \frac{a_i}{\alpha _i} < \frac{a_{r'}}{\alpha _{r'}} \}$. 
For simplicity, we put $\alpha '' := \sum _{i=1}^{r''}\alpha _i$ and $a'' := \sum _{i=1}^{r''}a_i$. 
Then we note $m_0 - \alpha '' > 0$. 
Moreover, by Lemma \ref{lem(3-3-1)} we have: 
\begin{align*}
\frac{1}{m_0-\alpha''}\left( a'' + \frac{(m_0-\alpha'')a_{r'}}{\alpha _{r'}} \right) > 0. 
\end{align*}
Letting $\varepsilon$ be a positive rational number satisfying: 
\begin{align*}
\varepsilon < \min \left\{ \frac{a_{r'}}{\alpha _{r'}} - \frac{a_1}{\alpha _1},\ \frac{1}{m_0-\alpha''}\left( a'' + \frac{(m_0-\alpha'')a_{r'}}{\alpha _{r'}} \right) \right\} , 
\end{align*}
we take the effective $\bQ$-divisor: 
\begin{align*}
D &:= \sum _{i=1}^{r''} \alpha _i \left( \frac{a_{r'}}{\alpha _{r'}} -\frac{a_i}{\alpha _i} - \varepsilon \right)E_i' + 
\sum _{j=r''+1}^r\alpha _j \left( \frac{a_j}{\alpha _j} - \frac{a_{r'}}{\alpha _{r'}} + \varepsilon \right)E_j \\
&\qquad + 2\left\{ a'' + (m_0-\alpha '')\left( \frac{a_{r'}}{\alpha _{r'}}-\varepsilon \right) \right\}E_{r+1}
\end{align*}
on $S$. 
Then $H \sim _{\bQ} D$. 
Moreover, we know: 
\begin{align*}
S \backslash \Supp (D) \simeq \wS \backslash \Supp \left( \wD _0 + \wD _{\infty} + \sum _{i=1}^{r''} (\wF _i - \wE _i) + \sum _{j=r''+1}^r (\wF _j - \wE _j') + \wF _{r+1} \right) \simeq \bA ^1_{\bk} \times \bA ^1_{\ast , \bk}
\end{align*}
by Lemma \ref{lem(2-1-1)} (2). 
Thus, $H \in \Ampc (S)$. 

The proof of Proposition \ref{prop(3-4)} is thus completed. 
\subsection{Type $\sA _n$ $(n \ge 3)$ case}\label{4-6}
In this subsection, we keep the notation from \S \S \ref{4-1} and assume further that $g$ satisfies $(\ast \ast)$, $(s,t) = (1,0)$, $\beta' = 1$, $(\wD_0 \cdot \wD_{\infty}) = 0$ and $(\wD_{\infty} \cdot \wD_{r+1,\beta-1}) = 1$. 
Namely, $-m_{\infty} = m_0 - \alpha - (\beta -1)$. 
Note that the configuration of the $\bP ^1$-fibration $g:\wS \to \bP ^1_{\bk}$ is given as in Figure \ref{Case6}. 
\begin{figure}\begin{center}\scalebox{0.7}
{\begin{tikzpicture}
\draw [very thick] (-1,0)--(7,0);
\node at (-1.4,0) {$\wD _0$};
\node at (1.75,3) {\Large $\cdots \cdots$};
\draw [very thick] (-1,5.5)--(4.5,5.5);
\draw [very thick] (5,4.5)--(7,4.5);
\draw [very thick] (4.5,5.5) .. controls (4.75,5.5) and (4.75,4.5) .. (5,4.5);
\node at (-1.4,5.5) {$\wD _{\infty}$};
\node at (0,-1.5) {};
\node at (7.6,0) {$-m_0$};
\node at (7.6,4.5) {$-m_{\infty}$};

\draw [dashed] (0.5,-0.25)--(0,1);
\draw (0,0.75)--(0.5,2);
\draw (0.5,1.75)--(0,3);
\draw (0,3.75)--(0.5,5);
\draw [dashed] (0.5,4.75)--(0,6);
\node at (0,3.5) {$\vdots$};
\node at (0.5,-0.6) {\footnotesize $\wE _1'$};
\node at (0.7,1.25) {\footnotesize $\wD _{1,1}$};
\node at (-0.1,2.25) {\footnotesize $\wD _{1,2}$};
\node at (-0.5,4.3) {\footnotesize $\wD _{1,\alpha _1-1}$};
\node at (0,6.3) {\footnotesize $\wE _1$};

\draw [dashed] (4,-0.25)--(3.5,1);
\draw (3.5,0.75)--(4,2);
\draw (4,1.75)--(3.5,3);
\draw (3.5,3.75)--(4,5);
\draw [dashed] (4,4.75)--(3.5,6);
\node at (3.5,3.5) {$\vdots$};
\node at (4,-0.6) {\footnotesize $\wE _r'$};
\node at (4.2,1.25) {\footnotesize $\wD _{r,1}$};
\node at (3.4,2.25) {\footnotesize $\wD _{r,2}$};
\node at (3,4.3) {\footnotesize $\wD _{r,\alpha _r-1}$};
\node at (3.5,6.3) {\footnotesize $\wE _r$};

\draw (5.5,-0.25)--(6.5,1.5);
\draw [dashed] (6.5,0.25)--(5.5,1);
\draw (6.5,1.25)--(5.5,3);
\draw (5.5,3.75)--(6.5,5.5);
\draw [dashed] (6.5,4.75)--(5.5,6);
\node at (5.5,3.4) {$\vdots$};
\node at (5.5,-0.6) {\footnotesize $\wD_ {r+1,0}$};
\node at (6.7,0.6) {\footnotesize $\wE _{r+1}'$};
\node at (6.5,2.5) {\footnotesize $\wD _{r+1,1}$};
\node at (6.5,4) {\footnotesize $\wD _{r+1,\beta -1}$};
\node at (5.5,6.3) {\footnotesize $\wE _{r+1}$};
\end{tikzpicture}}
\caption{The configuration of $g : \wS \to \bP ^1_{\bk}$ in Subsection \ref{4-6}.}\label{Case6}
\end{center}\end{figure}
Then we have the following result, which is the main result of this subsection: 
\begin{prop}\label{prop(3-6)}
With the same notations and assumptions as above, we have the following: 
\begin{enumerate}
\item If $\beta = 1$, then $\Ampc (S) = \Amp (S)$. 
\item If $\beta \ge 2$ and $m_0=2$, then $\Ampc (S) = \Amp (S)$. 
\end{enumerate}
\end{prop}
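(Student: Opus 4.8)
The plan is to run the scheme of the proofs of Propositions~\ref{prop(3-3)} and~\ref{prop(3-4)}, the one essential new feature being that the singular fiber $\wF _{r+1}$ now has type~\eqref{I-2} instead of type~\eqref{II}. Here the two $(-1)$-curves $\wE _{r+1}$ and $\wE _{r+1}'$ of $\wF _{r+1}$ are independent modulo the fiber class: we only have $E_{r+1}'\sim _{\bQ}F-E_{r+1}$, and \emph{not} a relation of the shape $2E_{r+1}\sim _{\bQ}F$ as in \S\S\ref{4-4}--\ref{4-5}. Consequently $\Cl (S)_{\bQ}=\left( \bigoplus _{i=1}^{r}\bQ [E_i]\right) \oplus \bQ [E_{r+1}]$ has rank $r+1$, one more than in \S\S\ref{4-4}--\ref{4-5}. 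I would therefore begin by writing $H\sim _{\bQ}\sum _{i=1}^{r}a_iE_i+a_{r+1}E_{r+1}$, recording the relations $E_i'\sim _{\bQ}F-E_i$ $(1\le i\le r+1)$ and the single relation expressing $m_0F$ as a $\bQ$-combination of $E_1,\dots ,E_r,E_{r+1}$ read off from $\wD _{\infty}\sim \wD _0+m_0\wF-(\cdots )$, and reducing to $\frac{a_1}{\alpha _1}\le \dots \le \frac{a_r}{\alpha _r}$ after reordering the type~\eqref{I-1} fibers.

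Next I would establish the numerical positivity inputs, in the spirit of Lemmas~\ref{lem(3-3-1)} and~\ref{lem(3-4-1)}. Fixing the pivot $r':=\min \{ i\,|\,\alpha _1+\dots +\alpha _i\ge m_0\}$, I would build an effective auxiliary divisor $\wDelta$ on $\wS$ of the form $\wD _0+m_0\wF$ corrected along the selected type~\eqref{I-1} fibers and along both arms of $\wF _{r+1}$, compute $(\wDelta )^2$ and $(\wDelta \cdot -K_{\wS})$, and use the Riemann--Roch theorem together with the rationality of $\wS$ (Lemma~\ref{lem(2-2-1)}) to conclude $\dim |\wDelta |\ge 0$. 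Pushing a general member forward by $f$ and pairing with the ample class $H$ would then give the inequalities on $a_1,\dots ,a_r$ and $a_{r+1}$ needed below, together with $\frac{a_{r'}}{\alpha _{r'}}>0$.

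With these inequalities available, the $H$-polar cylinder would be produced exactly as in \S\S\ref{4-4}--\ref{4-5}: split according to whether $\frac{a_1}{\alpha _1}=\frac{a_{r'}}{\alpha _{r'}}$ or $\frac{a_1}{\alpha _1}<\frac{a_{r'}}{\alpha _{r'}}$, fix a small rational $\varepsilon >0$, and write down an explicit effective $D\sim _{\bQ}H$ supported on $\wD _0+\wD _{\infty}+\sum _i(\wF _i-\wE _i'\text{ or }\wF _i-\wE _i)+\wF _{r+1}$, so that $S\setminus \Supp (D)\simeq \bA ^1_{\bk}\times \bA ^1_{\ast ,\bk}$ by Lemma~\ref{lem(2-1-1)}(2). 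The point requiring care is that the extra coefficient $a_{r+1}$ must be distributed between the two curves $\wE _{r+1}$ and $\wE _{r+1}'$ of the single fiber $\wF _{r+1}$, using $E_{r+1}'\sim _{\bQ}F-E_{r+1}$ to trade between them and keep both coefficients non-negative.

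I expect the genuine obstacle to be exactly this free parameter $a_{r+1}$, which in \S\S\ref{4-4}--\ref{4-5} was eliminated by a relation $2E_{r+1}\sim _{\bQ}F$ but here survives and must be balanced against the data of the type~\eqref{I-1} fibers while preserving effectivity of $D$. This is what forces the bifurcation of the statement. When $\beta =1$ the fiber $\wF _{r+1}$ is symmetric, $\wE _{r+1}$ and $\wE _{r+1}'$ being interchangeable $(-1)$-curves attached to the single $(-2)$-curve $\wD _{r+1,0}$, and $\wDelta$ can be chosen so that $(\wDelta )^2$ and $(\wDelta \cdot -K_{\wS})$ force $\dim |\wDelta |\ge 0$ for every admissible $m_0$. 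When $\beta \ge 2$ the long arm $\wD _{r+1,1},\dots ,\wD _{r+1,\beta -1}$ worsens these intersection numbers, and the hypothesis $m_0=2$ (so that $\wD _0$ is itself a $(-2)$-curve) is precisely what restores $\dim |\wDelta |\ge 0$ and the non-negativity of the coefficients of $D$. The crux of the proof is thus to pin down $\wDelta$ and $D$ so that all coefficients remain non-negative, and to check that in the $\beta \ge 2$ case the single input $m_0=2$ suffices.
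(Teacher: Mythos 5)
For part (1) ($\beta = 1$) your plan coincides with the paper's proof: the inequalities of Lemma \ref{lem(3-6-1)} are obtained from exactly the kind of section-type divisor you describe, namely $\wDelta = \wD _0 + m_0\wF - \sum _{i=1}^{r'}\sum _{\lambda}\lambda \wD _{i,\lambda} + \sum _{\mu}(\mu -m_0+\alpha ')\wD _{r',\mu} - \wE _{r+1}'$ together with its companion $\wDelta '$ ending at $\wE _{r+1}$ instead of $\wE _{r+1}'$ (two divisors are needed, so that the inequalities come both without and with $b$); then the pivot $r'$, the dichotomy $\frac{a_1}{\alpha _1} = \frac{a_{r'}}{\alpha _{r'}}$ versus $<$, and the balancing of the $E_{r+1}$- and $E_{r+1}'$-coefficients via $F \sim _{\bQ} E_{r+1}+E_{r+1}'$ all run exactly as in \S\S\ref{4-4}--\ref{4-5}, as you predict.

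For part (2), however, there is a genuine gap, in two places. First, the pivot need not exist: since $\alpha + \beta - 1 = m_0 + m_{\infty}$, with $m_0=2$ one has $\alpha = m_{\infty}+3-\beta$, which equals $1 < m_0$ when $\beta = m_{\infty}+2$; then $r' := \min \{ i \,|\, \alpha _1+\dots +\alpha _i \ge m_0\}$ is undefined and your scheme cannot even start (the paper treats precisely this configuration in Subcase 2-1 of Lemma \ref{lem(3-6-4)}, where $r=1$, $\alpha _1=1$, $\beta \ge 4$). Second, the positivity inputs required here have a different shape from anything a single ``corrected $\wD _0+m_0\wF$'' yields: the trade relation is now $2F \sim _{\bQ} \sum _{i=1}^r\alpha _iE_i + (\beta -1)E_{r+1}$, so each substitution $E_i' = F-E_i$ feeds a multiple of $\beta -1$ into the coefficient of $E_{r+1}$, and effectivity of $D$ forces inequalities in which $a_1$ is weighted by $\beta -1$ against $b$ — namely Lemma \ref{lem(3-6-3)}(4), $\frac{(\beta -1)a_1}{\alpha _1}+b>0$, alongside $b>0$ for $\beta \ge 3$ and $a_1>0$ whenever $\alpha _1>1$. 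The paper proves these by applying Riemann--Roch to divisors whose multiplicities grow with $\beta$, e.g. $(\beta -1)\wD _0 + 2(\beta -1)\wF - \sum _{\lambda}(\beta -1)\wD _{1,\lambda} - \sum _{\mu}\mu \wD _{r+1,\mu} - (\beta -2)\wE _{r+1}'$ — multisections, not sections — and then splits on the sign of $a_1$ (with $\alpha _1=1$ forced when $a_1 \le 0$), not on $\frac{a_1}{\alpha _1}$ versus $\frac{a_{r'}}{\alpha _{r'}}$. You correctly identified the surviving parameter $b$ and guessed that $m_0=2$ is where the extra hypothesis must act, but the mechanism you propose (pivot, single section-type $\wDelta$, same dichotomy as in \S\S\ref{4-4}--\ref{4-5}) does not produce the needed inequalities and already fails at the definition of $r'$; this is where the new idea of the paper's proof lies.
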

In what follows, we shall prove the above result. 
Let $H \in \Amp (S)$. 
Since $\Amp (S)$ is contained in $\Cl (S)_{\bQ} = \bigoplus _{i=1}^{r+1} \bQ [E_i]$, we can write: 
\begin{align*}
H \sim _{\bQ} \sum _{i=1}^r a_iE_i + bE_{r+1}
\end{align*}
for some rational numbers $a_1,\dots ,a_r,b$. 
Without loss of generality, we may assume $\frac{a_1}{\alpha _1} \le \frac{a_2}{\alpha _2} \le \dots \le \frac{a_r}{\alpha _r}$. 

At first, we consider the case $\beta = 1$. 
Since $\alpha = m_0+m_{\infty}$ and $m_{\infty} > 0$, we have $\alpha _1 + \dots + \alpha _r \ge m_0$. 
We set $r' := \min \{ i \in \{ 1,\dots ,r\} \,|\, \alpha _1 + \dots + \alpha _i \ge m_0\}$. 
For simplicity, we put $\alpha ' := \sum _{i=1}^{r'-1}\alpha _i$, where $\alpha ' = 0$ provided $r'=1$. 
Then we obtain the following lemma: 
\begin{lem}\label{lem(3-6-1)}
Assume that $\beta  = 1$. Then the following assertions hold: 
\begin{enumerate}
\item $a_1+\dots +a_i + \frac{(m_0-\alpha _1 - \dots - \alpha _i)a_{r'}}{\alpha _{r'}} > 0$ for every $i=1,\dots ,r'-1$. 
Moreover, $\frac{a_{r'}}{\alpha _{r'}} > 0$. 
\item $a_1+\dots +a_i + \frac{(m_0-\alpha _1 - \dots - \alpha _i)a_{r'}}{\alpha _{r'}} + b > 0$ for every $i=1,\dots ,r'-1$. 
Moreover, $\frac{m_0a_{r'}}{\alpha _{r'}} + b > 0$. 
\end{enumerate}
\end{lem}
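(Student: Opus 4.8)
The plan is to prove both assertions by the device already used for Lemmas \ref{lem(3-3-1)} and \ref{lem(3-4-1)}: construct an effective test divisor $\wDelta$ on $\wS$, force $|\wDelta|\neq\emptyset$ by the Riemann--Roch theorem together with the rationality of $\wS$ (Lemma \ref{lem(2-2-1)}), take a general member $\wC\in|\wDelta|$, and then read off the desired inequality from $(H\cdot f_{\ast}\wC)>0$, which holds because $f_{\ast}\wC$ is a nonzero effective divisor and $H$ is ample. Since $\beta=1$, the fibre $\wF_{r+1}$ of type \eqref{I-2} degenerates to the fork $\wE_{r+1}'+\wD_{r+1,0}+\wE_{r+1}$ with all multiplicities $1$, where $\wD_0$ meets $\wD_{r+1,0}$ and $\wD_{\infty}$ meets $\wE_{r+1}'$; in particular $F\sim_{\bQ}E_{r+1}+E_{r+1}'$, and the section $\wD_{\infty}$ yields the relation expressing $m_0F$ in terms of $E_1,\dots,E_{r+1}$. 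Throughout I use the ordering $\frac{a_1}{\alpha_1}\le\dots\le\frac{a_r}{\alpha_r}$ and the index $r'$ already fixed in the surrounding text.

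For assertion (1), which has the same shape as Lemma \ref{lem(3-3-1)}(1), I would copy that argument, only adapting the correction supported on $\wF_{r+1}$ to the present fork. Concretely I take a divisor of the form
\[
\wDelta := \wD_0 + m_0\wF - \sum_{i=1}^{r'}\sum_{\lambda=1}^{\alpha_i}\lambda\wD_{i,\lambda} + \sum_{\mu=m_0-\alpha'}^{\alpha_{r'}}(\mu-m_0+\alpha')\wD_{r',\mu} - (\text{a correction on }\wF_{r+1}),
\]
the last term chosen so that $(\wDelta\cdot\wD_0)=(\wDelta\cdot\wD_{\infty})=(\wDelta\cdot\wD_{r+1,0})=(\wDelta\cdot\wF)=0$. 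The relevant invariants $(\wDelta)^2=-1$ and $(\wDelta\cdot -K_{\wS})=1$ then come out as in Lemma \ref{lem(3-3-1)}, so $\dim|\wDelta|\ge 0$. Computing $(E_i\cdot f_{\ast}\wDelta)$ — namely $1$ for $i<r'$, $\frac{m_0-\alpha'}{\alpha_{r'}}$ for $i=r'$, and $0$ for $i>r'$, with no $E_{r+1}$ contribution — gives the inequality at $i=r'-1$; the general $i$ follows from it through $\frac{a_{i+1}}{\alpha_{i+1}}\le\dots\le\frac{a_{r'}}{\alpha_{r'}}$ exactly as in Lemma \ref{lem(3-3-1)}, and then $\frac{a_{r'}}{\alpha_{r'}}>0$ is deduced by bounding $a_j\le\alpha_j\frac{a_{r'}}{\alpha_{r'}}$ for $j<r'$ and invoking $m_0\ge 2$.

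For assertion (2) the only new feature is the extra summand $b$, the coefficient of $E_{r+1}$ in $H$. I would therefore replace $\wDelta$ by a divisor $\wDelta'$ whose image $f_{\ast}\wDelta'$ agrees with $f_{\ast}\wDelta$ on $E_1,\dots,E_r$ but carries coefficient $1$ on $E_{r+1}$; the natural choice is to add $\wE_{r+1}$, corrected by the $(-2)$-curve $\wD_{r+1,0}$ so as to leave the intersections with $\wD_0,\wD_{\infty},\wD_{r+1,0}$ and $\wF$ unchanged. Then $(H\cdot f_{\ast}\wDelta')$ equals the quantity of assertion (1) plus $b$, and after re-checking that the numerical invariants of $\wDelta'$ still give $\dim|\wDelta'|\ge 0$ via Riemann--Roch and Lemma \ref{lem(2-2-1)}, the $i=r'-1$ inequality of (2) drops out; the general $i$ and the final bound $\frac{m_0a_{r'}}{\alpha_{r'}}+b>0$ follow by the same ordering and bounding steps as in (1). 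The main obstacle is precisely this engineering of $\wDelta'$: one must arrange the pushforward to pick up exactly $+b$ (and nothing else) while keeping $(\wDelta')^2$ and $(\wDelta'\cdot K_{\wS})$ in the range where Riemann--Roch forces non-emptiness of the linear system. The fork shape of $\wF_{r+1}$ together with the presence of the two sections $\wD_0,\wD_{\infty}$ makes this the delicate bookkeeping step, whereas the positivity and monotonicity arguments are routine adaptations of Lemmas \ref{lem(3-3-1)} and \ref{lem(3-4-1)}.
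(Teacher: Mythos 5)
Your part (1) is, in substance, the paper's proof: the paper takes exactly
\begin{align*}
\wDelta := \wD _0 + m_0\wF - \sum _{i=1}^{r'} \sum _{\lambda = 1}^{\alpha _i} \lambda \wD _{i,\lambda} + \sum _{\mu = m_0-\alpha '}^{\alpha _{r'}}(\mu - m_0 + \alpha ')\wD _{r',\mu} - \wE _{r+1}',
\end{align*}
with $(\wDelta)^2=-1$, $(\wDelta \cdot -K_{\wS})=1$, the Riemann--Roch step, and the same pairing values and monotonicity argument you describe (all copied from Lemma \ref{lem(3-3-1)}). But two of your normalizing conditions are off. First, $(\wDelta \cdot \wF)=0$ is impossible: every candidate is $\wD_0$ plus fiber-supported terms, so $(\wDelta\cdot\wF)=(\wD_0\cdot\wF)=1$. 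Second, when $\beta=1$ the section $\wD_{\infty}$ meets $\wD_{r+1,0}$, not $\wE_{r+1}'$ (pair the relation $\wD_{\infty}\sim\wD_0+m_0\wF-\sum_{i=1}^r\sum_{\lambda}\lambda\wD_{i,\lambda}$, used in the proof of Lemma \ref{lem(3-6-2)}, against $\wD_{r+1,0}$). With this corrected, your usable conditions only confine the correction to the one-parameter family $-t\wE_{r+1}-(1-t)\wE_{r+1}'$; they do not separate the part-(1) divisor ($t=0$) from the part-(2) divisor ($t=1$), so ``no $E_{r+1}$ contribution'' is an extra input to be verified, not a consequence. It does hold for $t=0$, so (1) goes through as you sketch it.

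The genuine gap is part (2). The divisor $\wDelta+\wE_{r+1}+c\wD_{r+1,0}$ cannot do what you want. The bookkeeping you flag as delicate is in fact unachievable: adding $\wE_{r+1}+c\wD_{r+1,0}$ changes $(\,\cdot\,\wD_0)$ and $(\,\cdot\,\wD_{\infty})$ by $c$ but $(\,\cdot\,\wD_{r+1,0})$ by $1-2c$, so no $c$ preserves all three; and since $f$ contracts $\wD_{r+1,0}$, the correction has no effect downstairs anyway. Worse, even taking $c=0$, one gets $(H\cdot f_{\ast}(\wDelta+\wE_{r+1}))=(H\cdot f_{\ast}\wDelta)+(H\cdot E_{r+1})$ with $(H\cdot E_{r+1})=\sum_i a_i(E_i\cdot E_{r+1})+b(E_{r+1})^2$, and this is \emph{not} $b$: the intersection form on $\Cl(S)_{\bQ}$ is not diagonal in $E_1,\dots,E_{r+1}$. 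Concretely, in the $\sA_3$ configuration of Lemma \ref{lem(4-5)} (so $r=4$, all $\alpha_i=1$, $m_0=2$) one computes $f^{\ast}E_{r+1}=\wE_{r+1}+\frac{1}{2}\wD_{\infty}+\wD_{r+1,0}+\frac{1}{2}\wD_0$, whence $(E_i\cdot E_{r+1})=\frac{1}{2}$ and $(E_{r+1})^2=0$; your added term contributes $\frac{1}{2}(a_1+\dots+a_4)$ rather than $b$, so you prove a different inequality than assertion (2). The paper's device is a \emph{swap}, not an addition: $\wDelta'$ is the same base divisor with $-\wE_{r+1}$ in place of $-\wE_{r+1}'$. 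This leaves all numerical invariants, hence the Riemann--Roch step, verbatim, and since $E_{r+1}'-E_{r+1}\sim_{\bQ}F-2E_{r+1}$ pairs to $0$ with each $E_i$ and to $1$ with $E_{r+1}$ (as the paper's computations of $(E_i\cdot f_{\ast}\wDelta)$ and $(E_i\cdot f_{\ast}\wDelta')$ confirm), it keeps the $E_i$-pairings unchanged while raising $(E_{r+1}\cdot f_{\ast}\wDelta')$ from $0$ to $1$ --- producing exactly the summand $b$, after which your monotonicity steps finish (2).
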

\begin{proof}
Consider the following divisors on $\wS$: 
\begin{align*}
\wDelta &:= \wD _0 + m_0\wF - \sum _{i=1}^{r'} \sum _{\lambda = 1}^{\alpha _i} \lambda \wD _{i,\lambda} + \sum _{\mu = m_0-\alpha '}^{\alpha _{r'}}(\mu - m_0 + \alpha ')\wD _{r',\mu} - \wE _{r+1}' ,\\
\wDelta' &:= \wD _0 + m_0\wF - \sum _{i=1}^{r'} \sum _{\lambda = 1}^{\alpha _i} \lambda \wD _{i,\lambda} + \sum _{\mu = m_0-\alpha '}^{\alpha _{r'}}(\mu - m_0 + \alpha ')\wD _{r',\mu} - \wE _{r+1} .\\
\end{align*}
Since $(\wDelta \cdot K_{\wS}-\wF) = -2 <0$, $(\wDelta)^2 = -1$ and $(\wDelta \cdot -K_{\wS}) = 1$, we obtain $\dim |\wDelta| \ge 0$ by the Riemann-Roch theorem  and rationality of $\wS$. 
Similarly, we also obtain $\dim |\wDelta '| \ge 0$. 

We only show the assertion (1). 
Indeed, the assertion (2) can be shown by the same argument replacing $\wDelta$ by $\wDelta '$ in (1). 

At first, we shall show the first assertion with $i=r'-1$. 
Since $(\wDelta \cdot \wD _0) = (\wDelta \cdot \wD _{\infty}) = (\wDelta \cdot \wD _{r+1,0}) = (\wDelta \cdot \wD _{r+1,1}) = \dots = (\wDelta \cdot \wD _{r+1,\beta - 1}) = 0$, we have: 
\begin{align*}
(E _i \cdot f_{\ast}(\wDelta)) = \left\{ \begin{array}{cc}
1 & \text{if}\ i <r' \\ \frac{m_0-\alpha '}{\alpha _{r'}} & \text{if}\ i =r' \\ 0 & \text{if}\ i >r'
\end{array} \right. 
\end{align*}
for $i=1,\dots ,r,r+1$. Hence: 
\begin{align*}
0 < (H \cdot f_{\ast}(\wDelta)) = \sum _{i=1}^ra_i(E_i \cdot f_{\ast}(\wDelta)) + b(E_{r+1} \cdot f_{\ast}(\wDelta)) = a_1 + \dots + a_{r'-1} + \frac{(m_0-\alpha ')a_{r'}}{\alpha _{r'}} . 
\end{align*}

In what follows, we shall show the first assertion for the general case. 
By using the above result combined with $\frac{a_{i+1}}{\alpha _{i+1}} \le \dots \le \frac{a_{r'-1}}{\alpha _{r'-1}} \le \frac{a_{r'}}{\alpha _{r'}}$, we have: 
\begin{align*}
0 &< a_1 + \dots + a_{r'-1} + \frac{(m_0-\alpha ')a_{r'}}{\alpha _{r'}} \\
&\le a_1 + \dots +a_i + \frac{(\alpha _{i+1} + \dots + \alpha _{r'-1} + m_0 - \alpha ')a_{r'}}{\alpha _{r'}} \\
&= a_1 + \dots +a_i + \frac{(m_0 - \alpha _1 -\dots - \alpha _i)a_{r'}}{\alpha _{r'}}. 
\end{align*}

We shall prove the remaining assertion.
Similarly, we have: 
\begin{align*}
0 < a_1 + \dots + a_{r'-1} + \frac{(m_0-\alpha ')a_{r'}}{\alpha _{r'}} \le \frac{m_0a_{r'}}{\alpha _{r'}}. 
\end{align*}
By virtue of $m_0 > 0$, we thus obtain $\frac{a_{r'}}{\alpha _{r'}} > 0$. 
\end{proof}
Then the following result holds: 
\begin{lem}\label{lem(3-6-2)}
If $\beta = 1$, then $H \in \Ampc (S)$. 
\end{lem}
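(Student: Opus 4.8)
The plan is to mimic the proof of Proposition \ref{prop(3-3)} in \S\ref{4-5}, now carrying along the extra coordinate $b$ attached to $E_{r+1}$, and to feed the positivity statements of Lemma \ref{lem(3-6-1)} into an explicit effective divisor whose complement is a cylinder.

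First I would record the linear equivalences in $\Cl(S)_{\bQ}$ coming from the contraction $f$. Pushing forward the fibre relations gives $E_i'\sim_{\bQ}F-E_i$ for $i=1,\dots,r$, and, since for $\beta=1$ the special fibre is the chain $\wF_{r+1}=\wE_{r+1}+\wD_{r+1,0}+\wE_{r+1}'$, also $E_{r+1}'\sim_{\bQ}F-E_{r+1}$; pushing forward the equivalence $\wD_{\infty}\sim\wD_0+m_0\wF-\sum_{i}\sum_{\lambda}\lambda\wD_{i,\lambda}$ (the vertical part in the special fibre maps to zero) yields, exactly as in \S\ref{4-5}, the relation $m_0F\sim_{\bQ}\sum_{i=1}^r\alpha_iE_i$. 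These let me pass freely between the basis $E_1,\dots,E_{r+1}$ and the curves $E_i,E_{r+1},E_{r+1}'$.

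Next I would split, as in \S\ref{4-5}, according to whether $\frac{a_1}{\alpha_1}=\frac{a_{r'}}{\alpha_{r'}}$ or $\frac{a_1}{\alpha_1}<\frac{a_{r'}}{\alpha_{r'}}$. Writing $\theta:=\frac{a_{r'}}{\alpha_{r'}}$, Lemma \ref{lem(3-6-1)} supplies $\theta>0$ and $m_0\theta+b>0$. In the first case, for a small rational $\varepsilon>0$ I would take
\begin{align*}
D:=\sum_{i=1}^r\alpha_i\Big(\tfrac{a_i}{\alpha_i}-\theta+\varepsilon\Big)E_i+\big(m_0\theta+b-m_0\varepsilon\big)E_{r+1}+m_0(\theta-\varepsilon)E_{r+1}'.
\end{align*}
Substituting $E_{r+1}'\sim_{\bQ}F-E_{r+1}$ and $m_0F\sim_{\bQ}\sum\alpha_iE_i$ collapses the $E_{r+1}$-coefficient to $b$ and each $E_i$-coefficient to $a_i$, so $D\sim_{\bQ}H$; the ordering $\frac{a_1}{\alpha_1}\le\dots\le\frac{a_r}{\alpha_r}$ together with $\theta>0$ and $m_0\theta+b>0$ forces every coefficient to be strictly positive as soon as $\varepsilon<\min\{\theta,\frac{m_0\theta+b}{m_0}\}$. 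In the second case I would instead retain $\wE_i$ for $i\le r''$ and $\wE_i'$ for $i>r''$ (with $r''$ defined as in \S\ref{4-5}) and build $D$ from $E_i'\,(i\le r'')$, $E_i\,(i>r'')$ and $E_{r+1},E_{r+1}'$, the positivity of the coefficients being exactly the partial-sum inequalities of Lemma \ref{lem(3-6-1)}.

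Finally, in each case $\Supp(D)$ is precisely the set of $f$-images of the fibre components omitted from the cylinder, so in the first case
\begin{align*}
S\backslash\Supp(D)\simeq\wS\backslash\Supp\Big(\wD_0+\wD_{\infty}+\sum_{i=1}^r(\wF_i-\wE_i')+\wF_{r+1}\Big),
\end{align*}
and in the second case one replaces $\wF_i-\wE_i'$ by $\wF_i-\wE_i$ for $i\le r''$; Lemma \ref{lem(2-1-1)} (2) then identifies the complement with $\bA^1_{\bk}\times\bA^1_{\ast,\bk}$, giving $H\in\Ampc(S)$. I expect the genuine obstacle to be the bookkeeping behind effectivity: unlike in \S\ref{4-5}, the coefficient of $E_{r+1}$ now mixes $\theta$ and $b$, so one truly needs \emph{both} the $b$-free estimates of Lemma \ref{lem(3-6-1)}(1) (to keep the $E_i$-coefficients positive) and the $b$-estimates of Lemma \ref{lem(3-6-1)}(2) (to keep the $E_{r+1}$-coefficient positive), and the range for $\varepsilon$ must be chosen to honour both bounds at once.
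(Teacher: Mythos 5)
Your proposal is correct and takes essentially the same route as the paper's proof: the same case split on whether $\frac{a_1}{\alpha_1}=\frac{a_{r'}}{\alpha_{r'}}$, the coefficient-for-coefficient identical divisor $D$ in the first case, the same choice of curves $E_i'$ $(i\le r'')$, $E_j$ $(j>r'')$, $E_{r+1}$, $E_{r+1}'$ in the second, and the same identification of the complement via Lemma \ref{lem(2-1-1)} (2). Your additional constraint $\varepsilon<\frac{m_0\theta+b}{m_0}$ in the first case is in fact slightly more careful than the paper, which only imposes $\varepsilon<\frac{a_{r'}}{\alpha_{r'}}$ and hence implicitly needs your extra bound when $b<0$ to keep the $E_{r+1}$-coefficient positive.
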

\begin{proof}
Since $\beta = 1$, we note $F \sim _{\bQ} E_{r+1} + E_{r+1}'$ and $m_0F \sim _{\bQ} \sum _{i=1}^r\alpha _iE_i$ because $\wF \sim \wE _{r+1}' + \wD _{r+1,0} + \wE _{r+1}$ and $\wD _{\infty} \sim \wD _0 + m_0\wF - \sum _{i=1}^r \sum _{\lambda = 1}^{\alpha _i} \lambda \wD _{i,\lambda}$. 

Suppose that $\frac{a_1}{\alpha _1} = \frac{a_{r'}}{\alpha _{r'}}$. 
Note that $\frac{a_{r'}}{\alpha _{r'}} > 0$ and $b + \frac{m_0a_{r'}}{\alpha _{r'}} > 0$ by Lemma \ref{lem(3-6-1)}. 
Letting $\varepsilon$ be a positive rational number satisfying $\varepsilon < \frac{a_{r'}}{\alpha _{r'}}$, we take the effective $\bQ$-divisor: 
\begin{align*}
D := \sum _{i=1}^r\alpha _i \left( \frac{a_i}{\alpha _i} - \frac{a_{r'}}{\alpha _{r'}} + \varepsilon \right)E_i + \left\{ b + m_0 \left( \frac{a_{r'}}{\alpha _{r'}} - \varepsilon \right)\right\} E_{r+1} + m_0 \left( \frac{a_{r'}}{\alpha _{r'}} - \varepsilon \right)E_{r+1}'
\end{align*}
on $S$. 
Then we know $H \sim _{\bQ} D$ and: 
\begin{align*}
S \backslash \Supp (D) \simeq \wS \backslash \Supp \left( \wD _0 + \wD _{\infty} + \sum _{i=1}^r (\wF _i - \wE _i') + \wF _{r+1} \right) \simeq \bA ^1_{\bk} \times \bA ^1_{\ast , \bk}
\end{align*}
by Lemma \ref{lem(2-1-1)} (2). 
Thus, $H \in \Ampc (S)$. 

In what follows, we can assume that $\frac{a_1}{\alpha _1} < \frac{a_{r'}}{\alpha _{r'}}$. 
Put $r'' := \max \{ i \in \{ 1,\dots ,r'-1\} \,|\, \frac{a_i}{\alpha _i} < \frac{a_{r'}}{\alpha _{r'}} \}$. 
For simplicity, we put $\alpha '' := \sum _{i=1}^{r''}\alpha _i$ and $a'' := \sum _{i=1}^{r''}a_i$. 
Then we note $m_0 - \alpha '' > 0$. 
Moreover, by using Lemma \ref{lem(3-6-1)} we have: 
\begin{align*}
\frac{1}{m_0-\alpha''}\left( a'' +b + \frac{(m_0-\alpha'')a_{r'}}{\alpha _{r'}} \right) > 0. 
\end{align*}
and: 
\begin{align*}
\frac{1}{m_0-\alpha''}\left( a'' + \frac{(m_0-\alpha'')a_{r'}}{\alpha _{r'}} \right) > 0. 
\end{align*}
Letting $\varepsilon$ be a positive rational number satisfying: 
\begin{align*}
\varepsilon < \min \left\{ \frac{a_{r'}}{\alpha _{r'}} - \frac{a_1}{\alpha _1},\ \frac{1}{m_0-\alpha''}\left( a'' + b + \frac{(m_0-\alpha'')a_{r'}}{\alpha _{r'}} \right),\ \frac{1}{m_0-\alpha''}\left( a'' + \frac{(m_0-\alpha'')a_{r'}}{\alpha _{r'}} \right) \right\} ,
\end{align*}
we take the effective $\bQ$-divisor: 
\begin{align*}
D &:= \sum _{i=1}^{r''} \alpha _i \left( \frac{a_{r'}}{\alpha _{r'}} -\frac{a_i}{\alpha _i} - \varepsilon \right)E_i' + 
\sum _{j=r''+1}^r\alpha _j \left( \frac{a_j}{\alpha _j} - \frac{a_{r'}}{\alpha _{r'}} + \varepsilon \right)E_j \\
&\qquad + \left\{ a'' + b + (m_0 - \alpha '') \left( \frac{a_{r'}}{\alpha _{r'}} - \varepsilon \right) \right\} E_{r+1} 
+ \left\{ a'' + (m_0 - \alpha '') \left( \frac{a_{r'}}{\alpha _{r'}} - \varepsilon \right) \right\} E_{r+1}'
\end{align*}
on $S$. 
Then $H \sim _{\bQ} D$. 
Moreover, we know: 
\begin{align*}
S \backslash \Supp (D) \simeq \wS \backslash \Supp \left( \wD _0 + \wD _{\infty} + \sum _{i=1}^{r''} (\wF _i - \wE _i) + \sum _{j=r''+1}^r (\wF _j - \wE _j') + \wF _{r+1} \right) \simeq \bA ^1_{\bk} \times \bA ^1_{\ast , \bk}
\end{align*}
by Lemma \ref{lem(2-1-1)} (2). 
Thus, $H \in \Ampc (S)$. 
\end{proof}
From now on, we thus consider the case $\beta \ge 2$. Assume $m_0=2$ in what follows. 
Then we obtain the following lemma: 
\begin{lem}\label{lem(3-6-3)}
The following assertions hold: 
\begin{enumerate}
\item If $\alpha _1 > 1$, then $a_1>0$. 
\item If $\alpha > 1$ and $\alpha _1 = 1$, then $a_1+\frac{a_2}{\alpha _2}>0$. 
\item If $\beta \ge 3$, then $b>0$. 
\item If $\beta \ge 2$, then $\frac{(\beta -1)a_1}{\alpha _1} + b > 0$. 
\end{enumerate}
\end{lem}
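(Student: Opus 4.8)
The plan is to prove all four inequalities by the mechanism already used in Lemmas~\ref{lem(3-6-1)}, \ref{lem(3-3-1)} and \ref{lem(3-4-1)}. To each claimed inequality I attach a $\bZ$-divisor $\wDelta$ on $\wS$, show that it is linearly equivalent to an effective divisor, and then pair it with $H$ on $S$. Concretely, $\wDelta$ will have the shape $\wD_0+m_0\wF$ minus explicit multiplicity corrections supported on the singular fibres $\wF_1,\dots,\wF_r$ and $\wF_{r+1}$; that $|\wDelta|\ne\emptyset$ follows, as in the quoted lemmas, from the Riemann--Roch estimate $\dim|\wDelta|\ge\frac12(\wDelta\cdot\wDelta-K_{\wS})$ together with the rationality of $\wS$ (Lemma~\ref{lem(2-2-1)}). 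Since $H$ is ample and $f_{\ast}\wDelta$ is a nonzero effective class on $S$, one gets $(H\cdot f_{\ast}\wDelta)>0$, and the point of each construction is that this intersection number is exactly the asserted combination of $a_1,\dots,a_r,b$.

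The bookkeeping rests on the relations $E_i'\sim_{\bQ}F-E_i$ and $E_{r+1}'\sim_{\bQ}F-E_{r+1}$ from \S\S\ref{4-1}, and on the relation carried by the second section $\wD_{\infty}$, which expresses a multiple of $F$ as a combination of $E_1,\dots,E_r,E_{r+1}$; together these let me convert $(H\cdot f_{\ast}\wDelta)$ into the coordinates $a_i,b$. The hypothesis $m_0=2$ enters twice: it forces the relevant $\wDelta$ to be integral, and it keeps $(\wDelta\cdot -K_{\wS})$ small enough that the Riemann--Roch bound is non-negative in the narrow numerical range available here (recall $(-K_{\wS})^2=4-m_{\infty}$ under $(\ast\ast)$ with $m_0=2$).

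For parts (3) and (4), which govern the coefficient $b$ of $E_{r+1}$, the relevant corrections live mainly on the type-(I-2) fibre $\wF_{r+1}$ and involve its short branch $\wE_{r+1}'$; the thresholds $\beta\ge3$ and $\beta\ge2$ are precisely the values for which the corresponding $\wDelta$ has $\dim|\wDelta|\ge0$, i.e.\ at which the needed effective divisor exists. For parts (1) and (2), whose targets are the smallest-slope quantities $a_1$ and $a_1+\frac{a_2}{\alpha_2}$, the dual class of $E_1$ is not itself effective, so I would first extract one positivity statement from a single $\wDelta$ and then propagate it using the ordering $\frac{a_1}{\alpha_1}\le\dots\le\frac{a_r}{\alpha_r}$, exactly as the base case $i=r'-1$ is bootstrapped to all $i$ in the proofs of Lemmas~\ref{lem(3-6-1)} and \ref{lem(3-3-1)}. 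The split $\alpha_1>1$ versus $\alpha_1=1$ reflects whether $\wF_1$ carries $(-2)$-curves: when $\alpha_1=1$ the fibre $\wF_1$ is a chain of two $(-1)$-curves, the single-fibre correction has to be shared between $\wF_1$ and $\wF_2$, and the weaker output $a_1+\frac{a_2}{\alpha_2}$ is what survives.

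The main obstacle is the intersection-number computation on $\wS$. Because $m_0=2$ (and, in the degree-$2$ application, $m_{\infty}=2$), the sections $\wD_0$ and $\wD_{\infty}$ are $(-2)$-curves, hence lie in the exceptional locus of $f$ and are contracted; as a result the pullbacks $f^{\ast}E_i$ pick up corrections along $\wD_0$ and $\wD_{\infty}$ that couple the otherwise independent fibres, so no single curve $E_i'$ isolates one coordinate. Each number $(E_i\cdot f_{\ast}\wDelta)$ must therefore be evaluated by solving the chain-correction system of Lemma~\ref{lem(3-2)} along the chain running through the two sections. Carrying out these computations correctly, and separately treating the degenerate borderline cases $\alpha_1=1$ in (2) and $\beta=2$ in (4), where the naive $\wDelta$ fails to be effective and must be modified by a fibre or by $\wE_{r+1}'$, is the delicate part of the argument; once the intersection numbers are known, each inequality is immediate from the ampleness of $H$.
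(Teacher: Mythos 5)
Your overall framework is exactly the paper's: for each inequality one writes down an explicit divisor $\wDelta$ supported on $\wD _0$, $\wF$ and components of the singular fibers, checks $|\wDelta | \not= \emptyset$ by the Riemann--Roch estimate together with rationality of $\wS$, and pairs $f_{\ast}(\wDelta )$ with the ample $H$. Your reading of parts (3) and (4) --- corrections concentrated on the type-(\ref{I-2}) fiber $\wF _{r+1}$ involving $\wE _{r+1}'$, with the thresholds $\beta \ge 3$ and $\beta \ge 2$ coming from effectivity --- matches the paper, which takes $\wDelta = \wD _0 + 2\wF - \sum _{\mu =1}^3 \mu \wD _{r+1,\mu}$ for $\beta = 3$, the rescaled variant $(\beta -1)\wD _0 + 2(\beta -1)\wF - \sum _{\mu =1}^{\beta}2\mu \wD _{r+1,\mu} - (\beta -3)\wE _{r+1}'$ for $\beta \ge 4$ (a case split you did not anticipate), and $(\beta -1)\wD _0 + 2(\beta -1)\wF - (\beta -1)\sum _{\lambda =1}^{\alpha _1}\wD _{1,\lambda} - \sum _{\mu =1}^{\beta}\mu \wD _{r+1,\mu} - (\beta -2)\wE _{r+1}'$ for (4). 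Also, the computation you flag as the delicate point is avoidable: by the projection formula $(H \cdot f_{\ast}(\wDelta )) = (f^{\ast}H \cdot \wDelta )$, only the coefficients of the non-exceptional components of $\wDelta$ matter, and the relations $E_{r+1}' \sim _{\bQ} F - E_{r+1}$ and $2F \sim _{\bQ} \sum _{i=1}^r \alpha _i E_i + (\beta -1)E_{r+1}$ convert the answer into the coordinates $a_i, b$; no chain-correction system from Lemma \ref{lem(3-2)} is needed here.

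The genuine gap is in your plan for (1) and (2). You propose to extract one positivity statement from a single $\wDelta$ and then ``propagate it using the ordering $\frac{a_1}{\alpha _1} \le \dots \le \frac{a_r}{\alpha _r}$'' as in Lemmas \ref{lem(3-6-1)} and \ref{lem(3-3-1)}. That mechanism moves positivity in the wrong direction for your target: in those lemmas one replaces trailing terms $a_j$ by the upper bounds $\frac{\alpha _j a_{r'}}{\alpha _{r'}}$, producing combinations weighted ever more heavily on the largest slope $\frac{a_{r'}}{\alpha _{r'}}$; it can never isolate the smallest slope. From a positive mixed combination and the ordering, $a_1 > 0$ simply does not follow (already $a_1 + a_2 > 0$ with $a_1 \le a_2$ allows $a_1 < 0$). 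Moreover, your premise that the dual class of $E_1$ is not effective is false precisely under the hypothesis $\alpha _1 > 1$ of (1): the paper's divisor $\wDelta := \wD _0 + 2\wF - \wD _{1,1} - \sum _{\lambda =2}^{\alpha _1} 2\wD _{1,\lambda} - \wE _{r+1}'$ has $(\wDelta )^2 = -1$ and $(\wDelta \cdot -K_{\wS}) = 1$, hence is effective by Riemann--Roch, and it pairs with $H$ to give exactly $\frac{2a_1}{\alpha _1}$ --- no propagation step occurs anywhere in the paper's proof of this lemma. Only when $\alpha _1 = 1$ does this class become unavailable, and the remedy is again not propagation but a different effective divisor, $\wDelta := \wD _0 + 2\wF - \wE _1 - \sum _{\lambda =1}^{\alpha _2}\wD _{2,\lambda} - \wE _{r+1}'$ (using $\alpha > 1$), whose pairing with $H$ yields the weaker combination $a_1 + \frac{a_2}{\alpha _2}$ of assertion (2) directly. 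So your diagnosis of the split $\alpha _1 > 1$ versus $\alpha _1 = 1$ is correct, but the argument you attach to it would not close; the content of the proof is the two explicit divisors above, which your proposal never produces.
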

\begin{proof}
In (1), consider the divisor $\wDelta := \wD _0 + 2\wF - \wD _{1,1} - \sum _{\lambda=2}^{\alpha _1}2\wD _{1,\lambda} - \wE _{r+1}'$. 
Since $(\wDelta \cdot K_{\wS}-\wF) = -2 <0$, $(\wDelta)^2 = -1$ and $(\wDelta \cdot -K_{\wS}) = 1$, we obtain $\dim |\wDelta| \ge 0$ by the Riemann-Roch theorem and rationality of $\wS$. 
Hence, we have $0 < (H \cdot f_{\ast}(\wDelta)) = \frac{2a_1}{\alpha _1}$. 
Namely, we obtain $a_1>0$. 

In (2), consider the divisor $\wDelta := \wD _0 + 2\wF - \wE _1 - \sum _{\lambda = 1}^{\alpha _2}\wD _{2,\lambda} - \wE _{r+1}'$. 
Since $(\wDelta \cdot K_{\wS}-\wF) = -2 <0$, $(\wDelta)^2 = -1$ and $(\wDelta \cdot -K_{\wS}) = 1$, we obtain $\dim |\wDelta| \ge 0$ by the Riemann-Roch theorem and rationality of $\wS$. 
Hence, we have $0 < (H \cdot f_{\ast}(\wDelta)) = a_1 + \frac{a_2}{\alpha _2}$. 

In (3), we first assume $\beta = 3$. 
Then consider the divisor $\wDelta := \wD _0 + 2\wF - \sum _{\mu = 1}^3\mu \wD_{r+1,\mu}$. 
Since $(\wDelta \cdot K_{\wS}-\wF) = -2 <0$, $(\wDelta)^2 = -1$ and $(\wDelta \cdot -K_{\wS}) = 1$, we obtain $\dim |\wDelta | \ge 0$ by the Riemann-Roch theorem and rationality of $\wS$. 
Hence, we have $0 < (H \cdot f_{\ast}(\wDelta)) = b$ because $(\wDelta \cdot \wD _0) = (\wDelta \cdot \wD _{\infty}) = 0$. 

In what follows, we can assume $\beta \ge 4$. 
Then consider the divisor $\wDelta := (\beta -1)\wD _0 + 2(\beta -1)\wF - \sum _{\mu =1}^{\beta} 2\mu \wD _{r+1,\mu} - (\beta -3)\wE _{r+1}'$. 
Since $(\wDelta \cdot K_{\wS} - \wF) = -2(\beta - 1) < 0$, $(\wDelta )^2 = \beta ^2 -2\beta - 7$ and $(\wDelta \cdot -K_{\wS}) = \beta - 1$, we obtain $\dim |\wDelta| \ge \frac{1}{2}(\beta ^2 - \beta - 8) \ge 0$ by the Riemann-Roch theorem and rationality of $\wS$. 
Hence, we have $0 < \frac{1}{2}(H \cdot f_{\ast}(\wDelta)) = b$ because $(\wDelta \cdot \wD _0) = (\wDelta \cdot \wD _{\infty}) = 0$. 

In (4), consider the divisor $\wDelta := (\beta -1)\wD _0 + 2(\beta -1)\wF - \sum _{\lambda = 1}^{\alpha _1}(\beta -1)\wD _{1,\lambda} - \sum _{\mu =1}^{\beta} \mu \wD _{r+1,\mu} - (\beta -2)\wE _{r+1}'$. 
Since $(\wDelta \cdot K_{\wS} - \wF) = -2(\beta - 1) < 0$, $(\wDelta )^2 = \beta - 3$ and $(\wDelta \cdot -K_{\wS}) = \beta - 1$, we obtain $\dim |\wDelta| \ge \beta - 2 \ge 0$ by the Riemann-Roch theorem. 
Hence, we have $0 < (H \cdot f_{\ast}(\wDelta)) = \frac{(\beta -1)a_1}{\alpha _1} + b$ because $(\wDelta \cdot \wD _0) = (\wDelta \cdot \wD _{\infty}) = 0$. 
\end{proof}
We note $E_{r+1}' \sim _{\bQ} F - E_{r+1}$ and $2F \sim _{\bQ} \sum _{i=1}^r\alpha _iE_i + (\beta - 1)E_{r+1}$ because $\wF \sim \wE _{r+1}' + \sum _{\mu =0}^{\beta}\wD_{r+1,\mu}$ and $\wD _{\infty} \sim \wD _0 + 2\wF - \sum _{i=1}^r \sum _{\lambda = 1}^{\alpha _i}\lambda \wD _{i,\lambda} - \sum _{\mu = 1}^{\beta}\mu \wD _{r+1,\mu} + \wE _{r+1}$. 
Then the following result holds: 
\begin{lem}\label{lem(3-6-4)}
If $\beta \ge 2$ and $m_0=2$, then $H \in \Ampc (S)$. 
\end{lem}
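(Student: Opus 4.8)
The plan is to construct, for an arbitrary $H \in \Amp(S)$ written as $H \sim_{\bQ} \sum_{i=1}^r a_iE_i + bE_{r+1}$ with $\frac{a_1}{\alpha_1} \le \dots \le \frac{a_r}{\alpha_r}$, an effective $\bQ$-divisor $D \sim_{\bQ} H$ supported on $\wD_0$, $\wD_\infty$, the type-(I-1) fibers and the type-(I-2) fiber, so that $S \backslash \Supp(D)$ is one of the cylinders $\bA^1_{\bk} \times \bA^1_{\ast,\bk}$ furnished by Lemma \ref{lem(2-1-1)}(2). This mirrors the proof of Lemma \ref{lem(3-6-2)}, and I would split into the same two subcases with $r' := \min\{i : \alpha_1 + \dots + \alpha_i \ge m_0\}$: first $\frac{a_1}{\alpha_1} = \frac{a_{r'}}{\alpha_{r'}}$, where I keep every $\wE_i'$ and remove $\wD_0 + \wD_\infty + \sum_i(\wF_i - \wE_i') + \wF_{r+1}$; and then $\frac{a_1}{\alpha_1} < \frac{a_{r'}}{\alpha_{r'}}$, where (with $r'' := \max\{i<r' : \frac{a_i}{\alpha_i} < \frac{a_{r'}}{\alpha_{r'}}\}$) I keep $\wE_i$ for the small-slope indices $i \le r''$ and $\wE_j'$ for the large-slope indices $j > r''$, removing the complementary curves exactly as before.

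In each subcase the divisor $D$ is pinned down by the two linear equivalences recorded just before the statement, namely $E_{r+1}' \sim_{\bQ} F - E_{r+1}$ and $2F \sim_{\bQ} \sum_i \alpha_iE_i + (\beta-1)E_{r+1}$; this is where the hypothesis $m_0 = 2$ enters decisively, keeping the coefficient of $F$ in the second relation equal to $2$. Using them I would rewrite $H$ as a combination of the retained generators together with a multiple $\varepsilon$ of the fixed cylinder-defining class, and then read off the coefficients of $E_{r+1}$ and $E_{r+1}'$ forced by $D \sim_{\bQ} H$. Once all coefficients are shown to be nonnegative, $D$ is effective with $S \backslash \Supp(D) \simeq \wS \backslash \Supp(\wD_0 + \wD_\infty + \cdots) \simeq \bA^1_{\bk}\times\bA^1_{\ast,\bk}$ by Lemma \ref{lem(2-1-1)}(2), which gives $H \in \Ampc(S)$; as $H$ is arbitrary this yields the claim.

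The crux is the effectivity of the coefficient of $E_{r+1}$. A direct computation shows that, up to an $O(\varepsilon)$ term, it equals $b + (3-\beta)\frac{a_{r'}}{\alpha_{r'}}$, whose sign is not automatic once $\beta \ge 4$. The resolution is that the coefficient of $E_{r+1}'$ comes out as $2(\frac{a_{r'}}{\alpha_{r'}} - \varepsilon)$, so $\varepsilon$ may be taken anywhere in $(0,\frac{a_{r'}}{\alpha_{r'}})$: letting $\varepsilon$ approach $\frac{a_{r'}}{\alpha_{r'}}$ drives the $E_{r+1}$-coefficient to $b$, which is positive for $\beta \ge 3$ by Lemma \ref{lem(3-6-3)}(3), whereas for $\beta = 2$ a small $\varepsilon$ suffices because $\frac{(\beta-1)a_{r'}}{\alpha_{r'}} + b > 0$ by Lemma \ref{lem(3-6-3)}(4). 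Thus the genuinely delicate step is this $\beta$-dependent window for $\varepsilon$, justified throughout by the inequalities of Lemma \ref{lem(3-6-3)}; the positivity of the remaining coefficients (those of the retained $E_i$ or $E_i'$, and of $E_{r+1}'$) follows routinely from the normalization $\frac{a_1}{\alpha_1} \le \dots \le \frac{a_r}{\alpha_r}$ together with $\frac{a_{r'}}{\alpha_{r'}} > 0$, and the second subcase is dispatched by the same bookkeeping after trading the relevant $\wE_i'$ for $\wE_i$.
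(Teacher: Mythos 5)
Your overall toolkit is the right one (the same supports $\{E_1,\dots,E_r,E_{r+1},E_{r+1}'\}$ or their swaps, the relations $E_{r+1}' \sim_{\bQ} F - E_{r+1}$ and $2F \sim_{\bQ} \sum_i \alpha_i E_i + (\beta-1)E_{r+1}$, and Lemma \ref{lem(3-6-3)}), but transplanting the case division of Lemma \ref{lem(3-6-2)} --- splitting on whether $\frac{a_1}{\alpha_1} = \frac{a_{r'}}{\alpha_{r'}}$ with $r' = \min\{i \,:\, \alpha_1+\dots+\alpha_i \ge m_0\}$ --- breaks down for $\beta \ge 2$ in two concrete places. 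First, $r'$ need not exist: here $-m_\infty = m_0 - \alpha - (\beta-1)$ gives $\alpha = 3 + m_\infty - \beta$ (as $m_0=2$), so $\alpha = 1 < m_0$ does occur (e.g.\ $m_\infty = 2$, $\beta = 4$). In that situation $r=1$, $\alpha_1 = 1$, neither part (1) nor part (2) of Lemma \ref{lem(3-6-3)} gives any positivity for $a_1$, and when $a_1 \le 0$ the keep-all-$E_i$ divisor is not effective while your swap subcase cannot even be triggered (its defining slope inequality involves the nonexistent $r'$). The paper handles this separately (its Subcase 2-1), taking $D = (-2a_1+\varepsilon)E_1' + \{(\beta-1)a_1+b-(\beta-2)\varepsilon\}E_2 + \varepsilon E_2'$ and using that $\alpha=1$ forces $\beta \ge 4$.

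Second, and more seriously, your strict-inequality subcase misroutes configurations with $a_1 > 0$. Since $m_0=2$, your second subcase occurs only with $r'=2$, $r''=1$, $\alpha_1=1$, and for the swap divisor $D = c_1'E_1' + \sum_{j\ge 2}c_jE_j + dE_{r+1} + eE_{r+1}'$ the relations above \emph{force} $c_1' = e - 2a_1$ and $d = b + (\beta-1)a_1 - (\beta-2)e$. If $a_1 > 0$ (which is compatible with $\frac{a_1}{\alpha_1} < \frac{a_2}{\alpha_2}$), effectivity of the $E_1'$-coefficient pins $e > 2a_1$, whence $d < b - (\beta-3)a_1$; for $\beta \ge 4$ the positivity of $b - (\beta-3)a_1$ is strictly stronger than the $b>0$ of Lemma \ref{lem(3-6-3)}(3) and is supplied nowhere, so your $\varepsilon$-window closes. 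Note that the slide ``$\varepsilon \to \frac{a_{r'}}{\alpha_{r'}}$'' you invoke at the crux only rescues the \emph{no-swap} divisor, where the $E_{r+1}'$-coefficient $2(\frac{a_{r'}}{\alpha_{r'}}-\varepsilon)$ may shrink to $0$; in the swap divisor the analogous slide is blocked by $e > 2a_1$. This is exactly why the paper splits on the sign of $a_1$ rather than on slope equality: when $a_1 > 0$ it never swaps (its Case 1 keeps every $E_i$ with coefficients $\alpha_i(\frac{a_i}{\alpha_i}-\varepsilon)$, needing only $\varepsilon$ small together with $\frac{a_1}{\alpha_1}+b>0$ resp.\ $b>0$), and it swaps $E_1$ only when $a_1 \le 0$ (then $\alpha_1=1$ by Lemma \ref{lem(3-6-3)}(1)), where $c_1' = -2a_1+\varepsilon > 0$ is automatic and $e=\varepsilon$ can be taken arbitrarily small so that $d \to (\beta-1)a_1+b > 0$ by Lemma \ref{lem(3-6-3)}(4). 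Your branch with $a_1 \le 0$ does coincide with the paper's Subcase 2-2 and is fine; to repair the rest, adopt the sign-of-$a_1$ division and treat $\alpha=1$ separately.
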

\begin{proof}
We consider the following two cases separately. 
\smallskip

\noindent
{\bf Case 1:} ($a_1>0$). 
In this case, we note $\frac{a_1}{\alpha _1} > 0$. 
We consider the following two subcases separately. 
\smallskip

\noindent
{\bf Subcase 1-1:} ($\beta = 2$). 
In this subcase, we note $\frac{a_1}{\alpha _1}+b>0$ by Lemma \ref{lem(3-6-3)} (4). 
Letting $\varepsilon$ be a positive rational number satisfying $\varepsilon < \min \left\{ \frac{a_1}{\alpha _1} + b, \frac{a_1}{\alpha _1} \right\}$, we take the effective $\bQ$-divisor: 
\begin{align*}
D := \sum _{i=1}^r \alpha _i \left( \frac{a_i}{\alpha _i} - \frac{a_1}{\alpha _1} + \varepsilon \right) E_i + \left( \frac{a_1}{\alpha _1} + b -\varepsilon \right) E_{r+1} + 2 \left( \frac{a_1}{\alpha _1} - \varepsilon \right) E_{r+1}'
\end{align*}
on $S$. 
Then we know $H \sim _{\bQ} D$ and: 
\begin{align*}
S \backslash \Supp (D) \simeq \wS \backslash \Supp \left( \wD _0 + \wD _{\infty} + \sum _{i=1}^r (\wF _i - \wE _i') + \wF _{r+1} \right) \simeq \bA ^1_{\bk} \times \bA ^1_{\ast , \bk}
\end{align*}
by Lemma \ref{lem(2-1-1)} (2). 
Thus, $H \in \Ampc (S)$. 
\smallskip

\noindent
{\bf Subcase 1-2:} ($\beta \ge 3$). 
In this subcase, we note $b>0$ by Lemma \ref{lem(3-6-3)} (3). 
Letting $\varepsilon$ be a positive rational number satisfying:
\begin{align*}
\varepsilon <
\left\{ \begin{array}{ll}
\frac{a_1}{\alpha _1} & \text{if}\ \beta = 3\\
\min \left\{ \frac{a_1}{\alpha _1},\ \frac{b}{\beta-3} \right\} & \text{if}\ \beta > 3
\end{array}\right. ,
\end{align*}
we take the effective $\bQ$-divisor: 
\begin{align*}
D := \sum _{i=1}^r \alpha _i \left( \frac{a_i}{\alpha _i} - \varepsilon \right) E_i + \{b - (\beta -3)\varepsilon \} E_{r+1} + 2 \varepsilon E_{r+1}'
\end{align*}
on $S$. 
Then we know $H \sim _{\bQ} D$ and: 
\begin{align*}
S \backslash \Supp (D) \simeq \wS \backslash \Supp \left( \wD _0 + \wD _{\infty} + \sum _{i=1}^r (\wF _i - \wE _i') + \wF _{r+1} \right) \simeq \bA ^1_{\bk} \times \bA ^1_{\ast , \bk}
\end{align*}
by Lemma \ref{lem(2-1-1)} (2). 
Thus, $H \in \Ampc (S)$. 
\smallskip

\noindent
{\bf Case 2:} ($a_1 \le 0$). 
In this case, we notice $\alpha _1 = 1$ by Lemma \ref{lem(3-6-3)} (1). 
In particular, $(\beta -1)a_1+b>0$ by using Lemma \ref{lem(3-6-3)} (4). 
We consider the following two subcases separately. 
\smallskip

\noindent
{\bf Subcase 2-1:} ($\alpha = 1$). 
In this subcase, we notice $r=1$ and $\beta \ge 4$. 
Letting $\varepsilon$ be a positive rational number satisfying $\varepsilon < \frac{(\beta - 1)a_1+b}{\beta -2}$, we take the effective $\bQ$-divisor: 
\begin{align*}
D := (-2a_1+\varepsilon)E_1' + \{(\beta -1)a_1+b - (\beta -2) \varepsilon \}E_2 + \varepsilon E_2' 
\end{align*}
on $S$. 
Then we know $H \sim _{\bQ} D$ and: 
\begin{align*}
S \backslash \Supp (D) \simeq \wS \backslash \Supp \left( \wD _0 + \wD _{\infty} + \wE _1' + \wF _2 \right) \simeq \bA ^1_{\bk} \times \bA ^1_{\ast , \bk}
\end{align*}
by Lemma \ref{lem(2-1-1)} (2). 
Thus, $H \in \Ampc (S)$. 
\smallskip

\noindent
{\bf Subcase 2-2:} ($\alpha > 1$). 
In this subcase, we have $a_1+\frac{a_2}{\alpha _2} > 0$ by Lemma \ref{lem(3-6-3)} (2). 
Letting $\varepsilon$ be a positive rational number satisfying:
\begin{align*}
\varepsilon <
\left\{ \begin{array}{ll}
a_1+\frac{a_2}{\alpha _2} & \text{if}\ \beta = 2 \\
\min \left\{ a_1+\frac{a_2}{\alpha _2},\ \frac{(\beta - 1)a_1+b}{\beta -2} \right\} & \text{if}\ \beta > 2
\end{array}\right. ,
\end{align*}
we take the effective $\bQ$-divisor: 
\begin{align*}
D := (-2a_1+\varepsilon)E_1' + \sum _{i=2}^r \alpha _i \left( a_1 + \frac{a_i}{\alpha _i} - \varepsilon \right) E_i + \{(\beta -1)a_1+b - (\beta -2) \varepsilon \}E_{r+1} + \varepsilon E_{r+1}' 
\end{align*}
on $S$. 
Then we know $H \sim _{\bQ} D$ and: 
\begin{align*}
S \backslash \Supp (D) \simeq \wS \backslash \Supp \left( \wD _0 + \wD _{\infty} + \wE _1' + \sum _{i=2}^r (\wF _i - \wE _i') + \wF _{r+1} \right) \simeq \bA ^1_{\bk} \times \bA ^1_{\ast , \bk}
\end{align*}
by Lemma \ref{lem(2-1-1)} (2). 
Thus, $H \in \Ampc (S)$. 
\smallskip

Lemma \ref{lem(3-6-4)} is thus verified. 
\end{proof}
Proposition \ref{prop(3-6)} follows from Lemmas \ref{lem(3-6-2)} and \ref{lem(3-6-4)}. 
The proof of Proposition \ref{prop(3-6)} is thus completed. 
\subsection{Type $\sA_2$ case}\label{4-7}
In this subsection, we keep the notation from \S \S \ref{4-1} and assume further that $g$ satisfies $(\ast \ast)$, $(s,t) = (0,0)$ and $(\wD_0 \cdot \wD_{\infty})  = 1$. 
Namely, $-m_{\infty} = (m_0+2) - \alpha$. 
Note that the configuration of the $\bP ^1$-fibration $g:\wS \to \bP ^1_{\bk}$ is given as in Figure \ref{Case5}. 
\begin{figure}\begin{center}\scalebox{0.7}
{\begin{tikzpicture}
\draw [very thick] (-1,0)--(7,0);
\node at (-1.4,0) {$\wD _0$};
\node at (4.25,3) {\Large $\cdots \cdots$};
\draw [very thick] (1.25,5.5)--(7,5.5);
\draw [very thick] (-0.5,-0.25)--(1.25,5.5);
\node at (0.6,5) {$\wD _{\infty}$};
\node at (7.6,0) {$-m_0$};
\node at (7.6,5.5) {$-m_{\infty}$};

\draw [dashed] (3,-0.25)--(2.5,1);
\draw (2.5,0.75)--(3,2);
\draw (3,1.75)--(2.5,3);
\draw (2.5,3.75)--(3,5);
\draw [dashed] (3,4.75)--(2.5,6);
\node at (2.5,3.5) {$\vdots$};
\node at (3,-0.6) {\footnotesize $\wE _1'$};
\node at (3.2,1.25) {\footnotesize $\wD _{1,1}$};
\node at (2.4,2.25) {\footnotesize $\wD _{1,2}$};
\node at (2,4.3) {\footnotesize $\wD _{1,\alpha _1-1}$};
\node at (2.5,6.3) {\footnotesize $\wE _1$};

\draw [dashed] (6.54,-0.25)--(6,1);
\draw (6,0.75)--(6.5,2);
\draw (6.5,1.75)--(6,3);
\draw (6,3.75)--(6.5,5);
\draw [dashed] (6.5,4.75)--(6,6);
\node at (6,3.5) {$\vdots$};
\node at (6.5,-0.6) {\footnotesize $\wE _r'$};
\node at (6.7,1.25) {\footnotesize $\wD _{r,1}$};
\node at (5.9,2.25) {\footnotesize $\wD _{r,2}$};
\node at (5.5,4.3) {\footnotesize $\wD _{r,\alpha _r-1}$};
\node at (6,6.3) {\footnotesize $\wE _r$};
\end{tikzpicture}}
\caption{The configuration of $g : \wS \to \bP ^1_{\bk}$ in Subsection \ref{4-7}.}\label{Case5}
\end{center}\end{figure}
Then we have the following result, which is the main result of this subsection: 
\begin{prop}\label{prop(3-5)}
With the same notations and assumptions as above, we have $\Ampc (S) = \Amp (S)$. 
\end{prop}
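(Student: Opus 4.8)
The plan is to follow the template of \S\ref{4-5} and \S\ref{4-6}, the new feature being that here the two negative sections $\wD_0$ and $\wD_\infty$ are not disjoint but meet in a single point. First I would record the numerical data. Since $(s,t)=(0,0)$ every singular fiber is of type \eqref{I-1}, so $\Cl(S)_\bQ=\bigoplus_{i=1}^r\bQ[E_i]$ and $E_i'\sim_\bQ F-E_i$ for each $i$. Reading $(\wD_0\cdot\wD_\infty)=1$ off the configuration in Figure \ref{Case5}, one computes $\wD_\infty\sim\wD_0+(m_0+1)\wF-\sum_{i=1}^r\sum_{\lambda=1}^{\alpha_i}\lambda\wD_{i,\lambda}$ --- the coefficient being $m_0+1$ rather than $m_0$ precisely because of this intersection --- and pushing forward by $f$ gives the key relation $(m_0+1)F\sim_\bQ\sum_{i=1}^r\alpha_iE_i$. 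I would then write an arbitrary $H\in\Amp(S)$ as $H\sim_\bQ\sum_ia_iE_i$ and, after reindexing, assume $\frac{a_1}{\alpha_1}\le\cdots\le\frac{a_r}{\alpha_r}$. Because $m_\infty\ge 2$ we have $\alpha=\sum_i\alpha_i=m_0+m_\infty+2>m_0+1$, so $r':=\min\{i:\alpha_1+\cdots+\alpha_i\ge m_0+1\}$ is well defined; put $\alpha':=\sum_{i<r'}\alpha_i\le m_0$.

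Next I would prove the positivity estimate analogous to Lemma \ref{lem(3-3-1)}: that $a_1+\cdots+a_i+\frac{(m_0+1-\alpha_1-\cdots-\alpha_i)a_{r'}}{\alpha_{r'}}>0$ for $1\le i\le r'-1$, and that $\frac{a_{r'}}{\alpha_{r'}}>0$. For this I set $\wDelta:=\wD_0+(m_0+1)\wF-\sum_{i=1}^{r'}\sum_{\lambda=1}^{\alpha_i}\lambda\wD_{i,\lambda}+\sum_{\mu=m_0+1-\alpha'}^{\alpha_{r'}}(\mu-m_0-1+\alpha')\wD_{r',\mu}$, arranged so that $(\wDelta\cdot\wD_0)=(\wDelta\cdot\wD_\infty)=0$. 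Checking $(\wDelta\cdot(K_{\wS}-\wF))<0$ forces $h^2(\wDelta)=0$, whence $\dim|\wDelta|\ge\frac12\bigl(\wDelta\cdot(\wDelta-K_{\wS})\bigr)\ge 0$ by Riemann--Roch and the rationality of $\wS$. Pairing the ample class $H$ with $f_\ast\wDelta$ then isolates the required combination of the $a_i$; monotonicity of the slopes upgrades the $i=r'-1$ inequality to all $i\le r'-1$ and yields $\frac{a_{r'}}{\alpha_{r'}}>0$.

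With $K:=\frac{a_{r'}}{\alpha_{r'}}$ as pivot slope I would then exhibit an effective $D\sim_\bQ H$ with cylindrical complement. The recipe uses $E_i$ on the fibers of slope $\ge K$, $E_i'$ on those of slope $<K$, and a positive multiple of $F$, the totals matching $H$ thanks to $(m_0+1)F\sim_\bQ\sum_i\alpha_iE_i$ and $E_i'\sim_\bQ F-E_i$. Concretely, when $\frac{a_1}{\alpha_1}=K$ take $D=\sum_i\alpha_i\bigl(\frac{a_i}{\alpha_i}-K+\varepsilon\bigr)E_i+(m_0+1)(K-\varepsilon)F$ for a small rational $\varepsilon\in(0,K)$, and when $\frac{a_1}{\alpha_1}<K$ the analogous divisor in which the fibers $i\le r'':=\max\{i<r':\frac{a_i}{\alpha_i}<K\}$ are switched from $E_i$ to $E_i'$, exactly as in \S\ref{4-5}; positivity of every coefficient is precisely what the previous step guarantees. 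Finally, since $(\wD_0\cdot\wD_\infty)=1$, removing $\wD_0$, $\wD_\infty$, the fiber $\wF$ through $\wD_0\cap\wD_\infty$, and from each singular fiber everything except one endpoint produces, by Lemma \ref{lem(2-1-1)}(3), an open set isomorphic to $\bA^1_\bk\times\bA^1_{\ast,\bk}$ that equals $S\setminus\Supp(D)$, so $H\in\Ampc(S)$.

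The crux, and what separates this case from \S\ref{4-4}--\S\ref{4-6}, is the single intersection $(\wD_0\cdot\wD_\infty)=1$: it forces the use of Lemma \ref{lem(2-1-1)}(3) in place of (2), it produces the extra ``$+1$'' in the structural relation $(m_0+1)F\sim_\bQ\sum_i\alpha_iE_i$, and---most delicately---it means the intersection numbers $(\wDelta\cdot\wD_\infty)$, $(\wDelta)^2$ and $(\wDelta\cdot K_{\wS})$ underlying the Riemann--Roch bound must all be computed with the cross term $(\wD_0\cdot\wD_\infty)$ present. I expect this bookkeeping, together with verifying that $D$ stays effective through the $\varepsilon$-perturbation in both the tie and the non-tie cases, to be the main technical burden.
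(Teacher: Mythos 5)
Your plan reproduces the paper's architecture for \S\ref{4-7} faithfully --- the same basis $\{E_i\}$, the same threshold $r'$ defined with $m_0+1$, the same target estimate (the paper's Lemma \ref{lem(3-5-1)}), the same final divisors with the $E_i/E_i'$ switch, and correctly the special fiber $F_0$ through $\wD_0\cap\wD_\infty$ together with Lemma \ref{lem(2-1-1)} (3) --- but the auxiliary divisor you use to prove the estimate is wrong, at precisely the point you single out as the crux. For your $\wDelta:=\wD_0+(m_0+1)\wF-\sum_{i=1}^{r'}\sum_{\lambda=1}^{\alpha_i}\lambda\wD_{i,\lambda}+\sum_{\mu=m_0+1-\alpha'}^{\alpha_{r'}}(\mu-m_0-1+\alpha')\wD_{r',\mu}$ one has $(\wDelta\cdot\wD_0)=-m_0+(m_0+1)=1$ and $(\wDelta\cdot\wD_\infty)=(\wD_0\cdot\wD_\infty)+(m_0+1)-\alpha'-(m_0+1-\alpha')=1$, so your normalization claim ``arranged so that $(\wDelta\cdot\wD_0)=(\wDelta\cdot\wD_\infty)=0$'' is false. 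The paper's divisor carries $m_0\wF$, not $(m_0+1)\wF$: it is exactly the cross term $(\wD_0\cdot\wD_\infty)=1$ that then makes $(\wDelta\cdot\wD_\infty)=1+m_0-\alpha'-(m_0+1-\alpha')=0$, while $(\wDelta\cdot\wD_0)=-m_0+m_0=0$. You transplanted the ``$+1$'' from the structural relation $\wD_\infty\sim\wD_0+(m_0+1)\wF-\sum_{i=1}^{r}\sum_{\lambda=1}^{\alpha_i}\lambda\wD_{i,\lambda}$ (where it is correct, and which you derived correctly) into the truncated auxiliary divisor (where it is not).

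This is fatal for the estimate, not merely cosmetic. Your $\wDelta$ equals the paper's plus $\wF$, so $f_\ast(\wDelta_{\mathrm{yours}})=f_\ast(\wDelta_{\mathrm{paper}})+F$, and pairing with $H$ yields only $(H\cdot f_\ast\wDelta_{\mathrm{paper}})+(H\cdot F)>0$, which is strictly weaker than the required $a_1+\cdots+a_{r'-1}+\frac{(m_0+1-\alpha')a_{r'}}{\alpha_{r'}}>0$, since the positive term $(H\cdot F)$ can absorb a failure of the latter. Equivalently: $(E_i\cdot f_\ast\wDelta)$ must be computed through the Mumford pullback $f^\ast E_i=\wE_i+c_0\wD_0+c_\infty\wD_\infty+\cdots$, and because every $\wE_i$ meets $\wD_\infty$ one finds $c_0=\frac{1}{m_0m_\infty-1}$ and $c_\infty=\frac{m_0}{m_0m_\infty-1}$, both strictly positive; your nonzero intersections against the two sections then pollute every pairing by $c_0+c_\infty=\frac{m_0+1}{m_0m_\infty-1}$ and destroy the clean values $(E_i\cdot f_\ast\wDelta)=1$ for $i<r'$, $\frac{m_0+1-\alpha'}{\alpha_{r'}}$ for $i=r'$, and $0$ for $i>r'$, on which both $\frac{a_{r'}}{\alpha_{r'}}>0$ and the positivity of the $F_0$-coefficient $a''+(m_0+1-\alpha'')\left(\frac{a_{r'}}{\alpha_{r'}}-\varepsilon\right)$ in the non-tie case rest. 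Replacing $(m_0+1)\wF$ by $m_0\wF$ (your correction sum is already the paper's) repairs the lemma, and the remainder of your argument then goes through as written; note also that in your displayed tie-case divisor the class $F$ must be represented by the special fiber $F_0$, as you do say in words, since $S\backslash\Supp(D)$ has to match Lemma \ref{lem(2-1-1)} (3).
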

In what follows, we shall prove the above result. 
Let $H \in \Amp (S)$. 
Since $\Amp (S)$ is contained in $\Cl (S)_{\bQ} = \bigoplus _{i=1}^r \bQ [E_i]$, we can write: 
\begin{align*}
H \sim _{\bQ} \sum _{i=1}^r a_iE_i
\end{align*}
for some rational numbers $a_1,\dots ,a_r$. 
Without loss of generality, we may assume $\frac{a_1}{\alpha _1} \le \frac{a_2}{\alpha _2} \le \dots \le \frac{a_r}{\alpha _r}$. 
Since $\alpha - 2 = m_0+m_{\infty}$ and $m_{\infty}>0$, we have $\alpha _1 + \dots + \alpha _r \ge m_0+1$. 
We set $r' := \min \{ i \in \{ 1,\dots ,r\} \,|\, \alpha _1 + \dots + \alpha _i \ge m_0+1\}$. 
For simplicity, we put $\alpha ' := \sum _{i=1}^{r'-1}\alpha _i$, where $\alpha ' = 0$ provided $r'=1$. 
Then the following lemma holds: 
\begin{lem}\label{lem(3-5-1)}
$a_1+\dots +a_i + \frac{(m_0 + 1 -\alpha _1 - \dots - \alpha _i)a_{r'}}{\alpha _{r'}} > 0$ for every $i=1,\dots ,r'-1$. 
Moreover, $\frac{a_{r'}}{\alpha _{r'}} > 0$. 
\end{lem}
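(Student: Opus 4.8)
The plan is to run the same argument as in Lemma~\ref{lem(3-3-1)}, with the single change that the coefficient $m_0$ appearing there is everywhere replaced by $m_0+1$. The reason for this shift is the shape of the section $\wD_{\infty}$: reading off the chain structure of each type~(\ref{I-1}) fiber $\wF_i$ together with $(\wD_{\infty}\cdot\wD_0)=1$ and $(\wD_{\infty})^2=-m_{\infty}=(m_0+2)-\alpha$, one finds
\begin{align*}
\wD_{\infty}\sim\wD_0+(m_0+1)\wF-\sum_{i=1}^r\sum_{\lambda=1}^{\alpha_i}\lambda\wD_{i,\lambda},
\end{align*}
and pushing this forward by $f$ gives the relation $(m_0+1)F\sim_{\bQ}\sum_{i=1}^r\alpha_iE_i$ on $S$, the exact analogue of the relation $m_0F\sim_{\bQ}\sum_i\alpha_iE_i$ used in \S\S\ref{4-5}. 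First I would record this equivalence.

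Next I would introduce the test divisor
\begin{align*}
\wDelta:=\wD_0+(m_0+1)\wF-\sum_{i=1}^{r'}\sum_{\lambda=1}^{\alpha_i}\lambda\wD_{i,\lambda}+\sum_{\mu=(m_0+1)-\alpha'}^{\alpha_{r'}}\bigl(\mu-(m_0+1)+\alpha'\bigr)\wD_{r',\mu}
\end{align*}
on $\wS$; this is the analogue of the divisor of Lemma~\ref{lem(3-3-1)}, without the type~(\ref{II}) correction term (which is absent since $t=0$). By the minimality defining $r'$ we have $\alpha'\le m_0$ and $\alpha_{r'}\ge(m_0+1)-\alpha'$, so $\wDelta$ is linearly equivalent to $\wD_{\infty}$ plus an effective divisor supported on the fibers $\wF_i$ with $i\ge r'$, whence $|\wDelta|\not=\emptyset$ (alternatively this follows from the Riemann--Roch theorem and the rationality of $\wS$, as in Lemma~\ref{lem(3-3-1)}). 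Pushing an effective member forward, $f_{\ast}(\wDelta)$ is a nonzero effective $\bQ$-divisor on $S$ (nonzero because $\alpha=\alpha_1+\dots+\alpha_r=m_0+m_{\infty}+2>m_0+1$), and since $H$ is ample this forces $0<(H\cdot f_{\ast}(\wDelta))$.

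The heart of the matter is to evaluate $(H\cdot f_{\ast}(\wDelta))$. Using the projection formula $(E_i\cdot f_{\ast}(\wDelta))=(f^{\ast}(E_i)\cdot\wDelta)$ together with the orthogonality $(\wDelta\cdot\wD_0)=(\wDelta\cdot\wD_{\infty})=0$, a direct computation (identical in spirit to Lemma~\ref{lem(3-3-1)}, with $m_0$ replaced by $m_0+1$) should give
\begin{align*}
(E_i\cdot f_{\ast}(\wDelta))=\begin{cases}1 & (i<r'),\\[2pt] \dfrac{(m_0+1)-\alpha'}{\alpha_{r'}} & (i=r'),\\[2pt] 0 & (i>r').\end{cases}
\end{align*}
Hence $0<(H\cdot f_{\ast}(\wDelta))=a_1+\dots+a_{r'-1}+\frac{(m_0+1-\alpha')a_{r'}}{\alpha_{r'}}$, which is precisely the first assertion in the case $i=r'-1$ (recall $\alpha'=\alpha_1+\dots+\alpha_{r'-1}$). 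This intersection bookkeeping---complicated by the truncation of the $r'$-th fiber and by the fact that $f^{\ast}(E_{r'})$ carries an exceptional correction along $\wD_{r',(m_0+1)-\alpha'}$, which is exactly what produces the fractional value at $i=r'$---is the step I expect to be the main obstacle.

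Finally, the general case and the positivity of $\frac{a_{r'}}{\alpha_{r'}}$ follow formally, exactly as in Lemma~\ref{lem(3-3-1)}. For $1\le i\le r'-1$, the ordering $\frac{a_1}{\alpha_1}\le\dots\le\frac{a_{r'}}{\alpha_{r'}}$ gives $a_{i+1}+\dots+a_{r'-1}\le(\alpha_{i+1}+\dots+\alpha_{r'-1})\frac{a_{r'}}{\alpha_{r'}}$, so the $i=r'-1$ inequality upgrades to
\begin{align*}
0<a_1+\dots+a_i+\frac{(m_0+1-\alpha_1-\dots-\alpha_i)a_{r'}}{\alpha_{r'}}.
\end{align*}
Applying the same monotonicity bound $a_1+\dots+a_{r'-1}\le\alpha'\frac{a_{r'}}{\alpha_{r'}}$ to the $i=r'-1$ inequality yields $0<(m_0+1)\frac{a_{r'}}{\alpha_{r'}}$, and since $m_0+1>0$ we conclude $\frac{a_{r'}}{\alpha_{r'}}>0$.
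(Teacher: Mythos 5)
There is a genuine error here: your test divisor is the paper's divisor plus one extra full fiber, and that extra fiber destroys the key orthogonality on which your intersection table rests. The paper's proof uses
\begin{align*}
\wDelta := \wD_0 + m_0\wF - \sum_{i=1}^{r'}\sum_{\lambda=1}^{\alpha_i}\lambda\,\wD_{i,\lambda} + \sum_{\mu=m_0-\alpha'+1}^{\alpha_{r'}}(\mu-m_0-1+\alpha')\,\wD_{r',\mu};
\end{align*}
the shift ``$m_0\mapsto m_0+1$'' occurs \emph{only} in the truncation term of the $r'$-th fiber (your correction sum agrees with the paper's verbatim, since $(m_0+1)-\alpha'=m_0-\alpha'+1$), \emph{not} in the coefficient of $\wF$. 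Your divisor is $\wDelta'=\wDelta+\wF$, and for it the orthogonality you invoke fails: $(\wDelta'\cdot\wD_0)=-m_0+(m_0+1)=1$ and likewise $(\wDelta'\cdot\wD_{\infty})=1$, not $0$. Consequently your claimed values of $(E_i\cdot f_{\ast}(\wDelta'))$ are false. Since $f_{\ast}(\wF)=F$, what you actually obtain is $(H\cdot f_{\ast}(\wDelta'))=(H\cdot f_{\ast}(\wDelta))+(H\cdot F)$ with $(H\cdot F)>0$ (note $F$ is not orthogonal to the $E_i$ on $S$, because $f$ contracts the two sections), i.e.\ only the strictly weaker inequality $a_1+\dots+a_{r'-1}+\frac{(m_0+1-\alpha')a_{r'}}{\alpha_{r'}}>-(H\cdot F)$, which does not give the lemma: its whole content is that this quantity is positive even though some $a_i$ may be negative. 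Concretely, take ${\rm Dyn}(S)=\sA_2$ with $m_0=m_{\infty}=2$ and six singular fibers $\wE_i'+\wE_i$ (so $\alpha_i=1$, $r'=3$, $\alpha'=2$): then $f^{\ast}E_i=\wE_i+\frac{1}{3}\wD_0+\frac{2}{3}\wD_{\infty}$, and one computes $(E_j\cdot f_{\ast}(\wDelta'))=2$ for $j\le 3$ and $=1$ for $j>3$, so your positivity reads $2(a_1+a_2+a_3)+(a_4+a_5+a_6)>0$ instead of the required $a_1+a_2+a_3>0$.

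The repair is exactly the paper's choice: keep $m_0\wF$. Then $(\wDelta\cdot\wD_0)=(\wDelta\cdot\wD_{\infty})=0$ holds, effectivity follows from the Riemann--Roch theorem and rationality of $\wS$ (here $(\wDelta)^2=-1$, $(\wDelta\cdot -K_{\wS})=1$, $(\wDelta\cdot K_{\wS}-\wF)=-3$), your displayed intersection table becomes correct, and the rest of your write-up --- the monotonicity upgrade from $i=r'-1$ to general $i$ using $\frac{a_{i+1}}{\alpha_{i+1}}\le\dots\le\frac{a_{r'}}{\alpha_{r'}}$, and the deduction of $\frac{a_{r'}}{\alpha_{r'}}>0$ from $m_0+1\ge 3$ --- coincides with the paper's proof. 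A warning sign you could have caught yourself: your ``alternative'' effectivity argument exhibits $\wDelta'$ as $\wD_{\infty}$ plus an effective fiber-supported divisor, whereas the paper's $\wDelta$ is $\wD_{\infty}-\wF$ plus an effective divisor (which is why the paper argues via Riemann--Roch instead); that your decomposition worked so cleanly is precisely the trace of the superfluous $\wF$.
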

\begin{proof}
Consider the following divisor on $\wS$: 
\begin{align*}
\wDelta := \wD _0 + m_0\wF - \sum _{i=1}^{r'} \sum _{\lambda = 1}^{\alpha _i} \lambda \wD _{i,\lambda} + \sum _{\mu = m_0-\alpha '+1}^{\alpha _{r'}}(\mu - m_0 -1 + \alpha ')\wD _{r',\mu}. 
\end{align*}
Since $(\wDelta \cdot K_{\wS}-\wF) = -2 <0$, $(\wDelta)^2 = -1$ and $(\wDelta \cdot -K_{\wS}) = 1$, we obtain $\dim |\wDelta| \ge 0$ by the Riemann-Roch theorem and rationality of $\wS$. 

At first, we shall show the first assertion with $i=r'-1$. 
Since $(\wDelta \cdot \wD _0) = (\wDelta \cdot \wD _{\infty}) = 0$, we have: 
\begin{align*}
(E _i \cdot f_{\ast}(\wDelta)) = \left\{ \begin{array}{cc}
1 & \text{if}\ i <r' \\ \frac{m_0+1-\alpha '}{\alpha _{r'}} & \text{if}\ i =r' \\ 0 & \text{if}\ i >r'
\end{array} \right. 
\end{align*}
for $i=1,\dots ,r$. Hence: 
\begin{align*}
0 < (H \cdot f_{\ast}(\wDelta)) = \sum _{i=1}^ra_i(E_i \cdot f_{\ast}(\wDelta)) = a_1 + \dots + a_{r'-1} + \frac{(m_0+1-\alpha ')a_{r'}}{\alpha _{r'}}. 
\end{align*}

In what follows, we shall show the first assertion for the general case. 
By using the above result combined with $\frac{a_{i+1}}{\alpha _{i+1}} \le \dots \le \frac{a_{r'-1}}{\alpha _{r'-1}} \le \frac{a_{r'}}{\alpha _{r'}}$, we have: 
\begin{align*}
0 &< a_1+\dots +a_{r'-1} + \frac{(m_0+1-\alpha')a_{r'}}{\alpha _{r'}} \\
&\le a_1 + \dots +a_i + \frac{(\alpha _{i+1} + \dots + \alpha _{r'-1} + m_0 + 1 - \alpha ')a_{r'}}{\alpha _{r'}} \\
&= a_1 + \dots +a_i + \frac{(m_0 + 1 - \alpha _1 - \dots - \alpha _i)a_{r'}}{\alpha _{r'}}. 
\end{align*}

We shall prove the remaining assertion.
Similarly, we have: 
\begin{align*}
0 < a_1+\dots +a_{r'-1} + \frac{(m_0+1-\alpha')a_{r'}}{\alpha _{r'}} \le \frac{(m_0+1)a_{r'}}{\alpha _{r'}}. 
\end{align*}
By virtue of $m_0+1 \ge 3$, we thus obtain $\frac{a_{r'}}{\alpha _{r'}} > 0$. 
\end{proof}
Note $(m_0+1)F \sim _{\bQ} \sum _{i=1}^r \alpha _i E_i$ because $\wD _{\infty} \sim \wD _0 + (m_0+1)\wF - \sum _{i=1}^r \sum _{\lambda = 1}^{\alpha _i} \lambda \wD _{i,\lambda}$. 
Since $(\wD _0 \cdot \wD _{\infty}) = 1$, there exists a unique fiber $\wF _0$ of $g$ such that $\wD _0 \cap \wD _{\infty} \cap \wF _0 \not= \emptyset$. 
Put $F_0 := f_{\ast}(\wF _0)$. 

Assume $\frac{a_1}{\alpha _1} = \frac{a_{r'}}{\alpha _{r'}}$. 
Note that $\frac{a_{r'}}{\alpha _{r'}} > 0$ by Lemma \ref{lem(3-5-1)}. 
Letting $\varepsilon$ be a positive rational number satisfying $\varepsilon < \frac{a_{r'}}{\alpha _{r'}}$, we take the effective $\bQ$-divisor: 
\begin{align*}
D := (m_0+1) \left( \frac{a_{r'}}{\alpha _{r'}} - \varepsilon \right)F_0 + \sum _{i=1}^r\alpha _i \left( \frac{a_i}{\alpha _i} - \frac{a_{r'}}{\alpha _{r'}} + \varepsilon \right)E_i
\end{align*}
on $S$. 
Then we know $H \sim _{\bQ} D$ and: 
\begin{align*}
S \backslash \Supp (D) \simeq \wS \backslash \Supp \left( \wD _0 + \wD _{\infty} + \wF _0 + \sum _{i=1}^r (\wF _i - \wE _i') \right) \simeq \bA ^1_{\bk} \times \bA ^1_{\ast , \bk}
\end{align*}
by Lemma \ref{lem(2-1-1)} (3). 
Thus, $H \in \Ampc (S)$. 

In what follows, we can assume $\frac{a_1}{\alpha _1} < \frac{a_{r'}}{\alpha _{r'}}$. 
Put $r'' := \max \{ i \in \{ 1,\dots ,r'-1\} \,|\, \frac{a_i}{\alpha _i} < \frac{a_{r'}}{\alpha _{r'}} \}$. 
For simplicity, we put $\alpha '' := \sum _{i=1}^{r''}\alpha _i$ and $a'' := \sum _{i=1}^{r''}a_i$. 
Then we note $m_0 + 1 - \alpha '' > 0$. 
Moreover, by Lemma \ref{lem(3-5-1)} we have: 
\begin{align*}
\frac{1}{m_0+1-\alpha''}\left( a'' + \frac{(m_0+1-\alpha'')a_{r'}}{\alpha _{r'}} \right) > 0. 
\end{align*}
Letting $\varepsilon$ be a positive rational number satisfying: 
\begin{align*}
\varepsilon < \min \left\{ \frac{a_{r'}}{\alpha _{r'}} - \frac{a_1}{\alpha _1},\ \frac{1}{m_0+1-\alpha''}\left( a'' + \frac{(m_0+1-\alpha'')a_{r'}}{\alpha _{r'}} \right) \right\} , 
\end{align*}
we take the effective $\bQ$-divisor: 
\begin{align*}
D &:= \left\{ a'' + (m_0+1-\alpha '') \left( \frac{a_{r'}}{\alpha _{r'}}-\varepsilon \right) \right\} F_0 \\
&\qquad + \sum _{i=1}^{r''} \alpha _i \left( \frac{a_{r'}}{\alpha _{r'}} -\frac{a_i}{\alpha _i} - \varepsilon \right)E_i' + 
\sum _{j=r''+1}^r\alpha _j \left( \frac{a_j}{\alpha _j} - \frac{a_{r'}}{\alpha _{r'}} + \varepsilon \right)E_j 
\end{align*}
on $S$. 
Then $H \sim _{\bQ} D$. 
Moreover, we know: 
\begin{align*}
S \backslash \Supp (D) \simeq \wS \backslash \Supp \left( \wD _0 + \wD _{\infty} + \wF _0 + \sum _{i=1}^{r''} (\wF _i - \wE _i) + \sum _{j=r''+1}^r (\wF _j - \wE _j') \right) \simeq \bA ^1_{\bk} \times \bA ^1_{\ast , \bk}
\end{align*}
by Lemma \ref{lem(2-1-1)} (3). 
Thus, $H \in \Ampc (S)$. 

The proof of Proposition \ref{prop(3-5)} is thus completed. 
\section{Proof of Theorem \ref{main(1)}}\label{5}
In this section, we shall prove Theorem \ref{main(1)}. 
Let $S$ be a Du Val del Pezzo surface of degree $d \ge 2$. 
If $\Ampc (S) = \Amp (S)$, then we obtain that $-K_S \in \Ampc (S)$ because $-K_S$ is ample. 

From now on, we assume that $-K_S \in \Ampc (S)$. 
If $d \ge 3$, then we obtain that $\Ampc (S) = \Amp (S)$ by {\cite{Saw25}}. 
Hence, we can assume further that $d=2$. 
If $\rho (S) = 1$, then we know that $\Ampc (S) = \Amp (S) = \bQ _{>0}[-K_S]$. 
In what follows, we thus assume that $\rho (S) \ge 2$. 
Let $f:\wS \to S$ be the minimal resolution, and let $\wD$ be the reduced exceptional divisor of $f$. 
\begin{lem}\label{lem(4-1)}
If $S$ has a non-cyclic quotient singular point, which is not of type $\sD _5$, then $\Ampc (S) = \Amp (S)$. 
\end{lem}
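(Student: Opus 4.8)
The plan is to equip the minimal resolution $f\colon \wS \to S$, which is a weak del Pezzo surface of degree $2$, with a $\bP ^1$-fibration $g\colon \wS \to \bP ^1_{\bk}$ satisfying condition $(\ast )$ together with one of the three numerical conditions of \S\S\ref{4-2}, and then to quote Proposition \ref{prop(4-1)}. By the classification recalled in \S\S\ref{2-2}--\ref{2-3}, the $(-2)$-curves of $\wS$ span a subsystem of the root system of type $\sE _7$; hence a non-cyclic singular point of $S$ other than $\sD _5$ has exceptional locus $\wD ^{(1)}$ of Dynkin type $\sD _4$, $\sD _6$, $\sE _6$ or $\sE _7$, which are precisely the types named in Proposition \ref{prop(4-1)}.

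First I would construct the fibration. Condition $(\ast )$ forces $m_0 = 2$, since $(-K_{\wS})^2 = 2 = 4 - m_0$, so the section $\wD _0$ must be one of the $(-2)$-curves of $\wD ^{(1)}$. Singling out such a curve, I would exhibit a nef class $\wF$ with $(\wF )^2 = 0$, $(\wF \cdot -K_{\wS}) = 2$ and $(\wF \cdot \wD _0) = 1$; its pencil defines $g$, and by Lemma \ref{lem(2-2-2)}(3) every other $(-2)$-curve is then vertical, so $(\ast )$ holds. Such a class is obtained by completing the components of $\wD ^{(1)}$ other than $\wD _0$ to honest singular fibers, attaching the $(-1)$-curves dictated by the shapes in Lemma \ref{lem(2-1-2)}; the multiplicities are forced by $(\wF \cdot \wC ) = 0$ for every vertical component $\wC$.

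The heart of the argument is the choice of $\wD _0$ and the ensuing fiber analysis. For $\sE _6$ and $\sE _7$ I would take $\wD _0$ to be the endpoint of a length-$2$ arm of the diagram; the remaining curves, capped by a single $(-1)$-curve, then form one fiber of type \eqref{II} whose vertical $(-2)$-curves constitute a $\sD _5$, respectively $\sD _6$, graph, so that $\gamma = 5$, respectively $\gamma = 6$, and condition $(1)$ holds. For $\sD _4$ I would instead take $\wD _0$ to be the trivalent (central) component; its three length-$1$ branches then lie in three fibers of type \eqref{I-2} with $\beta _i = \beta _i' = 1$, giving $\gamma = 0$ and $\beta ' = 3$, which is condition $(3)$. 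The type $\sD _6$ is treated by the same bookkeeping, choosing $\wD _0$ so that the resulting fibers realize one of $(1)$--$(3)$.

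The main obstacle is twofold. First, one must guarantee the existence of the auxiliary $(-1)$-curves used to close up the fibers: nothing in the hypothesis produces them a priori (for instance the $\sD _4$ construction already requires two $(-1)$-curves meeting each branch), and I expect to supply them by the method of \S\ref{3}, i.e. by studying effective members of $|{-}K_{\wS} - \wA |$ for suitable $\wA$ supported on $\wD$ and exploiting the negative-definiteness of $\wD ^{(1)}$, or else by contracting $\wS$ to a Hirzebruch surface and tracking lines. Second, because the budget $\alpha + \beta + \beta ' + \gamma = 6$ is tight in degree $2$, verifying that the chosen section genuinely realizes one of conditions $(1)$--$(3)$ in every sub-case is delicate; it is this case-by-case numerical checking, rather than any single conceptual step, that constitutes the crux of the proof.
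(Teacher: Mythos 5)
Your overall strategy is the paper's: choose a $(-2)$-curve $\wD _0$ of $\wD$ as section (so $m_0=2$, as you correctly note), complete the remaining $(-2)$-curves to singular fibers of a $\bP ^1$-fibration satisfying $(\ast )$, compute $(\alpha ,\beta ,\beta ',\gamma )$, and quote Proposition \ref{prop(4-1)}; your pure $\sD _4$ and $\sE _6$ constructions agree with the paper's Figure \ref{fig(4-1)}. But there is a genuine gap in how you set up the case division: you split according to the type of the non-cyclic point ($\sD _4$, $\sD _6$, $\sE _6$, $\sE _7$) instead of according to ${\rm Dyn}(S)$. Under the standing assumption $\rho (S)\ge 2$ of Section \ref{5} (this, not the root-system embedding alone, is what removes $\sE _7$ --- a reduction you never invoke), the actual cases are $\sD _4$, $\sD _4+\sA _1$, $\sD _4+2\sA _1$, $\sD _6$, $\sE _6$, and the extra $\sA _1$-components materially change the computation. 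For ${\rm Dyn}(S)=\sD _4+\sA _1$ your three fibers of type (\ref{I-2}) are impossible: they already exhaust the budget $\alpha +\beta +\beta '+\gamma =6$, yet the extra $(-2)$-curve must also be vertical, so one branch has to be paired with the $\sA _1$-curve in a type-(\ref{II}) fiber, giving $(0,2,2,2)$, and for $\sD _4+2\sA _1$ one gets $(0,1,1,4)$. Relatedly, your claim that Lemma \ref{lem(2-2-2)} (3) makes all remaining $(-2)$-curves vertical is a non sequitur: that lemma characterizes $(-2)$-curves by $(\wC \cdot -K_{\wS})=0$ and says nothing about $(\wF \cdot \wC )$; indeed in \S \S \ref{4-4}--\ref{4-7} a second $(-2)$-curve is a horizontal section. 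Verticality of the components outside $\wD ^{(1)}$ must be arranged by exactly the pairing just described, and the paper obtains it --- together with the existence of the capping $(-1)$-curves, which you only promise to supply by the methods of \S \ref{3} --- from the explicit configurations listed in {\cite{CPW16b}}.

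The second concrete failure is $\sD _6$, which you defer with ``the same bookkeeping, choosing $\wD _0$ so that the resulting fibers realize one of (1)--(3)'': no such choice exists. The long-arm end is excluded because none of the graphs (\ref{I-1}), (\ref{I-2}), (\ref{II}) of Lemma \ref{lem(2-1-2)} attaches the section to the long end of a $\sD _5$-shaped $(-2)$-set (the adjacent component would have multiplicity $2$ in its fiber), and the trivalent vertex and the vertex next to it force $\alpha +\beta +\beta '+\gamma >6$; the only admissible sections yield $(\alpha ,\beta ,\beta ',\gamma )=(0,1,1,4)$ (section the second vertex of the long arm, as in the paper) or $(0,4,2,0)$ (section a short-arm tip), and neither satisfies (1), (2) or (3) as stated in Proposition \ref{prop(4-1)}. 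The paper itself quotes $(0,1,1,4)$ for both $\sD _6$ and $\sD _4+2\sA _1$; what actually legitimizes the appeal to Proposition \ref{prop(4-1)} there is the inequality $3\beta '+2\gamma -10\ge 0$ appearing in the proof of Lemma \ref{lem(3-1-2)}, which $(\beta ',\gamma )=(1,4)$ satisfies even though the displayed hypotheses (1)--(3) do not literally cover it. So your plan, taken at face value, dead-ends at $\sD _6$ (and would at $\sD _4+2\sA _1$, a case your decomposition never sees); repairing it requires redoing the sub-case analysis by full Dynkin type and checking that the proof, rather than the statement, of Proposition \ref{prop(4-1)} applies to $(\beta ',\gamma )=(1,4)$.
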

\begin{proof}
By assumption and $\rho (S) \ge 2$, ${\rm Dyn}(S) = \sD _4$, $\sD _4+\sA _1$, $\sD_ 4+2\sA _1$, $\sD _6$ or $\sE _6$. 
Indeed, it can be seen from the classification of Du Val del Pezzo surfaces of degree $2$ (see, e.g., {\cite{Ura81}} or {\cite[\S 8.7]{Dol12}}). 
We consider this proof according to Dynkin types of $S$.  

In ${\rm Dyn}(S) = \sD _4$, by the configuration of this Du Val del Pezzo surface (cf.\ {\cite[p.\ 1220]{CPW16b}}), we can find a $\bP ^1$-fibration $g:\wS \to \bP ^1_{\bk}$ such that there exists exactly one $(-2)$-curve is a section of $g$ and other $(-2)$-curves are fiber components of $g$ (see Figure \ref{fig(4-1)} (1)). 
In particular, $g$ satisfies $(\ast )$. 
Moreover, we have $(\alpha , \beta ,\beta' ,\gamma) = (0,3,3,0)$ provided that we use notations from \S \S \ref{4-1}. 

In ${\rm Dyn}(S) = \sD _4 + \sA _1$, by the configuration of this Du Val del Pezzo surface (cf.\ {\cite[p.\ 1217]{CPW16b}}), we can find a $\bP ^1$-fibration $g:\wS \to \bP ^1_{\bk}$ such that there exists exactly one $(-2)$-curve is a section of $g$ and other $(-2)$-curves are fiber components of $g$ (see Figure \ref{fig(4-1)} (2)). 
In particular, $g$ satisfies $(\ast )$. 
Moreover, we have $(\alpha , \beta ,\beta' ,\gamma) = (0,2,2,2)$ provided that we use notations from \S \S \ref{4-1}. 

In ${\rm Dyn}(S) = \sD _4 + 2\sA _1$, by the configuration of this Du Val del Pezzo surface (cf.\ {\cite[p.\ 1217]{CPW16b}}), we can find a $\bP ^1$-fibration $g:\wS \to \bP ^1_{\bk}$ such that there exists exactly one $(-2)$-curve is a section of $g$ and other $(-2)$-curves are fiber components of $g$ (see Figure \ref{fig(4-1)} (3)). 
In particular, $g$ satisfies $(\ast )$. 
Moreover, we have $(\alpha , \beta ,\beta' ,\gamma) = (0,1,1,4)$ provided that we use notations from \S \S \ref{4-1}. 

In ${\rm Dyn}(S) = \sD _6$, by the configuration of this Du Val del Pezzo surface (cf.\ {\cite[p.\ 1216]{CPW16b}}), we can find a $\bP ^1$-fibration $g:\wS \to \bP ^1_{\bk}$ such that there exists exactly one $(-2)$-curve is a section of $g$ and other $(-2)$-curves are fiber components of $g$ (see Figure \ref{fig(4-1)} (4)). 
In particular, $g$ satisfies $(\ast )$. 
Moreover, we have $(\alpha , \beta ,\beta' ,\gamma) = (0,1,1,4)$ provided that we use notations from \S \S \ref{4-1}. 

In ${\rm Dyn}(S) = \sE _6$, by the configuration of this Du Val del Pezzo surface (cf.\ {\cite[p.\ 1216]{CPW16b}}), we can find a $\bP ^1$-fibration $g:\wS \to \bP ^1_{\bk}$ such that there exists exactly one $(-2)$-curve is a section of $g$ and other $(-2)$-curves are fiber components of $g$ (see Figure \ref{fig(4-1)} (5)). 
In particular, $g$ satisfies $(\ast )$. 
Moreover, we have $(\alpha , \beta ,\beta' ,\gamma) = (1,0,0,5)$ provided that we use notations from \S \S \ref{4-1}. 

Hence, in every case, we obtain $\Amp (S) = \Ampc (S)$ by Proposition \ref{prop(4-1)}. 
\end{proof}
\begin{figure}
\begin{minipage}[c]{1\hsize}\begin{center}\scalebox{0.6}{\begin{tikzpicture}
\node at (-1,6) {\Large (1)};
\node at (-1,-2) {\ };
\node at (0.5,0) {\Large $\ast$};
\draw [thick] (0,0)--(5,0);
\draw [thick] (1,-0.5)--(1,5);
\draw [thick] (4,-0.5)--(4,5);
\draw [thick] (2.5,-0.5)--(2.5,5);
\draw [thick,dashed] (0.5,4)--(1.5,4);
\draw [thick,dashed] (2,4)--(3,4);
\draw [thick,dashed] (3.5,4)--(4.5,4);
\draw [thick,dashed] (0.5,2)--(1.5,2);
\draw [thick,dashed] (2,2)--(3,2);
\draw [thick,dashed] (3.5,2)--(4.5,2);
\end{tikzpicture}
\qquad \qquad
\begin{tikzpicture}
\node at (-1,6) {\Large (2)};
\node at (-1,-2) {\ };
\draw [thick] (0,0)--(5,0);
\node at (0.5,0) {\Large $\ast$};
\draw [thick] (1,-0.5)--(1,5);
\draw [thick] (2.5,-0.5)--(2.5,5);
\draw [thick] (3.7,-0.5)--(4.3,1.5);
\draw [thick] (3.7,5)--(4.3,3);
\draw [thick,dashed] (4.15,0.75)--(4.15,3.75);
\draw [thick,dashed] (0.5,4)--(1.5,4);
\draw [thick,dashed] (2,4)--(3,4);
\draw [thick,dashed] (0.5,2)--(1.5,2);
\draw [thick,dashed] (2,2)--(3,2);
\end{tikzpicture}
\qquad \qquad
\begin{tikzpicture}
\node at (-1,6) {\Large (3)};
\node at (-1,-2) {\ };
\draw [thick] (0,0)--(5,0);
\node at (0.5,0) {\Large $\ast$};
\draw [thick] (1,-0.5)--(1,5);
\draw [thick] (3.7,-0.5)--(4.3,1.5);
\draw [thick] (3.7,5)--(4.3,3);
\draw [thick,dashed] (4.15,0.75)--(4.15,3.75);
\draw [thick] (2.2,-0.5)--(2.8,1.5);
\draw [thick] (2.2,5)--(2.8,3);
\draw [thick,dashed] (2.65,0.75)--(2.65,3.75);
\draw [thick,dashed] (0.5,4)--(1.5,4);
\draw [thick,dashed] (0.5,2)--(1.5,2);
\end{tikzpicture}}\end{center}\end{minipage}

\begin{minipage}[c]{1\hsize}\begin{center}\scalebox{0.6}{\begin{tikzpicture}
\node at (-1,6) {\Large (4)};
\node at (-1,-2) {\ };
\draw [thick] (0,0)--(7,0);
\node at (0.5,0) {\Large $\ast$};
\draw [thick] (1,-0.5)--(1,5);
\draw [thick] (5.5,-0.5)--(6.3,1.5);
\draw [thick] (5.5,5)--(6.3,3);
\draw [thick] (6.15,0.75)--(6.15,3.75);
\draw [thick,dashed] (0,4)--(2,4);
\draw [thick] (4,2.25)--(7,2.25);
\draw [thick,dashed] (5,0.75)--(5,3.75);
\draw [thick,dashed] (0,2)--(2,2);
\end{tikzpicture}
\qquad \qquad
\begin{tikzpicture}
\node at (-1,6) {\Large (5)};
\node at (-1,-2) {\ };
\draw [thick] (0,0)--(7,0);
\node at (0.5,0) {\Large $\ast$};
\draw [thick,dashed] (1.3,-0.5)--(0.7,3);
\draw [thick,dashed] (0.7,2)--(1.3,5);
\draw [thick] (5.5,-0.5)--(6.3,1.5);
\draw [thick] (5.5,5)--(6.3,3);
\draw [thick] (6.15,0.75)--(6.15,3.75);
\draw [thick] (4.75,2)--(7,2.5);
\draw [thick] (5.75,2)--(3.25,2.5);
\draw [thick,dashed] (1.75,2)--(4.25,2.5);
\end{tikzpicture}}
\caption{Lemma \ref{lem(4-1)}; A dotted line (resp. a solid line) stands for a $(-1)$-curve (resp. a $(-2)$-curve); A line with $\ast$ means a non-fiber component of the $\bP ^1$-fibrations from $\wS$. }\label{fig(4-1)}
\end{center}\end{minipage}
\end{figure}
\begin{figure}
\begin{minipage}[c]{1\hsize}\begin{center}\scalebox{0.6}{\begin{tikzpicture}
\node at (-1,6.5) {\large (a)};
\node at (-1,-2) {\ };
\node at (0.5,0) {\Large $\ast$};
\draw [thick] (0,0)--(5,0);
\node at (-0.4,0) {$\wD _5$};
\draw [thick] (2,-0.25)--(4,2);
\draw [thick] (3.5,1)--(3.5,5);
\draw [thick] (4,3.75)--(2,6);
\node at (1.75,-0.6) {$\wD _4$};
\node at (1.75,6.3) {$\wD _2$};
\node at (3.5,5.5) {$\wD _3$};
\draw [thick] (0,3)--(4,3);
\draw [thick,dashed] (1,1.5)--(1,4.5);
\node at (-0.4,3) {$\wD _1$};
\node at (1,5) {$\wE _1$};
\end{tikzpicture}
\qquad \qquad
\begin{tikzpicture}
\node at (-1,6.5) {\large (b)};
\node at (-1,-2) {\ };
\node at (0.5,0) {\Large $\ast$};
\node at (0.5,5.75) {\Large $\ast$};
\draw [thick] (0,5.75)--(5,5.75);
\draw [thick] (0,0)--(5,0);
\node at (-0.4,5.75) {$\wD _1$};
\node at (-0.4,0) {$\wD _5$};
\draw [thick] (2,-0.25)--(4,2);
\draw [thick] (3.5,1)--(3.5,5);
\draw [thick] (4,3.75)--(2,6);
\node at (1.75,-0.6) {$\wD _4$};
\node at (1.75,6.3) {$\wD _2$};
\node at (4,5) {$\wD _3$};
\draw [thick,dashed] (0,3)--(4,3);
\node at (-0.4,3) {$\wE _3$};
\end{tikzpicture}
\qquad \qquad
\begin{tikzpicture}
\node at (-1,6.5) {\large (c)};
\node at (-1,-2) {\ };
\node at (0.5,0) {\Large $\ast$};
\node at (0.5,5.75) {\Large $\ast$};
\draw [thick] (0,5.75)--(1,5.75);
\draw [thick] (2,0.5)--(5,0.5);
\draw [very thick] (1,5.75) .. controls (1.5,5.75) and (1.5,0.5) .. (2,0.5);
\draw [thick] (0,0)--(5,0);
\node at (-0.4,5.75) {$\wD _1$};
\node at (-0.4,0) {$\wD _3$};
\draw [thick] (2.5,-0.25)--(4,2);
\draw [thick,dashed] (3.5,1)--(3.5,5);
\draw [thick] (4,3.75)--(2.5,6);
\node at (2.25,-0.6) {$\wD _2$};
\node at (2.25,6.3) {$\wD _4$};
\node at (4,5) {$\wE _2$};
\end{tikzpicture}}\end{center}\end{minipage}

\begin{minipage}[c]{1\hsize}\begin{center}\scalebox{0.6}{\begin{tikzpicture}
\node at (-1,6.5) {\large (d)};
\node at (0.5,0) {\Large $\ast$};
\node at (0.5,5.75) {\Large $\ast$};
\draw [thick] (0,5.75)--(5,5.75);
\draw [thick] (0,0)--(5,0);
\node at (-0.4,5.75) {$\wD _1$};
\node at (-0.4,0) {$\wD _3$};
\draw [thick] (2.5,-0.5)--(2.5,6.25);
\node at (3,-0.6) {$\wD _2$};
\draw [thick,dashed] (0,2)--(4,2);
\draw [thick,dashed] (0,4)--(4,4);
\node at (-0.4,2) {$\wE _2'$};
\node at (-0.4,4) {$\wE _2$};
\end{tikzpicture}
\qquad \qquad
\begin{tikzpicture}
\node at (-1,6.5) {\large (e)};
\node at (0.5,0) {\Large $\ast$};
\node at (0.5,5.75) {\Large $\ast$};
\draw [thick] (0,5.75)--(5,5.75);
\draw [thick] (0,0)--(5,0);
\node at (-0.4,5.75) {$\wD _1$};
\node at (-0.4,0) {$\wD _n$};
\draw [thick] (2.5,-0.25)--(4,1.5);
\draw [thick,dashed] (1.25,2)--(3.5,0.5);
\draw [thick] (2.5,2)--(4,1);
\draw [thick,dashed] (1.25,3.75)--(3.5,5.25);
\draw [thick] (4,4.25)--(2.5,6);
\draw [thick] (4,4.75)--(2.5,3.75);
\node at (2.25,-0.6) {$\wD _{n-1}$};
\node at (2.25,6.3) {$\wD _2$};
\node at (0.75,3.75) {$\wE _2$};
\node at (0.75,2) {$\wE _{n-1}$};
\node at (2.5,3.25) {$\vdots$};
\node at (2.5,2.75) {$\vdots$};
\end{tikzpicture}
\qquad \qquad
\begin{tikzpicture}
\node at (-1,6.5) {\large (f)};
\node at (4.5,0) {\Large $\ast$};
\node at (4.5,5.75) {\Large $\ast$};
\draw [thick] (3,5.75)--(5,5.75);
\draw [thick] (0.5,-0.25)--(1.5,4.75);
\draw [very thick] (1.5,4.75) .. controls (1.5,5) and (2,5.75) .. (3,5.75);
\draw [thick] (0,0)--(5,0);
\node at (0.5,-0.6) {$\wD _1$};
\node at (-0.4,0) {$\wD _2$};
\end{tikzpicture}}
\caption{Lemmas \ref{lem(4-2)}, \ref{lem(4-3)}, \ref{lem(4-4)}, \ref{lem(4-5)} and \ref{lem(4-6)}; Configurations of some sections and fiber components of $g$; A line with $\ast$ means a non-fiber component of $g$. }\label{fig(4-2)}
\end{center}\end{minipage}
\end{figure}
\begin{lem}\label{lem(4-2)}
If $S$ has a singular point of type $\sD _5$, then $\Ampc (S) = \Amp (S)$. 
\end{lem}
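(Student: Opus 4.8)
The plan is to reduce the statement to Proposition \ref{prop(4-2)} by producing, on the minimal resolution $\wS$, a $\bP ^1$-fibration $g:\wS \to \bP ^1_{\bk}$ that satisfies condition $(\ast)$ together with $(s,t)=(0,1)$ and $\gamma =4$, so that the $\sD _5$ singularity gives rise to the single type (\ref{II}) fiber required there. First I would invoke the classification of Du Val del Pezzo surfaces of degree $2$ (see, e.g., {\cite{Ura81}} or {\cite[\S 8.7]{Dol12}}) to list the Dynkin types containing an $\sD _5$ summand: under $\rho (S)\ge 2$ these are ${\rm Dyn}(S)=\sD _5$ and $\sD _5+\sA _1$. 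In either case the hypothesis $-K_S\in \Ampc (S)$ is automatic by Theorem \ref{CPW} (3), since $\sD _5$ is not of type $\sA _1$, so the real content is the construction of the fibration.

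Next I would read off the configuration of negative curves from {\cite{CPW16b}}, as in the proof of Lemma \ref{lem(4-1)}. Writing $\wD _1,\dots ,\wD _5$ for the $(-2)$-curves of the $\sD _5$ chain, with branch component $\wD _3$ and long tail $\wD _5-\wD _4-\wD _3$, there is a $(-1)$-curve $\wE$ meeting the fork leaf $\wD _1$; this is exactly Figure \ref{fig(4-2)} (a). I take $\wD _0:=\wD _5$ as the section, which is a $(-2)$-curve, so $m_0=2$ and $(-K_{\wS})^2=4-m_0=2$, as it must be in degree $2$. A direct multiplicity computation shows that $\wF:=\wD _4+\wD _2+2\wD _3+2\wD _1+2\wE$ satisfies $(\wF)^2=0$, $(\wF\cdot -K_{\wS})=2$ and $(\wF\cdot \wD _0)=1$, and that the weighted dual graph of $\wF+\wD _0$ is precisely the graph (\ref{II}) with $\gamma _1=4$; hence $\wF$ is a type (\ref{II}) singular fiber. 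The remaining negative curves fill out the type (\ref{I-1}) fibers: when ${\rm Dyn}(S)=\sD _5$ there are two such fibers, each a pair of $(-1)$-curves ($\alpha _1=\alpha _2=1$), whereas when ${\rm Dyn}(S)=\sD _5+\sA _1$ the extra $(-2)$-curve sits inside a single type (\ref{I-1}) fiber ($\alpha _1=2$). In both cases $(\alpha ,\beta ,\beta ',\gamma )=(2,0,0,4)$, consistent with $8-(\alpha +\beta +\beta '+\gamma )=(-K_{\wS})^2=2$.

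Having exhibited these fibers, I would verify that the pencil $|\wF|$ is base-point free and defines a $\bP ^1$-fibration $g:\wS \to \bP ^1_{\bk}$; this is routine, since $\wF$ is nef with $(\wF)^2=0$ on the rational surface $\wS$ and $\dim |\wF|\ge 1$ by the Riemann--Roch theorem. By construction $\wD _0$ is the unique irreducible component of $\wD$ that is a section, every other component of $\wD$ lies in a fiber, and $(-K_{\wS})^2=4-m_0$; thus $g$ satisfies $(\ast)$ with $(s,t)=(0,1)$ and $\gamma =4$. Proposition \ref{prop(4-2)} then yields $\Ampc (S)=\Amp (S)$ directly.

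The main obstacle I anticipate is the configurational input of the second paragraph. Unlike the $\sA _n$ setting of \S \ref{3}, the surface now carries an $\sD _5$ point, so Proposition \ref{prop(3-0)} does not apply, and one must instead extract from the classification both the existence of the $(-1)$-curve $\wE$ meeting the fork leaf and the precise way the remaining $(-1)$-curves (and the $\sA _1$-curve, when present) organize into type (\ref{I-1}) fibers with $\sum \alpha _i=2$. Establishing that these fibers genuinely occur and that no unexpected component of $\wD$ becomes a second section, so that $(\ast)$ really holds with the stated invariants, is the delicate point; the numerical identities above serve as the consistency check that the bookkeeping is correct.
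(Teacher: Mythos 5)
Your proposal is correct and follows essentially the same route as the paper: the same reduction of the Dynkin type to $\sD _5$ or $\sD _5+\sA _1$, the same fiber $\wF = \wD _2+\wD _4+2(\wE +\wD _1+\wD _3)$ with section $\wD _0=\wD _5$ giving a fibration satisfying $(\ast)$ with $(s,t)=(0,1)$ and $\gamma =4$, and the same appeal to Proposition \ref{prop(4-2)}. The configurational input you flag as delicate (the $(-1)$-curve $\wE$ meeting the fork leaf with $(\wE \cdot \wD _i)=\delta _{1,i}$, so that no component of $\wD -\wD _5$ meets $\wF$ positively) is exactly what the paper settles by citing the list of negative-curve configurations in {\cite[p.\ 20]{PW10}}, and your consistency checks $(\alpha ,\beta ,\beta ',\gamma )=(2,0,0,4)$ agree with the paper's bookkeeping.
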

\begin{proof}
By assumption and $\rho (S) \ge 2$, ${\rm Dyn}(S) = \sD _5$ or $\sD _5+\sA _1$. 
Indeed, it can be seen from the classification of Du Val del Pezzo surfaces of degree $2$ (see, e.g., {\cite{Ura81}} or {\cite[\S 8.7]{Dol12}}). 
Let $P$ be a singular point on $S$ of type $\sD _5$ and let $\wD _1+\dots + \wD _5$ be the connected component of $\wD$ corresponding to $P$ such that the weighted dual graph of $\wD _1+\dots + \wD _5$ is given as follows: 
\begin{align*}
\xygraph{\circ ([]!{+(0,.25)} {^{\wD_5}}) -[l] \circ ([]!{+(0,.25)} {^{\wD_4}}) -[l] \circ ([]!{+(0,.25)} {^{\wD_3}}) (-[]!{+(-1,-0.5)} \circ ([]!{+(0,.25)} {^{\wD_2}}), -[]!{+(-1,0.5)} \circ ([]!{+(0,.25)} {^{\wD_1}}))}
\end{align*}
By using the list of the configurations of all $(-2)$-curves and some $(-1)$-curves on weak del Pezzo surfaces of degree $2$ (see {\cite[p.\ 20]{PW10}}), we know that there exists a $(-1)$-curve $\wE _1$ on $\wS$ such that $(\wE _1 \cdot \wD _i) = \delta _{1,i}$ for $i=1,\dots ,5$. 
Then a divisor $\wF := \wD_2+2(\wE_1+\wD_1+\wD_3)+\wD_4$ defines a $\bP ^1$-fibration $g := \Phi _{|\wF|}: \wS \to \bP ^1_{\bk}$, $\wD_5$ becomes a section of $g$ and each component of $\wD-\wD_5$ is a fiber component of $g$. 
Note that the configuration of $\wF + \wD _5$ looks like that in Figure \ref{fig(4-2)} (a). 
Moreover, $g$ satisfies $(\ast )$, $(s,t)=(0,1)$ and $\gamma = 4$ provided that we use notations from \S \S \ref{4-1}. 
Hence, we obtain that $\Amp (S) = \Ampc (S)$ by Proposition \ref{prop(4-2)}. 
\end{proof}
In what follows, we assume that $S$ has only cyclic quotient singular points. 
Thus, $S$ allows only singular points of types $\sA _1$, $\sA _2$, $\sA _3$, $\sA _4$, $\sA _5$, $\sA _6$. 
Here, note that $S$ has no singular point of type $\sA _n$ with $n \ge 7$ because $\rho (\wS ) = 8$ and $\rho (S) \ge 2$. 
Moreover, $S$ has a singular point, which is not of type $\sA _1$ by Theorem \ref{CPW}. 
\begin{lem}\label{lem(4-3)}
If $S$ is of type $(\sA _5)'$ or $(\sA _5+\sA _1)'$, then $\Ampc (S) = \Amp (S)$. 
\end{lem}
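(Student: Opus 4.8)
The plan is to exhibit on $\wS$ an explicit $\bP^1$-fibration satisfying the hypotheses of Proposition \ref{prop(3-4)}, namely $(\ast\ast)$ together with $(s,t)=(0,1)$ and $\gamma=3$, and then to invoke that proposition. By the classification of Du Val del Pezzo surfaces of degree $2$, the assumption forces $\mathrm{Dyn}(S)=\sA_5$ or $\sA_5+\sA_1$; in either case write $\wD_1-\wD_2-\wD_3-\wD_4-\wD_5$ for the chain of $(-2)$-curves contracting to the $\sA_5$ point, so that $\wD_3$ is its central component. Since $S$ is of type $(\sA_5)'$ or $(\sA_5+\sA_1)'$, there is by definition a $(-1)$-curve meeting $\wD_3$; hence Proposition \ref{prop(3-0)} applies in case (B), and by Proposition \ref{prop(3-2)} there is a $(-1)$-curve $\wE$ with $(\wE\cdot\wD_i)=\delta_{3,i}$ and $2\wE\sim -K_{\wS}-\wD_1-2\wD_2-3\wD_3-2\wD_4-\wD_5$.

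First I would set $\wF:=2\wE+\wD_2+2\wD_3+\wD_4$ and record the intersection numbers $\wF^2=0$ and $(\wF\cdot -K_{\wS})=2$; then the arithmetic genus of $\wF$ is $0$ and, by the Riemann--Roch theorem together with the rationality of $\wS$, $\dim|\wF|\ge 1$, so the pencil $|\wF|$ defines a $\bP^1$-fibration $g:=\Phi_{|\wF|}:\wS\to\bP^1_{\bk}$. A direct computation gives $(\wD_1\cdot\wF)=(\wD_5\cdot\wF)=1$, so $\wD_1$ and $\wD_5$ are sections of $g$, while $\wD_2,\wD_3,\wD_4,\wE$ form one reducible fibre. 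Its weighted dual graph, together with the two sections, is precisely the configuration of Figure \ref{fig(4-2)} (b): a fibre of type (\ref{II}) with branch component $\wD_3$ (of multiplicity $2$), the two further branches $\wD_2$ (meeting $\wD_1$) and $\wD_4$ (meeting $\wD_5$), and the $(-1)$-curve $\wE$ at the remaining end. In the notation of \S\S \ref{4-1} this is a type (\ref{II}) fibre with $\gamma=\gamma_1=3$, so $t=1$.

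It then remains to confirm $(\ast\ast)$ and $s=0$. The sections $\wD_1,\wD_5$ are $(-2)$-curves, whence $m_0=m_\infty=2$ and $(-K_{\wS})^2=6-m_0-m_\infty=2$, matching the degree; and every $(-2)$-curve other than the two sections lies in a fibre, namely $\wD_2,\wD_3,\wD_4$ in the type (\ref{II}) fibre above and, in the $\sA_5+\sA_1$ case, the remaining $(-2)$-curve $\wD'$, which is disjoint from the $\sA_5$ chain. The step I expect to require real care is showing $(s,t)=(0,1)$: any fibre of type (\ref{I-2}), and any second fibre of type (\ref{II}), would contain a $(-2)$-curve meeting one of the sections (its branch component adjacent to that section), whereas among all $(-2)$-curves only $\wD_2$ meets $\wD_1$ and only $\wD_4$ meets $\wD_5$, and both already belong to the fibre constructed above, while $\wD'$ (when present) meets neither section. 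Consequently every remaining singular fibre is of type (\ref{I-1})---with $\wD'$, if present, occurring as its unique $(-2)$-component---so indeed $(s,t)=(0,1)$. With $(\ast\ast)$, $(s,t)=(0,1)$ and $\gamma=3$ in hand, Proposition \ref{prop(3-4)} yields $\Ampc(S)=\Amp(S)$.
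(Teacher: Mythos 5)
Your proposal is correct and follows essentially the same route as the paper: the same $(-1)$-curve $\wE$ with $(\wE \cdot \wD _i) = \delta _{3,i}$ supplied by Section \ref{3}, the same fibration $g = \Phi _{|\wF|}$ with $\wF = \wD _2 + 2(\wD _3 + \wE ) + \wD _4$ and sections $\wD _1, \wD _5$, and the same verification that $g$ satisfies $(\ast \ast)$ with $(s,t)=(0,1)$ and $\gamma = 3$ (your explicit ruling-out of fibers of type (\ref{I-2}) or of a second fiber of type (\ref{II}), via the observation that their section-adjacent components are $(-2)$-curves while only $\wD _2$ and $\wD _4$ among $(-2)$-curves meet the sections, is left implicit in the paper). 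One point in your favor: your appeal to Proposition \ref{prop(3-4)} is the correct citation, since that is the proposition covering $(\ast \ast)$, $(s,t)=(0,1)$, $\gamma = 3$, whereas the paper's proof cites Proposition \ref{prop(3-3)} (the $\gamma = 2$ case), an apparent cross-reference slip swapped with the citation in Lemma \ref{lem(4-4)}.
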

\begin{proof}
By assumption, $S$ has a singular point $P$ of type $\sA _5$. 
Let $\wD _1+\dots + \wD _5$ be the connected component of $\wD$ corresponding to $P$ such that the weighted dual graph of $\wD _1+\dots + \wD _5$ is given as follows: 
\begin{align*}
\xygraph{\circ ([]!{+(0,.25)} {^{\wD_1}}) -[r] \circ ([]!{+(0,.25)} {^{\wD_2}}) -[r] \circ ([]!{+(0,.25)} {^{\wD_3}}) -[r] \circ ([]!{+(0,.25)} {^{\wD_4}})-[r] \circ ([]!{+(0,.25)} {^{\wD_5}})}
\end{align*}
By assumption, there exists a $(-1)$-curve $\wE _3$ on $\wS$ such that $(\wD _i \cdot \wE _3) = \delta _{i,3}$ for $i=1,\dots ,5$. 
Then a divisor $\wF := \wD_2+2(\wD_3+\wE_3)+\wD_4$ defines a $\bP ^1$-fibration $g := \Phi _{|\wF|}: \wS \to \bP ^1_{\bk}$, 
$\wD_1$ and $\wD_5$ become sections of $g$ and each component of $\wD-(\wD_1+\wD_5)$ is a fiber component of $g$. 
Note that the configuration of $\wF + \wD _1 + \wD _5$ looks like that in Figure \ref{fig(4-2)} (b). 
Moreover, $g$ satisfies $(\ast \ast )$, $(s,t)=(0,1)$ and $\gamma = 3$ provided that we use notations from \S \S \ref{4-1}. 
Hence, we obtain that $\Amp (S) = \Ampc (S)$ by Proposition \ref{prop(3-3)}. 
\end{proof}
\begin{lem}\label{lem(4-4)}
If $S$ is of type $\sA_3+\sA_2+\sA_1$, $\sA_3+3\sA_1$, $(\sA _3+\sA _1)'$ or $(\sA _3+2\sA _1)'$, then $\Ampc (S) = \Amp (S)$. 
\end{lem}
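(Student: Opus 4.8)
The plan is to mirror the strategy of Lemmas \ref{lem(4-1)}--\ref{lem(4-3)}: manufacture a $\bP^1$-fibration on $\wS$ whose unique type (II) degenerate fiber is assembled from the $\sA_3$-chain together with the special $(-1)$-curve supplied by the type $'$ hypothesis, and then quote Proposition \ref{prop(3-3)}. Both cases $(\sA_3+\sA_1)'$ and $(\sA_3+2\sA_1)'$ will be treated by one and the same construction.

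First I would fix notation. The surface $S$ carries a singular point $P$ of type $\sA_3$, whose connected component of $\wD$ is a chain $\wD_1 - \wD_2 - \wD_3$ with central component $\wD_2$. By the definition of type $(\sA_3+\sA_1)'$ (resp. $(\sA_3+2\sA_1)'$), which is exactly the situation of Proposition \ref{prop(3-0)}(C) for $n=3$, there exist a $(-1)$-curve $\wE_2$ and a $(-2)$-curve $\wD_4$ with $(\wD_4 \cdot \wD - \wD_4)=0$ such that $(\wE_2 \cdot \wD_i)=\delta_{i,2}$ for $i=1,2,3$ and $(\wE_2 \cdot \wD_4)=1$; that is, $\wE_2$ bridges the central component $\wD_2$ to the isolated $\sA_1$-curve $\wD_4$.

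Next I would set $\wF := \wD_2 + 2\wE_2 + \wD_4$. A direct intersection computation gives $(\wF)^2 = 0$ and $(\wF \cdot -K_{\wS}) = 2$, so $|\wF|$ defines a $\bP^1$-fibration $g := \Phi_{|\wF|} : \wS \to \bP^1_{\bk}$ whose special fiber is the type (II) configuration $\wD_2 - \wE_2 - \wD_4$. Since $(\wD_1 \cdot \wF) = (\wD_3 \cdot \wF) = 1$, the two ends $\wD_1$ and $\wD_3$ of the chain become sections, while every remaining component of $\wD$ — in the $(\sA_3+2\sA_1)'$ case this includes the second isolated $\sA_1$-curve — is disjoint from $\wF$ and therefore sits inside a singular fiber. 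The resulting configuration is the one drawn in Figure \ref{fig(4-2)}(c). It then remains to confirm that $g$ satisfies $(\ast\ast)$ together with $(s,t)=(0,1)$ and $\gamma=2$: concretely, that $\wD_1,\wD_3$ are the only components of $\wD$ that are sections (whence $m_0=m_\infty=2$ and $(-K_{\wS})^2 = 6-m_0-m_\infty = 2$), that $\wF$ is the unique fiber of type (II), and that no fiber of type (I-2) arises. Granting this, Proposition \ref{prop(3-3)} delivers $\Ampc(S)=\Amp(S)$ immediately.

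I expect the genuine obstacle to be precisely this last global check on the fibre structure — excluding a type (I-2) fiber and a second type (II) fiber, and verifying that the extra $\sA_1$-curve occurring in the $(\sA_3+2\sA_1)'$ case lands inside a type (I-1) fiber rather than furnishing a spurious third section. As in the earlier lemmas of this section, I would dispose of this by appealing to the explicit classification of the degree-$2$ Du Val del Pezzo surfaces of the relevant Dynkin type and of their $(-2)$- and $(-1)$-curve configurations, which pins down all fibres of $g$ and shows they are accounted for by the type (II) fiber $\wF$ and type (I-1) fibers only.
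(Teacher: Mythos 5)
Your proposal is correct and takes essentially the same route as the paper: the identical fiber divisor $\wF = \wD _2 + 2\wE _2 + \wD _4$ built from the special $(-1)$-curve of Proposition \ref{prop(3-0)} (C), the same $\bP ^1$-fibration with sections $\wD _1, \wD _3$ satisfying $(\ast \ast)$, $(s,t)=(0,1)$ and $\gamma =2$, concluded by the proposition of \S \ref{4-5} (your citation of Proposition \ref{prop(3-3)} is in fact the correct target for $\gamma =2$; the paper's text cites Proposition \ref{prop(3-4)}, apparently a swapped label). The ``global check'' you flag (no (I-2) or second (II) fiber, and the extra $\sA _1$-curve landing in an (I-1) fiber) is likewise left implicit in the paper, and follows easily since every fiber $(-2)$-curve lies in $\wD$ and $(\wE _2 \cdot \wD _5)=0$ by the relation $2\wE _2 \sim -K_{\wS}-\wD _1-2\wD _2-\wD _3-\wD _4$.
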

\begin{proof}
By assumption, $S$ has two singular points $P_1$ and $P_2$ of types $\sA _3$ and $\sA _1$, respectively. 
Let $\wD _1+\wD _2 + \wD _3$ (resp. $\wD _4$) be the connected component of $\wD$ corresponding to $P_1$ (resp. $P_2$) such that the weighted dual graph of $(\wD _1+\wD _2 + \wD _3) + \wD_4$ is given as follows: 
\begin{align*}
\xygraph{\circ ([]!{+(0,.25)} {^{\wD_1}}) -[r] \circ ([]!{+(0,.25)} {^{\wD_2}}) -[r] \circ ([]!{+(0,.25)} {^{\wD_3}}) [r] \circ ([]!{+(0,.25)} {^{\wD_4}})}
\end{align*}
By assumption and {\cite[pp.\ 21--22]{PW10}}, we may assume that there exists a $(-1)$-curve $\wE _2$ on $\wS$ such that $(\wD _i \cdot \wE _2) = \delta _{i,2} + \delta _{i,4}$ for $i=1,\dots ,4$. 
Then a divisor $\wF := \wD_2+2\wE_2+\wD_4$ defines a $\bP ^1$-fibration $g := \Phi _{|\wF|}: \wS \to \bP ^1_{\bk}$, 
$\wD_1$ and $\wD_3$ become sections of $g$ and each component of $\wD-(\wD_1+\wD_3)$ is a fiber component of $g$. 
Note that the configuration of $\wF + \wD _1 + \wD _3$ looks like that in Figure \ref{fig(4-2)} (c). 
Moreover, $g$ satisfies $(\ast \ast )$, $(s,t)=(0,1)$ and $\gamma = 2$ provided that we use notations from \S \S \ref{4-1}. 
Hence, we obtain that $\Amp (S) = \Ampc (S)$ by Proposition \ref{prop(3-4)}. 
\end{proof}
\begin{lem}\label{lem(4-5)}
If $S$ has a singular point of type $\sA _3$, $\sA _4$, $\sA _5$ or $\sA _6$, then $\Ampc (S) = \Amp (S)$.
\end{lem}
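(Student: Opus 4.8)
The plan is to read off the relevant $(-1)$-curves from Proposition \ref{prop(3-0)} and then build a $\bP^1$-fibration that feeds Proposition \ref{prop(3-6)}. Let $\wD^{(1)} = \wD_1 + \dots + \wD_n$ be the chain of $(-2)$-curves lying over the given point of type $\sA_n$, $n \in \{3,4,5,6\}$. By Proposition \ref{prop(3-0)} exactly one of (A), (B), (C) holds. Cases (B) and (C) are precisely the configurations in which a $(-1)$-curve meets the \emph{central} component of an $\sA_5$- resp.\ $\sA_3$-chain; there the type-(\ref{II}) fibration constructed in the proof of Lemma \ref{lem(4-3)} resp.\ Lemma \ref{lem(4-4)} applies verbatim (the two outer chain-components become the sections, and any further singular points contribute only type-(\ref{I-1}) fibers), so Proposition \ref{prop(3-4)} resp.\ \ref{prop(3-3)} yields $\Ampc(S) = \Amp(S)$. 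I may therefore assume case (A): there are $(-1)$-curves $\wE_1, \wE_2$ with $(\wE_1 \cdot \wD_j) = \delta_{2,j}$ and $(\wE_2 \cdot \wD_j) = \delta_{n-1,j}$, and by Proposition \ref{prop(3-1)} one has $\wE_1 + \wE_2 \sim \wDelta$, where $\wDelta := -K_{\wS} - \wD_1 - 2(\wD_2 + \dots + \wD_{n-1}) - \wD_n$.

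I would then put $\wF := \wE_1 + \wD_2 + \wD_3 + \dots + \wD_{n-1} + \wE_2$. A short computation gives $(\wF)^2 = 0$ and $(\wF \cdot -K_{\wS}) = 2$, so by Riemann-Roch on the rational surface $\wS$ we get $\dim|\wF| \ge \frac{1}{2}(\wF \cdot (\wF - K_{\wS})) = 1$, and $\wF$ defines a $\bP^1$-fibration $g := \Phi_{|\wF|} : \wS \to \bP^1_{\bk}$. Since $(\wD_1 \cdot \wF) = (\wD_1 \cdot \wD_2) = 1$ and $(\wD_n \cdot \wF) = (\wD_n \cdot \wD_{n-1}) = 1$, the outer components $\wD_1$ and $\wD_n$ are sections; being $(-2)$-curves they give $m_0 = m_{\infty} = 2$, in agreement with $(-K_{\wS})^2 = 6 - m_0 - m_{\infty} = 2$.

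The core of the proof is to check that $g$ satisfies $(\ast \ast)$ with $(s,t) = (1,0)$ and $\beta' = 1$. Any $(-2)$-curve $C$ not contained in $\wD^{(1)}$ is disjoint from $\wD_1, \dots, \wD_n$ (distinct connected components of $\wD$), and intersecting the linear equivalence above with $C$ gives $(\wE_1 + \wE_2) \cdot C = (\wDelta \cdot C) = 0$; hence $(\wF \cdot C) = 0$, so $C$ is a fiber component. Consequently the only components of $\wD$ that are not fiber components are the sections $\wD_1, \wD_n$, which is condition $(\ast \ast)$. Taking $\wD_0 = \wD_1$, the fiber $\wF$ is crossed by $\wD_1$ at the branch component $\wD_2$, so it is of type (\ref{I-2}) with short branch $\{\wE_1\}$, i.e.\ $\beta' = 1$ and $\beta = n - 2$. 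For any other fiber $\wF'$, its components are $(-1)$- or $(-2)$-curves by Lemma \ref{lem(2-2-2)}(1); the $(-2)$-curves of $\wF'$ stem from other (cyclic) singular points and so are disjoint from $\wD_1$, forcing $\wD_1$ to meet $\wF'$ along a $(-1)$-curve. By the trichotomy of Lemma \ref{lem(2-1-2)} — in types (\ref{I-2}) and (\ref{II}) the section meets a $(-2)$-curve — the fiber $\wF'$ is necessarily of type (\ref{I-1}). Thus $\wF$ is the unique non-(\ref{I-1}) fiber and $(s,t) = (1,0)$.

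With $m_0 = 2$ and $\beta' = 1$ established, Proposition \ref{prop(3-6)} concludes the argument: part (1) applies when $n = 3$ (so $\beta = 1$), and part (2) when $n \in \{4,5,6\}$ (so $\beta \ge 2$ and $m_0 = 2$); in every case $\Ampc(S) = \Amp(S)$. The step I expect to be the main obstacle is the third one — excluding type-(\ref{I-2}) and type-(\ref{II}) fibers other than $\wF$ — and the observation that unlocks it is that both sections $\wD_1, \wD_n$ are $(-2)$-curves disjoint from every other $(-2)$-curve, so each remaining singular fiber is crossed by $\wD_1$ along a $(-1)$-curve and is therefore of type (\ref{I-1}).
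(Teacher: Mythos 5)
Your proposal is correct and takes essentially the same route as the paper: it reduces the central-$(-1)$-curve configurations (B) and (C) of Proposition \ref{prop(3-0)} to the fibrations of Lemmas \ref{lem(4-3)} and \ref{lem(4-4)}, and in case (A) builds the same fiber $\wF = \wE _1 + \wD _2 + \dots + \wD _{n-1} + \wE _2$ (the paper's $\wE _2 + \wD _2 + \wE _2'$ for $n=3$ and $\wE _2 + \wD _2 + \dots + \wD _{n-1} + \wE _{n-1}$ for $n \ge 4$, which you treat uniformly) and concludes by Proposition \ref{prop(3-6)} (1) resp.\ (2). As a bonus, you spell out two points the paper only asserts --- that every $(-2)$-curve outside the chain is a fiber component (via intersecting $\wDelta \sim \wE _1 + \wE _2$ with it) and that all remaining singular fibers are of type (I-1) because both sections are $(-2)$-curves disjoint from every other $(-2)$-curve --- and your pairing of Propositions \ref{prop(3-4)}/\ref{prop(3-3)} with the $(\sA _5)'$ and $(\sA _3+\sA _1)'$ cases is the correct one, whereas the citations in the paper's Lemmas \ref{lem(4-3)} and \ref{lem(4-4)} appear to be transposed.
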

\begin{proof}
If $S$ is of type $(\sA _5)'$, $(\sA _5+\sA _1)'$, $\sA_3+\sA_2+\sA_1$, $\sA_3+3\sA_1$, $(\sA _3+\sA _1)'$ or $(\sA _3+2\sA _1)'$, then we know $\Ampc (S) = \Amp (S)$ by Lemmas \ref{lem(4-3)} and \ref{lem(4-4)}. 
Hence, we assume that $S$ is not of the above types in what follows. 
By assumption, $S$ has a singular point $P$ of type $\sA _n$ for some $n=3,4,5,6$. 
Let $\wD _1+\dots + \wD _n$ be the connected component of $\wD$ corresponding to $P$ such that the weighted dual graph of $\wD _1+\dots + \wD _n$ is given as follows: 
\begin{align*}
\xygraph{\circ ([]!{+(0,.25)} {^{\wD_1}}) -[r] \circ ([]!{+(0,.25)} {^{\wD_2}}) -[r] \cdots  -[r] \circ ([]!{+(0,.25)} {^{\wD_n}})}
\end{align*}

We first consider the case $n=3$. 
Since $S$ is not of type listed in Lemma \ref{lem(4-4)}, there exist two distinct $(-1)$-curves $\wE _2$ and $\wE _2'$ on $\wS$ such that $(\wE _2 \cdot \wE _2') = 0$ and $(\wD _i \cdot \wE _2) = (\wD _i \cdot \wE _2') = \delta _{i,2}$ for $i=1,2,3$ by Proposition \ref{prop(3-0)}. 
Then a divisor $\wF := \wE _2+\wD_2+\wE_2'$ defines a $\bP ^1$-fibration $g := \Phi _{|\wF|}: \wS \to \bP ^1_{\bk}$, 
$\wD_1$ and $\wD_3$ become sections of $g$ and each component of $\wD-(\wD_1+\wD_3)$ is a fiber component of $g$. 
Note that the configuration of $\wF + \wD _1 + \wD _3$ looks like that in Figure \ref{fig(4-2)} (d). 
Moreover, $g$ satisfies $(\ast \ast )$, $(s,t)=(1,0)$, $\beta ' = 1$ and $\beta = 1$ provided that we use notations from \S \S \ref{4-1}. 
Hence, we obtain that $\Amp (S) = \Ampc (S)$ by Proposition \ref{prop(3-6)} (1). 

In what follows, we consider the remaining case. 
Since $S$ is not of type $(\sA _5)'$ nor $(\sA _5+\sA _1)'$, there exist two distinct $(-1)$-curves $\wE _2$ and $\wE _{n-1}$ on $\wS$ such that $(\wE _2 \cdot \wE _{n-1}) = 0$ and$(\wD _i \cdot \wE _j) = \delta _{i,j}$ for $i=1,\dots ,n$ and $j=2,n-1$ by Proposition \ref{prop(3-0)}.  
Then a divisor $\wF := \wE _2 + \wD_2 + \dots + \wD _{n-1} + \wE _{n-1}$ defines a $\bP ^1$-fibration $g := \Phi _{|\wF|}: \wS \to \bP ^1_{\bk}$, 
$\wD_1$ and $\wD_n$ become sections of $g$ and each component of $\wD-(\wD_1+\wD_n)$ is a fiber component of $g$. 
Note that the configuration of $\wF + \wD _1 + \wD _n$ looks like that in Figure \ref{fig(4-2)} (e). 
Moreover, $g$ satisfies $(\ast \ast )$, $(s,t)=(1,0)$, $\beta ' = 1$ and $\beta \ge 2$ provided that we use notations from \S \S \ref{4-1}. 
Hence, we obtain that $\Amp (S) = \Ampc (S)$ by Proposition \ref{prop(3-6)} (2). 
\end{proof}
\begin{lem}\label{lem(4-6)}
If $S$ has a singular point of type $\sA _2$, then $\Ampc (S) = \Amp (S)$.
\end{lem}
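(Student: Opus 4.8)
The plan is to realise the assertion by equipping the minimal resolution $\wS$ with a $\bP^1$-fibration of the shape treated in \S\S\ref{4-7}, and then to invoke Proposition \ref{prop(3-5)}. By Lemmas \ref{lem(4-1)}, \ref{lem(4-2)} and \ref{lem(4-5)} we may assume that every singular point of $S$ is of type $\sA_1$ or $\sA_2$, so that the $(-2)$-curves on $\wS$ split into isolated curves and transversal pairs. Fix a singular point of type $\sA_2$ and let $\wD_1+\wD_2$ be the corresponding connected component of $\wD$, so $(\wD_1)^2=(\wD_2)^2=-2$ and $(\wD_1\cdot\wD_2)=1$ (see Figure \ref{fig(4-2)} (f)). The first step is to produce a fibration having $\wD_1$ and $\wD_2$ as its two sections, and for this I would take
\begin{align*}
\wF:=-K_{\wS}-\wD_1-\wD_2 .
\end{align*}
Using $(-K_{\wS})^2=2$ and $(-K_{\wS}\cdot\wD_i)=0$, a direct computation gives $(\wF)^2=0$, $(\wF\cdot -K_{\wS})=2$ and $(\wF\cdot\wD_1)=(\wF\cdot\wD_2)=1$, and the Riemann--Roch theorem together with the rationality of $\wS$ yields $\dim|\wF|\ge 1$.

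Next I would show that $\wF$ is nef, so that $|\wF|$ is base point free and $g:=\Phi_{|\wF|}:\wS\to\bP^1_{\bk}$ is a $\bP^1$-fibration. For a $(-2)$-curve $\wD'\ne\wD_1,\wD_2$ one has $(\wF\cdot\wD')=-((\wD_1+\wD_2)\cdot\wD')=0$, because $\wD'$ lies in a connected component of $\wD$ disjoint from $\wD_1+\wD_2$; in particular each such $\wD'$ is a fibre component. For a $(-1)$-curve $\wE$ one has $(\wF\cdot\wE)=1-((\wD_1+\wD_2)\cdot\wE)$, so the only obstruction to nefness is a $(-1)$-curve $\wE$ with $((\wD_1+\wD_2)\cdot\wE)\ge 2$. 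The Hodge index theorem excludes this: for $C=\wE+\wD_1+\wD_2$ or $C=\wE+\wD_i$ one would obtain $(C\cdot -K_{\wS})^2<(C)^2\,(-K_{\wS})^2$, which is absurd since $(-K_{\wS})^2>0$. Hence $\wF$ is nef and $g$ is a $\bP^1$-fibration whose sections among the components of $\wD$ are exactly $\wD_1$ and $\wD_2$; since $(\wD_i)^2=-2$ and all remaining $(-2)$-curves are fibre components, $g$ satisfies the condition $(\ast\ast)$ with $m_0=m_\infty=2$, and $(-K_{\wS})^2=8-(\alpha+\beta+\beta'+\gamma)$ forces $\alpha+\beta+\beta'+\gamma=6$ in the notation of \S\S\ref{4-1}.

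Finally I would verify that $(s,t)=(0,0)$, i.e. that every singular fibre of $g$ is of type \eqref{I-1}; this is the main obstacle. A fibre of type \eqref{I-2} or \eqref{II} contains a $(-2)$-curve meeting three components of that fibre, which requires either a branch node among the $(-2)$-curves or a $(-2)$-curve meeting two distinct $(-1)$-curves together with a further fibre component. Since all singular points of $S$ are of type $\sA_1$ or $\sA_2$, the $(-2)$-curves carry no branch node, and I would rule out the remaining possibility by running through the finitely many admissible Dynkin types ($\sA_2$, $\sA_2+k\sA_1$, $2\sA_2+k\sA_1$, $3\sA_2$, \dots) and reading off the configurations of $(-1)$- and $(-2)$-curves from the classification of degree-$2$ Du Val del Pezzo surfaces (as in Lemmas \ref{lem(4-1)}--\ref{lem(4-5)}), checking in each case that every singular fibre is a chain. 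Once $(s,t)=(0,0)$ is established, Proposition \ref{prop(3-5)} gives $\Ampc(S)=\Amp(S)$, which completes the proof.
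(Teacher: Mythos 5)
Your construction phase coincides with the paper's: the paper also works with the class $\wDelta:=-K_{\wS}-(\wD_1+\wD_2)$, computes $(\wDelta)^2=0$, $(\wDelta\cdot -K_{\wS})=2$ and $\dim|\wDelta|\ge 1$, and obtains the $\bP^1$-fibration $g$ with sections $\wD_1,\wD_2$ and all other $(-2)$-curves in fibers; the only difference is that the paper quotes {\cite[Lemma 5.2]{Saw24}} for the existence of a $0$-curve in $|\wDelta|$, whereas you prove nefness of $\wF$ via the Hodge index theorem (your computations there are correct: for $C=\wE+\wD_1+\wD_2$ or $C=\wE+\wD_i$ one gets $(C\cdot -K_{\wS})^2=1<2\le (C)^2(-K_{\wS})^2$). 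Up to this point your proposal is sound and essentially the paper's argument.

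The gap is in your final step, which you yourself flag as ``the main obstacle'' and then only sketch. Your claim that a fiber of type \eqref{I-2} ``contains a $(-2)$-curve meeting three components of that fibre'' is false: a fiber of type \eqref{I-2} is itself a \emph{chain} with $(-1)$-curves at both ends (every component has multiplicity one, and the component $\wD_{r+i,0}$ meets only two other \emph{fiber} components, its third neighbour being the section, which is not part of the fiber). So ``no branch node among the $(-2)$-curves'' excludes nothing of type \eqref{I-2}, and even for type \eqref{II} with $\gamma_i=3$ the $(-2)$-curves of the fiber form an $\sA_3$-chain with no branch node, so your criterion alone is insufficient there as well; your fallback, a sweep through the admissible Dynkin types, is not carried out and is in any case the wrong lens. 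What actually distinguishes \eqref{I-2} and \eqref{II} from \eqref{I-1} in Lemma \ref{lem(2-1-2)} is that the \emph{section meets a $(-2)$-curve of the fiber} ($\wD_{r+i,0}$, resp.\ $\wD_{r+s+i,0}$). This gives the one-line argument that closes the gap: every singular fiber of $g$ consists of $(-1)$- and $(-2)$-curves by Lemma \ref{lem(2-2-2)} (1), so Lemma \ref{lem(2-1-2)} applies with the section $\wD_1$; but every $(-2)$-curve of $\wS$ other than $\wD_2$ lies in a connected component of $\wD$ disjoint from $\wD_1+\wD_2$, and $\wD_2$ is a section, so $\wD_1$ meets no $(-2)$-curve contained in a fiber; hence no fiber is of type \eqref{I-2} or \eqref{II}, i.e.\ $(s,t)=(0,0)$, and Proposition \ref{prop(3-5)} applies. (The same observation applied to $\wD_2$ shows it too passes through a $(-1)$-curve of each singular fiber, which is what the configuration of Figure \ref{Case5} used in \S\S \ref{4-7} requires; the paper asserts this configuration without further comment, but your proposal does not reach it at all.)
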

\begin{proof}
Let $P$ be a singular point on $S$ of type $\sA _2$ and let $\wD _1+\wD _2$ be the connected component of $\wD$ corresponding to $P$ such that the weighted dual graph of $\wD _1+\wD _2$ is given as follows: 
\begin{align*}
\xygraph{\circ ([]!{+(0,.25)} {^{\wD_1}}) -[r] \circ ([]!{+(0,.25)} {^{\wD_2}})}
\end{align*}
Consider the divisor $\wDelta := -K_{\wS} - (\wD _1 + \wD _2)$ on $\wS$. 
By the Riemann-Roch theorem, we have $\dim |\wDelta| \ge 1$. 
Moreover, $(\wDelta )^2 = 0$ and $(\wDelta \cdot -K_{\wS}) = 2$. 
Since $\wS$ is a weak del Pezzo surface, we know that there exists a $0$-curve $\wC$ included in $|\wDelta|$ (see also {\cite[Lemma 5.2]{Saw24}}). 
Hence, $\wC$ defines a $\bP ^1$-fibration $g := \Phi _{|\wC|}: \wS \to \bP ^1_{\bk}$, 
$\wD_1$ and $\wD_2$ become sections of $g$ and each component of $\wD-(\wD_1+\wD_2)$ is a fiber component of $g$. 
Note that the configuration of $\wD _1 + \wD _2$ looks like that in Figure \ref{fig(4-2)} (f). 
Moreover, $g$ satisfies $(\ast \ast )$ and $(s,t)=(0,0)$ provided that we use notations from \S \S \ref{4-1}. 
Hence, we obtain that $\Amp (S) = \Ampc (S)$ by Proposition \ref{prop(3-5)}. 
\end{proof}
Hence, in every case, we obtain $\Amp (S) = \Ampc (S)$ by Lemmas \ref{lem(4-1)}, \ref{lem(4-2)}, \ref{lem(4-3)}, \ref{lem(4-4)}, \ref{lem(4-5)} and \ref{lem(4-6)}. 
The proof of Theorem \ref{main(1)} is thus completed. 
\begin{table}[t]
\begin{center}
\begin{tabular}{|c||c|c|c|}
\hline
Dynkin type & Lemma & $(r,s,t)$ &  Configuration of singular fibers \\
\hline \hline 

$\sE_6$ & Lemma \ref{lem(4-1)} &$(1,0,1)$& $\alpha_1 = 1$, $\gamma_1 = 5$ \\ \hline 
$\sD_6$ & Lemma \ref{lem(4-1)} &$(0,1,1)$&$(\beta_1,\beta_1')=(1,1)$, $\gamma_1=4$ \\ \hline 
$\sD_5+\sA_1$ & Lemma \ref{lem(4-2)} &$(1,0,1)$& $\alpha_1=2$, $\gamma_1 = 4$ \\ \hline 
$\sD_5$ & Lemma \ref{lem(4-2)} &$(2,0,1)$& $\{\alpha_i\}_{i=1,2}=\{(1)_2\}$, $\gamma_1 = 4$ \\ \hline 
$\sD_4+2\sA_1$& Lemma \ref{lem(4-1)} &$(0,1,2)$& $(\beta_j,\beta_j')=(1,1)$, $\{\gamma_{\ell}\}_{\ell=1,2} = \{(2)_2\}$ \\ \hline
$\sD_4+\sA_1$ & Lemma \ref{lem(4-1)} &$(0,2,1)$& $\{(\beta_j,\beta_j')\}_{j=1,2}=\{(1,1)_2\}$, $\gamma_1 = 2$ \\ \hline 
$\sD_4$ & Lemma \ref{lem(4-1)} &$(0,3,0)$& $\{(\beta_j,\beta_j')\}_{j=1,2,3}=\{(1,1)_3\}$ \\ \hline 

$\sA_6$ & Lemma \ref{lem(4-5)} &$(1,1,0)$& $\alpha_1=1$, $(\beta_1,\beta_1')=(4,1)$ \\ \hline 

$(\sA_5+\sA_1)'$& Lemma \ref{lem(4-3)} &$(2,0,1)$&$\{\alpha_i\}_{i=1,2} = \{ 1,2\}$, $\gamma_1=3$ \\ \hline 
$(\sA_5)'$ & Lemma \ref{lem(4-3)} &$(3,0,1)$&$\{\alpha_i\}_{i=1,2,3} = \{ (1)_3\}$, $\gamma_1=3$ \\ \hline 
$(\sA_5+\sA_1)''$& Lemma \ref{lem(4-5)} &$(1,1,0)$&$\alpha_1 = 2$, $(\beta_1,\beta_1')=(3,1)$ \\ \hline 
$(\sA_5)''$ & Lemma \ref{lem(4-5)} &$(2,1,0)$&$\{\alpha_i\}_{i=1,2} = \{ (1)_2\}$, $(\beta_1,\beta_1')=(3,1)$ \\ \hline 

\multirow{2}{*}{$\sA_4+\sA_2$} &Lemma \ref{lem(4-5)} &$(1,1,0)$&$\alpha_1=3$, $(\beta_1,\beta_1')=(2,1)$ \\ \cline{2-4}
& Lemma \ref{lem(4-6)} &$(2,0,0)$&$\{\alpha_i\}_{i=1,2} = \{ 1,5\}$ \\ \hline 
$\sA_4+\sA_1$ & Lemma \ref{lem(4-5)} &$(2,1,0)$&$\{\alpha_i\}_{i=1,2} = \{ 1,2\}$, $(\beta_1,\beta_1')=(2,1)$ \\ \hline 
$\sA_4$ & Lemma \ref{lem(4-5)} &$(3,1,0)$&$\{\alpha_i\}_{i=1,2,3} = \{ (1)_3\}$, $(\beta_1,\beta_1')=(2,1)$ \\ \hline 

$2\sA_3$ & Lemma \ref{lem(4-5)} &$(1,1,0)$&$\alpha_1 = 4$, $(\beta_1,\beta_1')=(1,1)$ \\ \hline 
\multirow{2}{*}{$\sA_3+\sA_2+\sA_1$} &Lemma \ref{lem(4-4)} &$(2,0,1)$&$\{\alpha_i\}_{i=1,2} = \{ 1,3\}$, $\gamma_1=2$ \\ \cline{2-4}
& Lemma \ref{lem(4-6)} &$(2,0,0)$&$\{\alpha_i\}_{i=1,2} = \{ 2,4\}$ \\ \hline 
$\sA_3+3\sA_1$ & Lemma \ref{lem(4-4)} &$(2,0,1)$&$\{\alpha_i\}_{i=1,2} = \{ (2)_2\}$, $\gamma_1=2$ \\ \hline 
\multirow{2}{*}{$\sA_3+\sA_2$} & Lemma \ref{lem(4-5)} &$(2,1,0)$&$\{\alpha_i\}_{i=1,2} = \{ 1,3\}$, $(\beta_1,\beta_1')=(1,1)$ \\ \cline{2-4}
& Lemma \ref{lem(4-6)} &$(3,0,0)$&$\{\alpha_i\}_{i=1,2,3} = \{ (1)_2,4\}$ \\ \hline 
$(\sA_3+2\sA_1)'$ & Lemma \ref{lem(4-4)} &$(3,0,1)$&$\{\alpha_i\}_{i=1,2,3} = \{ (1)_2,2\}$, $\gamma_1=2$ \\ \hline 
$(\sA_3+\sA_1)'$ & Lemma \ref{lem(4-4)} &$(4,0,1)$&$\{\alpha_i\}_{i=1,2,3,4} = \{ (1)_4\}$, $\gamma_1=2$ \\ \hline 
$(\sA_3+2\sA_1)''$ & Lemma \ref{lem(4-5)} &$(2,1,0)$&$\{\alpha_i\}_{i=1,2} = \{ (2)_2\}$, $(\beta_1,\beta_1')=(1,1)$ \\ \hline 
$(\sA_3+\sA_1)''$ & Lemma \ref{lem(4-5)} &$(3,1,0)$&$\{\alpha_i\}_{i=1,2,3} = \{ (1)_2,2\}$, $(\beta_1,\beta_1')=(1,1)$ \\ \hline 
$\sA_3$ & Lemma \ref{lem(4-5)} &$(4,1,0)$&$\{\alpha_i\}_{i=1,2,3,4} = \{ (1)_4\}$, $(\beta_1,\beta_1')=(1,1)$ \\ \hline 

$3\sA_2$ & Lemma \ref{lem(4-6)} &$(2,0,0)$&$\{\alpha_i\}_{i=1,2} = \{ (3)_2\}$ \\ \hline 
$2\sA_2+\sA_1$ & Lemma \ref{lem(4-6)} &$(3,0,0)$&$\{\alpha_i\}_{i=1,2,3} = \{ 1,2,3\}$ \\ \hline 
$\sA_2+3\sA_1$ & Lemma \ref{lem(4-6)} &$(3,0,0)$&$\{\alpha_i\}_{i=1,2,3} = \{ (2)_3\}$ \\ \hline 
$2\sA_2$ & Lemma \ref{lem(4-6)} &$(4,0,0)$&$\{\alpha_i\}_{i=1,2,3,4} = \{ (1)_3,3\}$ \\ \hline 
$\sA_2+2\sA_1$ & Lemma \ref{lem(4-6)} &$(4,0,0)$&$\{\alpha_i\}_{i=1,2,3,4} = \{ (1)_2,(2)_2\}$ \\ \hline 
$\sA_2+\sA_1$ & Lemma \ref{lem(4-6)} &$(5,0,0)$&$\{\alpha_i\}_{i=1,2,3,4,5} = \{ (1)_4,2\}$ \\ \hline 
$\sA_2$ & Lemma \ref{lem(4-6)} &$(6,0,0)$&$\{\alpha_i\}_{i=1,2,3,4,5,6} = \{ (1)_6\}$ \\ \hline 
\end{tabular}
\caption{The list of the configuration of $\bP^1$-fibration $g:\wS \to \bP^1_{\bk}$. }\label{table}
\end{center}
\end{table}
\smallskip

For the reader's convenience, we summarize in Table \ref{table} the configurations of the $\bP^1$-fibrations considered for each Dynkin type. 
We shall explain the notation of this table. 
``Dynkin type'' means the Dynkin type of $S$. 
``Lemma'' indicates the lemma in whose proof the corresponding $\bP^1$-fibration is constructed. 
``$(r,s,t)$'' denotes the triplet of numbers of singular fibers of $g:\wS \to \bP^1_{\bk}$ corresponding to (\ref{I-1}), (\ref{I-2}), and (\ref{II}). 
``Configurations of singular fibers'' means the types of singular fibers of $g:\wS \to \bP^1_{\bk}$ (see \S\S \ref{4-1} for the notation $\alpha_i$, $(\beta_j,\beta_j')$, and $\gamma_{\ell}$). 
Here, $(m)_k$ (resp. $(m_1,m_2)_k$) in this table denotes the repetition of a positive integer $m$ (resp. a pair of positive integers $(m_1,m_2)$) exactly $k$ times. 
Notice that if the Dynkin type of $\wS$ is $\sA_4+\sA_2$, $\sA_3+\sA_2+\sA_1$ or $\sA_3+\sA_2$, then there are two $\bP^1$-fibrations giving rise to $H$-polar cylinders for every ample $\bQ$-divisor $H$. 
As an example, we consider the case ${\rm Dyn}(S) = (\sA_3+2\sA_1)''$. 
Then the configuration of the $\bP^1$-fibration $g$ looks like that in Figure \ref{(A_3+2A_1)''}. 

\begin{figure}[t]
\begin{center}\scalebox{0.5}{\begin{tikzpicture}
\draw [thick] (-0.5,0.5)--(8.5,0.5);
\node at (-0.9,0.5) {\Large $\wD _0$};
\draw [thick] (-0.5,4.5)--(8.5,4.5);
\node at (-0.9,4.5) {\Large $\wD _{\infty}$};

\draw [dashed,thick] (1,0)--(0,2);
\draw [thick] (0.2,1.25)--(0.2,3.75);
\draw [dashed,thick] (0,3)--(1,5);
\node at (1,-0.4) {\Large $\wE _1'$};
\node at (0.8,2.5) {\Large $\wD _{1,1}$};
\node at (1,5.4) {\Large $\wE _1$};

\draw [dashed,thick] (4,0)--(3,2);
\draw [thick] (3.2,1.25)--(3.2,3.75);
\draw [dashed,thick] (3,3)--(4,5);
\node at (4,-0.4) {\Large $\wE _2'$};
\node at (3.8,2.5) {\Large $\wD _{2,1}$};
\node at (4,5.4) {\Large $\wE _2$};

\draw [thick] (6.5,0)--(6.5,5);
\draw [dashed,thick] (5.5,1.75)--(7.5,1.75);
\draw [dashed,thick] (5.5,3.25)--(7.5,3.25);

\node at (6.5,-0.4) {\Large $\wD_{3,0}$};
\node at (8,1.75) {\Large $\wE _3'$};
\node at (8,3.25) {\Large $\wE _3$};
\end{tikzpicture}}\end{center}
\caption{The configuration of $\wS$ with ${\rm Dyn} (S) = (A_3+2A_1)''$}\label{(A_3+2A_1)''}
\end{figure}
\section*{Data Availability}
Data sharing not applicable to this article as no datasets were generated or analysed during the current study. 
\section*{Conflict of Interest}
The author declares no competing interests. 

\end{document}